\newtheorem{theorem}{Theorem}[section]
\newtheorem{corollary}[theorem]{Corollary}
\newtheorem{lemma}[theorem]{Lemma}
\newtheorem{proposition}[theorem]{Proposition}
\theoremstyle{definition}
\newtheorem{definition}[theorem]{Definition}
\newtheorem{remark}[theorem]{Remark}
\newtheorem{example}[theorem]{Example}
\newtheorem{construction}[theorem]{Construction}
\newtheorem{observation}[theorem]{Observation}
\newtheorem{notation}[theorem]{Notation}
\DeclareMathOperator{\CM}{CM}
\DeclareMathOperator{\stabCM}{\underline{CM}}
\DeclareMathOperator{\Supp}{Supp}
\DeclareMathOperator{\thick}{thick}
\DeclareMathOperator{\pretr}{pretr}
\DeclareMathOperator{\rep}{rep}
\DeclareMathOperator{\eff}{eff}
\DeclareMathAlphabet{\mathpzc}{OT1}{pzc}{m}{it}
\newcommand{\HOM}{\mathcal{H}\mathpzc{om}}
\newcommand{\Ho}{{\rm H}}
\numberwithin{equation}{section}
\newcommand{\iso}{\cong}
\newcommand{\then}{\ensuremath{\Longrightarrow}}
\renewcommand{\iff}{\ensuremath{\Longleftrightarrow}}
\DeclareMathAlphabet{\mathpzc}{OT1}{pzc}{m}{it}
\DeclareMathOperator{\Mod}{Mod}
\DeclareMathOperator{\modules}{mod} \renewcommand{\mod}{\modules}
\DeclareMathOperator{\Proj}{Proj}
\DeclareMathOperator{\proj}{proj}
\DeclareMathOperator{\inj}{inj}
\DeclareMathOperator{\Add}{Add}
\DeclareMathOperator{\add}{add}
\DeclareMathOperator{\stabmod}{\underline{mod}}
\DeclareMathOperator{\costmod}{\overline{mod}}
\DeclareMathOperator{\perf}{perf}
\DeclareMathOperator{\End}{End}
\DeclareMathOperator{\Hom}{Hom}
\DeclareMathOperator{\Ext}{Ext}
\DeclareMathOperator{\Aut}{Aut}
\DeclareMathOperator{\Ob}{\mathpzc{Ob}}
\DeclareMathOperator{\Rad}{Rad}
\DeclareMathOperator{\Img}{Im} \renewcommand{\Im}{\Img}
\DeclareMathOperator{\gld}{gl.\!dim}
\DeclareMathOperator{\id}{id}
\DeclareMathOperator{\op}{op}
\newenvironment{smallpmatrix}
	       {\left( \! \begin{smallmatrix}}
	       {\end{smallmatrix} \! \right)}
\def\mathclap{\mathpalette\mathclapinternal}
\def\mathclapinternal#1#2{\clap{$\mathsurround=0pt#1{#2}$}}
\def\clap#1{\hbox to 0pt{\hss#1\hss}}
\newcommand{\leftsub}[2]{{\vphantom{#2}}_{#1}{#2}}
\tikzset{>=stealth',
         vertex/.style={circle,draw=black,inner sep=1.5pt,outer sep=2pt},
         tvertex/.style={inner sep=1pt,font=\scriptsize},
         gap/.style={fill=white,inner sep=1pt}}
\newcommand{\arrow}[2][20]
 {
  \hspace{-5pt}
  \begin{tikzpicture}
   \node (A) at (0,0) {};
   \node (B) at (#1pt,0) {};
   \draw [#2] (A) -- (B);
  \end{tikzpicture}
  \hspace{-5pt}
 }
\newcommand{\arrowl}[3][20]
 {
  \hspace{-5pt}
  \begin{tikzpicture}
   \node (A) at (0,0) {};
   \node (B) at (#1pt,0) {};
   \draw [#2] (A) -- node [above] {$#3$} (B);
  \end{tikzpicture}
  \hspace{-5pt}
 }
\renewcommand{\to}[1][20]{\arrow[#1]{->}}
\newcommand{\tol}[2][20]{\arrowl[#1]{->}{#2}}
\newcommand{\epi}[1][20]{\arrow[#1]{->>}}
\newcommand{\mono}[1][20]{\arrow[#1]{>->}}
\newcommand{\sub}[1][20]{\arrow[#1]{right hook->}}
\renewcommand{\mapsto}[1][20]{\arrow[#1]{|->}}
\title{Stable categories of higher preprojective algebras}
\author{Osamu Iyama \and Steffen Oppermann}
\address{Osamu Iyama: Graduate School of Mathematics, Nagoya University, Chikusa-ku, Nagoya,
464-8602 Japan}
\email{iyama@math.nagoya-u.ac.jp}
\address{Steffen Oppermann: Institutt for matematiske fag, NTNU, 7491 Trondheim, Norway}
\email{steffen.oppermann@math.ntnu.no}
\newcommand{\psL}{}
\DeclareMathOperator{\TRUNCATE}{t}
\newcommand{\trunc}{\TRUNCATE}
\begin{document}

\begin{abstract}
We introduce $(n+1)$-preprojective algebras of algebras of global dimension $n$. We show that if an algebra is $n$-representation-finite then its $(n+1)$-preprojective algebra is self-injective. In this situation, we show that the stable module category of the $(n+1)$-preprojective algebra is $(n+1)$-Calabi-Yau, and, more precisely, it is the $(n+1)$-Amiot cluster category of the stable $n$-Auslander algebra of the original algebra. In particular this stable category contains an $(n+1)$-cluster tilting object. We show that even if the $(n+1)$-preprojective algebra is not self-injective, under certain assumptions (which are always satisfied for $n \in \{1,2\}$) the results above still hold for the stable category of Cohen-Macaulay modules.
\end{abstract}

\thanks{The first author was supported by JSPS Grant-in-Aid for Scientific Research 21740010}
\thanks{The second author was supported by NFR Storforsk grant no.\ 167130.}

\maketitle
\tableofcontents

\section{Introduction}

The preprojective algebras $\widetilde{\Lambda}$ of path algebras $\Lambda$ of quivers have been introduced by Gelfand and Ponomarev to understand representation theory of path algebras of quivers. They have played important roles in representation theory and other areas of mathematics, e.g.\ resolutions of Kleinian singularities, crystal bases of quantum groups, quiver varieties, and cluster theory. Preprojective algebras satisfy many important homological properties, in particular they give rise to 2-Calabi-Yau triangulated categories: For the Dynkin case the stable module category $\stabmod \widetilde{\Lambda}$ is $2$-Calabi-Yau \cite{MR1388043, MR1781930}, and for the non-Dynkin case the bounded derived category $\mathscr{D}^{\rm b}({\rm fd} \; \widetilde{\Lambda})$ of finite dimensional modules is 2-Calabi-Yau \cite{MR1781930}. Moreover Geiss-Leclerc-Schr{\"o}er \cite{GLS1, GLS2} constructed a $2$-cluster tilting object in the stable category $\stabmod \widetilde{\Lambda}$ for the Dynkin case. These properties play an important role in the categorification of Fomin-Zelevinsky cluster algebras \cite{FZ}. Also there is another important class of $2$-Calabi-Yau triangulated categories in cluster theory, namely cluster categories \cite{BMRRT} and their generalization given by Amiot \cite{C_PhD, CC}, and sometimes there exists a triangle equivalence between the stable category $\stabmod \widetilde{\Lambda}$ and a cluster category. Actually for each Dynkin case Amiot \cite{CC} constructed a triangle equivalence
\[ \stabmod \widetilde{\Lambda} \approx \mathscr{C}^2_{\Gamma}\]
with the $2$-Amiot cluster category $\mathscr{C}^2_{\Gamma}$ of the stable Auslander algebra $\Gamma$ of $\Lambda$. This gives another proof of the existence of a $2$-cluster tilting object in
$\stabmod \widetilde{\Lambda}$, since Amiot cluster categories contain cluster tilting objects \cite{CC}.

The aim of this paper is to introduce a higher analogue of preprojective algebras and generalize the properties of classical preprojective algebras discussed above to the higher case. In particular we compare them with higher (Amiot-)cluster categories \cite{BaurMarsh08,Thomas06,Zhu08,Guo}. For an algebra $\Lambda$ of global dimension at most $n$, its \emph{$(n+1)$-preprojective algebra} is defined as the tensor algebra
\[ \widetilde{\Lambda} = T_{\Lambda} \Ext_{\Lambda}^n(D\Lambda, \Lambda) := \bigoplus_{i \geqslant 0} \Ext_{\Lambda}^n(D\Lambda, \Lambda)^{\otimes_{\Lambda}^i}. \]
This is a generalization of Crawley-Boevey's description $\widetilde{\Lambda}=T_\Lambda\Ext^1_\Lambda(D\Lambda,\Lambda)$ of classical preprojective algebras \cite{MR1730657} and Assem-Br{\"u}stle-Schiffler's description of cluster tilted algebras \cite{ABS_trivext}. Our $(n+1)$-preprojective algebra $\widetilde{\Lambda}$ is the $0$-th homology of Keller's derived $(n+1)$-preprojective DG algebra ${\bf\Pi}$ \cite{Kel_DefCY} which plays an important role in the study of $n$-Amiot cluster categories \cite{CC,Guo}, and in particular $\widetilde{\Lambda}$ is isomorphic to the $n$-Calabi-Yau tilted algebra $\End_{\mathscr{C}^n_{\Lambda}}(\Lambda)$. While it is known that $\mathbf{\Pi}$ is always a bimodule $(n+1)$-Calabi-Yau DG algebra \cite{Kel_DefCY}, our $(n+1)$-preprojective algebra $\widetilde{\Lambda}$ is not so nice in general from a homological viewpoint. Therefore we have to restrict ourselves to certain nice classes of algebras $\Lambda$, which are called ``$n$-representation-finite algebras'' or, more generally, ``algebras satisfying the
vosnex property''.

The notion of $n$-representation-finite algebras has been introduced by the authors in \cite{IO} in the context of higher Auslander-Reiten theory. We define a finite dimensional algebra to be weakly $n$-representation-finite if there is an $n$-cluster tilting object in the module category, and we call it $n$-representation-finite if moreover it has global dimension at most $n$ (see Section~\ref{section.background}). Various papers have constructed classes of $n$-representation finite algebras \cite{ErdmannHolm09,HerI,HerschendIyama,HuangZhangA,HuangZhangB,HuangZhang09,Iy_Auslander_corr,I2,Iy_n-Auslander,IO}.
In this case the $n$-Amiot cluster category $\mathscr{C}^n_{\Lambda}$  is $\Hom$-finite, and the $(n+1)$-preprojective algebra
$\widetilde{\Lambda}$ is a finite dimensional algebra which, regarded as a $\Lambda$-module, gives the unique basic $n$-cluster tilting object in $\mod \Lambda$ \cite{Iy_n-Auslander} (see Theorem~\ref{theorem.tilde_is_ctmod}). One of our main results in this paper is that the $(n+1)$-preprojective algebra $\widetilde{\Lambda}$ of an $n$-representation finite algebra $\Lambda$ is self-injective (see Corollary~\ref{corollary.isselfinj}). As an application we give a complete classification of iterated tilted $2$-representation-finite algebras (see Theorem~\ref{theorem.tilted2repfin}) by using Ringel's classification of self-injective cluster tilted algebras \cite{Ringel_selfinj_cta}. Moreover we show that any quasi-tilted algebra of canonical type $(2,2,2,2)$ is $2$-representation-finite (see Proposition~\ref{proposition.canonical}). Thus, in contrast to the (classical) $1$-representation-finite situation, there is a $1$-parameter family of $2$-representation-finite algebras.

For an $n$-representation-finite algebra $\Lambda$, we study the stable module category $\stabmod \widetilde{\Lambda}$ (and a derived category version of it, called $\stabmod \mathscr{U}\psL$ -- see Theorem~\ref{theorem.ct_in_Db}) in Section~\ref{sect.selfinj}, and obtain the following result. 
\begin{theorem} \label{thm.intro1}
Let $\Lambda$ be an $n$-representation-finite algebra.
\begin{enumerate}
\item (Corollary~\ref{corollary.stabmod_CY}) The stable module category $\stabmod \widetilde{\Lambda}$ is an $(n+1)$-Calabi-Yau triangulated category.
\item (Theorem~\ref{theorem.H_equiv_selfinj}) We have a triangle equivalence
\[ \stabmod \widetilde{\Lambda} \approx \mathscr{C}_{\Gamma}^{n+1} \]
between the stable module category of $\widetilde{\Lambda}$ and the $(n+1)$-Amiot cluster category of $\Gamma$, where $\Gamma$ is the stable $n$-Auslander algebra $\underline{\End}_{\Lambda}(\widetilde{\Lambda})$  (see Definition~\ref{definition.Auslalg}).
\item (Corollary~\ref{cor.stab_is_Amiot}) $\stabmod \widetilde{\Lambda}$ contains an $(n+1)$-cluster tilting object.
\end{enumerate}
\end{theorem}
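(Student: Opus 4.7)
\bigskip
\noindent\textit{Proof proposal.}
The plan is to establish part (2) first and then extract (1) and (3) as essentially formal consequences. The reason is that the Amiot cluster category $\mathscr{C}^{n+1}_\Gamma$ is by construction $(n+1)$-Calabi-Yau and carries a canonical $(n+1)$-cluster tilting object (namely the image of $\Gamma$); once (2) is established, transporting these structures along the triangle equivalence yields (1) and (3) at no extra cost.

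For (2), the natural strategy is to apply a Keller--Reiten style recognition theorem for Amiot cluster categories. Concretely, I would exhibit in $\stabmod \widetilde{\Lambda}$ an $(n+1)$-cluster tilting object $T$ with $\underline{\End}_{\widetilde{\Lambda}}(T) \cong \Gamma$, verify that $\stabmod \widetilde{\Lambda}$ is algebraic, $\Hom$-finite and $(n+1)$-Calabi-Yau, and then invoke the recognition result. The natural candidate for $T$ is the $\Lambda$-module structure of $\Lambda$, regarded as a $\widetilde{\Lambda}$-module via the canonical projection $\widetilde{\Lambda} \twoheadrightarrow \Lambda$ onto tensor-degree zero. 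By Theorem~\ref{theorem.tilde_is_ctmod} the $\Lambda$-module $\widetilde{\Lambda}$ is the basic $n$-cluster tilting object of $\mod \Lambda$, hence its stable $\Lambda$-endomorphism algebra is precisely $\Gamma$ by Definition~\ref{definition.Auslalg}, so the endomorphism calculation on the $\Lambda$-side comes for free; the work is then to match it with the computation over $\widetilde{\Lambda}$, which is controlled by the $\widetilde{\Lambda}$-module analogue of the cluster tilting subcategory provided by Theorem~\ref{theorem.ct_in_Db}.

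For the Calabi-Yau input to the recognition theorem, the most likely route is via Keller's derived preprojective DG algebra $\mathbf{\Pi}$, which is bimodule $(n+1)$-Calabi-Yau with $H^0(\mathbf{\Pi}) \cong \widetilde{\Lambda}$. Since Corollary~\ref{corollary.isselfinj} gives that $\widetilde{\Lambda}$ is self-injective and finite dimensional, $\stabmod \widetilde{\Lambda}$ is a $\Hom$-finite algebraic triangulated category; the CY-property on $\mathbf{\Pi}$ then transfers, on $H^0$, to the $(n+1)$-Calabi-Yau property on $\stabmod \widetilde{\Lambda}$ via a Serre duality computation using that the Nakayama functor of a self-injective algebra agrees with $\Omega^{-(n+1)}$. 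The hardest step, and the main obstacle, will be the change-of-algebra: one must carefully track how the $n$-cluster tilting structure of $\widetilde{\Lambda}$ in $\mod\Lambda$ translates, under the passage from $\Lambda$ to $\widetilde{\Lambda}$, to an $(n+1)$-cluster tilting structure of $\Lambda$ in $\stabmod \widetilde{\Lambda}$, including vanishing of $\underline{\Ext}^i_{\widetilde{\Lambda}}(\Lambda,\Lambda)$ for $i \in \{1,\ldots,n\}$, and the identification of the resulting grading on $\Ext$ with the grading appearing in $\mathscr{C}^{n+1}_\Gamma$.

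Once (2) is established, (1) is immediate because Amiot cluster categories are $(n+1)$-Calabi-Yau; and (3) follows by transporting the canonical $(n+1)$-cluster tilting object of $\mathscr{C}^{n+1}_\Gamma$ across the equivalence.
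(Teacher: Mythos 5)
Your strategy has a circularity and two concrete factual errors, and it is genuinely different from what the paper does.

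First the circularity: a recognition theorem for $\mathscr{C}^{n+1}_\Gamma$ requires the $(n+1)$-Calabi--Yau property and the existence of an $(n+1)$-cluster tilting object as \emph{input}, so those must be established before (2), not extracted afterwards as ``formal consequences.''

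Second, your candidate cluster tilting object and the claimed endomorphism ring are both wrong. The module $\Lambda$ viewed over $\widetilde{\Lambda}$ through the projection $\widetilde{\Lambda}\twoheadrightarrow\Lambda$ has $\End_{\widetilde{\Lambda}}(\Lambda)=\End_\Lambda(\Lambda)=\Lambda$, not $\Gamma$, and there is no reason for the stable endomorphism ring to become $\Gamma$ either. The object the paper identifies as the $(n+1)$-cluster tilting object of $\stabmod\widetilde{\Lambda}$ is $\Hom_\Lambda(\widetilde{\Lambda}_P,\widetilde{\Lambda}_P)$, i.e.\ $\Gamma$ itself, regarded as a $\widetilde{\Lambda}$-module via the algebra map $\widetilde{\Lambda}\to\Gamma=\End_\Lambda(\widetilde{\Lambda}_P)$ (Corollary~\ref{cor.stab_is_Amiot}). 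And even with the right object, the condition to feed into a recognition theorem cannot be $\underline{\End}_{\widetilde{\Lambda}}(T)\cong\Gamma$: by Theorem~\ref{theorem.clairemain2}, the endomorphism ring of the canonical cluster tilting object $\pi\Gamma$ in $\mathscr{C}^{n+1}_\Gamma$ is the $(n+2)$-preprojective algebra $T_\Gamma\Ext^{n+1}_\Gamma(D\Gamma,\Gamma)$, not $\Gamma$. Finally, a recognition theorem in the generality you need here (arbitrary $n$, arbitrary base field) is not available off the shelf; for $n=2$ there are results of Keller--Reiten and Amiot in the Jacobian-algebra setting, but the paper does not rely on any such theorem.

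The paper's route to (2) is quite different and more constructive. It first proves $\mathscr{U}\psL\approx\widehat{\Gamma}$ as additive categories (Theorem~\ref{theorem.stableisderived}, via a concrete computation of $\Hom_{\mathscr{D}}(\widetilde{\Lambda}_P,\mathbb{S}^i\widetilde{\Lambda}_P)$), hence $\stabmod\mathscr{U}\psL\approx\mathscr{D}_\Gamma$ by Happel's theorem (Corollary~\ref{corollary.stabmodisderived}). It then produces a triangle functor $\mathtt{H}\colon\mathscr{C}^{n+1}_\Gamma\to\stabmod\widetilde{\Lambda}$ directly from Keller's universal property of DG orbit categories (Proposition~\ref{prop.H_exists}, with the DG details in Appendix~\ref{appendix}), after verifying the triangle condition of Proposition~\ref{prop.amiot.factorization.conditions}; finally $\mathtt{H}$ is shown to be an equivalence by checking it is fully faithful on the dense image of $\mathscr{D}_\Gamma$ and hits all simples (Theorem~\ref{theorem.H_equiv_selfinj}). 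Part (1) is established independently, not via Keller's derived preprojective DG algebra $\mathbf{\Pi}$ as you suggest, but by a direct Serre-functor computation on $\stabmod\mathscr{U}$ for an $n$-cluster tilting subcategory $\mathscr{U}$ with $\mathscr{U}[n]=\mathscr{U}$ (Proposition~\ref{proposition.serrestable1}, Lemma~\ref{lemma.n_ind_n+2}, Theorem~\ref{theorem.serrestable}, Corollary~\ref{corollary.stabmod_CY}). Part (3) then does follow from (2) together with Theorem~\ref{theorem.clairemain2}, which is the one piece of your plan that matches the paper.
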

For the case $n=1$ we recover the properties of classical preprojective algebras explained above.

In the final Section of this paper we generalize the results above by weakening the assumption that $\Lambda$ is $n$-representation-finite. We introduce a homological condition called ``vosnex property'' (see \ref{notation.vosnex}) which generalizes $n$-representation-finiteness. By definition this condition is always satisfied in the cases $n=1$ or $2$. We generalize Theorem~\ref{thm.intro1} and the main result of Keller-Reiten \cite{KelRei}  for the case $n=2$ by showing the following.
\begin{theorem} \label{thm.intro_2}
Let $\Lambda$ be an algebra satisfying the vosnex property. 
\begin{enumerate}
\item (Lemma~\ref{lem.selinj_dim_1}, Corollary~\ref{cor.stabCM_is_CY}) $\widetilde{\Lambda}$ is Iwanaga-Gorenstein of dimension at most $1$, and the stable category of Cohen-Macaulay modules $\stabCM(\widetilde{\Lambda})$ over $\widetilde{\Lambda}$ is an $(n+1)$-Calabi-Yau triangulated category.
\item (Theorem~\ref{theorem.H_equiv_CM}) We have a triangle equivalence
\[ \stabCM(\widetilde{\Lambda}) \approx \mathscr{C}_{\Gamma}^{n+1}, \]
with the $(n+1)$-Amiot cluster category of $\Gamma$, where $\Gamma$ is the stable endomorphism algebra $\underline{\End}_{\Lambda}(\widetilde{\Lambda})$.
\item (Corollary~\ref{cor.CMunderline_cto}) $\stabCM(\widetilde{\Lambda})$ contains an $(n+1)$-cluster tilting object.
\end{enumerate}
\end{theorem}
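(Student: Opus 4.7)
The plan is to mirror the approach used for Theorem~\ref{thm.intro1}, systematically replacing the stable module category $\stabmod\widetilde\Lambda$ by the stable Cohen--Macaulay category $\stabCM(\widetilde\Lambda)$. Note that part (1) is a logical prerequisite for parts (2) and (3), since $\stabCM(\widetilde\Lambda)$ is triangulated exactly when $\widetilde\Lambda$ is Iwanaga--Gorenstein.

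For part (1), I would analyse $\widetilde\Lambda$-modules via the identification $\widetilde\Lambda = T_\Lambda \Ext^n_\Lambda(D\Lambda,\Lambda)$ and the fact that, viewed as a $\Lambda$-module, $\widetilde\Lambda$ encodes the $n$-cluster tilting subcategory (Theorem~\ref{theorem.tilde_is_ctmod}). The vosnex property is crafted precisely so that the relevant higher self-extensions of $\widetilde\Lambda$ as a $\Lambda$-module vanish, yielding injective coresolutions of $\widetilde\Lambda$ on either side of length at most one; this is the content of Iwanaga--Gorensteinness of dimension at most~$1$. For the $(n+1)$-Calabi--Yau property of $\stabCM(\widetilde\Lambda)$, I would then invoke Serre duality via the Nakayama functor, which becomes well behaved on Cohen--Macaulay modules once Iwanaga--Gorensteinness is in place, and argue as in the $n$-representation-finite case, using the higher AR-translate on the $n$-cluster tilting subcategory of $\mod\Lambda$ to identify the Serre functor with the $(n+1)$-fold shift.

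For parts (2) and (3), I would appeal to an Amiot / Keller--Reiten style recognition theorem. The natural candidate for an $(n+1)$-cluster tilting object in $\stabCM(\widetilde\Lambda)$ is $\widetilde\Lambda$ itself: its projective summands vanish in the stable category, while the endomorphism algebra of what survives is exactly the stable $n$-Auslander algebra $\Gamma = \underline{\End}_\Lambda(\widetilde\Lambda)$. The $\Ext$-vanishing required to confirm that this object is cluster tilting is again supplied by the vosnex property. One then passes through a derived-category model analogous to the $\stabmod \mathscr{U}$ appearing in Theorem~\ref{theorem.ct_in_Db}, in order to align DG-enhancements, and applies the recognition theorem to obtain the triangle equivalence $\stabCM(\widetilde\Lambda) \approx \mathscr{C}^{n+1}_\Gamma$. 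Part (3) is then immediate, since $\mathscr{C}^{n+1}_\Gamma$ comes equipped with an $(n+1)$-cluster tilting object by construction.

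The main obstacle is part (1): the step from dimension zero (self-injective) to dimension one Iwanaga--Gorenstein requires fine control over precisely those indecomposable projective $\widetilde\Lambda$-modules whose injective envelopes are no longer projective, and identifying the correct axiomatic condition on $\Lambda$ that delivers this control is the conceptual content of the vosnex property. Verifying that vosnex really achieves this, and producing the length-one injective coresolutions explicitly, is the technical heart of the argument. Once part (1) is in hand, parts (2) and (3) run in close parallel to the arguments in the self-injective case, with Cohen--Macaulay modules simply absorbing the extra flexibility introduced in the first step.
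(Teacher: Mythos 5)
The high-level template --- mirror the self-injective case, replacing $\stabmod$ by $\stabCM$, use vosnex to control the Gorenstein dimension, and pass through a derived-category model to the Amiot cluster category --- matches the structure of the paper's argument. But there are two genuine gaps, one conceptual and one technical.

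First, you lean on Theorem~\ref{theorem.tilde_is_ctmod} and on ``the $n$-cluster tilting subcategory of $\mod\Lambda$.'' Under vosnex alone this subcategory does not exist. The paper proves (Theorem~\ref{theorem.rightcto}) that $\widetilde{\Lambda}$ is only a \emph{right} $n$-cluster tilting object of $\mod\Lambda$, while $D\widetilde{\Lambda}$ is the \emph{left} one, and these are genuinely different in the non-representation-finite situation. So the ``higher AR-translate on the $n$-cluster tilting subcategory of $\mod\Lambda$'' is not available; one has to work with the $n$-cluster tilting subcategory $\mathscr{U}$ of the \emph{triangulated} category $\mathscr{D}_\Lambda$ (and its image in $\mathscr{C}^n_\Lambda$), as the paper does. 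Relatedly, your candidate for the $(n+1)$-cluster tilting object conflates $\Lambda$- and $\widetilde{\Lambda}$-module structures: as a $\widetilde{\Lambda}$-module, $\widetilde{\Lambda}$ is projective and vanishes entirely in $\stabCM(\widetilde{\Lambda})$. The actual object is $\Hom_\Lambda(\widetilde{\Lambda}_P,\widetilde{\Lambda}_P)$, i.e.\ $\Gamma$ restricted along $\widetilde{\Lambda}\to\Gamma$.

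Second, you state that parts (2) and (3) ``run in close parallel to the arguments in the self-injective case,'' but this hides where most of the work lies. In the self-injective case the identification $\stabmod\mathscr{U}\approx\mathscr{D}_\Gamma$ comes for free from Happel's theorem via the repetitive category ($\mathscr{U}\approx\widehat{\Gamma}$); that argument breaks down when $\mathscr{U}$ is merely Iwanaga--Gorenstein of dimension $1$, because $\mathscr{U}$ is then no longer the repetitive category of anything. The paper instead constructs an explicit tilting object $T = P_V^{\geqslant 0}$ in $\stabCM(\mathscr{U})$ (Construction~\ref{const.tiltingobj}), verifies the vanishing of positive and negative self-extensions and the generating property (Lemmas~\ref{lemma.no_neg_selfext}, \ref{lemma.no_pos_selfext}, Proposition~\ref{prop.T_generates}), identifies $\End(T)$ with $\Gamma$ (Theorem~\ref{thm.identify_endoring}), and only then obtains $\stabCM(\mathscr{U})\approx\mathscr{D}_\Gamma$. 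This is the technical heart of the vosnex case, and it is precisely the step your proposal elides by pointing to a ``recognition theorem'' without supplying the input it needs. Your instinct that part (1) is the conceptual novelty is also slightly off: Lemma~\ref{lem.selinj_dim_1} is a short consequence of $\mathbb{S}\mathscr{U}\subseteq\mathscr{U}*\mathscr{U}[1]$; the novelty is concentrated in the tilting-object construction of Section~\ref{sect.non-selfinj} and in the Serre-functor computation of Theorem~\ref{theorem.serre_CM}, which is a fresh three-step argument ($\mathbb{S}_n^{-1}M\cong\tau^-_{\mod\mathscr{U}}\Hom(-,X[n-1])$, then $M\cong\Omega^n\Hom(-,X[n-1])$ via representability, then AR duality) rather than a re-run of the self-injective case's long exact sequence.
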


Even for the case $n=2$, we have the following properties of $2$-Calabi-Yau tilted algebras which were not known before.

%


\begin{corollary}[$n=2$]
Let $\Lambda$ be a finite dimensional algebra with $\gld \Lambda \leqslant 2$ such that $\mathscr{C}_{\Lambda}^2$ is $\Hom$-finite. Then we have a triangle equivalence
\[ \stabCM(\widetilde{\Lambda}) \approx \mathscr{C}_{\Gamma}^3, \]
where $\Gamma = \underline{\End}_{\Lambda}(\widetilde{\Lambda})$. In particular $\stabCM(\widetilde{\Lambda})$ contains a $3$-cluster tilting object.
\end{corollary}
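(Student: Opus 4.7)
The plan is to observe that the statement is essentially a direct specialization of Theorem~\ref{thm.intro_2} to the case $n=2$, so the proof reduces to verifying the two standing hypotheses of that theorem for a $\Lambda$ with $\gld \Lambda \leqslant 2$ and $\mathscr{C}_{\Lambda}^2$ being $\Hom$-finite.

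First, I would invoke the remark made in the introduction that the vosnex property (Notation~\ref{notation.vosnex}) is automatic for $n \in \{1,2\}$. Since we are in the case $n=2$ and $\gld \Lambda \leqslant 2$, $\Lambda$ satisfies the vosnex property by definition, so no further work is needed on this front. Second, I would check that $\widetilde{\Lambda}$ is a finite-dimensional algebra so that $\stabCM(\widetilde{\Lambda})$ is the usual stable category of Cohen-Macaulay modules. This follows from the identification $\widetilde{\Lambda} \iso \End_{\mathscr{C}^2_{\Lambda}}(\Lambda)$ recalled in the introduction, which becomes finite-dimensional once $\mathscr{C}_{\Lambda}^2$ is $\Hom$-finite.

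With both hypotheses verified, parts (1) and (2) of Theorem~\ref{thm.intro_2} apply verbatim for $n=2$, yielding that $\widetilde{\Lambda}$ is Iwanaga-Gorenstein of dimension at most $1$ with $\stabCM(\widetilde{\Lambda})$ a $3$-Calabi-Yau triangulated category, and providing the triangle equivalence
\[ \stabCM(\widetilde{\Lambda}) \approx \mathscr{C}_{\Gamma}^{3} \]
with $\Gamma = \underline{\End}_{\Lambda}(\widetilde{\Lambda})$. The final assertion about the $3$-cluster tilting object then follows from part (3) of Theorem~\ref{thm.intro_2}, or equivalently by transporting the canonical cluster tilting object of the $3$-Amiot cluster category $\mathscr{C}_{\Gamma}^{3}$ across this equivalence.

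The only real content beyond citation is the bookkeeping that $\mathscr{C}_{\Lambda}^2$ $\Hom$-finite actually forces $\widetilde{\Lambda}$ finite-dimensional, but this should be immediate from the identification $\widetilde{\Lambda} \iso \End_{\mathscr{C}^2_{\Lambda}}(\Lambda)$; no separate obstacle is expected, since the hard technical work has been absorbed into Theorem~\ref{thm.intro_2}.
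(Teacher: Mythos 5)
Your proposal is essentially correct and takes exactly the route the paper intends (the corollary is stated without proof as a direct specialization of Theorem~\ref{thm.intro_2}), but there is a small logical misstep in your first paragraph that is worth flagging. You claim that the vosnex property for $\Lambda$ holds ``by definition'' just because $n=2$ and $\gld\Lambda\leqslant 2$, with ``no further work needed on this front.'' This misreads Notation~\ref{notation.vosnex}: for an \emph{algebra}, the vosnex property is a two-part condition, namely (i) $\Lambda$ is $\tau_n$-finite, and (ii) the cluster tilting subcategory $\mathscr{U}\psL$ satisfies $\Hom_{\mathscr{D}}(\mathscr{U}\psL[i],\mathscr{U}\psL)=0$ for $i\in\{1,\dots,n-2\}$. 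Only condition (ii) is vacuous when $n\leqslant 2$; condition (i) is a genuine finiteness hypothesis that global dimension alone does not give you. What actually supplies $\tau_2$-finiteness is the assumption that $\mathscr{C}_{\Lambda}^2$ is $\Hom$-finite, via Amiot's lemma in Subsection~\ref{subsect.amiot}. Thus your ``second step'' (checking $\widetilde{\Lambda}$ is finite-dimensional) is not merely bookkeeping; it is exactly the ingredient needed to complete the vosnex verification, since $\Lambda$ is $\tau_2$-finite if and only if $\widetilde{\Lambda}$ is finite-dimensional if and only if $\mathscr{C}_{\Lambda}^2$ is $\Hom$-finite. Once that is properly attributed, the rest of your argument — invoking parts (1), (2), (3) of Theorem~\ref{thm.intro_2} — goes through unchanged.
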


It should be noted that all results of Section~\ref{sect.selfinj} are contained in the corresponding results of Section~\ref{sect.non-selfinj}. However we give separate proofs for most of them, since the proofs in the setup of Section~\ref{sect.selfinj} are considerably shorter and more explicit.

In Appendix~\ref{appendix} we give a detailed explanation of how we use the universal property of $n$-Amiot cluster categories.

\subsection*{Acknowledgements}

We would like to thank Claire Amiot and Bernhard Keller for helpfully explaining to us how to apply their results.

\section{Background and notation} \label{section.background}

We refer to \cite{ARS,ASS,R} for general background on representation theory of algebras, to \cite{Aus_CohF,Au,AR_dualR_I} for the theory of functor categories and to \cite{handbook_tilting,Ha} for tilting theory and derived categories.

Throughout this paper we fix a base field $k$, which is not necessarily algebraically closed. All algebras and categories are assumed to be $k$-algebras and $k$-categories, respectively. Moreover, all algebras are assumed to be basic and connected.

We say a category $\mathscr{T}$ is $\Hom$-finite if for any $X, Y \in \Ob \mathscr{T}$ the $k$-vector space $\Hom_{\mathscr{T}}(X,Y)$ is finite dimensional.

\begin{notation}
We use the notation $\Hom_{\mathscr{T}}(X,Y)$ for the morphisms in $\mathscr{T}$ from $X$ to $Y$, if $X, Y \in \Ob \mathscr{T}$, and, by abuse of notation, also for morphisms in $\mod \mathscr{T}$ from $X$ to $Y$, if $X$ and $Y$ are $\mathscr{T}$-modules. Moreover, in some diagrams (when there is little space) we omit the $\Hom$ and just write $\leftsub{\mathscr{T}}{(X,Y)}$. For $\mathscr{T}$-modules $X$ and $Y$ we denote by $\underline{\Hom}_{\mathscr{T}}(X,Y)$ the space of morphisms in the stable module category.
\end{notation}

\begin{definition}[\cite{BonKap}]
Let $\mathscr{T}$ be a $\Hom$-finite triangulated category. A \emph{Serre functor} $\leftsub{\mathscr{T}}{\mathbb{S}}$ is an auto-equivalence of $\mathscr{T}$, such that there is an isomorphism
\[ \Hom_{\mathscr{T}}(X,Y) \iso D \Hom_{\mathscr{T}}(Y, \leftsub{\mathscr{T}}{\mathbb{S}} X) \]
natural in $X$ and $Y$. If a Serre functor exists it is unique up to unique natural isomorphism.
\end{definition}

\begin{notation}
Let $\mathscr{T}$ be a triangulated category with a Serre functor $\leftsub{\mathscr{T}}{\mathbb{S}}$. As in \cite{Iy_n-Auslander, IO} we set
\[ \leftsub{\mathscr{T}}{\mathbb{S}}_n := \leftsub{\mathscr{T}}{\mathbb{S}}[-n] \colon \mathscr{T} \to[30] \mathscr{T}, \]
where $[1]$ denotes the suspension in $\mathscr{T}$. Whenever there is no danger of confusion we omit the left index $\mathscr{T}$.
\end{notation}

\begin{definition}[\cite{Kontsevich}]
Let $\mathscr{T}$ be a $\Hom$-finite triangulated category. We say $\mathscr{T}$ is \emph{$n$-Calabi-Yau} if $[n]$ is a Serre functor on $\mathscr{T}$. Equivalently we could ask $\mathscr{T}$ to have a Serre functor $\mathbb{S}$, and $\mathbb{S}_n \iso \id$.
\end{definition}

The two most important types of triangulated categories for this paper are the following:
\begin{itemize}
\item The bounded derived category of $\mod \Lambda$, for a finite dimensional algebra $\Lambda$, is denoted by $\mathscr{D}_{\Lambda}$, or by $\mathscr{D}$ if there is no danger of confusion.
\item The $n$-Amiot cluster category $\mathscr{C}_{\Lambda}^n$ of a finite dimensional algebra $\Lambda$ with $\gld \Lambda \leqslant n$ (see Subsection~\ref{subsect.amiot} below, and \cite{CC, C_PhD}).
\end{itemize}

\begin{notation}[\cite{I2}] \label{def.tau_n}
Let $\Lambda$ be a finite dimensional algebra. We denote by
\begin{align*}
\tau_n & = \tau \Omega^{n-1} \colon \stabmod \Lambda \to[30] \costmod \Lambda \text{, and} \\
\tau_n^- & = \tau^- \Omega^{-(n-1)} \colon \costmod \Lambda \to[30] \stabmod \Lambda
\end{align*}
the \emph{$n$-Auslander-Reiten translation} and \emph{inverse $n$-Auslander-Reiten translation}, respectively. Note that if $\Lambda$ is an algebra with $\gld \Lambda \leqslant n$, then $\tau_n^{\pm} = \Ho^0(\leftsub{\mathscr{D}_{\Lambda}}{\mathbb{S}}_n^{\pm 1} -)$.

We call an algebra $\Lambda$ with $\gld \Lambda \leqslant n$ \emph{$\tau_n$-finite} if $\tau_n^{-i} \Lambda = 0$ for sufficiently large $i$, or, equivalently, if $\tau_n^i D\Lambda = 0$ for sufficiently large $i$. The equivalence of these two conditions can be seen as follows:
\[ \tau_n^{-i} \Lambda=0 \iff \Hom_{\mathscr{D}_{\Lambda}}(\Lambda, \mathbb{S}_n^{-i} \Lambda) = 0 \iff \Hom_{\mathscr{D}_{\Lambda}}(\mathbb{S}_n^i D \Lambda, D\Lambda) = 0 \iff \tau_n^i(D\Lambda) = 0. \]
\end{notation}

\subsection{\texorpdfstring{$n$}{n}-Amiot cluster categories} \label{subsect.amiot}

For the convenience of the reader we recall the construction and the most important properties of $n$-Amiot cluster categories.

It should be noted that while Amiot \cite{CC, C_PhD} formulates the results we recall in this subsection only in case $n=2$, they immediately generalize to arbitrary $n$.

\begin{construction}[Amiot -- see \cite{CC, C_PhD}] \label{const.n-Amiot}
Let $\Lambda$ be an algebra of finite global dimension, such that $\Lambda / \Rad \Lambda$ is separable over $k$. We set $\Gamma$ to be the DG algebra $\Lambda \oplus D\Lambda[-n-1]$. Projection $\Gamma \to \Lambda$ yields a restriction functor $\mathscr{D}_{\Lambda} \to \mathscr{D}_{\Gamma}$. Now the $n$-Amiot cluster category is defined to be the quotient category
\[ \mathscr{C}^n_{\Lambda} = \thick_{\mathscr{D}_{\Gamma}}(\Lambda) / \perf \Gamma, \]
where $\thick_{\mathscr{D}_{\Gamma}}(\Lambda)$ denotes the smallest thick subcategory of $\mathscr{D}_{\Gamma}$ containing (the image of) $\Lambda$, and $\perf \Gamma = \thick_{\mathscr{D}_{\Gamma}}(\Gamma)$ denotes the perfect complexes over $\Gamma$.
\end{construction}

\begin{remark}
Whenever we use the $n$-Amiot cluster category of an algebra $\Lambda$, we assume $\Lambda / \Rad \Lambda$ to be separable over the base field $k$ (see \cite{CR,Waer}) -- that is $\Lambda / \Rad \Lambda$ is a product of matrix rings over skew fields, such that the centers of these skew fields are separable field extensions of $k$. See the assumptions at the beginning of Section~2 in \cite{CC} on why we need this.
\end{remark}

\begin{lemma}[Amiot -- see \cite{CC, C_PhD}]
The restriction functor induces a functor $\pi\colon \mathscr{D}_{\Lambda} \to \mathscr{C}^n_{\Lambda}$. This functor commutes with the Serre functors of the two categories.
\end{lemma}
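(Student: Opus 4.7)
The first claim is essentially formal: since $\gld\Lambda<\infty$ forces $\mathscr{D}_\Lambda=\mathscr{D}^{\rm b}(\mod\Lambda)=\thick_{\mathscr{D}_\Lambda}(\Lambda)$, and the restriction functor $\res\colon\mathscr{D}_\Lambda\to\mathscr{D}_\Gamma$ along the DG-algebra projection $\Gamma\twoheadrightarrow\Lambda$ is triangulated and sends $\Lambda\mapsto\Lambda\in\thick_{\mathscr{D}_\Gamma}(\Lambda)$, its essential image lies in $\thick_{\mathscr{D}_\Gamma}(\Lambda)$. The functor $\pi$ is then simply the composition of $\res$ with the Verdier quotient $\thick_{\mathscr{D}_\Gamma}(\Lambda)\to\mathscr{C}_\Lambda^n$.

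For the commutation with Serre functors, recall that the Serre functor on $\mathscr{D}_\Lambda$ is the Nakayama functor $-\otimes^L_\Lambda D\Lambda$, while $\mathscr{C}_\Lambda^n$ is $n$-Calabi-Yau by Amiot's theorem, so its Serre functor is the shift $[n]$. The task thus reduces to constructing a natural isomorphism $\pi(X\otimes^L_\Lambda D\Lambda)\cong\pi(X)[n]$ for $X\in\mathscr{D}_\Lambda$. My key input will be the short exact sequence of DG $\Gamma$-bimodules
\[
0\to D\Lambda[-n-1]\to\Gamma\to\Lambda\to 0,
\]
which encodes the trivial-extension structure $\Gamma=\Lambda\oplus D\Lambda[-n-1]$, with $D\Lambda[-n-1]$ a square-zero ideal. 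Restricting the left action along the subalgebra inclusion $\Lambda\hookrightarrow\Gamma$ and applying $X\otimes^L_\Lambda-$ produces a distinguished triangle in $\mathscr{D}_\Gamma$:
\[
\res(X\otimes^L_\Lambda D\Lambda)[-n-1]\to X\otimes^L_\Lambda\Gamma\to\res(X)\to\res(X\otimes^L_\Lambda D\Lambda)[-n].
\]

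Since $\gld\Lambda<\infty$ forces $X\in\perf\Lambda$ and induction along $\Lambda\to\Gamma$ preserves perfect complexes, the middle term $X\otimes^L_\Lambda\Gamma$ lies in $\perf\Gamma$ and hence vanishes in $\mathscr{C}_\Lambda^n$. The outer terms therefore become isomorphic in $\mathscr{C}_\Lambda^n$, which yields $\pi(X)\cong\pi(X\otimes^L_\Lambda D\Lambda)[-n]$, i.e.\ the desired $[n]\circ\pi\cong\pi\circ(-\otimes^L_\Lambda D\Lambda)$. Naturality in $X$ follows from the functoriality of the tensor construction applied to a fixed triangle of bimodules.

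The main delicate point is making rigorous the DG bimodule structures underlying the displayed short exact sequence and the derived tensor functor $X\otimes^L_\Lambda-$ into $\mathscr{D}_\Gamma$, so that the displayed triangle, the perfection of its middle term, and its naturality in $X$ all stand on firm footing. This is essentially part of the derived Morita framework for Amiot's trivial DG extension; once these foundations are in place, the commutation of $\pi$ with Serre functors is formal.
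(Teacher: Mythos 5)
Since the paper cites this lemma to Amiot without supplying a proof, there is no paper proof to compare against; your job was to reconstruct Amiot's argument, and you have done so correctly. The central steps are all right: the short exact sequence $0\to D\Lambda[-n-1]\to\Gamma\to\Lambda\to 0$ of DG $\Gamma$-bimodules, the triangle produced by restricting on the left along $\Lambda\hookrightarrow\Gamma$ and applying $X\otimes^L_\Lambda-$, the observation that the middle term $X\otimes^L_\Lambda\Gamma$ is perfect (using $\gld\Lambda<\infty$ to put $X\in\perf\Lambda$), and the conclusion in the Verdier quotient. You also correctly note that everything in sight lands in $\thick_{\mathscr{D}_\Gamma}(\Lambda)$, so the computation makes sense in $\mathscr{C}^n_\Lambda$.

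Two remarks on exposition. First, you open by invoking ``$\mathscr{C}^n_\Lambda$ is $n$-Calabi-Yau by Amiot's theorem, so its Serre functor is $[n]$''; but in the paper's logical ordering that Calabi-Yau property (Theorem~\ref{theorem.clairemain1}) comes \emph{after} this lemma, and in any case it is only available under $\tau_n$-finiteness. The substantive content of the lemma, and what your computation actually establishes, is the natural isomorphism $\pi\circ(-\otimes^L_\Lambda D\Lambda)\cong[n]\circ\pi$, which holds without presupposing the Calabi-Yau property; that isomorphism \emph{is} the statement ``$\pi$ commutes with Serre functors'' once one knows $\mathscr{C}^n_\Lambda$ has a Serre functor equal to $[n]$. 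It would be cleaner to state the target isomorphism directly rather than deducing it from a later theorem. Second, you acknowledge that the DG-bimodule bookkeeping is left informal; this is honest, and the level of detail matches the literature, but a fully self-contained treatment would verify that the right $\Gamma$-action on $D\Lambda[-n-1]$ factors through $\Gamma\to\Lambda$ (because $D\Lambda[-n-1]$ is a square-zero ideal), which is exactly what identifies $X\otimes^L_\Lambda D\Lambda[-n-1]$ with $\res(X\otimes^L_\Lambda D\Lambda)[-n-1]$ as a DG $\Gamma$-module.
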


\begin{lemma}[Amiot -- see \cite{CC, C_PhD}]
Let $\Lambda$ be a finite dimensional algebra with $\gld \Lambda \leqslant n$. The $n$-Amiot cluster category $\mathscr{C}_{\Lambda}^n$ is $\Hom$-finite if and only if $\Lambda$ is $\tau_n$-finite.
\end{lemma}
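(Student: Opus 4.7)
My plan is to adapt Amiot's argument for $n=2$ to general $n$, by computing the graded $\Hom$-space $\bigoplus_{i\in\mathbb{Z}}\Hom_{\mathscr{C}_\Lambda^n}(\pi\Lambda, \pi\Lambda[i])$ explicitly and recognising the appearance of the $\tau_n^-$-iterates of $\Lambda$. Since $\Lambda$ generates $\thick_{\mathscr{D}_\Gamma}(\Lambda)$ by construction, $\pi\Lambda$ generates $\mathscr{C}_\Lambda^n$ as a triangulated category; hence $\mathscr{C}_\Lambda^n$ is $\Hom$-finite if and only if $\Hom_{\mathscr{C}_\Lambda^n}(\pi\Lambda, \pi\Lambda[i])$ is finite dimensional for every $i\in\mathbb{Z}$.

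The core step is to establish the formula
\[ \Hom_{\mathscr{C}_\Lambda^n}(\pi\Lambda, \pi\Lambda[i]) \iso \bigoplus_{p\geqslant 0}\Hom_{\mathscr{D}_\Lambda}(\Lambda, \mathbb{S}_n^{-p}\Lambda[i]). \]
I would prove this using the adjunction $\induct = -\otimes_\Lambda^{L}\Gamma \dashv \res$ between $\mathscr{D}_\Lambda$ and $\mathscr{D}_\Gamma$: since $\Gamma = \Lambda \oplus D\Lambda[-n-1]$ is a trivial DG extension and the extra summand $D\Lambda[-n-1]$ represents (up to the appropriate shift) the inverse Serre shift $\mathbb{S}_n^{-1}$ on $\mathscr{D}_\Lambda$, an iterated resolution of $\res\Lambda$ in $\mathscr{D}_\Gamma$ by induced (and hence perfect) objects has $p$-th layer $\induct(\mathbb{S}_n^{-p}\Lambda)$. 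Passing to the Verdier quotient by $\perf\Gamma$ kills these induced pieces, so the surviving contribution is exactly the direct sum of $\Hom$-spaces in $\mathscr{D}_\Lambda$ displayed on the right.

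Under the hypothesis $\gld\Lambda \leqslant n$ the $p$-th summand for $i=0$ is $\Ho^0(\mathbb{S}_n^{-p}\Lambda) = \tau_n^{-p}\Lambda$ (by Notation~\ref{def.tau_n}, extended inductively in $p$), and for any other fixed $i$ a cohomological-degree bound, using that $\mathbb{S}_n^{-1}$ acts in a controlled way on $\mathscr{D}_\Lambda^{\rm b}$ once $\gld\Lambda \leqslant n$, shows that only finitely many $p$ contribute. Hence every $\Hom_{\mathscr{C}_\Lambda^n}(\pi\Lambda, \pi\Lambda[i])$ is finite dimensional if and only if $\tau_n^{-p}\Lambda = 0$ for all but finitely many $p$, which is exactly $\tau_n$-finiteness.

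The main obstacle is making the Hom-space formula rigorous: morphisms in a Verdier quotient are a priori given by a calculus of fractions, not a clean direct sum, and the claimed identification depends crucially on $\Gamma$ being bimodule $(n+1)$-Calabi-Yau. Amiot exploits this Calabi-Yau property to equip $\perf\Gamma$ with a Serre functor, which lets one identify morphisms factoring through $\perf\Gamma$ with Serre duals of morphisms already in $\mathscr{D}_\Lambda$ and repackage the infinite induced resolution into the stated sum. I would follow her argument verbatim, remarking that nothing in the construction or its Hom-space computation is specific to $n=2$.
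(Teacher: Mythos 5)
The paper does not prove this lemma --- it is cited from Amiot's work --- so there is no in-paper argument to compare against. Your outline is the right strategy and essentially reproduces Amiot's, but the displayed Hom formula is incorrect as written: the sum must range over all $p\in\mathbb{Z}$ (equivalently, you must add the Serre-dual terms that you describe in prose but omit from the display). Concretely, take $n=2$ and $\Lambda$ hereditary. Then $\tau_2^{-}\Lambda = \Ext^2_\Lambda(D\Lambda,\Lambda)=0$, so $\Lambda$ is $\tau_2$-finite and $\widetilde\Lambda = \Lambda$; by the $2$-Calabi--Yau property of $\mathscr{C}^2_\Lambda$ together with Theorem~\ref{theorem.clairemain2} one has $\Hom_{\mathscr{C}^2_\Lambda}(\pi\Lambda,\pi\Lambda[2]) \iso D\End_{\mathscr{C}^2_\Lambda}(\pi\Lambda) = D\Lambda \neq 0$, whereas $\bigoplus_{p\geqslant 0}\Hom_{\mathscr{D}_\Lambda}(\Lambda,\mathbb{S}_2^{-p}\Lambda[2]) = \bigoplus_{p\geqslant 0}\Ho^2(\mathbb{S}_2^{-p}\Lambda) = 0$, since $\mathbb{S}_2^{-p}\Lambda\in\mathscr{D}^{\leqslant 0}$ for $p\geqslant 0$ by Lemma~\ref{lemma.nuorders}. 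The missing summand sits at $p=-1$: $\Hom_{\mathscr{D}_\Lambda}(\Lambda,\mathbb{S}_2\Lambda[2]) = \Hom_{\mathscr{D}_\Lambda}(\Lambda,D\Lambda) = D\Lambda$. For the present lemma the slip is recoverable --- at $i=0$ the $p<0$ terms vanish, so both sides give $\widetilde\Lambda$, and the ``only if'' direction only needs the $p\geqslant 0$ tail as a subspace --- but the ``if'' direction must handle both tails.

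The other place you are glossing over a genuine step is the phrase ``a cohomological-degree bound \dots shows that only finitely many $p$ contribute.'' This does not follow from $\gld\Lambda\leqslant n$ alone: for fixed $i<0$, the groups $\Ho^i(\mathbb{S}_n^{-p}\Lambda)$ could a priori be nonzero for infinitely many $p$, because $\mathbb{S}_n^{-p}\Lambda$ spreads over more and more negative degrees as $p$ grows. What one actually needs is the $\tau_n$-finiteness itself: if $\tau_n^{-N}\Lambda = 0$ then $\mathbb{S}_n^{-N}\Lambda\in\mathscr{D}^{\leqslant -1}$, hence $\mathbb{S}_n^{-N}(\proj\Lambda)\subseteq\mathscr{D}^{\leqslant -1}$, and then (using bounded projective resolutions, since $\gld\Lambda<\infty$) $\mathbb{S}_n^{-N}$ carries bounded objects of $\mathscr{D}^{\leqslant 0}$ into $\mathscr{D}^{\leqslant -1}$; iterating gives $\mathbb{S}_n^{-kN}\Lambda\in\mathscr{D}^{\leqslant -k}$, so for each fixed $i$ only finitely many $p\geqslant 0$ contribute, and dually for $p\leqslant 0$. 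Spelling this out is exactly what makes $\tau_n$-finiteness, as opposed to the global-dimension hypothesis alone, the correct condition.
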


The $n$-Amiot cluster category is the ``algebraic $n$-Calabi-Yau'' version of the derived category, as indicated by the following theorem.

\begin{theorem}[[Amiot -- see \cite{CC, C_PhD}] \label{theorem.clairemain1}
Let $\Lambda$ be an algebra with $\gld \Lambda \leqslant n$, which is $\tau_n$-finite. Then the $n$-Amiot cluster category is $n$-Calabi-Yau, and satisfies a universal property. (See Appendix~\ref{appendix}, and in particular Theorem~\ref{thm.universal_stable} for details on the universal property.)
\end{theorem}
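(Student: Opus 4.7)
The plan is to follow Amiot's argument for $n=2$, which adapts essentially verbatim to general $n$. The key input is that the DG algebra $\Gamma = \Lambda \oplus D\Lambda[-n-1]$ is bimodule $(n+1)$-Calabi-Yau, and Keller's general principle that the quotient of the thick subcategory generated by $\Lambda$ by $\perf \Gamma$ inherits an $n$-Calabi-Yau structure.

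First I would establish the bimodule Calabi-Yau property of $\Gamma$. Consider the short exact sequence of DG $\Gamma$-bimodules
\[ 0 \to[30] D\Lambda[-n-1] \to[30] \Gamma \to[30] \Lambda \to[30] 0. \]
Since $\gld \Lambda \leqslant n$, the bimodule $\Lambda$ admits a finite projective bimodule resolution over $\Lambda^e$, hence over $\Gamma^e$. One computes $\RHom_{\Gamma^e}(\Lambda, \Gamma^e)$ and $\RHom_{\Gamma^e}(D\Lambda[-n-1], \Gamma^e)$, and pieces these together using the triangle above; the shift $[-n-1]$ is arranged so that the outcome yields a bimodule quasi-isomorphism $\RHom_{\Gamma^e}(\Gamma, \Gamma \otimes^{\rm L} \Gamma) \simeq \Gamma[-n-1]$.

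Next, I would import Keller's general result: whenever a DG algebra $\Gamma$ is bimodule $(n+1)$-Calabi-Yau, the quotient $\thick_{\mathscr{D}_\Gamma}(\Lambda) / \perf \Gamma$ carries a Serre functor equal to $[n]$, because dualizing against $\perf \Gamma$ kills one shift and the bimodule CY structure accounts for the remaining $n$. This gives the $n$-Calabi-Yau property of $\mathscr{C}^n_\Lambda$.

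Then I would address $\Hom$-finiteness. Morphisms in $\mathscr{C}^n_\Lambda$ between images of objects of $\mathscr{D}_\Lambda$ can be expressed as a colimit
\[ \Hom_{\mathscr{C}^n_\Lambda}(\pi\Lambda, \pi\Lambda[i]) \;\iso\; \varinjlim_{j \geqslant 0} \Hom_{\mathscr{D}_\Lambda}(\Lambda, \mathbb{S}_n^{-j}\Lambda[i]), \]
where the transition maps are induced by the Serre functor $\mathbb{S}_n$. The $\tau_n$-finiteness hypothesis $\tau_n^{-j}\Lambda = 0$ for large $j$ forces the sequence to stabilize, giving a finite dimensional answer, while conversely a non-terminating tower yields an infinite dimensional space. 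This identifies $\Hom$-finiteness with $\tau_n$-finiteness.

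Finally, for the universal property one checks that $\pi(\Lambda) \in \mathscr{C}^n_\Lambda$ is a basic $n$-cluster tilting object with endomorphism algebra $\Lambda$, so that $\mathscr{C}^n_\Lambda$ is characterized among $\Hom$-finite algebraic $n$-Calabi-Yau triangulated categories by the data $(\Lambda, n)$ via a recognition theorem (to be invoked from the appendix). The main obstacle is Step~1: the explicit bimodule resolution calculation verifying the $(n+1)$-Calabi-Yau property of $\Gamma$, since one must work coherently in the derived category of DG bimodules and track the shift $[-n-1]$ through the Serre duality computation. Everything downstream is a formal consequence.
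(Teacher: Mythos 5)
The paper does not prove Theorem~\ref{theorem.clairemain1} at all; it is stated with attribution to Amiot and the proof is deferred to \cite{CC,C_PhD}, so there is no internal argument to compare against. What can be assessed is whether your sketch is a plausible reconstruction of Amiot's proof, and there the central step is off.

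The DG algebra $\Gamma = \Lambda \oplus D\Lambda[-n-1]$ appearing in Construction~\ref{const.n-Amiot} is \emph{not} bimodule $(n+1)$-Calabi-Yau; it is not even homologically smooth. Bimodule Calabi-Yau in the Ginzburg sense presupposes that $\Gamma$ is perfect over $\Gamma^e$, and already for $\Lambda = k$ one sees that $\Gamma = k[x]/(x^2)$ (with $x$ placed in degree $n+1$) has an infinite periodic bimodule resolution, so $\Gamma \notin \perf \Gamma^e$. Relatedly, your auxiliary claim that a finite projective $\Lambda^e$-resolution of $\Lambda$ restricts to a finite projective $\Gamma^e$-resolution is false: restriction along $\Gamma^e \to \Lambda^e$ does not send projectives to projectives, and indeed $\Lambda$ has infinite projective dimension over $\Gamma^e$, which is precisely why $\thick_{\mathscr{D}_\Gamma}(\Lambda)/\perf\Gamma$ is a nontrivial quotient. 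The DG algebra which \emph{is} bimodule $(n+1)$-CY is Keller's derived $(n+1)$-preprojective algebra $\mathbf{\Pi}$ (\cite{Kel_DefCY}), a different object from $\Gamma$; Amiot's proof of the Calabi-Yau property passes through the alternative presentation $\mathscr{C}^n_\Lambda \approx \perf \mathbf{\Pi} / \mathscr{D}^{\rm b}_{\rm fd}(\mathbf{\Pi})$, and the Serre duality argument is carried out there. The correct duality satisfied by the trivial extension $\Gamma$ itself is the Frobenius-type isomorphism $D\Gamma \iso \Gamma[n+1]$ of $\Gamma$-bimodules (immediate from $D(\Lambda \oplus D\Lambda[-n-1]) = D\Lambda \oplus \Lambda[n+1]$); that is the property one should feed into Keller's machinery if one wishes to work directly with $\Gamma$, but it is a different statement from bimodule CY, and your proposed bimodule-resolution computation for $\Gamma$ would not go through.

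Your treatment of $\Hom$-finiteness via the colimit $\varinjlim_j \Hom_{\mathscr{D}_\Lambda}(\Lambda, \mathbb{S}_n^{-j}\Lambda[i])$ and the stabilisation forced by $\tau_n^{-j}\Lambda = 0$ is essentially correct in spirit (compare the orbit-category formula in Definition~\ref{def.orbitDG}). However, your description of the universal property as a ``recognition theorem'' characterising $\mathscr{C}^n_\Lambda$ among Calabi-Yau categories does not match what the paper uses: the universal property invoked here (Theorem~\ref{thm.universal_stable}, derived from Keller's \cite[Theorem~4]{K_orbit}) is a \emph{factorisation} statement -- a triangle functor $\mathscr{D}_\Lambda \to \mathscr{T}$ equipped with suitable effaçabilité data factors through $\mathscr{C}^n_\Lambda$ -- not a uniqueness-up-to-equivalence characterisation.
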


\subsection{\texorpdfstring{$(n+1)$}{(n+1)}-preprojective algebras}

\begin{definition}
Let $\Lambda$ be an algebra of global dimension at most $n$. Then the \emph{$(n+1)$-preprojective algebra} of $\Lambda$ is
\[ \widetilde{\Lambda} := T_{\Lambda} \Ext_{\Lambda}^n(D \Lambda, \Lambda), \]
that is the tensor algebra of the $\Lambda$-$\Lambda$-bimodule $\Ext_{\Lambda}^n(D \Lambda, \Lambda)$ over $\Lambda$.
\end{definition}

\begin{remark}
In \cite{Kel_DefCY} Keller introduced the notion of derived $(n+1)$-preprojective algebras. The $(n+1)$-preprojective algebras are the $0$-th homology of his derived $(n+1)$-preprojective algebras.
\end{remark}

One basic property of $(n+1)$-preprojective algebras is the following.

\begin{lemma}
Let $\Lambda$ be an algebra of global dimension at most $n$. Then
\[ \widetilde{\Lambda} \iso \bigoplus_{i \geqslant 0} \tau_n^- \Lambda \]
as $\Lambda$-modules.

In particular $\Lambda$ is $\tau_n$-finite if and only if $\widetilde{\Lambda}$ is finite dimensional. In that case we also have
\[ D \widetilde{\Lambda} \iso \bigoplus_{i \geqslant 0} \tau_n D \Lambda. \]
\end{lemma}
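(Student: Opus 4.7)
The plan is to identify the functor $\tau_n^-$ on $\mod\Lambda$ with the bimodule-tensor functor $-\otimes_\Lambda M$, where $M := \Ext^n_\Lambda(D\Lambda,\Lambda)$. Once this identification is in hand, the description of $\widetilde\Lambda$ follows formally from its definition as the tensor algebra of $M$ over $\Lambda$.

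The crucial step is therefore to prove that $\tau_n^- N \iso N \otimes_\Lambda M$ for every $N \in \mod\Lambda$. I would argue this inside $\mathscr{D}_\Lambda$. Since $\gld\Lambda \leqslant n$, the object $D\Lambda$ is perfect, so the derived tensor-hom adjunction yields a natural isomorphism
\[ \RHom_\Lambda(D\Lambda, N) \iso N \otimes_\Lambda^L Y, \qquad Y := \RHom_\Lambda(D\Lambda, \Lambda). \]
The cohomology $H^i(Y) = \Ext^i_\Lambda(D\Lambda, \Lambda)$ is zero for $i<0$ and, by $\pd D\Lambda \leqslant n$, for $i>n$; in particular $H^n(Y) = M$. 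Tensoring the smart truncation triangle
\[ \tau^{<n}Y \longrightarrow Y \longrightarrow M[-n] \longrightarrow (\tau^{<n}Y)[1] \]
with $N$ over $\Lambda$ yields a triangle whose first term has cohomology only in degrees $<n$, because $\tau^{<n}Y$ is represented by a complex concentrated in degrees $<n$ and $N$ sits in degree $0$. The resulting long exact sequence of cohomology then forces
\[ H^n(N\otimes_\Lambda^L Y) \iso H^n(N\otimes_\Lambda^L M[-n]) = N\otimes_\Lambda M. \]
Since $\tau_n^- N = H^0(\mathbb{S}_n^{-1}N) = H^0(N\otimes_\Lambda^L Y[n]) = H^n(N\otimes_\Lambda^L Y)$, the claim follows.

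An induction on $i$ now gives $\tau_n^{-i}\Lambda \iso M^{\otimes_\Lambda i}$ as right $\Lambda$-modules, so summing produces the asserted decomposition $\widetilde\Lambda = T_\Lambda M \iso \bigoplus_{i\geqslant 0} \tau_n^{-i}\Lambda$. Each $M^{\otimes i}$ is finite dimensional, so $\widetilde\Lambda$ is finite dimensional precisely when $\tau_n^{-i}\Lambda = 0$ for all but finitely many $i$, i.e.\ precisely when $\Lambda$ is $\tau_n$-finite. The dual identity $D\widetilde\Lambda \iso \bigoplus_{i\geqslant 0}\tau_n^i D\Lambda$ follows by applying the $k$-duality $D$ and invoking the standard intertwining $D\tau_n^- \iso \tau_n D$ (which itself comes from $D\tau^- \iso \tau D$ together with $D\Omega \iso \Omega^{-1}D$ on stable categories).

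I expect the main obstacle to be the derived-category identification in the second paragraph — correctly tracking the left and right $\Lambda$-actions through the adjunction $\RHom_\Lambda(D\Lambda,N) \iso N\otimes_\Lambda^L Y$, and then executing the top-degree cohomology computation via the truncation triangle. Once the key identity $\tau_n^- N \iso N\otimes_\Lambda M$ is established, everything else is bookkeeping.
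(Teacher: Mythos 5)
Your argument is correct and lands on the same key identity $\tau_n^- \cong - \otimes_\Lambda M$ with $M=\Ext^n_\Lambda(D\Lambda,\Lambda)$, but you reach it by a different route than the paper. The paper's proof is a one-line chain of functor identities carried out entirely inside $\mod\Lambda$: it unwinds $\tau_n^- = \tau^-\Omega^{-(n-1)}$, rewrites this as $\Ext^1_\Lambda(D\Lambda,\Omega^{-(n-1)}-)$ (using that $\tau^- = \Ext^1_\Lambda(D\Lambda,-)$ on modules of injective dimension at most $1$, which $\Omega^{-(n-1)}$ of anything has once $\gld\Lambda\leqslant n$), performs a cosyzygy dimension shift to obtain $\Ext^n_\Lambda(D\Lambda,-)$, and finally uses that this top $\Ext$ is right exact, hence equal to $\Ext^n_\Lambda(D\Lambda,\Lambda)\otimes_\Lambda-$. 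You instead lift everything to $\mathscr{D}_\Lambda$: you take as input the characterization $\tau_n^- = \Ho^0(\mathbb{S}_n^{-1}-)$ from Notation~\ref{def.tau_n}, identify $\mathbb{S}_n^{-1}$ with $-\otimes^L_\Lambda \RHom_\Lambda(D\Lambda,\Lambda)[n]$ via the tensor--hom adjunction for the perfect complex $D\Lambda$, and then extract the top cohomology with a $\trunc^{<n}$-truncation triangle, which is precisely the derived restatement of right exactness in degree $n$. Both rest on the same homological facts; the paper's route is shorter and stays at the abelian level, while yours avoids the module-level formula $\tau^-=\Ext^1_\Lambda(D\Lambda,-)$ and the cosyzygy shift at the cost of some derived-category bookkeeping. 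The iteration $\tau_n^{-i}\Lambda \cong M^{\otimes_\Lambda i}$ and the dual statement via $D\tau_n^- \cong \tau_n D$ are handled as in the paper (which simply writes ``the remaining claims follow'').
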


\begin{proof}
Since $\gld \Lambda \leqslant n$ we have
\[ \tau_n^- = \tau^- \Omega^{-(n-1)} = \Ext_{\Lambda}^1( D\Lambda, \Omega^{-(n-1)} -) = \Ext_{\Lambda}^n( D\Lambda, -) = \Ext_{\Lambda}^n( D\Lambda, \Lambda) \otimes_{\Lambda} - .\]
This implies the first isomorphism. The remaining claims follow.
\end{proof}

\subsection{\texorpdfstring{$n$}{n}-cluster tilting subcategories}

\begin{definition}[\cite{IyYo}] \label{def.n-cto}
Let $\mathscr{T}$ be a triangulated category. A full subcategory $\mathscr{U} \subseteq \mathscr{T}$ is called \emph{$n$-cluster tilting subcategory}, if it is functorially finite and
\begin{align*}
\mathscr{U} & = \{X \in \mathscr{T} \mid \Hom_{\mathscr{T}}(\mathscr{U}, X[i]) = 0 \, \forall i \in \{1, \ldots, n-1\} \} \\
& = \{X \in \mathscr{T} \mid \Hom_{\mathscr{T}}(X[-i], \mathscr{U}) = 0 \, \forall i \in \{1, \ldots, n-1\} \}.
\end{align*}
An object $X$ in $\mathscr{T}$ is called \emph{$n$-cluster tilting object}, if $\add X$ is an $n$-cluster tilting subcategory.

Similarly one defines $n$-cluster tilting subcategories and objects in abelian categories.
\end{definition}

We have the following basic property.

\begin{proposition}[{\cite[Corollary~3.3]{IyYo}}] \label{prop.ct-resolution}
Let $\mathscr{T}$ be a triangulated category, and let $\mathscr{U}$ be an $n$-cluster tilting subcategory of $\mathscr{T}$. Then for any $X_0 \in \mathscr{T}$, there exist triangles
\[ \begin{tikzpicture}[xscale=1.3,yscale=1.5]
 \node (X0) at (0,0) {$U_{n-1}$};
 \node (X1) at (2,0) {$X_{n-2}$};
 \node (X2) at (4,0) {$X_{n-3}$};
 \node (X8) at (7,0) {$X_1$};
 \node (X9) at (9,0) {$X_0$};
 \node (U0) at (1,1) {$U_{n-2}$};
 \node (U1) at (3,1) {$U_{n-3}$};
 \node (U7) at (6,1) {$U_1$};
 \node (U8) at (8,1) {$U_0$};
 \node at (5.5,0) {$\cdots$};
 \node at (4.5,1) {$\cdots$};
 \draw [->] (U0) -- (U1);
 \draw [->] (U7) -- (U8);
 \draw [->] (X0) -- (U0);
 \draw [->] (X1) -- (U1);
 \draw [->] (X8) -- (U8);
 \draw [->] (U0) -- (X1);
 \draw [->] (U1) -- (X2);
 \draw [->] (U7) -- (X8);
 \draw [->] (U8) -- (X9);
 \draw [->] (X1) -- node [pos=.15] {\tikz{\draw[-] (0,.15) -- (0,-.15);}} (X0);
 \draw [->] (X2) -- node [pos=.15] {\tikz{\draw[-] (0,.15) -- (0,-.15);}} (X1);
 \draw [->] (X9) -- node [pos=.15] {\tikz{\draw[-] (0,.15) -- (0,-.15);}} (X8);
\end{tikzpicture} \]
with $U_i \in \mathscr{U}$.
\end{proposition}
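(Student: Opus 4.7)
The plan is to construct the sequence of triangles iteratively, by repeatedly taking right $\mathscr{U}$-approximations, and then to verify that the last term $X_{n-1}$ already lies in $\mathscr{U}$ by checking the defining vanishing condition from Definition~\ref{def.n-cto}.

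First, I would set up the iteration. Starting with the given $X_0$, the functorial finiteness of $\mathscr{U}$ provides a right $\mathscr{U}$-approximation $U_0 \to X_0$. Extending it to a triangle
\[ X_1 \to[20] U_0 \to[20] X_0 \to[20] X_1[1] \]
produces the first triangle of the asserted diagram. Iterating this, suppose $X_i$ has been constructed for some $i \geqslant 0$; take a right $\mathscr{U}$-approximation $U_i \to X_i$ and extend it to a triangle $X_{i+1} \to U_i \to X_i \to X_{i+1}[1]$. Perform this $n-1$ times to obtain $X_1, \ldots, X_{n-1}$ and the corresponding $U_0, \ldots, U_{n-2}$.

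The key step is to show $X_{n-1} \in \mathscr{U}$, i.e.\ that $\Hom_\mathscr{T}(\mathscr{U}, X_{n-1}[j]) = 0$ for every $j \in \{1, \ldots, n-1\}$; then we may take $U_{n-1} := X_{n-1}$. I would prove by induction on $i$ the statement
\[ \Hom_\mathscr{T}(\mathscr{U}, X_i[j]) = 0 \text{ for all } j \in \{1, \ldots, i\}. \]
Applying $\Hom_\mathscr{T}(\mathscr{U}, -)$ to the triangle $X_{i+1} \to U_i \to X_i \to X_{i+1}[1]$ gives the long exact sequence
\[ \Hom_\mathscr{T}(\mathscr{U}, U_i[j]) \to[20] \Hom_\mathscr{T}(\mathscr{U}, X_i[j]) \to[20] \Hom_\mathscr{T}(\mathscr{U}, X_{i+1}[j+1]) \to[20] \Hom_\mathscr{T}(\mathscr{U}, U_i[j+1]). \]
Two facts drive the argument: the approximation property forces the map on the left to be surjective when $j=0$, while the defining vanishing for the $n$-cluster tilting subcategory $\mathscr{U}$ gives $\Hom_\mathscr{T}(\mathscr{U}, U_i[k]) = 0$ for $k \in \{1, \ldots, n-1\}$. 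Combining these, the case $j=0$ yields $\Hom_\mathscr{T}(\mathscr{U}, X_{i+1}[1]) = 0$, while for $j \in \{1, \ldots, n-2\}$ the outer two terms vanish and the connecting map becomes an isomorphism $\Hom_\mathscr{T}(\mathscr{U}, X_i[j]) \iso \Hom_\mathscr{T}(\mathscr{U}, X_{i+1}[j+1])$. This provides precisely the inductive step, and applying the induction up to $i = n-1$ gives $\Hom_\mathscr{T}(\mathscr{U}, X_{n-1}[j]) = 0$ for all $j \in \{1,\ldots,n-1\}$, so $X_{n-1} \in \mathscr{U}$ by the first characterization in Definition~\ref{def.n-cto}.

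The only real obstacle is the bookkeeping: one must confirm that the shift indices stay in the allowed range $\{1, \ldots, n-1\}$ throughout the induction, which is precisely why exactly $n-1$ triangles suffice (and no more or fewer). No deeper input beyond the approximation property and the vanishing defining $\mathscr{U}$ is needed.
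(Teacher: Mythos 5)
Your proof is correct, and it follows the standard argument: iterate right $\mathscr{U}$-approximation triangles, then use the long exact sequence together with the approximation surjectivity (at $j=0$) and the $n$-rigidity of $\mathscr{U}$ (at $j \in \{1,\ldots,n-2\}$) to show inductively that $\Hom_{\mathscr{T}}(\mathscr{U}, X_i[j]) = 0$ for $j \in \{1,\ldots,i\}$, so that $X_{n-1}$ satisfies the defining vanishing and hence lies in $\mathscr{U}$. The paper itself gives no proof of this proposition but simply cites Corollary~3.3 of Iyama--Yoshino, and your argument is essentially the one used there, so there is nothing to contrast.
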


We will make use of the following examples of $n$-cluster tilting subcategories.

\begin{theorem}[{\cite[Theorem~1.22]{Iy_n-Auslander}}] \label{theorem.ct_in_Db}
Let $\Lambda$ be an algebra of global dimension at most $n$, which is $\tau_n$-finite. Then
\[ \mathscr{U}\psL = \add \{ \mathbb{S}_n^i \Lambda \mid i \in \mathbb{Z} \} \subseteq \mathscr{D}_{\Lambda} \]
is an $n$-cluster tilting subcategory of the derived category.
\end{theorem}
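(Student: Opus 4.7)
The plan is to reduce the derived-category statement to a module-level result: under the present hypotheses, $\widetilde\Lambda = \bigoplus_{i \ge 0} \tau_n^{-i}\Lambda$ is a finite-dimensional $n$-cluster tilting object in $\mod\Lambda$ (a theorem from higher Auslander-Reiten theory which holds precisely for $\tau_n$-finite algebras of global dimension at most $n$, and is the subject of the cited paper \cite{Iy_n-Auslander}). The derived version will then follow by unfolding this module-level cluster tilting subcategory along the action of the Serre functor $\mathbb{S}_n$.

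First I would identify the generators of $\mathscr{U}$ explicitly. Using $\tau_n^- N \iso H^0(\mathbb{S}_n^{-1}N)$ and induction on $i \ge 0$, I would show that $\mathbb{S}_n^{-i}\Lambda \iso \tau_n^{-i}\Lambda$ as objects of $\mathscr{D}_\Lambda$, i.e.\ as stalk complexes concentrated in degree $0$. The inductive step rests on the vanishing $\Ext^k_\Lambda(D\Lambda, \tau_n^{-(i-1)}\Lambda) = 0$ for $1 \le k \le n-1$, which holds because both $D\Lambda$ and $\tau_n^{-(i-1)}\Lambda$ are summands of the $n$-cluster tilting module $\widetilde\Lambda$: $D\Lambda$ appears as a summand because, under $\tau_n$-finiteness, $\tau_n^-$-orbits terminate at injectives. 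The dual statement $\mathbb{S}_n^i D\Lambda \iso \tau_n^i D\Lambda$ for $i \ge 0$ is proved analogously. Consequently
\[ \mathscr{U} = \add\{\mathbb{S}_n^i \widetilde\Lambda \mid i \in \mathbb{Z}\}, \]
which makes the $\mathbb{S}_n$-invariance of $\mathscr{U}$ manifest and exhibits it as the ``$\mathbb{S}_n$-orbit closure'' of the module-level cluster tilting subcategory $\add\widetilde\Lambda$.

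Next I would verify the defining properties of an $n$-cluster tilting subcategory. Functorial finiteness follows from finiteness of the $\mathbb{S}_n$-orbits of indecomposables in $\mathscr{U}$ together with module-level cluster tilting approximations applied to each cohomology, glued by the standard truncation argument in $\mathscr{D}_\Lambda$. The Ext-vanishing $\Hom_{\mathscr{D}}(\mathbb{S}_n^i \widetilde\Lambda, \mathbb{S}_n^j \widetilde\Lambda[k]) = 0$ for $1 \le k \le n-1$ reduces, via the autoequivalence $\mathbb{S}_n$, to $\Ext^k_\Lambda(\widetilde\Lambda, \tau_n^{-(j-i)}\widetilde\Lambda) = 0$ when $j \ge i$, which holds since $\add\widetilde\Lambda$ is $n$-cluster tilting in $\mod\Lambda$, and to the dual statement via Serre duality when $j < i$.

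The main obstacle will be the reconstruction step, namely showing that any $X \in \mathscr{D}_\Lambda$ with $\Hom_{\mathscr{D}}(\mathscr{U}, X[k]) = 0$ for $1 \le k \le n-1$ must lie in $\mathscr{U}$. The strategy is induction on the cohomological amplitude of $X$, using the truncation triangle $\tau_{\le m-1} X \to X \to \tau_{\ge m} X$ to peel off the top cohomology: the $\mathbb{S}_n$-shift reduces to the case where the top cohomology is a module $N$, the hypothesis then gives $\Ext^k_\Lambda(\widetilde\Lambda, N) = 0$ for $1 \le k \le n-1$, and the module-level cluster tilting property forces $N \in \add\widetilde\Lambda$. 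The delicate point is verifying that each truncated piece inherits the orthogonality hypothesis for $\mathscr{U}$ and that the two summands recombine within $\mathscr{U}$; this is where the assumption $\gld \Lambda \le n$ is used critically to keep truncation-induced extensions within a single ``period'' of the $\mathbb{S}_n$-orbit structure, so that the inductive hypothesis can be applied to the pieces.
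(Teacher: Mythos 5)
The proposal rests on a false premise: you claim that under $\tau_n$-finiteness and $\gld \Lambda \le n$, the module $\widetilde\Lambda$ is an $n$-cluster tilting object in $\mod \Lambda$. This is not true in that generality. In the paper's own terminology, $\widetilde\Lambda$ being an $n$-cluster tilting object in $\mod\Lambda$ is (by Theorem~\ref{theorem.tilde_is_ctmod}) precisely the condition of $n$-representation-finiteness, and Theorem~\ref{theorem.nrepfinderived} characterizes this as equivalent to $D\Lambda \in \mathscr{U}$ (equivalently $\mathbb{S}\mathscr{U} = \mathscr{U}$) --- a strictly stronger condition than $\tau_n$-finiteness. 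Indeed, the entire Section~\ref{sect.non-selfinj} of the paper is devoted to the broader class of $\tau_n$-finite algebras that are not $n$-representation-finite, where $\widetilde\Lambda$ is only a one-sided (right) $n$-cluster tilting object (Theorem~\ref{theorem.rightcto}), and even that needs the additional vosnex hypothesis. Thus your reduction only covers a special case of the theorem and is circular for that case, since you also invoke that ``$\tau_n^-$-orbits terminate at injectives,'' which is again equivalent to $n$-representation-finiteness rather than a consequence of $\tau_n$-finiteness.

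A second, related problem: the step asserting $\mathbb{S}_n^{-i}\Lambda \iso \tau_n^{-i}\Lambda$ as stalk complexes concentrated in degree $0$ is also not correct in general. Even under the stronger vosnex hypothesis, the paper only manages to show (in Proposition~\ref{prop.prepare_right_cto}) that $\trunc^{<0}(\mathbb{S}_n^{-\ell}\Lambda)$ lies in $\mathscr{D}^{\le 1-n}$, not that it vanishes; and in the $n$-representation-finite case the objects $\mathbb{S}_n^i I$ migrate through $\mathscr{D}^{\le -n}$, $\mod\Lambda$, and $\mathscr{D}^{\ge n}$ as $i$ varies (see the displayed location formulas in the proof of Theorem~\ref{theorem.nrepfinderived}). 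The identification $\mathbb{S}_n^{-i}\Lambda \iso \tau_n^{-i}\Lambda$ is only an isomorphism after applying $\Ho^0$, not in $\mathscr{D}_\Lambda$. Note finally that the present paper does not reprove this statement at all but cites \cite[Theorem~1.22]{Iy_n-Auslander}; the argument there works directly in $\mathscr{D}_\Lambda$ without assuming a module-level cluster tilting object exists, which is essential for the stated level of generality.
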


\begin{theorem}[Amiot -- see \cite{CC, C_PhD}] \label{theorem.clairemain2}
Let $\Lambda$ be an algebra of global dimension at most $n$, which is $\tau_n$-finite. Then $\pi \Lambda$ is an $n$-cluster tilting object in the $n$-Amiot cluster category $\mathscr{C}_{\Lambda}^n$. Moreover the endomorphism algebra $\End_{\mathscr{C}_{\Lambda}^n}(\pi \Lambda)$ of $\pi \Lambda$ is isomorphic to the $(n+1)$-preprojective algebra $\widetilde{\Lambda}$ of $\Lambda$.
\end{theorem}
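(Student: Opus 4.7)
The plan is to establish a Hom-formula in $\mathscr{C}_\Lambda^n$ and read off both assertions from it. Starting from the Verdier quotient description $\mathscr{C}_\Lambda^n = \thick_{\mathscr{D}_\Gamma}(\Lambda)/\perf\Gamma$ with $\Gamma = \Lambda \oplus D\Lambda[-n-1]$, I would first exploit the fact that $\pi \colon \mathscr{D}_\Lambda \to \mathscr{C}_\Lambda^n$ commutes with Serre functors while the Serre functor of $\mathscr{C}_\Lambda^n$ is $[n]$, giving $\pi \mathbb{S}_n \cong \pi$. Interpreting the fibre sequence $D\Lambda[-n-1] \to \Gamma \to \Lambda$ in $\mathscr{D}_\Gamma$ as providing, after tensoring with any $X \in \mathscr{D}_\Lambda$, a triangle that relates $X$ to $\mathbb{S}_n^{-1} X$ modulo $\perf\Gamma$, iteration should yield the identification
\[ \Hom_{\mathscr{C}_\Lambda^n}(\pi X, \pi Y) \;\cong\; \bigoplus_{j\geqslant 0}\Hom_{\mathscr{D}_\Lambda}(X, \mathbb{S}_n^{-j} Y) \qquad \text{for } X,Y \in \mathscr{D}_\Lambda, \]
where the sum is finite thanks to $\tau_n$-finiteness.

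Granting this formula, the endomorphism algebra computation is essentially automatic. Since $\gld \Lambda \leqslant n$, each $\mathbb{S}_n^{-j}\Lambda$ is concentrated in degree $0$ and equals $\tau_n^{-j}\Lambda$, so
\[ \End_{\mathscr{C}_\Lambda^n}(\pi\Lambda) \;\cong\; \bigoplus_{j\geqslant 0} \Hom_\Lambda(\Lambda, \tau_n^{-j}\Lambda) \;\cong\; \bigoplus_{j\geqslant 0} \tau_n^{-j}\Lambda \;\cong\; \widetilde\Lambda \]
by the $\Lambda$-module identification of $\widetilde\Lambda$ already recorded. To promote this $\Lambda$-module isomorphism to a ring isomorphism, I would check that the Yoneda composition in $\mathscr{C}_\Lambda^n$ matches the $\Ext_\Lambda^n(D\Lambda,\Lambda)$-action under the identification $\tau_n^- \cong \Ext_\Lambda^n(D\Lambda,\Lambda) \otimes_\Lambda -$, hence matches multiplication in the tensor algebra defining $\widetilde\Lambda$. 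Likewise, the vanishing part of the cluster tilting property follows by applying the Hom-formula to $(\Lambda,\Lambda[i])$ for $1 \leqslant i \leqslant n-1$: each summand reduces to $\Ext_\Lambda^i(\Lambda, \tau_n^{-j}\Lambda) = 0$ since $\Lambda$ is projective.

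The remaining step is the orthogonal characterization: if $X \in \mathscr{C}_\Lambda^n$ satisfies $\Hom_{\mathscr{C}_\Lambda^n}(\pi\Lambda, X[i]) = 0$ for $1 \leqslant i \leqslant n-1$, then $X \in \add \pi\Lambda$. Here I would invoke Theorem~\ref{theorem.ct_in_Db}: the subcategory $\mathscr{U} = \add\{\mathbb{S}_n^i \Lambda \mid i \in \mathbb{Z}\}$ is $n$-cluster tilting in $\mathscr{D}_\Lambda$, and since $\pi\mathbb{S}_n \cong \pi$ we have $\pi\mathscr{U} \subseteq \add \pi\Lambda$. Lifting $X$ to some $\widehat X$ in $\mathscr{D}_\Lambda$ (every object of $\mathscr{C}_\Lambda^n$ has such a lift by construction), Proposition~\ref{prop.ct-resolution} produces a tower of triangles approximating $\widehat X$ by objects of $\mathscr{U}$; pushing this tower through $\pi$, the Hom-formula together with the orthogonality assumption on $X$ forces the successive triangles to split in $\mathscr{C}_\Lambda^n$, placing $X$ in $\add \pi\Lambda$.

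The main obstacle is the rigorous verification of the Hom-formula. One needs to show that the natural map $\bigoplus_{j\geqslant 0}\Hom_{\mathscr{D}_\Lambda}(X, \mathbb{S}_n^{-j} Y) \to \Hom_{\mathscr{C}_\Lambda^n}(\pi X, \pi Y)$ is a bijection; this requires a careful analysis of roofs in the Verdier quotient and the DG structure of $\Gamma$, together with an argument (essentially $\tau_n$-finiteness together with $\gld \Lambda \leqslant n$) that the relevant terms in positive cohomological degree vanish, so that nothing is lost when reducing modulo $\perf \Gamma$. Once this formula is available, the compatibility with composition (needed to get a ring isomorphism, not merely a $\Lambda$-module one) and the orthogonality argument both become routine bookkeeping.
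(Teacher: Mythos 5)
The theorem is cited (to Amiot and Guo) rather than proved in this paper, so the comparison has to be against whether the proposal is a valid self-contained argument.

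The central claim you hang everything on, the Hom-formula
\[
\Hom_{\mathscr{C}_\Lambda^n}(\pi X,\pi Y)\;\cong\;\bigoplus_{j\geqslant 0}\Hom_{\mathscr{D}_\Lambda}(X,\mathbb{S}_n^{-j}Y)\qquad\text{for all }X,Y\in\mathscr{D}_\Lambda,
\]
is false, and one can see this on the very objects your argument needs. Take $X=\Lambda$ and $Y=\Lambda[n]$. Since $\mathscr{C}_\Lambda^n$ is $n$-Calabi--Yau, Serre duality gives
\[
\Hom_{\mathscr{C}_\Lambda^n}(\pi\Lambda,\pi\Lambda[n])\;=\;D\Hom_{\mathscr{C}_\Lambda^n}(\pi\Lambda,\pi\Lambda)\;\cong\;D\widetilde{\Lambda}\;\neq\;0.
\]
On the other hand, by Lemma~\ref{lemma.nuorders} we have $\mathbb{S}_n^{-j}\Lambda\in\mathscr{D}_\Lambda^{\leqslant 0}$ for every $j\geqslant 0$, hence $\Hom_{\mathscr{D}_\Lambda}(\Lambda,\mathbb{S}_n^{-j}\Lambda[n])=H^n(\mathbb{S}_n^{-j}\Lambda)=0$, so the right-hand side of your formula vanishes. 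Thus the formula cannot hold as stated; the problem is precisely that you sum only over $j\geqslant 0$. A formula with $j$ ranging over all of $\mathbb{Z}$ would, for these particular objects, pick up the contributions $H^n(\mathbb{S}_n^j\Lambda)=\tau_n^{j-1}D\Lambda$ for $j\geqslant 1$ and give the right answer, but even that version is not an isomorphism for general $X,Y$: the Amiot cluster category is only the triangulated hull of the orbit category $\mathscr{D}_\Lambda/\mathbb{S}_n$, and the canonical map from the orbit-category Hom-space to $\Hom_{\mathscr{C}_\Lambda^n}(\pi X,\pi Y)$ is in general neither injective nor surjective.

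What Amiot actually proves (and what is needed here) is local rather than global: she exhibits a fundamental domain $\mathscr{F}\subseteq\mathscr{D}_\Lambda$ (a strip of amplitude governed by the t-structure and the $n$-cluster tilting subcategory $\mathscr{U}\psL$) on which the restriction functor $\pi$ is fully faithful and which hits every object of $\mathscr{C}_\Lambda^n$ up to isomorphism. This is exactly the point where $\tau_n$-finiteness, $\gld\Lambda\leqslant n$, and the interaction of $\mathbb{S}_n$ with the standard t-structure enter; it is also what justifies your step ``lifting $X$ to some $\widehat X$ in $\mathscr{D}_\Lambda$'', which is not automatic from the construction. Without this fundamental-domain analysis, the endomorphism-algebra computation, the rigidity of $\pi\Lambda$, and the orthogonal characterisation all rest on an identity that fails. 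You flag the Hom-formula as ``the main obstacle''; it is more than an obstacle -- its statement needs to be replaced by the fully-faithfulness of $\pi$ on a fundamental domain, together with the exactness properties of $\pi$ that Amiot and Guo develop, before the rest of your bookkeeping can be carried out.
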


\begin{remark}
Note that $\mathscr{U}\psL$ is the preimage of $\add \pi \Lambda$ under the functor from the derived category to the $n$-Amiot cluster category as indicated in the following diagram.
\[ \begin{tikzpicture}[xscale=3,yscale=-1.5]
 \node (A) at (0,0) {$\mathscr{U}\psL$};
 \node (B) at (1,0) {$\add \pi(\Lambda)$};
 \node (C) at (0,1) {$\mathscr{D}_{\Lambda}$};
 \node (D) at (1,1) {$\mathscr{C}_{\Lambda}^n$};
 \draw [->>] (A) -- (B);
 \draw [->] (C) -- node [above] {$\pi$} (D);
 \draw [right hook->] (A) -- (C);
 \draw [right hook->] (B) -- (D);
\end{tikzpicture} \]
\end{remark}

\subsection{\texorpdfstring{$n$}{n}-representation-finiteness}

\begin{definition}
\begin{enumerate}
\item A finite dimensional algebra $\Lambda$ is called \emph{weakly $n$-representation-finite} if $\mod \Lambda$ contains an $n$-cluster tilting object.
\item It is called  \emph{$n$-representation-finite} if moreover $\gld \Lambda \leqslant n$.
\end{enumerate}
\end{definition}

Note that an algebra is weakly $1$-representation-finite if and only if it is representation-finite, and $1$-representation-finite if and only if it is hereditary and representation-finite.

For examples of $n$-representation-finite algebras see \cite{HerI,IO} and Section~\ref{subsect.tilted}.

We have the following basic property of weakly $n$-representation-finite algebras.

\begin{proposition} \label{prop.tauderived}
Let $\Lambda$ be a weakly $n$-representation-finite algebra with an $n$-cluster tilting object $M$ in $\mod \Lambda$.
\begin{enumerate}
\item $\tau_n$ induces a bijection from isomorphism classes of indecomposable non-projective modules in $\add M$ to isomorphism classes of indecomposable non-injective modules in $\add M$.
\item If moreover $\gld \Lambda \leqslant n$ (that is, $\Lambda$ is $n$-representation-finite), then for any $X \in \add M$ without non-zero projective summands we have $\tau_n X \iso \mathbb{S}_n X$.
\end{enumerate}
\end{proposition}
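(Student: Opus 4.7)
For part (1), the plan is to invoke Iyama's higher Auslander--Reiten theory for the functorially finite $n$-cluster tilting subcategory $\add M \subseteq \mod \Lambda$. For each indecomposable non-projective $X \in \add M$ one obtains an $n$-almost split sequence
\[ 0 \to Y \to C_{n-1} \to \cdots \to C_0 \to X \to 0 \]
with all $C_i$ and $Y$ in $\add M$, where $Y$ is indecomposable non-injective and uniquely determined by $X$ up to isomorphism. Splicing the sequence into short exact sequences and applying the classical Auslander formula $\tau = D\Tr$ to the $(n-1)$-st syzygy of $X$ identifies $Y$ with $\tau\Omega^{n-1}X = \tau_n X$. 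The symmetric construction, starting from an indecomposable non-injective object in $\add M$ and using $\tau_n^-$, provides the inverse bijection.

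For part (2), the plan is to use the identity $\tau_n = \Ho^0 \mathbb{S}_n$, which holds since $\gld \Lambda \leq n$, and upgrade it by showing that $\mathbb{S}_n X$ is already concentrated in cohomological degree $0$ in $\mathscr{D}_\Lambda$, so that the canonical map $\tau_n X = \Ho^0(\mathbb{S}_n X) \to \mathbb{S}_n X$ is an isomorphism. Applying $\nu$ to a minimal projective resolution of $X$ (of length $\leq n$) and shifting by $[-n]$ yields
\[ \Ho^j(\mathbb{S}_n X) \iso D\Ext^{n-j}_\Lambda(X, \Lambda) \qquad (0 \leq j \leq n). \]
For $1 \leq j \leq n-1$ the vanishing is immediate: since $X, \Lambda \in \add M$ and $M$ is $n$-cluster tilting, $\Ext^i_\Lambda(X, \Lambda) = 0$ for $1 \leq i \leq n-1$.

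The main obstacle, and the case which truly uses the $n$-representation-finite hypothesis rather than just cluster tilting, is $j = n$: one must show $\Hom_\Lambda(X, \Lambda) = 0$ for indecomposable non-projective $X \in \add M$. My plan here is to argue inductively using part (1). By the bijection in (1), any such $X$ lies in the $\tau_n^-$-orbit of some indecomposable projective, say $X \iso \tau_n^{-i} P$ with $i \geq 1$. Working inside the derived $n$-cluster tilting subcategory $\mathscr{U} = \add\{\mathbb{S}_n^i \Lambda \mid i \in \mathbb{Z}\}$ of Theorem~\ref{theorem.ct_in_Db}, any indecomposable summand of $X$ must be of the form $\mathbb{S}_n^{-i_k} \Lambda_k$ with $i_k \geq 1$ (the hypothesis ``no projective summands'' is precisely what excludes $i_k = 0$). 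Applying $\mathbb{S}_n$ then produces summands $\mathbb{S}_n^{1-i_k} \Lambda_k$ with exponents $\leq 0$, which by an inductive identification of $\mathscr{U} \cap \mod\Lambda$ with $\add M$ all lie in $\mod \Lambda$; this forces $\mathbb{S}_n X \in \mod\Lambda$. The induction itself is closed by the dual statement $\Hom_\Lambda(D\Lambda, Y) = 0$ for $Y$ indecomposable non-injective in $\add M$, which one sets up simultaneously, observing that each step peels off one $\tau_n^{-1}$ (or $\tau_n$) and transports the vanishing via an application of $\Hom_\Lambda(-,\Lambda)$ (resp.\ $\Hom_\Lambda(D\Lambda,-)$) to an $n$-almost split sequence from (1), eventually reducing to $\tau_n^j\Lambda = 0$ by projectivity of $\Lambda$.
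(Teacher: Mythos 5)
Your part (1) follows the same route as the paper: the paper simply cites \cite[Lemma~2.3]{I2}, and your sketch (existence and uniqueness of the $n$-almost split sequence, identification of the left end with $\tau_n X$) is a reasonable description of what that lemma supplies. Your part (2) starts exactly as the paper does: with $\gld\Lambda\leqslant n$ one has $\tau_n = \Ho^0\mathbb{S}_n$, and $\Ho^j(\mathbb{S}_n X)\cong D\Ext_\Lambda^{n-j}(X,\Lambda)$ vanishes for $1\leqslant j\leqslant n-1$ by $n$-rigidity, so the whole content is the claim $\Hom_\Lambda(X,\Lambda)=0$. You correctly identify this as the crux.

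Where you diverge from the paper is in how you propose to prove $\Hom_\Lambda(X,\Lambda)=0$, and this is where there is a genuine gap. Your plan is to place $X$ inside the derived cluster tilting subcategory $\mathscr{U}\psL$ by writing $X\cong\mathbb{S}_n^{-i}P$ for some $i\geqslant1$ and some projective $P$, then deduce $\mathbb{S}_n X = \mathbb{S}_n^{1-i}P\in\mod\Lambda$. But the identification of the module $\tau_n^{-i}P$ (which is what part (1) gives you) with the derived-category object $\mathbb{S}_n^{-i}P$ is \emph{precisely} the statement that each $\mathbb{S}_n^{-j}P$ for $1\leqslant j\leqslant i$ is concentrated in degree $0$, and that in turn requires the dual vanishing $\Hom_\Lambda(D\Lambda,\tau_n^{-(j-1)}P)=0$ at every step. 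You gesture at a ``simultaneous'' induction closing via $n$-almost split sequences, but you do not explain how an $n$-almost split sequence transports $\Hom(-,\Lambda)$-vanishing or $\Hom(D\Lambda,-)$-vanishing from one orbit step to the next, and the proposed base case ``$\tau_n^j\Lambda=0$'' does not in itself yield $\Hom_\Lambda(D\Lambda,P)=0$ for a non-injective projective $P$. As written, the scheme is either circular or leaves the essential step unproven.

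The paper sidesteps all of this by citing \cite[Lemma~2.3]{Iy_n-Auslander}: if $\gld\Lambda\leqslant n$, $X$ has no non-zero projective summands, and $\Ext_\Lambda^i(X,\Lambda)=0$ for $0<i<n$, then $\Hom_\Lambda(X,\Lambda)=0$. This is a short homological argument on a minimal projective resolution of $X$ and does not require any orbit-by-orbit bookkeeping, nor does it rely on the very isomorphism $\tau_n X\cong\mathbb{S}_n X$ being proved. You should replace your inductive argument for the $j=n$ vanishing with a direct appeal to that lemma (or reproduce its minimal-resolution proof).
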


\begin{proof}
\begin{enumerate}
\item \cite[Lemma~2.3]{I2}.
\item Since $\Lambda \in \add M$ we have $\Ext^i_{\Lambda}(X, \Lambda) = 0$ for $0 < i < n$. This implies $\Hom_{\Lambda}(X, \Lambda) = 0$ (see \cite[Lemma~2.3]{Iy_n-Auslander}). Thus $\tau_n X \iso \mathbb{S}_n X$. \qedhere
\end{enumerate}
\end{proof}

Using this, we have the following result.

\begin{theorem}[\cite{Iy_n-Auslander}] \label{theorem.tilde_is_ctmod}
Let $\Lambda$ be an $n$-representation-finite algebra. Then
\begin{enumerate}
\item $\widetilde{\Lambda}$ is the unique basic $n$-cluster tilting object in $\mod \Lambda$, and
\item for $\mathscr{U}$ as in Theorem~\ref{theorem.ct_in_Db} we have
\[ \mathscr{U}\psL = \add \{ \widetilde{\Lambda}[in] \mid i \in \mathbb{Z} \}. \]
\end{enumerate}
\end{theorem}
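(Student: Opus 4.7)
Using $n$-representation-finiteness, fix a basic $n$-cluster tilting object $M\in\mod\Lambda$; the plan is to show $\add M=\add\widetilde{\Lambda}$. Since $\Ext_{\Lambda}^{>0}(\Lambda,-)=0=\Ext_{\Lambda}^{>0}(-,D\Lambda)$, both $\Lambda$ and $D\Lambda$ lie in $\add M$. By Proposition~\ref{prop.tauderived}(1), iterating $\tau_n^-$ on an indecomposable projective summand $P$ of $\Lambda$ produces a chain $P,\tau_n^-P,\ldots,\tau_n^{-i_P}P$ of indecomposables in $\add M$ which terminates at the first indecomposable injective reached (and does terminate, by $\tau_n$-finiteness). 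Summing these chains over $P$ gives $\bigoplus_{i\ge 0}\tau_n^{-i}\Lambda=\widetilde{\Lambda}\in\add M$. Conversely, any indecomposable $X\in\add M$ arises in such a chain — iterate $\tau_n$ until reaching a projective, necessarily a summand of $\Lambda$ — so $\add M\subseteq\add\widetilde{\Lambda}$. Hence $M\cong\widetilde{\Lambda}$, which gives both existence and uniqueness.

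\textbf{Plan for (2).} By Theorem~\ref{theorem.ct_in_Db} we know $\mathscr{U}\psL=\add\{\mathbb{S}_n^i\Lambda\mid i\in\mathbb{Z}\}$. The main step is a lemma describing the action of $\mathbb{S}_n^{\pm 1}$ on indecomposables of $\add\widetilde{\Lambda}$: for such $X$,
\[
\mathbb{S}_n^{-1}X\cong\begin{cases}\tau_n^-X\in\add\widetilde{\Lambda}&\text{if $X$ is not injective,}\\ \nu^{-1}(X)[n]\in\add\Lambda[n]&\text{if $X$ is injective,}\end{cases}
\]
with a dual formula for $\mathbb{S}_n X$ (involving $\tau_n X$ if $X$ is non-projective and $\nu(X)[-n]$ if $X$ is projective). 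The non-injective case follows from Proposition~\ref{prop.tauderived}(2) applied to $\tau_n^- X$ — which is a non-projective indecomposable in $\add\widetilde{\Lambda}$ by Proposition~\ref{prop.tauderived}(1) — yielding $X\cong\mathbb{S}_n(\tau_n^- X)$; the injective case is immediate from $\mathbb{S}(\nu^{-1}X)=X$ combined with $\mathbb{S}_n^{-1}=\mathbb{S}^{-1}[n]$. Iterating this step lemma starting from each indecomposable projective summand $P$ of $\Lambda$ then traces out the $\mathbb{S}_n$-orbit of $P$ in $\mathscr{U}\psL$: the orbit stays in $\add\widetilde{\Lambda}$ through $P,\tau_n^- P,\ldots,\tau_n^{-i_P}P$, then jumps into $\add\Lambda[n]$ via $\nu^{-1}(\tau_n^{-i_P}P)[n]$, and continues similarly in both directions. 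This simultaneously shows that every $\mathbb{S}_n^j\Lambda$ is a sum of shifts $\widetilde{\Lambda}[kn]$ and that every indecomposable summand of $\widetilde{\Lambda}$ occurs along some such orbit, so the two additive closures agree.

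\textbf{Main obstacle.} The essential subtlety is that $\mathbb{S}_n^{-1}\Lambda$ need not be concentrated in a single degree: any projective–injective summand of $\Lambda$ makes $\Hom_{\Lambda}(D\Lambda,\Lambda)\ne 0$, contributing cohomology to $\mathbb{S}_n^{-1}\Lambda$ in degree $-n$. The step lemma resolves this by separating indecomposable summands into the non-injective and injective cases and pushing the latter into $\add\Lambda[n]$. The remaining work is bookkeeping — checking that the $[kn]$-layer jumps chain together so that the $\mathbb{S}_n$-orbits precisely cover the indecomposables of both $\add\{\mathbb{S}_n^i\Lambda\}$ and $\add\{\widetilde{\Lambda}[in]\}$.
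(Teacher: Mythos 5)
The paper does not supply its own proof of this theorem; it is cited from Iyama's ``Cluster tilting for higher Auslander algebras''. So there is no in-paper argument to compare against, but your outline does follow the route one would expect from that reference, built around Proposition~\ref{prop.tauderived} and the interaction of $\tau_n^{\pm}$ with $\mathbb{S}_n^{\pm 1}$.

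Your plan is essentially sound. For (1), the inclusion $\Lambda,D\Lambda\in\add M$ is standard, the $\tau_n^-$-chains partition $\ind(\add M)$ because $\tau_n^{\pm}$ are mutually inverse bijections on the appropriate subsets, and the chains start at indecomposable projectives and end at indecomposable injectives. For (2), the ``step lemma'' is correct: for $X$ non-injective indecomposable in $\add\widetilde{\Lambda}$, Proposition~\ref{prop.tauderived}(2) applied to the non-projective module $\tau_n^-X$ gives $X\cong\mathbb{S}_n\tau_n^-X$, and for $X$ injective one has $\mathbb{S}_n^{-1}X=\nu^{-1}(X)[n]$. The bookkeeping you defer works because $P\mapsto\tau_n^{-i_P}P$ is a bijection from indecomposable projectives to indecomposable injectives (the chains are disjoint and equinumerous classes are matched), so composing with $\nu^{-1}$ makes the $\mathbb{S}_n$-orbits sweep out each shifted copy $\widetilde{\Lambda}[kn]$ exactly.

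The one place where you assert rather than argue is the termination of the $\tau_n^-$-chains. You attribute it to ``$\tau_n$-finiteness'', but $\tau_n$-finiteness is not part of the hypothesis; it is a consequence of $n$-representation-finiteness, and the consequence needs to be established here (you also need it to invoke Theorem~\ref{theorem.ct_in_Db} in part (2)). The fix is short: $\ind(\add M)$ is finite; if $\tau_n^{-i}P\ne 0$ for all $i$, then all $\tau_n^{-i}P$ ($i\ge 1$) are non-projective and some repetition $\tau_n^{-a}P\cong\tau_n^{-b}P$ with $a<b$ occurs; since all intermediate terms are non-injective, applying $\tau_n$ repeatedly (using $\tau_n\tau_n^-=\mathrm{id}$ on non-injectives) forces $P\cong\tau_n^{-(b-a)}P$, contradicting that $P$ is projective and $\tau_n^{-(b-a)}P$ is not. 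This both proves termination and shows $\Lambda$ is $\tau_n$-finite, closing the loop. With that added, your proof is complete.
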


\begin{definition} \label{definition.Auslalg}
The \emph{$n$-Auslander algebra} and \emph{stable $n$-Auslander algebra} of an $n$-representation-finite algebra $\Lambda$ are defined as $\End_{\Lambda}(\widetilde{\Lambda})$ and $\underline{\End}_{\Lambda}(\widetilde{\Lambda})$, respectively.
\end{definition}

The following is the basic property of (stable) Auslander algebras.

\begin{theorem}[\cite{Iy_n-Auslander,Iy_Auslander_corr}] \label{theorem.Auslalg_gld}
Let $\Lambda$ be $n$-representation-finite. Then
\[ \gld \End_{\Lambda}(\widetilde{\Lambda}) \leqslant n+1, \quad \text{and} \quad \gld \underline{\End}_{\Lambda}(\widetilde{\Lambda}) \leqslant n+1. \]
\end{theorem}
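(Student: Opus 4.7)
The plan is to bound the projective dimension of every simple module over $\Gamma = \End_\Lambda(\widetilde{\Lambda})$ (and similarly over $\underline{\Gamma} = \underline{\End}_\Lambda(\widetilde{\Lambda})$) by $n+1$, via explicit projective resolutions coming from higher Auslander--Reiten theory in the $n$-cluster tilting subcategory $\add \widetilde{\Lambda} \subseteq \mod \Lambda$ (Theorem~\ref{theorem.tilde_is_ctmod}). Since $\Gamma$ is Morita equivalent to $\add \widetilde{\Lambda}$, the simple $\Gamma$-modules $S_X$ are parametrized by indecomposable summands $X$ of $\widetilde{\Lambda}$, with projective cover $P_X := \Hom_\Lambda(-, X)|_{\add \widetilde{\Lambda}}$.

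First I would handle an indecomposable non-projective summand $X$: Proposition~\ref{prop.tauderived}(1) gives $\tau_n X \in \add \widetilde{\Lambda}$, and higher Auslander--Reiten theory \cite{I2} supplies an $n$-almost split sequence in $\add \widetilde{\Lambda}$
\[
0 \to \tau_n X \to C_n \to \cdots \to C_1 \to X \to 0,
\]
whose rightmost map is a right-almost-split morphism of $X$ in $\add \widetilde{\Lambda}$; hence its image under the Yoneda embedding equals $\Rad P_X$. Exploiting the $n$-cluster tilting vanishing $\Ext^i_\Lambda(Y, C) = 0$ for $Y, C \in \add \widetilde{\Lambda}$ and $1 \leq i \leq n-1$, the Yoneda embedding preserves exactness of the sequence, producing a projective resolution of $S_X$ in $\mod \Gamma$ of length $n+1$.

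For an indecomposable projective summand $X$, the first syzygy $\Omega S_X = \Rad P_X$ identifies with $\Hom_\Lambda(-, \Rad X)|_{\add \widetilde{\Lambda}}$. Using that $\add \widetilde{\Lambda}$ is $n$-cluster tilting in $\mod \Lambda$, I would then invoke an exact sequence $0 \to D_{n-1} \to \cdots \to D_0 \to \Rad X \to 0$ with $D_i \in \add \widetilde{\Lambda}$, preserved by the Yoneda embedding (a defining feature of $n$-cluster tilting resolutions), yielding $\pd_\Gamma S_X \leq n$. Combined with the previous paragraph, this gives $\gld \Gamma \leq n+1$.

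For the stable version, $\underline{\Gamma} = \Gamma / \Gamma e_\Lambda \Gamma$, where $e_\Lambda$ is the idempotent of the summand $\Lambda \subseteq \widetilde{\Lambda}$. The simple $\underline{\Gamma}$-modules correspond to the non-projective indecomposable summands of $\widetilde{\Lambda}$, and projective $\underline{\Gamma}$-modules have the form $\underline{\Hom}_\Lambda(-, Y)|_{\add \widetilde{\Lambda}}$ for non-projective $Y \in \add \widetilde{\Lambda}$. Starting from the $n$-almost split sequence attached to a non-projective $X$, splitting off the projective summands of the $C_i$ and replacing $\Hom_\Lambda$ by $\underline{\Hom}_\Lambda$ should produce a projective resolution of the corresponding simple $\underline{\Gamma}$-module of length at most $n+1$. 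The main obstacle will be to confirm that the passage to $\underline{\Hom}$ preserves exactness: morphisms factoring through the summand $\Lambda$ of $\widetilde{\Lambda}$ are killed, and one must check that this does not introduce extra homology. Since the $n$-almost split sequence already lives inside $\add \widetilde{\Lambda}$, such factorizations are controlled by the morphism structure of $\add \widetilde{\Lambda}$, and I expect the descent to go through.
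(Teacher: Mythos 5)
The paper states this as a cited theorem from Iyama's earlier work and does not reprove it; the closest internal argument is Proposition~\ref{prop.fingld}, which gives $\gld \End_{\Lambda}(\widetilde{\Lambda}) \leqslant n+1$ under the weaker vosnex hypothesis by resolving an arbitrary $X \in \mod \Lambda$ via Lemma~\ref{lemma.exactsequence}, noting that $\Ext^i_{\Lambda}(\widetilde{\Lambda},\widetilde{\Lambda})=0$ for $0<i<n$ makes the sequence a $\widetilde{\Lambda}$-resolution, and invoking the Auslander-type bound of \cite[Lemma~2.1]{EHIS}. Your route is genuinely different: you resolve the simple $\Gamma$-modules directly, using $n$-almost split sequences for the non-projective summands and an $\add\widetilde{\Lambda}$-resolution of $\Rad X$ for the projective ones. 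The paper's route is shorter and stays valid in the vosnex generality, but only treats the non-stable algebra there; your route is closer to Iyama's original argument and treats $\Gamma$ and $\underline{\Gamma}$ on the same footing.

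The hesitation you voice at the end is not a real obstacle, and the missing observation is one the paper itself uses (in the proof of Theorem~\ref{theorem.stableisderived}): by Proposition~\ref{prop.tauderived}(2) one has $\Hom_{\Lambda}(\widetilde{\Lambda}_P,\Lambda)=0$. Two things follow. First, no morphism between non-projective summands of $\widetilde{\Lambda}$ factors through a projective, so $\underline{\Hom}_{\Lambda}=\Hom_{\Lambda}$ on $\add\widetilde{\Lambda}_P$ and $\underline{\Gamma}=\End_{\Lambda}(\widetilde{\Lambda}_P)$; nothing at all is ``killed'' in your last step. Second, restricting the resolution $0 \to \Hom_{\Lambda}(-,\tau_n X) \to \Hom_{\Lambda}(-,C_n) \to \cdots \to \Hom_{\Lambda}(-,C_1) \to \Hom_{\Lambda}(-,X) \to S_X \to 0$ from $\add\widetilde{\Lambda}$ to the full subcategory $\add\widetilde{\Lambda}_P$ is exact (it is evaluation), and for $C_i=C_i'\oplus C_i''$ with $C_i''$ projective the summand $C_i''$ contributes $0$ to $\Hom_{\Lambda}(-,C_i)|_{\add\widetilde{\Lambda}_P}$. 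Hence no extra homology can appear, and one reads off $\pd_{\underline{\Gamma}} S_X \leqslant n+1$ (indeed $\leqslant n$ when $\tau_n X$ is projective). With this noted, your sketch becomes a complete proof of both inequalities.
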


\section{\texorpdfstring{$n$}{n}-representation-finiteness and self-injectivity}

\begin{theorem} \label{theorem.nrepfinderived}
Let $\Lambda$ be an algebra with $\gld \Lambda \leqslant n$. Then the following are equivalent:
\begin{enumerate}
\item $\Lambda$ is $n$-representation-finite,
\item $D\Lambda \in \mathscr{U}\psL$, and
\item $\mathscr{U}\psL = \mathbb{S} \mathscr{U}\psL$.
\end{enumerate}
\end{theorem}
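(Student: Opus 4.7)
The plan is to prove the cycle $(1) \Rightarrow (2) \Rightarrow (3) \Rightarrow (1)$.

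For $(1) \Rightarrow (2)$, suppose $\Lambda$ is $n$-representation-finite. By Theorem~\ref{theorem.tilde_is_ctmod}(1), $\widetilde{\Lambda}$ is the basic $n$-cluster tilting object in $\mod\Lambda$. Any $n$-cluster tilting subcategory of $\mod\Lambda$ must contain every injective module, since $\Ext^i_{\Lambda}(-, D\Lambda) = 0$ for $i > 0$ forces $D\Lambda$ into the subcategory. Hence $D\Lambda$ is a summand of $\widetilde{\Lambda}$, and Theorem~\ref{theorem.tilde_is_ctmod}(2) then gives $D\Lambda \in \mathscr{U}$.

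For $(2) \Rightarrow (3)$, the identification $\mathbb{S}\Lambda \iso D\Lambda$ in $\mathscr{D}_\Lambda$ (valid because $\Lambda$ has finite global dimension) rephrases (2) as $\mathbb{S}\Lambda \in \mathscr{U}$. The category $\mathscr{U} = \add\{\mathbb{S}_n^i \Lambda \mid i \in \mathbb{Z}\}$ is tautologically $\mathbb{S}_n$-stable, so $\mathbb{S}(\mathbb{S}_n^i \Lambda) = \mathbb{S}_n^i \mathbb{S}\Lambda \in \mathscr{U}$ for every $i$, giving $\mathbb{S}\mathscr{U} \subseteq \mathscr{U}$. As $\mathbb{S}$ is an autoequivalence of $\mathscr{D}_\Lambda$, $\mathbb{S}\mathscr{U}$ is also an $n$-cluster tilting subcategory; and two $n$-cluster tilting subcategories with one contained in the other must coincide, since if $\mathscr{U}' \subseteq \mathscr{U}$ then the vanishing $\Hom_{\mathscr{D}}(\mathscr{U}, X[i]) = 0$ implies the vanishing against $\mathscr{U}'$, whence $\mathscr{U} \subseteq \mathscr{U}'$ by the first equality in Definition~\ref{def.n-cto}. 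Thus $\mathbb{S}\mathscr{U} = \mathscr{U}$.

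For $(3) \Rightarrow (1)$, the main direction: using $\mathbb{S} = \mathbb{S}_n[n]$ and the tautological $\mathbb{S}_n \mathscr{U} = \mathscr{U}$, hypothesis (3) is equivalent to the $[n]$-stability $\mathscr{U}[n] = \mathscr{U}$. I would show that $M := \widetilde{\Lambda} = \bigoplus_{i \geq 0} \tau_n^{-i}\Lambda$, finite dimensional by $\tau_n$-finiteness, is an $n$-cluster tilting object in $\mod \Lambda$. The strategy is to analyse the $\mathbb{S}_n$-orbit of each indecomposable summand $\Lambda_j$ of $\Lambda$: via $\tau_n^{-i} = H^0(\mathbb{S}_n^i -)$ this orbit visits the modules $\tau_n^{-i}\Lambda_j$ in cohomological degree $0$ until reaching an injective, after which the identity $\mathbb{S}_n(DA) = A[-n]$ forces a jump to cohomological degree $n$; the $[n]$-stability of $\mathscr{U}$ guarantees this transition is clean, and $\tau_n$-finiteness guarantees it happens in finitely many steps. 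Hence every indecomposable of $\mathscr{U}$ has the form $N[kn]$ for some indecomposable summand $N$ of $M$ and some $k \in \mathbb{Z}$, and $\mathscr{U} = \add\{M[kn] \mid k \in \mathbb{Z}\}$. The $n$-cluster tilting property of $M$ in $\mod\Lambda$ then transfers from that of $\mathscr{U}$ in $\mathscr{D}_\Lambda$: if $\Ext^i_\Lambda(M, X) = 0$ for $0 < i < n$ then $[n]$-periodicity propagates vanishing to every generator $M[kn]$ of $\mathscr{U}$, forcing $X \in \mathscr{U} \cap \mod\Lambda = \add M$.

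The hard part will be making the orbit-transition step of $(3) \Rightarrow (1)$ rigorous: establishing that the $\mathbb{S}_n$-orbit of $\Lambda$ remains in cohomological degree $0$ exactly while traversing non-injective modules, and transitions cleanly to degree $n$ only at an injective. This rests on the interplay between $\tau_n$-finiteness, the identification $\tau_n^- = H^0(\mathbb{S}_n -)$, the identity $\mathbb{S}_n(D\Lambda') \iso \Lambda'[-n]$, and the $[n]$-periodicity of $\mathscr{U}$ imposed by (3).
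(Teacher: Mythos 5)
Your implication $(1) \Rightarrow (2)$ is correct and takes a slightly different (but valid) route than the paper: the paper derives $D\Lambda \in \mathscr{U}\psL$ as part of $(1) \Rightarrow (3)$ via Proposition~\ref{prop.tauderived}, whereas you appeal to Theorem~\ref{theorem.tilde_is_ctmod} together with the observation that injectives are automatic members of any $n$-cluster tilting subcategory of $\mod\Lambda$.

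Your $(2) \Rightarrow (3)$ has a gap. The maximality argument you give (two $n$-cluster tilting subcategories with one contained in the other must coincide) is correct on its own, but it presupposes that $\mathscr{U}\psL$ \emph{is} an $n$-cluster tilting subcategory of $\mathscr{D}_\Lambda$. By Theorem~\ref{theorem.ct_in_Db} this requires $\Lambda$ to be $\tau_n$-finite, which is neither a hypothesis of the theorem nor an obvious consequence of condition $(2)$ alone. Condition $(2)$ directly gives you only $\mathbb{S}\mathscr{U}\psL \subseteq \mathscr{U}\psL$ (equivalently $\mathscr{U}\psL[n] \subseteq \mathscr{U}\psL$); upgrading this to an equality is precisely what extra work is needed, and the paper obtains $\tau_n$-finiteness from condition $(2)$ by pinning down the position of $\mathbb{S}_n^i I$ for each injective $I$ using Lemma~\ref{lemma.nuorders}, \emph{before} Theorem~\ref{theorem.ct_in_Db} can legitimately be invoked.

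Your $(3) \Rightarrow (1)$, as you acknowledge, is only a sketch, and its gaps are genuine. The plan essentially mirrors the paper's $(2) \Rightarrow (1)$ argument, but several points need repair: (i) $\tau_n$-finiteness is again assumed rather than derived; (ii) the formula ``$\mathbb{S}_n(DA) = A[-n]$'' is not correct as stated -- for an indecomposable projective $P$ one has $\mathbb{S}_n P = \nu P[-n]$ and $\mathbb{S}_n^{-1}(\nu P) = P[n]$, where $\nu$ is the Nakayama functor; (iii) making the orbit transition ``clean'' requires the inclusions $\mathbb{S}_n \mathscr{D}^{\geqslant 0} \subseteq \mathscr{D}^{\geqslant 0}$ and $\mathbb{S}_n^{-1}\mathscr{D}^{\leqslant 0} \subseteq \mathscr{D}^{\leqslant 0}$ (Lemma~\ref{lemma.nuorders}), which you never invoke; (iv) the final transfer step, where you deduce that $\Ext^i_\Lambda(M,X)=0$ for $0<i<n$ forces $\Hom_{\mathscr{D}}(\mathscr{U}\psL, X[i])=0$ for $0<i<n$, is exactly where the precise $t$-structure locations of \eqref{eq.location_S_n} and $\gld\Lambda\leqslant n$ are needed, and cannot be summarized as ``$[n]$-periodicity propagates vanishing.'' The paper avoids the sign/direction confusions by parametrizing indecomposable injectives as $I = \mathbb{S}_n^{-\ell_I} P_I$ and working backward from $D\Lambda$, which you may find a cleaner starting point than pushing forward from $\Lambda$.
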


For the proof we will need the following easy lemma (e.g.\ \cite[Proposition~5.4]{Iy_n-Auslander}).

\begin{lemma} \label{lemma.nuorders}
Let $\Lambda$ be an algebra with $\gld \Lambda \leqslant n$. Denote by $(\mathscr{D}^{\leqslant 0}, \mathscr{D}^{\geqslant 0})$ the standard t-structure of $\mathscr{D}$. Then we have $\mathbb{S}_n \mathscr{D}^{\geqslant 0} \subseteq \mathscr{D}^{\geqslant 0}$ and $\mathbb{S}_n^{-1} \mathscr{D}^{\leqslant 0} \subseteq \mathscr{D}^{\leqslant 0}$.
\end{lemma}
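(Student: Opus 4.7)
The plan is to unwind the Serre functor as the derived Nakayama functor $\mathbb{S} = - \otimes^{\rm L}_\Lambda D\Lambda$, with quasi-inverse $\mathbb{S}^{-1} = \RHom_\Lambda(D\Lambda, -)$, and then to reduce both containments to the case of a module concentrated in degree zero. The global dimension hypothesis enters through the fact that every $M \in \mod \Lambda$ admits a projective resolution of length at most $n$, and dually an injective coresolution of length at most $n$.

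For the base case, let $M \in \mod \Lambda$ sit in degree zero. Choosing a projective resolution $P^\bullet \to M$ with $P^i = 0$ for $i \notin [-n,0]$ and tensoring by $D\Lambda$ gives $\mathbb{S} M \in \mathscr{D}^{[-n,0]}$, hence $\mathbb{S}_n M = \mathbb{S} M[-n] \in \mathscr{D}^{[0,n]} \subseteq \mathscr{D}^{\geqslant 0}$. Symmetrically, an injective coresolution $M \to I^\bullet$ with $I^j = 0$ for $j \notin [0,n]$ yields $\mathbb{S}^{-1} M = \Hom_\Lambda(D\Lambda, I^\bullet) \in \mathscr{D}^{[0,n]}$, so that $\mathbb{S}_n^{-1} M = \mathbb{S}^{-1} M[n] \in \mathscr{D}^{[-n,0]} \subseteq \mathscr{D}^{\leqslant 0}$.

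To pass from single modules to arbitrary $X \in \mathscr{D}^{\geqslant 0}$ I would induct on the cohomological amplitude. If $m$ denotes the top cohomological degree of $X$, then applying the triangulated functor $\mathbb{S}_n$ to the standard truncation triangle
\[ \tau^{<m} X \to X \to \Ho^m(X)[-m] \to \tau^{<m} X[1] \]
yields a distinguished triangle whose outer terms both lie in $\mathscr{D}^{\geqslant 0}$: the leftmost by the inductive hypothesis, and the rightmost because the base case puts $\mathbb{S}_n \Ho^m(X)$ in $\mathscr{D}^{[0,n]}$, which shifts to $\mathscr{D}^{[m, m+n]} \subseteq \mathscr{D}^{\geqslant 0}$ since $m \geqslant 0$. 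The argument for $\mathbb{S}_n^{-1}$ on $\mathscr{D}^{\leqslant 0}$ is entirely symmetric, truncating instead at the bottom cohomological degree.

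No step here is genuinely difficult; the only point to keep straight is that the $[-n]$ twist in the definition $\mathbb{S}_n = \mathbb{S}[-n]$ precisely absorbs the length-$n$ tail produced by a (co)resolution of a module, landing $\mathbb{S}_n^{\pm 1}$ in the correct half of the t-structure. That is why the authors flag this as an ``easy lemma''.
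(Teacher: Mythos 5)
Your proof is correct. The paper itself gives no argument for this lemma — it simply labels it ``easy'' and cites \cite[Proposition~5.4]{Iy_n-Auslander} — so there is no in-paper proof to compare against, but your argument is the standard one: unwind $\mathbb{S}$ as the derived Nakayama functor $-\otimes^{\rm L}_\Lambda D\Lambda$ (with quasi-inverse $\RHom_\Lambda(D\Lambda,-)$), check the containment on a module in a single degree using a length-$\leqslant n$ projective (resp.\ injective) resolution so that the shift $[-n]$ (resp.\ $[n]$) exactly compensates, and then extend to arbitrary bounded complexes by induction on cohomological amplitude via truncation triangles together with the fact that $\mathscr{D}^{\geqslant 0}$ (resp.\ $\mathscr{D}^{\leqslant 0}$) is closed under extensions. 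This is complete and is surely the argument the cited reference has in mind.
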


\begin{proof}[Proof of Theorem~\ref{theorem.nrepfinderived}]
(1) \then\ (3): We only have to show $\Lambda \in \mathbb{S} \mathscr{U}\psL$ and $D\Lambda \in \mathscr{U}\psL$. For any indecomposable projective $\Lambda$-module $P$ there exists an indecomposable injective $\Lambda$-module $I$ such that $\tau_n^\ell I \iso P$ for some $\ell \geqslant 0$ (Proposition~\ref{prop.tauderived}(1)). By Proposition~\ref{prop.tauderived}(2) we have $\mathbb{S}_n^\ell I \iso P$. Thus $P \in \add \mathbb{S}_n^{\ell} \mathbb{S} \Lambda \subseteq \mathbb{S} \mathscr{U}\psL$. Hence we have $\Lambda \in \mathbb{S} \mathscr{U}\psL$. Similarly we have $D \Lambda \in \mathscr{U}\psL$.

(3) \then\ (2): This is clear.

(2) \then\ (1): By assumption any indecomposable injective $\Lambda$-module $I$ is in $\mathscr{U}\psL$, and hence of the form $I = \mathbb{S}_n^{- \ell_I} P_I$ for some indecomposable projective module $P_I$. Hence, by Lemma~\ref{lemma.nuorders}, we have
\begin{align}
\notag \mathbb{S}_n^i I & \in \mathscr{D}^{\leqslant -n} \text{ for any } i < 0, \\
\mathbb{S}_n^i I & = \mathbb{S}_n^{i - \ell_I} P_I \in \mathscr{D}^{\geqslant 0} \cap \mathscr{D}^{\leqslant 0} = \mod \Lambda \text{ for } 0 \leqslant i \leqslant \ell_I \text{, and} \label{eq.location_S_n} \\
\notag \mathbb{S}_n^i I & = \mathbb{S}_n^{i - \ell_I} P_I \in \mathscr{D}^{\geqslant n} \text{ for any } i > \ell_I.
\end{align}
In particular we see that $\Lambda$ is $\tau_n$-finite. Set
\[ M = \bigoplus_{\substack{I \text{ indecomp.}\\\text{injective}}} \bigoplus_{i=0}^{\ell_I} \mathbb{S}_n^i I. \]
Then $\add M = \mathscr{U}\psL \cap \mod \Lambda$. By Theorem~\ref{theorem.ct_in_Db} we have $\Ext^i_{\Lambda}(M,M) = 0$ for any $0 < i < n$.

Assume that a $\Lambda$-module $X$ satisfies $\Ext^i_{\Lambda}(M,X) = 0$ for all $0 < i < n$. Then \eqref{eq.location_S_n}, together with $\gld \Lambda \leqslant n$, implies $\Hom_{\mathscr{D}}(\mathscr{U}\psL, X[i]) = 0$ for any $0 < i < n$. Hence, by Theorem~\ref{theorem.ct_in_Db}, we have $X \in \mathscr{U}\psL \cap \mod \Lambda = \add M$. Similarly any $\Lambda$-module $X$ satisfying $\Ext^i_{\Lambda}(X,M) = 0$ for all $0 < i < n$ belongs to $\add M$. Therefore $M$ is an $n$-cluster tilting $\Lambda$-module.
\end{proof}

\subsection{Self-injective \texorpdfstring{$(n+1)$}{(n+1)}-preprojective algebras}

\begin{notation}
As customary for algebras, we say that a category $\mathscr{U}$ is self-injective if projective and injective objects in $\mod \mathscr{U}$ coincide. (Here and throughout this paper $\mod \mathscr{U}$ denotes the category of finitely presented functors $\mathscr{U}^{\op} \to \mod k$.)
\end{notation}

We obtain the following immediate consequence of Theorem~\ref{theorem.nrepfinderived}.

\begin{corollary} \label{corollary.isselfinj}
Let $\Lambda$ be $n$-representation-finite. Then $\widetilde{\Lambda}$ ($ \iso \End_{\mathscr{C}^n_{\Lambda}}(\pi \Lambda)$) and $\mathscr{U}\psL$ are self-injective.
\end{corollary}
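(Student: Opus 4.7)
The plan is to deduce both self-injectivity claims from the identity $\mathscr{U}\psL = \mathbb{S}\mathscr{U}\psL$ established in Theorem~\ref{theorem.nrepfinderived}(3), by invoking the general principle that Serre duality turns injective objects of a functor category into representables. Concretely, for a functorially finite, $\Hom$-finite Krull--Schmidt subcategory $\mathscr{V}$ of a $\Hom$-finite Krull--Schmidt triangulated category $\mathscr{T}$ with Serre functor $\mathbb{S}$, the indecomposable projectives of $\mod \mathscr{V}$ are the representables $\Hom_{\mathscr{T}}(-,V)|_{\mathscr{V}}$ (Yoneda), whereas the indecomposable injectives identify via $k$-duality and Serre duality with $D\Hom_{\mathscr{T}}(V,-)|_{\mathscr{V}} \cong \Hom_{\mathscr{T}}(-,\mathbb{S}V)|_{\mathscr{V}}$. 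Thus projectives and injectives of $\mod \mathscr{V}$ coincide if and only if $\mathbb{S}\mathscr{V} = \mathscr{V}$.

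I would apply this observation twice. Setting $\mathscr{V} := \mathscr{U}\psL$ inside $\mathscr{D}_{\Lambda}$ and invoking Theorem~\ref{theorem.nrepfinderived}(3) gives self-injectivity of $\mathscr{U}\psL$ at once. For $\widetilde{\Lambda}$, Theorem~\ref{theorem.clairemain2} provides the identification $\widetilde{\Lambda} \cong \End_{\mathscr{C}^n_{\Lambda}}(\pi \Lambda)$, so $\mod \widetilde{\Lambda} \cong \mod(\add \pi \Lambda)$. Since $\pi\colon \mathscr{D}_{\Lambda} \to \mathscr{C}^n_{\Lambda}$ commutes with the Serre functors, we obtain
\[ \mathbb{S}(\add \pi \Lambda) = \pi(\mathbb{S}\mathscr{U}\psL) = \pi(\mathscr{U}\psL) = \add \pi \Lambda, \]
and the same criterion, this time applied to $\mathscr{V} := \add \pi \Lambda \subseteq \mathscr{C}^n_{\Lambda}$, yields that $\widetilde{\Lambda}$ is self-injective.

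The only point demanding care is justifying the description of injectives in $\mod \mathscr{V}$: this requires $\mathscr{V}$ to be a dualizing variety in the sense of Auslander--Reiten. For $\add \pi \Lambda$ this is automatic from $\Hom$-finiteness of $\mathscr{C}^n_{\Lambda}$ (ensured by Theorem~\ref{theorem.clairemain1}, as $\Lambda$ is $\tau_n$-finite), and for $\mathscr{U}\psL$ it follows from Theorem~\ref{theorem.ct_in_Db}, which gives that $\mathscr{U}\psL$ is a functorially finite $n$-cluster tilting subcategory of the $\Hom$-finite triangulated category $\mathscr{D}_{\Lambda}$. Modulo that mild verification the corollary is then immediate.
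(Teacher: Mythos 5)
Your proof is correct and follows essentially the same route as the paper's: both deduce $\mathbb{S}\mathscr{U}\psL = \mathscr{U}\psL$ from Theorem~\ref{theorem.nrepfinderived}, transfer this to $\add\pi\Lambda$ using that $\pi$ commutes with Serre functors, and then invoke Serre duality to conclude that injectives are projective. The paper phrases the final step concretely by computing $D\widetilde{\Lambda} \cong \Hom_{\mathscr{C}^n_{\Lambda}}(\pi\Lambda,\mathbb{S}\pi\Lambda) \in \add\widetilde{\Lambda}$, while you state the same fact as a general criterion for subcategories closed under the Serre functor, but the underlying argument is identical.
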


\begin{proof}
By Theorem~\ref{theorem.nrepfinderived}, $\mathscr{U}\psL$ is closed under $\mathbb{S}$. Hence $\pi(\add \Lambda) = \pi(\mathscr{U}\psL)$ is closed under $\mathbb{S}$ in $\mathscr{C}_{\Lambda}^n$. Using Serre duality we have
\[ D \widetilde{\Lambda} = D \End_{\mathscr{C}_{\Lambda}^n}(\pi \Lambda) \iso \Hom_{\mathscr{C}_{\Lambda}^n}(\pi \Lambda, \mathbb{S} \pi \Lambda) \in \add \End_{\mathscr{C}_{\Lambda}^n}(\pi(\Lambda)) = \add \widetilde{\Lambda}. \]
Thus $\widetilde{\Lambda}$ is self-injective. Similarly we have that $\mathscr{U}\psL$ is self-injective.
\end{proof}

We now turn the implication of this corollary into an equivalence by strengthening the second condition.

\begin{notation} \label{notation.vosnex}
We say an $n$-cluster tilting subcategory $\mathscr{U}$ in a triangulated category $\mathscr{T}$ has the ``vanishing of small negative extensions''-property (= \emph{vosnex}), if $\Hom_{\mathscr{T}}(\mathscr{U}[i], \mathscr{U}) = 0$ for all $i \in \{1, \ldots, n-2\}$.

By abuse of notation we say that an algebra $\Lambda$ has the vosnex property, if it is $\tau_n$-finite, and the category $\mathscr{U}\psL$ (as in Theorem~\ref{theorem.ct_in_Db}) has the vosnex property.
\end{notation}

\begin{proposition} \label{prop.selfinjeq}
Let $\mathscr{U}$ be an $n$-cluster tilting subcategory in a triangulated category $\mathscr{T}$. Then the following conditions are equivalent.
\begin{enumerate}
\item $\mathscr{U} = \mathbb{S} \mathscr{U}$.
\item $\mathscr{U} = \mathscr{U}[n]$.
\item $\Hom_{\mathscr{T}}(\mathscr{U}[i], \mathscr{U}) = 0 \, \forall i \in \{1, \ldots, n-1\}$.
\item $\mathscr{U}$ is self-injective, and has the vosnex property (see \ref{notation.vosnex}).
\end{enumerate}
\end{proposition}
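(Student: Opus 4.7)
The plan is to establish the cycle (1) $\iff$ (2) $\iff$ (3), then the easy direction (1) $\then$ (4), and finally tackle the main implication (4) $\then$ (1) by an explicit cone analysis.

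For (1) $\iff$ (2), I would invoke the standard fact that the shifted Serre functor $\mathbb{S}_n = \mathbb{S}[-n]$ restricts to an autoequivalence of any $n$-cluster tilting subcategory of $\mathscr{T}$: applying Serre duality to the two defining vanishing conditions shows both $\mathbb{S}_n \mathscr{U} \subseteq \mathscr{U}$ and $\mathbb{S}_n^{-1} \mathscr{U} \subseteq \mathscr{U}$. Hence $\mathbb{S}\mathscr{U} = \mathbb{S}_n\mathscr{U}[n] = \mathscr{U}[n]$. For (1) $\iff$ (3), I would use the Serre duality identity $\Hom_{\mathscr{T}}(\mathscr{U}[i], \mathscr{U}) \iso D\Hom_{\mathscr{T}}(\mathscr{U}, \mathbb{S}\mathscr{U}[i])$: under (1), the right-hand side equals $D\Hom_{\mathscr{T}}(\mathscr{U}, \mathscr{U}[i])$, which vanishes for $i \in \{1, \ldots, n-1\}$ by the $n$-cluster tilting property; conversely, under (3) one checks directly that $\mathbb{S}\mathscr{U} \subseteq \mathscr{U}$ (since $\Hom_{\mathscr{T}}(V, \mathbb{S}U[i]) = D\Hom_{\mathscr{T}}(\mathscr{U}[i], \mathscr{U}) = 0$ matches the $n$-cluster tilting characterization), and then $\mathbb{S}^{-1}\mathscr{U} \subseteq \mathscr{U}$ follows from the previous inclusion. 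The implication (1) $\then$ (4) is immediate: under (1) the injective functor $D\Hom_{\mathscr{T}}(U,-)|_{\mathscr{U}} \iso \Hom_{\mathscr{T}}(-, \mathbb{S}U)|_{\mathscr{U}}$ is represented by $\mathbb{S}U \in \mathscr{U}$ and is therefore projective, while vosnex is part of (3).

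The main work goes into (4) $\then$ (1). Self-injectivity supplies, for every $U \in \mathscr{U}$, an object $\sigma U \in \mathscr{U}$ (giving an autoequivalence $\sigma$ of $\mathscr{U}$ by the Nakayama correspondence) together with an isomorphism $\Hom_{\mathscr{T}}(-, \sigma U)|_{\mathscr{U}} \iso \Hom_{\mathscr{T}}(-, \mathbb{S}U)|_{\mathscr{U}}$; since $\sigma U \in \mathscr{U}$, Yoneda realises this isomorphism by a concrete morphism $f_U \colon \sigma U \to \mathbb{S}U$ in $\mathscr{T}$. I would set $C_U = \Cone(f_U)$ and analyse the long exact sequence obtained by applying $\Hom_{\mathscr{T}}(V, -)$ (for $V \in \mathscr{U}$) to the triangle $\sigma U \to \mathbb{S}U \to C_U \to \sigma U[1]$, combining the $n$-cluster tilting vanishings of $\Hom_{\mathscr{T}}(V, \sigma U[i])$, the vosnex vanishings of $\Hom_{\mathscr{T}}(V, \sigma U[i])$ in negative degrees and (via Serre duality) of $\Hom_{\mathscr{T}}(V, \mathbb{S}U[i])$ in positive degrees, and the degree-$0$ isomorphism induced by $f_U$. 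The output is $\Hom_{\mathscr{T}}(V, C_U[i]) = 0$ for all $V \in \mathscr{U}$ and all $i \in \{-(n-1), \ldots, n-2\}$.

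This vanishing range is precisely what the $n$-cluster tilting characterization requires to conclude $C_U[-(n-1)] \in \mathscr{U}$, so we may write $C_U \iso V[n-1]$ for some $V \in \mathscr{U}$. Translating the vanishing back then gives $\Hom_{\mathscr{T}}(V', V[k]) = 0$ for all $V' \in \mathscr{U}$ and all $k \in \{0, \ldots, 2n-3\}$; specialising to $V' = V$ and $k = 0$ forces $\End_{\mathscr{T}}(V) = 0$, whence $V = 0$ and $C_U = 0$. Therefore $f_U$ is an isomorphism and $\mathbb{S}U \iso \sigma U \in \mathscr{U}$, and combined with the essential surjectivity of $\sigma$ this yields $\mathbb{S}\mathscr{U} = \mathscr{U}$. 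The main technical point is to arrange the cone analysis so that vosnex is exactly strong enough to produce the width-$(2n-2)$ vanishing range that forces $C_U$ into $\mathscr{U}[n-1]$.
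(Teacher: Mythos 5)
Your proof is correct, and for the main implication (4) $\Rightarrow$ (1) it takes a genuinely different route from the paper. The paper argues through the Iyama--Yoshino equivalence $\Hom_{\mathscr{T}}(\mathscr{U},-)\colon\mathscr{Z}/(\mathscr{U}[1])\to\mod\mathscr{U}$ of Lemma~\ref{lemma.factorismod}: first Lemma~\ref{lemma.equiv_vosnex} translates vosnex into $\mathbb{S}\mathscr{U}\subseteq\mathscr{Z}=\mathscr{U}*\mathscr{U}[1]$, and then self-injectivity becomes, under that equivalence, precisely the statement $\mathbb{S}\mathscr{U}\subseteq\mathscr{U}$. You instead avoid that equivalence entirely: you use Yoneda (valid because $\sigma U\in\mathscr{U}$) to realize the abstract isomorphism $\Hom_{\mathscr{T}}(-,\sigma U)|_{\mathscr{U}}\iso\Hom_{\mathscr{T}}(-,\mathbb{S}U)|_{\mathscr{U}}$ by an actual morphism $f_U\colon\sigma U\to\mathbb{S}U$ in $\mathscr{T}$, and then kill its cone by a long-exact-sequence bookkeeping argument. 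Your approach is more elementary and self-contained; the paper's is shorter because it outsources the work to \cite[Corollary~6.4]{IyYo}. Two small caveats in your write-up: the list of vanishings feeding the cone analysis omits the $n$-cluster-tilting (via Serre duality) vanishing $\Hom_{\mathscr{T}}(V,\mathbb{S}U[i])=0$ for $i\in\{-(n-1),\ldots,-1\}$, which is needed for the negative part of the range $\{-(n-1),\ldots,n-2\}$; and the step ``$\mathbb{S}^{-1}\mathscr{U}\subseteq\mathscr{U}$ follows from the previous inclusion'' should be justified, e.g.\ by noting that $\mathbb{S}\mathscr{U}$ is again $n$-cluster tilting, so a one-sided inclusion of $n$-cluster tilting subcategories is automatically an equality (this same observation is how the paper turns (3) into (2)). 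For (1) $\iff$ (3) you use Serre duality directly, whereas the paper goes through (2) by a shift; these are equivalent in substance.
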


Before we start the proof, let us note that what it means in the situation of derived and $n$-Amiot cluster categories.

\begin{corollary}
Let $\Lambda$ be an algebra of global dimension at most $n$. Then the following are equivalent.
\begin{enumerate}
\item $\Lambda$ is $n$-representation-finite.
\item $\mathscr{U}\psL = \leftsub{\mathscr{D}}{\mathbb{S}} \mathscr{U}\psL$.
\item $\add \pi \Lambda = \leftsub{\mathscr{C}_{\Lambda}^n}{\mathbb{S}} (\add \pi \Lambda)$.
\item $\mathscr{U}\psL$ is self-injective and has the vosnex property.
\item $\widetilde{\Lambda}$ is a self-injective algebra, and $\Lambda$ has the vosnex property.
\end{enumerate}
\end{corollary}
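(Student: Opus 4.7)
The plan is to assemble this corollary from the preceding results of the section, as a cycle (1) $\iff$ (2) $\iff$ (4) together with the parallel equivalences (2) $\iff$ (3) and (3) $\iff$ (self-injectivity part of (5)); the vosnex clauses in (4) and (5) agree verbatim by Notation~\ref{notation.vosnex}.

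First, (1) $\iff$ (2) is immediate from Theorem~\ref{theorem.nrepfinderived}, whose items (1) and (3) are literally our (1) and (2). The equivalence (2) $\iff$ (4) is then a direct application of Proposition~\ref{prop.selfinjeq} to the $n$-cluster tilting subcategory $\mathscr{U}\psL \subseteq \mathscr{D}_\Lambda$ provided by Theorem~\ref{theorem.ct_in_Db}: conditions (1) and (4) of that proposition are precisely our (2) and (4). (Note that (2) does force $\tau_n$-finiteness, so the hypothesis of Theorem~\ref{theorem.ct_in_Db} is in force.)

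For (2) $\iff$ (3), I would use two facts from Subsection~\ref{subsect.amiot}: the functor $\pi\colon \mathscr{D}_\Lambda \to \mathscr{C}^n_\Lambda$ commutes with Serre functors, and $\mathscr{U}\psL = \pi^{-1}(\add \pi \Lambda)$ (the remark after Theorem~\ref{theorem.clairemain2}). The forward implication is immediate by applying $\pi$; conversely (3) gives $\mathbb{S} \pi \Lambda \in \add \pi \Lambda$, hence $\mathbb{S} \Lambda \in \pi^{-1}(\add \pi \Lambda) = \mathscr{U}\psL$, so $\mathbb{S} \mathscr{U}\psL \subseteq \mathscr{U}\psL$, with the reverse inclusion by the symmetric argument using $\mathbb{S}^{-1}$. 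To equate (3) with the self-injectivity of $\widetilde{\Lambda}$ appearing in (5), I combine $\widetilde{\Lambda} \iso \End_{\mathscr{C}^n_\Lambda}(\pi \Lambda)$ (Theorem~\ref{theorem.clairemain2}) with Serre duality in the $n$-Calabi-Yau category $\mathscr{C}^n_\Lambda$ to obtain
\[ D\widetilde{\Lambda} \iso \Hom_{\mathscr{C}^n_\Lambda}(\pi \Lambda, \mathbb{S} \pi \Lambda) \]
as a right $\widetilde{\Lambda}$-module, and observe that this module lies in $\add \widetilde{\Lambda}$ precisely when $\mathbb{S} \pi \Lambda \in \add \pi \Lambda$, i.e., when (3) holds.

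The main bookkeeping point is the third paragraph: it rests on the precise description of $\mathscr{U}\psL$ as the $\pi$-preimage of $\add \pi \Lambda$ and on the compatibility of $\pi$ with Serre functors. Beyond that, the corollary is essentially a translation of Proposition~\ref{prop.selfinjeq} across the functor $\pi$ via Serre duality, with Theorem~\ref{theorem.nrepfinderived} anchoring the representation-theoretic condition (1).
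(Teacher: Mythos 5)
The overall architecture is right — the corollary is indeed meant as a translation of Proposition~\ref{prop.selfinjeq} across the functor $\pi$, and your handling of (1)~$\iff$~(2) via Theorem~\ref{theorem.nrepfinderived}, of (2)~$\iff$~(4) via Proposition~\ref{prop.selfinjeq} applied to $\mathscr{U}\psL\subseteq\mathscr{D}_\Lambda$, and of (2)~$\iff$~(3) via the compatibility of $\pi$ with Serre functors and the preimage identification $\mathscr{U}\psL=\pi^{-1}(\add\pi\Lambda)$ is all correct.

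The last paragraph has a gap, though. You claim $D\widetilde{\Lambda}\iso\Hom_{\mathscr{C}^n_\Lambda}(\pi\Lambda,\mathbb{S}\pi\Lambda)$ lies in $\add\widetilde{\Lambda}$ ``precisely when'' $\mathbb{S}\pi\Lambda\in\add\pi\Lambda$. The forward direction is immediate, but the converse is not an observation: the functor $\Hom_{\mathscr{C}^n_\Lambda}(\pi\Lambda,-)$ is not faithful (it kills $\pi\Lambda[1],\dots,\pi\Lambda[n-1]$), so an object $X$ with $\Hom(\pi\Lambda,X)$ projective need not lie in $\add\pi\Lambda$. What rescues the converse is precisely the vosnex hypothesis: by Lemma~\ref{lemma.equiv_vosnex} it forces $\mathbb{S}\pi\Lambda\in(\add\pi\Lambda)*(\add\pi\Lambda)[1]$, after which Lemma~\ref{lemma.factorismod} (the equivalence $\mathscr{Z}/(\mathscr{U}[1])\simeq\mod\mathscr{U}$) applies — this is exactly the (4)~$\Rightarrow$~(1) step in the proof of Proposition~\ref{prop.selfinjeq}. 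So the clean route for (3)~$\iff$~(5) is to apply Proposition~\ref{prop.selfinjeq} a second time, now to the $n$-cluster tilting subcategory $\add\pi\Lambda\subseteq\mathscr{C}^n_\Lambda$ (which is legitimate by Theorem~\ref{theorem.clairemain2}), after checking that the vosnex condition for $\mathscr{U}\psL$ in $\mathscr{D}_\Lambda$ is equivalent to the vosnex condition for $\add\pi\Lambda$ in $\mathscr{C}^n_\Lambda$ via the $\Hom$-formula in orbit categories. Also, the vosnex clauses in (4) and (5) do not literally ``agree verbatim'': (5) additionally carries $\tau_n$-finiteness by Notation~\ref{notation.vosnex}, which has to be noted (though it comes for free from, e.g., $\widetilde{\Lambda}$ being finite dimensional, or from (2) as you point out).
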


Finally note that for $n=2$ the vosnex property is always satisfied, so that we obtain the following.

\begin{corollary} \label{corollary.selfinj2repfin}
Let $\Lambda$ be an algebra of global dimension at most $2$. Then the following are equivalent.
\begin{enumerate}
\item $\Lambda$ is $2$-representation-finite.
\item $\mathscr{U}\psL = \leftsub{\mathscr{D}}{\mathbb{S}} \mathscr{U}\psL$.
\item $\add \pi \Lambda = \leftsub{\mathscr{C}_{\Lambda}^2}{\mathbb{S}} (\add \pi \Lambda)$.
\item $\mathscr{U}\psL$ is self-injective.
\item $\widetilde{\Lambda}$ is a self-injective algebra.
\end{enumerate}
\end{corollary}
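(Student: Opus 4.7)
The plan is extremely short: this corollary is just the specialization of the preceding corollary to the case $n=2$, combined with the observation that the vosnex property is vacuous in this range. So I would not reprove anything from scratch; I would simply reduce to the already-established $n$-corollary.

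Concretely, first I would recall from Notation~\ref{notation.vosnex} that the vosnex property for an $n$-cluster tilting subcategory $\mathscr{U}$ requires $\Hom_{\mathscr{T}}(\mathscr{U}[i],\mathscr{U}) = 0$ for all $i \in \{1,\ldots,n-2\}$. When $n=2$ the index set $\{1,\ldots,0\}$ is empty, so the condition is vacuously true for \emph{any} $2$-cluster tilting subcategory in \emph{any} triangulated category. In particular, whenever $\Lambda$ has $\gld \Lambda \leqslant 2$ and is $\tau_2$-finite, the subcategory $\mathscr{U}\psL \subseteq \mathscr{D}_\Lambda$ automatically has the vosnex property, and equivalently $\Lambda$ itself has the vosnex property in the sense of Notation~\ref{notation.vosnex}.

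Next I would feed this into the preceding corollary (the five-way equivalence for general $n$) applied with $n=2$. That corollary lists the same five conditions, except that conditions (4) and (5) there carry the extra assumption ``and has the vosnex property''. By the previous paragraph those extra clauses are automatic, so the five conditions collapse to the five listed in Corollary~\ref{corollary.selfinj2repfin}, and each implication is inherited. To make the argument clean I would spell out the two nontrivial reductions: (4) of the $n=2$ statement asks merely that $\mathscr{U}\psL$ is self-injective, which is equivalent to ``self-injective and vosnex'' because vosnex is automatic; and (5) asks merely that $\widetilde{\Lambda}$ be self-injective, where again the vosnex hypothesis on $\Lambda$ is free. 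The remaining conditions (1)--(3) are literally the same in the two statements, so they carry over without change.

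The only real content to check is therefore the vosnex observation, which is purely a matter of reading off an empty index set. There is no hard step; the main point of isolating this corollary is pedagogical, to emphasize that for $n=2$ self-injectivity of $\widetilde{\Lambda}$ alone characterizes $2$-representation-finiteness among algebras of global dimension at most $2$, without any auxiliary homological hypothesis. Hence the proof reduces to one sentence: apply the preceding corollary and observe that the vosnex condition is empty when $n=2$.
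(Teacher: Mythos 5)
Your proof is correct and matches the paper's own reasoning exactly: the paper precedes Corollary~\ref{corollary.selfinj2repfin} with the one-line remark that for $n=2$ the vosnex property is always satisfied, then invokes the preceding five-way corollary for general $n$. The observation that the index set $\{1,\ldots,n-2\}$ is empty when $n=2$ is precisely the content, and no further argument is needed.
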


For the proof of Proposition~\ref{prop.selfinjeq} we need the following preliminaries.

\begin{construction}
Let $\mathscr{T}$ be a triangulated category, and $\mathscr{A}$ and $\mathscr{B}$ two subcategories. Then $\mathscr{A} * \mathscr{B}$ denotes the full subcategory of $\mathscr{T}$, such that
\[ \Ob \mathscr{A} * \mathscr{B} = \{X \in \Ob \mathscr{T} \mid \exists \text{ a triangle } A \to[30] X \to[30] B \to[30] A[1] \text{ with } A \in \mathscr{A} \text{ and } B \in \mathscr{B} \}. \]
\end{construction}

\begin{lemma}[{\cite[Corollary~6.4]{IyYo}}] \label{lemma.factorismod}
Let $\mathscr{U}$ be an $n$-cluster tilting subcategory of a triangulated category $\mathscr{T}$. Set 
\[ \mathscr{Z} = \{ X \in \mathscr{T} \mid \Hom_{\mathscr{T}}(\mathscr{U},X[i])=0 \text{ for any } 0 < i \leqslant n-2\}. \]
Then we have $\mathscr{Z} = \mathscr{U} * \mathscr{U}[1]$, and there is an equivalence
\[ \Hom_{\mathscr{T}}(\mathscr{U},-) \colon \mathscr{Z} / (\mathscr{U}[1]) \to[30] \mod \mathscr{U}. \]
\end{lemma}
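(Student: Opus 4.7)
The plan is to invoke only two features of the cluster-tilting hypothesis: the vanishing $\Hom_{\mathscr{T}}(\mathscr{U},\mathscr{U}[j])=0$ for $1\le j\le n-1$, and the converse characterization of $\mathscr{U}$ as the full subcategory of objects satisfying this vanishing. The only other ingredient is the existence of right $\mathscr{U}$-approximations coming from functorial finiteness. For the equality $\mathscr{Z}=\mathscr{U}*\mathscr{U}[1]$, the inclusion $\supseteq$ is immediate: if $X$ sits in a triangle $U_1\to U_0\to X\to U_1[1]$ with $U_0,U_1\in\mathscr{U}$, then applying $\Hom_{\mathscr{T}}(\mathscr{U},-)$ at shift $i$ with $1\le i\le n-2$ sandwiches $\Hom_{\mathscr{T}}(\mathscr{U},X[i])$ between two cluster-tilting zeros, so $X\in\mathscr{Z}$. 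For $\subseteq$, take $X\in\mathscr{Z}$, choose a right $\mathscr{U}$-approximation $f\colon U_0\to X$, and complete to a triangle $X'\to U_0\to X\to X'[1]$. Applying $\Hom_{\mathscr{T}}(\mathscr{U},-)$ and combining the surjectivity of $\Hom_{\mathscr{T}}(\mathscr{U},f)$ with the vanishings $\Hom_{\mathscr{T}}(\mathscr{U},U_0[i])=0$ for $1\le i\le n-1$ and $\Hom_{\mathscr{T}}(\mathscr{U},X[i])=0$ for $1\le i\le n-2$, one reads off $\Hom_{\mathscr{T}}(\mathscr{U},X'[j])=0$ for every $1\le j\le n-1$. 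The cluster-tilting characterization forces $X'\in\mathscr{U}$, and rotating the triangle places $X$ in $\mathscr{U}*\mathscr{U}[1]$.

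Having identified $\mathscr{Z}$ in this way, a defining triangle $U_1\to U_0\to X\to U_1[1]$ for $X\in\mathscr{Z}$ yields, via $\Hom_{\mathscr{T}}(\mathscr{U},U_1[1])=0$, a presentation of $\Hom_{\mathscr{T}}(\mathscr{U},X)$ by representable functors. Hence the functor $F:=\Hom_{\mathscr{T}}(\mathscr{U},-)$ does land in $\mod\mathscr{U}$; and since $\Hom_{\mathscr{T}}(\mathscr{U},\mathscr{U}[1])=0$, $F$ kills every morphism factoring through an object of $\mathscr{U}[1]$, so $F$ descends to the quotient $\mathscr{Z}/(\mathscr{U}[1])$. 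Essential surjectivity follows by running this construction in reverse: a finite presentation of any $G\in\mod\mathscr{U}$ by representables is, by the Yoneda lemma, induced by some $g\colon U_1\to U_0$ in $\mathscr{U}$; its cone $X$ then lies in $\mathscr{U}*\mathscr{U}[1]=\mathscr{Z}$ and satisfies $F(X)\cong G$.

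For faithfulness, if $f\colon X\to Y$ in $\mathscr{Z}$ satisfies $F(f)=0$, apply $\Hom_{\mathscr{T}}(-,Y)$ to the defining triangle $U_1^X\to U_0^X\to X\to U_1^X[1]$ of $X$: the composite $U_0^X\to X\to Y$ is exactly the image under $F(f)$ of the approximation $U_0^X\to X$ and therefore vanishes, so $f$ factors through $U_1^X[1]\in\mathscr{U}[1]$. For fullness, given $\phi\colon F(X)\to F(Y)$, projectivity of representable functors in $\mod\mathscr{U}$ combined with the Yoneda lemma lifts $\phi$ to a commutative square relating the presenting morphisms $U_1^X\to U_0^X$ and $U_1^Y\to U_0^Y$; completing this square to a morphism between the defining triangles produces the desired $X\to Y$ inducing $\phi$. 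The main obstacle I expect is the bookkeeping in this last step, in particular showing that any two lifts of $\phi$ produce maps $X\to Y$ differing by a morphism factoring through $\mathscr{U}[1]$; however this is automatic by applying the faithfulness argument to the difference, closing the loop.
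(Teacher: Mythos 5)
The paper does not prove this lemma; it is quoted verbatim from Iyama--Yoshino \cite[Corollary~6.4]{IyYo}, so there is no in-paper argument to compare against. Your reconstruction is correct and is essentially the standard proof from that source: the approximation triangle $X'\to U_0\to X\to X'[1]$ with the long exact sequence forcing $X'\in\mathscr{U}$ gives $\mathscr{Z}=\mathscr{U}*\mathscr{U}[1]$, and the presentation $\Hom_{\mathscr{T}}(-,U_1)\to\Hom_{\mathscr{T}}(-,U_0)\to\Hom_{\mathscr{T}}(-,X)\to 0$ (valid since $\Hom_{\mathscr{T}}(\mathscr{U},\mathscr{U}[1])=0$, i.e.\ $n\geqslant 2$) yields density, faithfulness and fullness exactly as you describe; your closing worry about comparing different lifts is not even needed for fullness.
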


\begin{lemma} \label{lemma.equiv_vosnex}
Let $\mathscr{U}$ be an $n$-cluster tilting subcategory of a triangulated category $\mathscr{T}$. Then the following are equivalent:
\begin{enumerate}
\item $\mathscr{U}$ has the vosnex property,
\item $\mathbb{S} \mathscr{U} \subseteq \mathscr{U} * \mathscr{U}[1]$, and
\item $\mathbb{S}^{-1} \mathscr{U} \subseteq \mathscr{U}[-1] * \mathscr{U}$.
\end{enumerate}
\end{lemma}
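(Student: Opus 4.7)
The plan is to use Lemma~\ref{lemma.factorismod} to rewrite conditions (2) and (3) as $\Hom$-vanishing conditions, and then convert them to the vosnex condition~(1) via Serre duality. Implicit in the statement is that $\mathscr{T}$ has a Serre functor $\mathbb{S}$, so I will use the defining isomorphism $\Hom_{\mathscr{T}}(X, \mathbb{S} Y) = D\Hom_{\mathscr{T}}(Y, X)$ freely.

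For (1)$\iff$(2): By Lemma~\ref{lemma.factorismod}, $\mathscr{U} * \mathscr{U}[1]$ is precisely $\{X \in \mathscr{T} \mid \Hom_{\mathscr{T}}(\mathscr{U}, X[i]) = 0 \text{ for } 0 < i \leqslant n-2\}$. Hence (2) says that for all $U, U' \in \mathscr{U}$ and $0 < i \leqslant n-2$,
\[ \Hom_{\mathscr{T}}(U, \mathbb{S} U'[i]) = \Hom_{\mathscr{T}}(U, \mathbb{S}(U'[i])) = 0. \]
Applying Serre duality once gives $\Hom_{\mathscr{T}}(U, \mathbb{S}(U'[i])) = D\Hom_{\mathscr{T}}(U'[i], U)$, which vanishes exactly under the vosnex property.

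For (1)$\iff$(3): Because Definition~\ref{def.n-cto} of $n$-cluster tilting subcategories is symmetric under passing to the opposite category (where the suspension inverts), the proof of Lemma~\ref{lemma.factorismod} dualizes verbatim to yield
\[ \mathscr{U}[-1] * \mathscr{U} = \{X \in \mathscr{T} \mid \Hom_{\mathscr{T}}(X, \mathscr{U}[i]) = 0 \text{ for } 0 < i \leqslant n-2\}. \]
Condition (3) thus translates into $\Hom_{\mathscr{T}}(\mathbb{S}^{-1} U, U'[i]) = 0$ for all $U, U' \in \mathscr{U}$ and $0 < i \leqslant n-2$. Serre duality gives $\Hom_{\mathscr{T}}(\mathbb{S}^{-1} U, U'[i]) = D\Hom_{\mathscr{T}}(U'[i], \mathbb{S}\mathbb{S}^{-1} U) = D\Hom_{\mathscr{T}}(U'[i], U)$, so (3) is again equivalent to (1).

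The only nontrivial step is establishing the dual form of Lemma~\ref{lemma.factorismod} used in the second paragraph. I would either include this as a quick remark, noting that the argument of \cite[Corollary~6.4]{IyYo} relies only on the symmetric $n$-cluster tilting axiom and on the resolutions of Proposition~\ref{prop.ct-resolution} (both of which are self-dual), or simply apply Lemma~\ref{lemma.factorismod} in $\mathscr{T}^{\op}$ after observing that $\mathscr{U}^{\op}$ is $n$-cluster tilting in $\mathscr{T}^{\op}$ with suspension $[-1]$. Once this dualization is in hand, both equivalences are just two applications of Serre duality, so there is no real analytic obstacle.
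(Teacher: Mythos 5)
Your proof is correct and follows the paper's approach: use Lemma~\ref{lemma.factorismod} to translate the $*$-condition into a $\Hom$-vanishing condition and then apply Serre duality. The paper dispatches (1)$\iff$(3) with "the equivalence (1)$\iff$(3) is dual," whereas you spell out the passage to $\mathscr{T}^{\op}$ — a harmless and arguably welcome extra level of detail, but not a different argument.
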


\begin{proof}
We only show the equivalence (1) \iff\ (2), the equivalence (1) \iff\ (3) is dual. Let $\mathscr{Z} = \mathscr{U} * \mathscr{U}[1]$ as in Lemma~\ref{lemma.factorismod}. Then
\begin{align*}
& \mathbb{S} \mathscr{U} \subseteq \mathscr{Z} \\
\iff & \Hom_{\mathscr{T}}(\mathscr{U}, \mathbb{S} \mathscr{U}[i]) = 0 \quad \forall i \in \{1, \ldots, n-2\} && \text{(by Lemma~\ref{lemma.factorismod})} \\
\iff & \Hom_{\mathscr{T}}(\mathscr{U}[i], \mathscr{U}) = 0 \quad \forall i \in \{1, \ldots, n-2\} && \qedhere
\end{align*}
\end{proof}

Using this, we prove the desired proposition.

\begin{proof}[Proof of Proposition~\ref{prop.selfinjeq}]
Since $\mathbb{S} \mathscr{U}=\mathscr{U}[n]$, conditions (1) and (2) are clearly equivalent.

Next we show (2) \iff\ (3) in the following sequence of equivalent conditions.
\begin{align*}
\text{(3)} & \iff \Hom_{\mathscr{D}}(\mathscr{U}[n], \mathscr{U}[n-i]) = 0 \quad \forall i \in \{1, \ldots, n-1\} \\
& \iff \mathscr{U}[n] \subseteq \mathscr{U} \qquad \qquad \text{since $\mathscr{U}$ is $n$-cluster tilting} \\
& \iff \text{(2)} \hspace{2.8cm} \text{since $\mathscr{U}$ and $\mathscr{U}[n]$ are both $n$-cluster tilting}
\end{align*}

For the equivalence (1) \iff\ (4) first note that we have seen above that (1) \then\ (3), and clearly (3) \then\ $\mathscr{U}$ vosnex. So (1) and (4) both imply $\mathscr{U}$ to have the vosnex property. Hence, by Lemma~\ref{lemma.equiv_vosnex} we know that $\mathbb{S} \mathscr{U} \in \mathscr{U} * \mathscr{U}[1]$. Now
\begin{align*}
& \text{all injective $\mathscr{U}$-modules are projective} \\
& \iff \forall U \in \mathscr{U} \exists U' \in \mathscr{U} \colon \underbrace{D \Hom_{\mathscr{T}}(U, -)}_{\Hom_{\mathscr{T}}(-, \mathbb{S} U)} \iso \Hom_{\mathscr{T}}(-, U') \text{ in } \mod \mathscr{U} \\
& \iff \forall U \in \mathscr{U} \exists U' \in \mathscr{U} \colon \mathbb{S} U = U' \qquad \qquad \text{(by Lemma~\ref{lemma.factorismod})} \\
& \iff \mathbb{S} \mathscr{U} \subseteq \mathscr{U}
\end{align*}
Similarly all projective $\mathscr{U}$-modules are injective if and only if $\mathscr{U} \subseteq \mathbb{S} \mathscr{U}$.

Summing up we obtain the equivalence (1) \iff\ (4).
\end{proof}

\subsection{The \texorpdfstring{$2$}{2}-representation-finite iterated tilted algebras} \label{subsect.tilted}

As an application of Corollary~\ref{corollary.selfinj2repfin}, we classify the iterated tilted algebras which are $2$-representation-finite.

\begin{theorem} \label{theorem.tilted2repfin}
Let $\Lambda$ be an iterated tilted algebra. Then $\Lambda$ is $2$-representation-finite if and only if $\Lambda \iso \widetilde{k} Q / I$, where $\widetilde{k}$ is some finite dimensional skew field over $k$, and $Q$ and $I$ are one of the following:
\begin{enumerate}
\item $Q = \underbrace{\circ \tol[30]{\scriptstyle \alpha_1} \circ \tol[30]{\scriptstyle \alpha_2} \cdots \tol[30]{\scriptstyle \alpha_{n-2}} \circ \tol[30]{\scriptstyle \alpha_{n-1}} \circ}_{n \text{ vertices}}$ and $I = (\alpha_1 \cdots \alpha_{n-1})$.
\item $Q$ is a full subquiver of the following quiver, obtained by choosing for any $i \in \{1, \ldots, n-1\}$ exactly one of the arrows $\gamma_i$ or $\delta_i$ (and all other arrows).
\[ \begin{tikzpicture}[xscale=1.5]
 \node (t) at (2.5,1) [vertex] {};
 \node (1) at (0,0) [vertex] {};
 \node (2) at (1,0) [vertex] {};
 \node (3) at (2,0) [vertex] {};
 \node (4) at (3,0) {$\cdots$};
 \node (5) at (4,0) [vertex] {};
 \node (6) at (5,0) [vertex] {};
 \node (b1) at (.5,-1) [vertex] {};
 \node (b2) at (1.5,-1) [vertex] {};
 \node (b3) at (2.5,-1) [vertex] {};
 \node (b4) at (3,-1) {$\cdots$};
 \node (b5) at (3.5,-1) [vertex] {};
 \node (b6) at (4.5,-1) [vertex] {};
 \draw [->,out=90,in=180] (1) to node [above] {$\scriptstyle \alpha_1$} (t);
 \draw [->,out=0,in=90] (t) to node [above] {$\scriptstyle \alpha_2$} (6);
 \draw [->] (1) -- node [above] {$\scriptstyle \beta_1$} (2);
 \draw [->] (2) -- node [above] {$\scriptstyle \beta_2$} (3);
 \draw [->] (3) -- node [above] {$\scriptstyle \beta_3$} (4);
 \draw [->] (4) -- node [above] {$\scriptstyle \beta_{n-2}$} (5);
 \draw [->] (5) -- node [above] {$\scriptstyle \beta_{n-1}$} (6);
 \draw [->] (b1) -- node [below left=-3pt] {$\scriptstyle \gamma_1$} (1);
 \draw [->] (2) -- node [below right=-3pt] {$\scriptstyle \delta_1$} (b1);
 \draw [->] (b2) -- node [below left=-3pt] {$\scriptstyle \gamma_2$} (2);
 \draw [->] (3) -- node [below right=-3pt] {$\scriptstyle \delta_2$} (b2);
 \draw [->] (b3) -- node [below left=-3pt] {$\scriptstyle \gamma_3$} (3);
 \draw [->] (5) -- node [left=1pt] {$\scriptstyle \delta_{n-2}$} (b5);
 \draw [->] (b6) -- node [below=3pt] {$\scriptstyle \gamma_{n-1} \quad$} (5);
 \draw [->] (6) -- node [below right=-3pt] {$\scriptstyle \delta_{n-1}$} (b6);
\end{tikzpicture} \]
and
\[ I = \left( \begin{array}{ll} \alpha_1 \alpha_2 - \beta_1 \cdots \beta_{n-1} & \\ \gamma_i \beta_i & \text{ whenever } \gamma_i \in Q_1 \\ \beta_i \delta_i & \text{ whenever } \delta_i \in Q_1 \end{array} \right). \]
\end{enumerate}
\end{theorem}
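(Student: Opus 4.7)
The plan is to use Corollary~\ref{corollary.selfinj2repfin} to reduce $2$-representation-finiteness of $\Lambda$ to self-injectivity of $\widetilde{\Lambda}$, and then apply Ringel's classification \cite{Ringel_selfinj_cta} of self-injective cluster-tilted algebras to identify the list.

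First I would observe that if $\Lambda$ is $2$-representation-finite then $\gld \Lambda \leqslant 2$, so the iterated tilted hypothesis gives a triangle equivalence $\mathscr{D}_\Lambda \simeq \mathscr{D}_H$ with some hereditary algebra $H$. Derived invariance of the $2$-Amiot construction yields a triangle equivalence $\mathscr{C}^2_\Lambda \simeq \mathscr{C}^2_H$ carrying $\pi\Lambda$ to a $2$-cluster-tilting object in the classical cluster category of $H$. By Theorem~\ref{theorem.clairemain2}, $\widetilde{\Lambda} = \End_{\mathscr{C}^2_\Lambda}(\pi\Lambda)$ is therefore a cluster-tilted algebra in the sense of Buan-Marsh-Reiten. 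Corollary~\ref{corollary.selfinj2repfin} then restates the theorem as: an iterated tilted $\Lambda$ is $2$-representation-finite if and only if its associated cluster-tilted algebra $\widetilde{\Lambda}$ is self-injective.

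Next I would invoke Ringel's classification \cite{Ringel_selfinj_cta}: up to Morita equivalence, the self-injective cluster-tilted algebras form a short list, parametrized by a finite-dimensional skew field $\widetilde{k}$, an integer $n$, and in the $D_n$-case by combinatorial data. Computing the $3$-preprojective algebra of each algebra in (1) and (2) explicitly via Assem-Brüstle-Schiffler's relation-extension description $\widetilde{\Lambda} = \Lambda \ltimes \Ext^2_\Lambda(D\Lambda,\Lambda)$ from \cite{ABS_trivext}, one checks that these match Ringel's list exactly. This yields the ``if'' direction.

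Finally, for the converse, I would recover every iterated tilted $\Lambda$ giving a prescribed $\widetilde{\Lambda}$ in Ringel's list. By \cite{ABS_trivext}, these are exactly the admissible cuts of $\widetilde{\Lambda}$: subsets of arrows meeting each minimal relation cycle exactly once, such that the resulting algebra has finite global dimension and is derived equivalent to a hereditary algebra. For the $A_n$-type entries of Ringel's list, the admissible cuts produce precisely the linear Nakayama algebras in (1); for the $D_n$-type entries, they correspond to the choice of $\gamma_i$ versus $\delta_i$ for each $i \in \{1,\ldots,n-1\}$, together with a cut of the large cycle through $\alpha_1, \alpha_2$. The main obstacle will be this last step: carrying out the admissible cuts case-by-case and verifying both that all resulting algebras coincide with those in (1) and (2), and that no further iterated tilted presentations arise. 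In particular, checking that every combinatorial choice of $\gamma_i$ vs.\ $\delta_i$ does yield an iterated tilted (in fact tilted, by Happel's theorem, since $\gld \leqslant 2$) algebra of Dynkin type requires an explicit analysis of the inherited relations and of the resulting quiver combinatorics.
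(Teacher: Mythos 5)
Your proposal takes essentially the same route as the paper: reduce via Corollary~\ref{corollary.selfinj2repfin} to the self-injectivity of the cluster-tilted algebra $\widetilde{\Lambda} \iso \End_{\mathscr{C}^2_H}(T)$, then read off the answer from Ringel's classification \cite{Ringel_selfinj_cta}. The paper compresses the final translation step (recovering the $\Lambda$'s from the $\widetilde{\Lambda}$'s) into ``this immediately gives us the classification,'' whereas you spell out that it amounts to enumerating the admissible cuts of $\widetilde{\Lambda}$ in the sense of \cite{ABS_trivext} and checking that they yield exactly the lists (1) and (2); that is the correct way to discharge the implicit claim, so your proposal matches the paper's argument.
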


\begin{proof}
Let $H$ be a hereditary algebra derived equivalent to $\Lambda$. Then $\mathscr{C}_{\Lambda}^2 \approx \mathscr{C}_H^2$, and under this equivalence $\pi \Lambda$ corresponds to a cluster tilting object $T \in \mathscr{C}_H^2$. In particular $\widetilde{\Lambda} \iso \End_{\mathscr{C}_{\Lambda}^2}(\pi \Lambda) \iso \End_{\mathscr{C}_H^2}(T)$ is a cluster tilted algebra. By Corollary~\ref{corollary.selfinj2repfin} we know that $\widetilde{\Lambda}$ is self-injective. Ringel's classification \cite{Ringel_selfinj_cta} of self-injective cluster tilted algebras determine possible $H$ and $T$. This immediately gives us the classification.
\end{proof}

Inspired by \cite{Ringel_selfinj_cta} and Theorem~\ref{theorem.tilted2repfin} we also look at some quasi-tilted algebras.

\begin{proposition} \label{proposition.canonical}
Let $\Lambda$ be an iterated quasi-tilted algebra of canonical type $(2,2,2,2)$. If $\gld \Lambda \leqslant 2$ then $\Lambda$ is $2$-representation-finite.
\end{proposition}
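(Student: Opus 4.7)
The plan is to apply Corollary~\ref{corollary.selfinj2repfin}: under $\gld \Lambda \leqslant 2$, $2$-representation-finiteness of $\Lambda$ is equivalent to $\mathscr{U}\psL = \mathbb{S} \mathscr{U}\psL$. By Theorem~\ref{theorem.nrepfinderived}, it suffices to show that $D\Lambda \iso \mathbb{S}\Lambda$ lies in $\mathscr{U}\psL = \add\{\mathbb{S}_2^i \Lambda \mid i \in \mathbb{Z}\}$.

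By hypothesis $\Lambda$ is derived equivalent to a quasi-tilted algebra of canonical type $(2,2,2,2)$, and hence to the category of coherent sheaves on the weighted projective line $\mathbb{X} = \mathbb{X}_{(2,2,2,2)}$:
\[ \mathscr{D}_\Lambda \approx \mathscr{D}^b(\coh \mathbb{X}). \]
This equivalence commutes with both Serre functors and shifts, so any identity between their powers transfers between the two categories.

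The key computation is that on $\mathbb{X}$ of type $(2,2,2,2)$, the dualizing sheaf $\omega$ is $2$-torsion in the Picard group $\mathbb{L} = \mathbb{L}(\mathbb{X})$. In the standard generators $\vec{x}_1, \ldots, \vec{x}_4$ satisfying $2\vec{x}_i = \vec{c}$, the dualizing element is
\[ \vec{\omega} = (t-2)\vec{c} - \sum_{i=1}^t \vec{x}_i = 2\vec{c} - \vec{x}_1 - \vec{x}_2 - \vec{x}_3 - \vec{x}_4, \]
so $2\vec{\omega} = 4\vec{c} - 2 \sum \vec{x}_i = 4\vec{c} - 4\vec{c} = 0$, whence $\omega^{\otimes 2} \iso \mathcal{O}$. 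Therefore $\mathbb{S} = - \otimes \omega[1]$ satisfies $\mathbb{S}^2 \iso [2]$ on $\mathscr{D}^b(\coh \mathbb{X})$ and consequently on $\mathscr{D}_\Lambda$.

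Combined with $\mathbb{S}\Lambda \iso D\Lambda$, the periodicity $\mathbb{S}^2 \iso [2]$ readily identifies
\[ \mathscr{U}\psL = \add\{\mathbb{S}_2^i \Lambda \mid i \in \mathbb{Z}\} = \add\{\Lambda[2m],\, D\Lambda[2m] \mid m \in \mathbb{Z}\}, \]
which in particular contains $D\Lambda$. Theorem~\ref{theorem.nrepfinderived} then yields that $\Lambda$ is $2$-representation-finite. The principal non-routine ingredient is the standard identification of the derived category of an iterated quasi-tilted algebra of canonical type $(2,2,2,2)$ with $\mathscr{D}^b(\coh\mathbb{X}_{(2,2,2,2)})$; once this is accepted, the whole argument reduces to the one-line observation that $\vec{\omega}$ is $2$-torsion in $\mathbb{L}$, which is the special feature distinguishing the tubular type $(2,2,2,2)$ from the other canonical types.
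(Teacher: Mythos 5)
Your proof is correct, and it takes a genuinely different route from the paper's. The paper passes to the $2$-Amiot cluster category of $\coh\mathbb{X}_{(2,2,2,2)}$, observes that every indecomposable there lies in a tube of rank dividing $2$, so $\tau^2 \iso \id$ on objects, and since the cluster category is $2$-Calabi-Yau this gives $\mathbb{S} X \iso \tau^2 X \iso X$; then it applies the implication (3) $\Rightarrow$ (1) of Corollary~\ref{corollary.selfinj2repfin}. You instead stay in the derived category: the $2$-torsion of $\vec\omega$ gives $\mathbb{S}^2 \iso [2]$ on $\mathscr{D}^b(\coh\mathbb{X})$, hence on $\mathscr{D}_\Lambda$, so $\mathbb{S}_2^{-1}\Lambda \iso D\Lambda$ and $D\Lambda \in \mathscr{U}\psL$; you then apply the implication (2) $\Rightarrow$ (1) of Theorem~\ref{theorem.nrepfinderived}. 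Both arguments are ultimately rooted in the same arithmetical fact — $2\vec\omega = 0$ in the Picard group $\mathbb{L}(\mathbb{X}_{(2,2,2,2)})$ — but yours is more elementary in the sense that it never invokes the classification of indecomposables in the tubular category (the tube structure of the Auslander--Reiten quiver), only the Serre functor formula $\mathbb{S} = -\otimes\omega[1]$; the paper's version trades that extra input for a slightly shorter write-up by working downstream in the $2$-Calabi-Yau quotient. One small caveat worth noting: in your computation $\mathbb{S}_2^2 = \mathbb{S}^2[-4] = [-2]$, so $\mathscr{U}\psL = \add\{\Lambda[-2m], D\Lambda[-2-2m] \mid m \in \mathbb{Z}\}$, which agrees with your stated $\add\{\Lambda[2m], D\Lambda[2m]\}$ as a set since $m$ ranges over all of $\mathbb{Z}$; the conclusion $D\Lambda \in \mathscr{U}\psL$ is unaffected.
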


\begin{proof}
The Auslander-Reiten quiver of a weighted projective line of type $(2,2,2,2)$, and hence also the Auslander-Reiten quiver of the corresponding cluster category, consists of tubes of rank $1$ and $2$. Thus any object $X$ in the cluster category satisfies
\[ X \iso \tau^2 X = \mathbb{S} \mathbb{S}_2 X \iso \mathbb{S} X. \]
The claim now follows from Corollary~\ref{corollary.selfinj2repfin} (3) \then\ (1).
\end{proof}

\begin{remark}
Proposition~\ref{proposition.canonical} shows in particular, that there is a one-parameter family of $2$-representation-finite algebras (namely the canonical algebras of type $(2,2,2,2)$). On the other hand, it is known by \cite{BGRS} that there are only countably many isomorphism classes of representation-finite algebras. Thus in this way (weak) $2$-representation-finiteness differs from (weak) $1$-representation-finiteness.
\end{remark}

\section{The stable category I: the \texorpdfstring{$n$}{n}-representation-finite case} \label{sect.selfinj}

In the previous section we have seen that, for an $n$-representation-finite algebra $\Lambda$, the $(n+1)$-preprojective algebra $\widetilde{\Lambda}$ is self-injective. Hence its stable module category is a triangulated category. In this section we will see that this category is precisely the $(n+1)$-Amiot cluster category of the stable Auslander algebra $\Gamma = \underline{\End}_{\Lambda}(\widetilde{\Lambda})$. We will actually see that $\stabmod \mathscr{U}\psL \approx \mathscr{D}_{\Gamma}$, and we have the following picture:
\[ \begin{tikzpicture}[xscale=3,yscale=-1.5]
 \node (A) at (0,0) {$\mathscr{D}_{\Gamma}$};
 \node (B) at (1,0) {$\stabmod \mathscr{U}\psL$};
 \node (C) at (0,1) {$\mathscr{C}_{\Gamma}^{n+1}$};
 \node (D) at (1,1) {$\stabmod \widetilde{\Lambda}$};
 \draw (A) -- node [above] {$\approx$} (B);
 \draw (C) -- node [above] {$\approx$} (D);
 \draw [->] (A) -- node [left] {$\pi$} (C);
 \draw [->] (B) -- node [right] {push-down} (D);
\end{tikzpicture} \]

\subsection{The \texorpdfstring{$(n+1)$}{(n+1)}-Calabi-Yau property}

First we prove that in the setup above $\stabmod \widetilde{\Lambda}$ is $(n+1)$-Calabi-Yau. We need the following notation, to be able to lift auto-equivalences from categories to their (stable) module categories.

\begin{definition}
Let $\mathscr{T}$ be a category, $\alpha \in \Aut(\mathscr{T})$ an auto-equivalence. Let $X \in \mod \mathscr{T}$. Then we denote by $\alpha X$ the module that is given by the composition
\[ \mathscr{T} \tol[30]{\alpha^{-1}} \mathscr{T} \tol[30]{X} (\mod k)^{\op}. \]
In particular we have $\alpha \Hom_{\mathscr{T}}(-, T) = \Hom_{\mathscr{T}}(-, \alpha T)$ (this is the reason for choosing to compose with $\alpha^{-1}$ above). Note that $\alpha$ induces auto-equivalences on $\mod \mathscr{T}$ and $\stabmod \mathscr{T}$.
\end{definition}

\begin{proposition} \label{proposition.serrestable1}
Let $\mathscr{T}$ be a triangulated category with a Serre functor $\leftsub{\mathscr{T}}{\mathbb{S}}$. Let $\mathscr{U}$ be a subcategory satisfying $\leftsub{\mathscr{T}}{\mathbb{S}} \mathscr{U} = \mathscr{U}$. Then $\stabmod \mathscr{U}$ is a triangulated category with Serre functor $\leftsub{\mathscr{T}}{\mathbb{S}} \circ [-1]_{\stabmod \mathscr{U}}$.
\end{proposition}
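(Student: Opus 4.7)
My plan has two parts: first checking that $\stabmod \mathscr{U}$ is triangulated, then computing its Serre functor. Under the Yoneda embedding $U\mapsto \Hom_\mathscr{T}(-,U)|_\mathscr{U}$ the objects of $\mathscr{U}$ correspond to the finitely generated projective $\mathscr{U}$-modules, while Serre duality in $\mathscr{T}$ gives
$$D\Hom_\mathscr{T}(U,-)|_\mathscr{U}\cong\Hom_\mathscr{T}(-,\mathbb{S}U)|_\mathscr{U},$$
so the corresponding injective $\mathscr{U}$-modules are exactly those of the form $\Hom_\mathscr{T}(-,\mathbb{S}U)|_\mathscr{U}$ with $U\in\mathscr{U}$. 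The hypothesis $\mathbb{S}\mathscr{U}=\mathscr{U}$ therefore says that projective and injective modules coincide in $\mod\mathscr{U}$, i.e.\ $\mathscr{U}$ is self-injective. Hence $\mod\mathscr{U}$ is Frobenius and $\stabmod\mathscr{U}$ is triangulated by Happel's theorem, with suspension $[1]$ equal to the cosyzygy $\Omega^{-1}$.

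Next I would invoke the Auslander--Reiten formula
$$D\Ext^1_\mathscr{U}(X,Y)\cong\underline{\Hom}_\mathscr{U}(Y,\tau X)$$
for $X,Y\in\mod\mathscr{U}$, together with the classical identification $\tau=\nu\circ\Omega^2$ on $\stabmod\mathscr{U}$, where $\nu$ is the Nakayama functor. The Serre duality calculation above shows that $\nu$ sends the projective module $\Hom_\mathscr{T}(-,U)|_\mathscr{U}$ to $\Hom_\mathscr{T}(-,\mathbb{S}U)|_\mathscr{U}$, which is precisely the pushforward of $\mathbb{S}\colon\mathscr{U}\to\mathscr{U}$ in the sense of the preceding Definition. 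Since $\mathscr{U}$ is self-injective, both $\nu$ and this pushforward are exact autoequivalences of $\mod\mathscr{U}$ agreeing on projectives, so they coincide; in particular $\nu\cong\mathbb{S}$ as autoequivalences of $\stabmod\mathscr{U}$.

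Putting the pieces together, I use $\Ext^1_\mathscr{U}(X,Y)=\underline{\Hom}_\mathscr{U}(X,Y[1])$ (valid in any Frobenius category, since $\Omega=[-1]$); substituting $Y[-1]$ for $Y$ in the AR formula and using $\underline{\Hom}(Y[-1],-)\cong\underline{\Hom}(Y,-[1])$ yields
$$D\underline{\Hom}_\mathscr{U}(X,Y)\cong\underline{\Hom}_\mathscr{U}(Y,\tau X\,[1])=\underline{\Hom}_\mathscr{U}(Y,\mathbb{S}X[-1]),$$
where the last equality uses $\tau\,[1]=\nu\Omega^2[1]=\nu\Omega\cong\mathbb{S}\circ[-1]$. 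This exhibits $\mathbb{S}\circ[-1]_{\stabmod\mathscr{U}}$ as the Serre functor on $\stabmod\mathscr{U}$, with the required naturality inherited from that of the AR formula and Serre duality in $\mathscr{T}$. The main obstacle is justifying the AR formula and the identification $\tau=\nu\Omega^2$ in the possibly large functor category $\mod\mathscr{U}$ rather than for a single finite dimensional algebra; however, these standard results extend without essential change, since Hom-finiteness of $\mathscr{T}$ (implicit in the existence of $\mathbb{S}$) descends to $\mathscr{U}$ and makes $\mod\mathscr{U}$ a Hom-finite Frobenius category to which the classical proofs apply.
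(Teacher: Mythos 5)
Your proof is correct and follows essentially the same route as the paper: both establish that $\mod\mathscr{U}$ is Frobenius via Serre duality on $\mathscr{T}$ and then apply Auslander--Reiten duality to identify the Serre functor on $\stabmod\mathscr{U}$. The only cosmetic difference is that the paper computes $\tau^-_\mathscr{U}$ directly from an injective copresentation (showing $\leftsub{\mathscr{T}}{\mathbb{S}}^{-1}Y$ is a double syzygy of $\tau^-_\mathscr{U}Y$), whereas you invoke the general formula $\tau=\nu\Omega^2$ together with the identification $\nu\cong\leftsub{\mathscr{T}}{\mathbb{S}}$ obtained by comparing their values on projectives.
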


\begin{proof}
We have $D\Hom_{\mathscr{T}}(U, -) = \Hom_{\mathscr{T}}(-, \leftsub{\mathscr{T}}{\mathbb{S}} U)$. So projective and injective objects in $\mod \mathscr{U}$ coincide, and hence $\stabmod \mathscr{U}$ is a triangulated category.

We calculate the inverse Auslander-Reiten translation $\tau^-_{\mathscr{U}}$ of a $\mathscr{U}$-module $Y$ in the following diagram.
\[ \begin{tikzpicture}[xscale=4,yscale=-1]
 \node (A1) at (0,0) {$Y$};
 \node (B1) at (1,0) {$D\Hom_{\mathscr{T}}(U^0, -)$};
 \node (C1) at (2,0) {$D\Hom_{\mathscr{T}}(U^1, -)$};
 \node (B2) at (1,1) {$\Hom_{\mathscr{T}}(-, \leftsub{\mathscr{T}}{\mathbb{S}}U^0)$};
 \node (C2) at (2,1) {$\Hom_{\mathscr{T}}(-, \leftsub{\mathscr{T}}{\mathbb{S}}U^1)$};
 \node (A3) at (0,2) {$\tau_{\mathscr{U}}^- Y [-2]_{\stabmod \mathscr{U}}$};
 \node (A3+) at (-.5,1) {$\leftsub{\mathscr{T}}{\mathbb{S}}^{-1}Y$};
 \node (B3) at (1,2) {$\Hom_{\mathscr{T}}(-, U^0)$};
 \node (C3) at (2,2) {$\Hom_{\mathscr{T}}(-, U^1)$};
 \node (D3) at (3,2) {$\tau^-_{\mathscr{U}} Y$};
 \draw [>->] (A1) -- (B1);
 \draw [->] (B1) -- (C1);
 \draw [->] (B2) -- (C2);
 \draw [>->] (A3) -- (B3);
 \draw [->] (B3) -- (C3);
 \draw [->>] (C3) -- (D3);
 \draw [double distance=1.5pt] (B1) -- (B2);
 \draw [double distance=1.5pt] (C1) -- (C2);
 \draw [double distance=1.5pt] (A3+) -- (A3);
\end{tikzpicture} \]
By Auslander-Reiten duality we have
\begin{align*}
\underline{\Hom}_{\mathscr{U}}(X, Y) & = D \Ext^1_{\mathscr{U}}(\tau^-_{\mathscr{U}} Y, X) \\
& = D \Ext^1_{\mathscr{U}}(\leftsub{\mathscr{T}}{\mathbb{S}}^{-1} Y[2]_{\stabmod \mathscr{U}}, X) \\
& = D \underline{\Hom}_{\mathscr{U}}(\leftsub{\mathscr{T}}{\mathbb{S}}^{-1} Y[1]_{\stabmod \mathscr{U}}, X).
\end{align*}
This shows that $\leftsub{\mathscr{T}}{\mathbb{S}} \circ [-1]_{\stabmod \mathscr{U}}$ is the Serre functor on $\stabmod \mathscr{U}$.
\end{proof}

\begin{lemma} \label{lemma.n_ind_n+2}
Let $\mathscr{T}$ be a triangulated category with a full subcategory $\mathscr{U}$ satisfying
\begin{itemize}
\item $\Hom_{\mathscr{T}}(\mathscr{U}, \mathscr{U}[i]) = 0$ for $i \in \{1, \ldots, n-1\}$, and
\item $\mathscr{U}[n] = \mathscr{U}$.
\end{itemize}
Any triangles
\[ \begin{tikzpicture}[xscale=1.3,yscale=1.5]
 \node (X0) at (0,0) {$U_{n+1}$};
 \node (X1) at (2,0) {$X_{n-1}$};
 \node (X2) at (4,0) {$X_{n-2}$};
 \node (X8) at (8,0) {$X_1$};
 \node (X9) at (10,0) {$U_0$};
 \node (U0) at (1,1) {$U_n$};
 \node (U1) at (3,1) {$U_{n-1}$};
 \node (U2) at (5,1) {$U_{n-2}$};
 \node (U7) at (7,1) {$U_2$};
 \node (U8) at (9,1) {$U_1$};
 \node at (6,.5) {$\cdots$};
 \draw [->] (U0) -- (U1);
 \draw [->] (U1) -- (U2);
 \draw [->] (U7) -- (U8);
 \draw [->] (X0) -- (U0);
 \draw [->] (X1) -- (U1);
 \draw [->] (X2) -- (U2);
 \draw [->] (X8) -- (U8);
 \draw [->] (U0) -- (X1);
 \draw [->] (U1) -- (X2);
 \draw [->] (U7) -- (X8);
 \draw [->] (U8) -- (X9);
 \draw [->] (X1) -- node [pos=.15] {\tikz{\draw[-] (0,.15) -- (0,-.15);}} (X0);
 \draw [->] (X2) -- node [pos=.15] {\tikz{\draw[-] (0,.15) -- (0,-.15);}} (X1);
 \draw [->] (X9) -- node [pos=.15] {\tikz{\draw[-] (0,.15) -- (0,-.15);}} (X8);
\end{tikzpicture} \]
with $U_i \in \mathscr{U}$ give rise to a long exact sequence
\[ \begin{tikzpicture}[xscale=3,yscale=-1]
 \node (C0) at (2,0) {$\cdots$};
 \node (D0) at (3,0) {$\leftsub{\mathscr{T}}(-, U_0[-n])$};
 \node (A1) at (0,1) {$\leftsub{\mathscr{T}}(-, U_{n+1})$};
 \node (B1) at (1,1) {$\leftsub{\mathscr{T}}(-, U_n)$};
 \node (C1) at (2,1) {$\cdots$};
 \node (D1) at (3,1) {$\leftsub{\mathscr{T}}(-, U_0)$};
 \node (A2) at (0,2) {$\leftsub{\mathscr{T}}(-, U_{n+1}[n])$};
 \node (B2) at (1,2) {$\leftsub{\mathscr{T}}(-, U_n[n])$};
 \node (C2) at (2,2) {$\cdots$};
 \draw [->] (C0) -- (D0);
 \draw [->] (D0) -- node [above right,at end] {$\scriptstyle d_{-1}$} (3.5,0) arc (-90:90:.25) -- (-.5,.5) arc (270:90:.25) -- (A1);
 \draw [->] (A1) -- (B1);
 \draw [->] (B1) -- (C1);
 \draw [->] (C1) -- (D1);
 \draw [->] (D1) -- (3.5,1) arc (-90:90:.25) -- node [below right,at start] {$\scriptstyle d_0$} (-.5,1.5) arc (270:90:.25) -- (A2);
 \draw [->] (A2) -- (B2);
 \draw [->] (B2) -- (C2);
\end{tikzpicture} \]
in $\mod \mathscr{U}$, with $\Im d_{\ell} = \Hom_{\mathscr{T}}(-, X_1[\ell n + 1])$.
\end{lemma}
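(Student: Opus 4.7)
The plan is to apply $\Hom_{\mathscr{T}}(-, ?)$, viewed as a functor into $\mod \mathscr{U}$, to each of the $n+1$ given triangles and splice the resulting long exact sequences. The essential tool is the following periodicity vanishing: from $\mathscr{U} = \mathscr{U}[n]$ together with $\Hom_{\mathscr{T}}(\mathscr{U}, \mathscr{U}[i]) = 0$ for $i \in \{1, \ldots, n-1\}$ one immediately obtains
\[ \Hom_{\mathscr{T}}(V, W[j]) = 0 \quad \text{for all } V, W \in \mathscr{U} \text{ and all } j \in \mathbb{Z} \text{ with } j \not\equiv 0 \pmod{n}, \]
because $W[j] \in \mathscr{U}[j \bmod n]$. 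It then suffices to produce the portion of the long exact sequence from $\leftsub{\mathscr{T}}(-, U_0[-n])$ up through $\leftsub{\mathscr{T}}(-, U_{n+1}[n])$; the rest follows by repeating the argument with the diagram of triangles shifted by multiples of $n$, which is permissible because $\mathscr{U}[n] = \mathscr{U}$ keeps each $U_j[\ell n]$ in $\mathscr{U}$ and so preserves the periodicity vanishing.

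For the middle portion $\leftsub{\mathscr{T}}(-, U_{n+1}) \to \leftsub{\mathscr{T}}(-, U_n) \to \cdots \to \leftsub{\mathscr{T}}(-, U_0)$ it is convenient to adopt the uniform notation $X_0 := U_0$ and $X_n := U_{n+1}$, so that the $n$ triangles take the symmetric form $T_j \colon X_j \to U_j \to X_{j-1} \to X_j[1]$ for $j = 1, \ldots, n$. The six-term long exact sequence attached to $T_j$ yields
\[ \leftsub{\mathscr{T}}(-, X_j) \longrightarrow \leftsub{\mathscr{T}}(-, U_j) \longrightarrow \leftsub{\mathscr{T}}(-, X_{j-1}) \longrightarrow \leftsub{\mathscr{T}}(-, X_j[1]) \longrightarrow \leftsub{\mathscr{T}}(-, U_j[1]) = 0 \]
by the periodicity vanishing. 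Splicing these $n$ fragments at the intermediate terms $\leftsub{\mathscr{T}}(-, X_{j-1})$ immediately produces the required exact chain, whose rightmost map is the surjection $\leftsub{\mathscr{T}}(-, U_0) \twoheadrightarrow \leftsub{\mathscr{T}}(-, X_1[1])$ coming from $T_1$.

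For the connecting morphism $d_0$ and the identification $\Im d_0 = \leftsub{\mathscr{T}}(-, X_1[1])$, I would chain the connecting morphisms in the shifted triangles. For each $2 \leqslant j \leqslant n-1$, applying $\Hom_{\mathscr{T}}(-,?)$ to $T_j$ shifted by $j-1$ gives, using the periodicity vanishing since both $j-1$ and $j$ lie in $\{1, \ldots, n-1\}$,
\[ 0 = \leftsub{\mathscr{T}}(-, U_j[j-1]) \longrightarrow \leftsub{\mathscr{T}}(-, X_{j-1}[j-1]) \xrightarrow{\sim} \leftsub{\mathscr{T}}(-, X_j[j]) \longrightarrow \leftsub{\mathscr{T}}(-, U_j[j]) = 0, \]
yielding isomorphisms $\leftsub{\mathscr{T}}(-, X_1[1]) \iso \cdots \iso \leftsub{\mathscr{T}}(-, X_{n-1}[n-1])$. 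Finally, applying $\Hom_{\mathscr{T}}(-,?)$ to $T_n$ shifted by $n-1$ together with $\leftsub{\mathscr{T}}(-, U_n[n-1]) = 0$ gives an injection $\leftsub{\mathscr{T}}(-, X_{n-1}[n-1]) \hookrightarrow \leftsub{\mathscr{T}}(-, X_n[n]) = \leftsub{\mathscr{T}}(-, U_{n+1}[n])$. Composing with the earlier surjection produces
\[ d_0 \colon \leftsub{\mathscr{T}}(-, U_0) \twoheadrightarrow \leftsub{\mathscr{T}}(-, X_1[1]) \hookrightarrow \leftsub{\mathscr{T}}(-, U_{n+1}[n]), \]
so $\Im d_0 = \leftsub{\mathscr{T}}(-, X_1[1])$ as claimed.

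The main obstacle is organizational rather than conceptual: one must verify that all these spliced pieces fit coherently into a single long exact sequence in $\mod \mathscr{U}$, with the image identifications $\Im d_\ell = \leftsub{\mathscr{T}}(-, X_1[\ell n + 1])$ for every $\ell \in \mathbb{Z}$. Once the periodicity vanishing is in place this becomes a routine bookkeeping exercise, obtained by iterating the argument above with the whole diagram of triangles shifted by $\ell n$ and splicing the results; the consistency of the various connecting morphisms follows from the naturality of the six-term sequences attached to the triangles under shifts.
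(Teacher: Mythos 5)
Your overall plan — apply $\Hom_{\mathscr{T}}(-,?)$ to the given triangles, splice, then periodize by shifting by $\ell n$ — is the same as the paper's. But there is a real gap at the sentence ``Splicing these $n$ fragments at the intermediate terms $\leftsub{\mathscr{T}}{(-, X_{j-1})}$ immediately produces the required exact chain.'' It does not.

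The problem is that your fragment from $T_j$, namely
\[
\leftsub{\mathscr{T}}{(-, X_j)} \longrightarrow \leftsub{\mathscr{T}}{(-, U_j)} \longrightarrow \leftsub{\mathscr{T}}{(-, X_{j-1})} \longrightarrow \leftsub{\mathscr{T}}{(-, X_j[1])} \longrightarrow 0,
\]
only gives a surjection onto $\leftsub{\mathscr{T}}{(-, X_j[1])}$, not the surjection $\leftsub{\mathscr{T}}{(-, U_j)} \twoheadrightarrow \leftsub{\mathscr{T}}{(-, X_{j-1})}$ that you need for the splice. The map $\leftsub{\mathscr{T}}{(-, U_{j})} \to \leftsub{\mathscr{T}}{(-, U_{j-1})}$ in the desired sequence factors through $\leftsub{\mathscr{T}}{(-, X_{j-1})}$, and for exactness at $\leftsub{\mathscr{T}}{(-, U_{j-1})}$ one has to know (a) that $\leftsub{\mathscr{T}}{(-, U_j)} \to \leftsub{\mathscr{T}}{(-, X_{j-1})}$ is epi, equivalently $\Hom_{\mathscr{T}}(\mathscr{U}, X_j[1]) = 0$, and (b) that $\leftsub{\mathscr{T}}{(-, X_{j-2})} \to \leftsub{\mathscr{T}}{(-, U_{j-2})}$ is mono, equivalently $\Hom_{\mathscr{T}}(\mathscr{U}, X_{j-3}[-1]) = 0$. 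Your ``periodicity vanishing'' only applies when both arguments lie in $\mathscr{U}$, but the $X_i$ with $1 \leqslant i \leqslant n-1$ are \emph{not} in $\mathscr{U}$, so these vanishings do not come for free. In fact, $\Hom_{\mathscr{T}}(\mathscr{U}, X_1[1])$ and $\Hom_{\mathscr{T}}(\mathscr{U}, X_{n-1}[-1])$ are precisely the nonzero objects that appear as $\Im d_\ell$, so this distinction is the whole point.

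The missing step, which is the crux of the paper's proof, is to record that $X_i \in \mathscr{U} * \mathscr{U}[1] * \cdots * \mathscr{U}[n-i]$ and, dually, $X_i \in \mathscr{U}[-i] * \mathscr{U}[-i+1] * \cdots * \mathscr{U}$, and to deduce from the periodicity vanishing that $\Hom_{\mathscr{T}}(\mathscr{U}, X_i[1]) = 0$ for $i \geqslant 2$ and $\Hom_{\mathscr{T}}(\mathscr{U}, X_i[-1]) = 0$ for $i \leqslant n-2$. With those in hand the middle triangles yield genuine short exact sequences $0 \to \leftsub{\mathscr{T}}{(-, X_i)} \to \leftsub{\mathscr{T}}{(-, U_i)} \to \leftsub{\mathscr{T}}{(-, X_{i-1})} \to 0$ and the splicing becomes legitimate. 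Your identification of $d_0$ and $\Im d_\ell$ via the chain of isomorphisms $\leftsub{\mathscr{T}}{(-, X_1[1])} \iso \cdots \iso \leftsub{\mathscr{T}}{(-, X_{n-1}[n-1])}$ and the final injection into $\leftsub{\mathscr{T}}{(-, U_{n+1}[n])}$ is fine; it is the middle-portion exactness that needs the extra argument.
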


\begin{proof}
By the existence of the triangles in the lemma we have
\begin{align*}
& X_{n-1} \in \mathscr{U} * \mathscr{U}[1] && \text{and} && X_1 \in \mathscr{U}[-1] * \mathscr{U} \\
& X_{n-2} \in \mathscr{U} * \mathscr{U}[1] * \mathscr{U}[2] &&&& X_2 \in \mathscr{U}[-2] * \mathscr{U}[-1] * \mathscr{U} \\
& \qquad \vdots &&&& \qquad \vdots \\
& X_i \in \mathscr{U} * \mathscr{U}[1] * \cdots * \mathscr{U}[n-i] &&&& X_i \in \mathscr{U}[-i] * \mathscr{U}[-i+1] * \cdots * \mathscr{U}
\end{align*}
By the left inclusion above we have $\Hom_{\mathscr{T}}(\mathscr{U}, X_i[1]) = 0$ for $i \geqslant 2$. By the right inclusion, and since $\mathscr{U} = \mathscr{U}[n]$, we have $\Hom_{\mathscr{T}}(\mathscr{U}, X_i[-1]) = 0$ for $i \leqslant n-2$. Hence for $i \in \{2, \ldots, n-1\}$ we obtain a short exact sequence of $\mathscr{U}$-modules
\[ \underbrace{\leftsub{\mathscr{T}}{(-, X_{i-1}[-1])}}_{= 0} \to[30] \leftsub{\mathscr{T}}{(-, X_i)} \to[30] \leftsub{\mathscr{T}}{(-, U_i)} \to[30] \leftsub{\mathscr{T}}{(-, X_{i-1})} \to[30] \underbrace{\leftsub{\mathscr{T}}{(-, X_i[1])}}_{= 0}. \]
The leftmost and rightmost triangle give rise to exact sequences
\[ \leftsub{\mathscr{T}}{(-, X_{n-1}[-1])} \mono[30] \leftsub{\mathscr{T}}{(-, U_{n+1})} \to[30] \leftsub{\mathscr{T}}{(-, U_n)} \epi[30] \leftsub{\mathscr{T}}{(-, X_{n-1})} \text{, and} \]
\[ \leftsub{\mathscr{T}}{(-, X_1)} \mono[30] \leftsub{\mathscr{T}}{(-, U_1)} \to[30] \leftsub{\mathscr{T}}{(-, U_0)} \epi[30] \leftsub{\mathscr{T}}{(-, X_1[1])}. \]

Thus we have an exact sequence
\[ \leftsub{\mathscr{T}}(-,X_{n-1}[-1]) \mono \leftsub{\mathscr{T}}(-,U_{n+1}) \to \leftsub{\mathscr{T}}(-,U_n) \to \cdots \to \leftsub{\mathscr{T}}(-,U_1) \to \leftsub{\mathscr{T}}(-,U_0) \epi \leftsub{\mathscr{T}}(-,X_1[1]). \]
Similarly (since $\mathscr{U} = \mathscr{U}[\ell n]$) we get an exact sequence
\[ \leftsub{\mathscr{T}}(-,X_{n-1}[\ell n-1]) \mono \leftsub{\mathscr{T}}(-,U_{n+1}[\ell n]) \to \cdots \to \leftsub{\mathscr{T}}(-,U_0[\ell n]) \epi \leftsub{\mathscr{T}}(-,X_1[\ell n+1]) \]
for any $\ell \in \mathbb{Z}$. Now note that
\[ \leftsub{\mathscr{T}}{(-, X_{n-1}[n\ell - 1])} \iso \leftsub{\mathscr{T}}{(-, X_{n-2}[n\ell - 2])} \iso \cdots \iso \leftsub{\mathscr{T}}(-, X_1[\underbrace{n\ell - (n-1)}_{= n(\ell - 1) + 1}]) \]
on $\mathscr{U}$. Hence we obtain the long exact sequence of the lemma by concatenating the exact sequences above.
\end{proof}

\begin{proposition} \label{proposition.serrestable2}
Let $\mathscr{T}$ be a triangulated category, and let $\mathscr{U}$ be an $n$-cluster tilting subcategory of $\mathscr{T}$, satisfying $\mathscr{U}[n] = \mathscr{U}$. Then we have $[n+2]_{\stabmod \mathscr{U}} = [n]_{\mathscr{T}}$ on $\stabmod \mathscr{U}$.
\end{proposition}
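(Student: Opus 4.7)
The plan is to exhibit, for each $\mathscr{U}$-module $Y$, a doubly-infinite periodic projective--injective resolution in $\mod \mathscr{U}$ whose period $n+2$ identifies the $(n+2)$-fold cosyzygy of $Y$ with the twist $[n]_{\mathscr{T}}Y$.

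First, by Lemma~\ref{lemma.factorismod}, every $Y \in \mod \mathscr{U}$ can be written as $Y = \Hom_{\mathscr{T}}(-, X_1[1])|_{\mathscr{U}}$ for some $X_1 \in \mathscr{U}[-1] * \mathscr{U}$. From $X_1$ one iteratively constructs the system of triangles appearing in Lemma~\ref{lemma.n_ind_n+2}: the inclusion $X_1 \in \mathscr{U}[-1]*\mathscr{U}$ yields the triangle $X_1 \to U_1 \to U_0 \to X_1[1]$, successive right $\mathscr{U}$-approximations of the $X_i$'s produce triangles $X_i \to U_i \to X_{i-1} \to X_i[1]$ for $i = 2, \ldots, n-1$, and a final right $\mathscr{U}$-approximation of $X_{n-1}$ gives $U_{n+1} \to U_n \to X_{n-1} \to U_{n+1}[1]$ (the cone $U_{n+1}$ lying in $\mathscr{U}$ by the cluster-tilting property together with $\mathscr{U}[n]=\mathscr{U}$).

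Next, Lemma~\ref{lemma.n_ind_n+2} applies: its hypotheses $\Hom_{\mathscr{T}}(\mathscr{U}, \mathscr{U}[i]) = 0$ for $1 \leqslant i \leqslant n-1$ and $\mathscr{U}[n] = \mathscr{U}$ are exactly those we have. The lemma produces a doubly-infinite long exact sequence in $\mod \mathscr{U}$ whose terms are the representable (hence projective) modules $\Hom_{\mathscr{T}}(-, U_i[\ell n])$ and whose distinguished images along the differentials $d_\ell$ are
\[ Y_\ell := \Hom_{\mathscr{T}}(-, X_1[\ell n + 1]), \quad \ell \in \mathbb{Z}, \]
with $Y_0 \iso Y$. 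Because $\mathscr{U}$ is self-injective under the assumption $\mathscr{U}[n]=\mathscr{U}$ (Proposition~\ref{prop.selfinjeq}), these terms are also injective, so the sequence is at the same time an injective coresolution of $Y$. Counting the injective terms between $Y_0$ and $Y_1$ gives exactly $n+2$ of them, namely $\Hom_{\mathscr{T}}(-, U_i[n])$ for $i = n+1, n, \ldots, 0$. Hence $[n+2]_{\stabmod \mathscr{U}} Y \iso Y_1$. Since the induced action of $[n]_{\mathscr{T}}$ on representables reads $[n]_{\mathscr{T}} \Hom_{\mathscr{T}}(-, Z) = \Hom_{\mathscr{T}}(-, Z[n])$, we find $Y_1 = [n]_{\mathscr{T}} Y$, which gives the claimed identification on objects.

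It remains to upgrade this to an isomorphism of functors. The functor $[n]_{\mathscr{T}}$ descends to $\stabmod \mathscr{U}$ because $\mathscr{U}[n] = \mathscr{U}$ makes it preserve projective-injective $\mathscr{U}$-modules. Naturality of the identification in $Y$ will be, I expect, the main technical point: every morphism of $\mathscr{U}$-modules lifts via Lemma~\ref{lemma.factorismod} to a morphism between the representing objects in $\mathscr{U} * \mathscr{U}[1]$, and the construction of Lemma~\ref{lemma.n_ind_n+2} depends functorially on this lifted data up to chain homotopy; combined with the uniqueness of injective coresolutions up to chain homotopy, this yields the required naturality in $\stabmod \mathscr{U}$.
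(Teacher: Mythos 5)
Your argument follows essentially the same route as the paper: write $Y = \Hom_{\mathscr{T}}(-,X)$ with $X \in \mathscr{U} * \mathscr{U}[1]$ via Lemma~\ref{lemma.factorismod}, choose a $\mathscr{U}$-presentation triangle of $X$, build the remaining triangles by the approximation chain of Proposition~\ref{prop.ct-resolution} applied to $X[-1]$, and read off the period $n+2$ from the doubly-infinite exact sequence of Lemma~\ref{lemma.n_ind_n+2}. Two small points: the object $U_{n+1}$ lands in $\mathscr{U}$ already by the $n$-cluster-tilting property (Proposition~\ref{prop.ct-resolution}), without invoking $\mathscr{U}[n]=\mathscr{U}$; and it is the cocone, not the cone, of the final approximation that lies in $\mathscr{U}$.
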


\begin{proof}
By Lemma~\ref{lemma.factorismod} any object in $\mod \mathscr{U}$ is of the form $\Hom_{\mathscr{T}}(-, X)$ for some $X \in \mathscr{U} * \mathscr{U}[1]$. Choose a triangle $U_1 \to U_0 \to X \to U_1[1]$ with $U_i \in \mathscr{U}$. Also taking triangles as in Proposition~\ref{prop.ct-resolution} for $X[-1]$ we obtain a sequence of triangles as in Lemma~\ref{lemma.n_ind_n+2}. Now the claim follows from the long exact sequence in Lemma~\ref{lemma.n_ind_n+2}.
\end{proof}

\begin{theorem} \label{theorem.serrestable}
Let $\mathscr{T}$ be a triangulated category with an $n$-cluster tilting subcategory $\mathscr{U}$ satisfying $\mathscr{U}[n] = \mathscr{U}$. Then $\stabmod \mathscr{U}$ is a triangulated category, and $\leftsub{\stabmod \mathscr{U}}{\mathbb{S}}_{n+1} = \leftsub{\mathscr{T}}{\mathbb{S}}_n$ on $\stabmod \mathscr{U}$.
\end{theorem}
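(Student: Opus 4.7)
The theorem reads as an immediate synthesis of the two preceding propositions together with the equivalence established in Proposition~\ref{prop.selfinjeq}. My plan is therefore to combine them in the obvious way rather than to redo work.

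First I would note that the hypothesis $\mathscr{U}[n] = \mathscr{U}$ is condition (2) of Proposition~\ref{prop.selfinjeq}, so it is equivalent to $\leftsub{\mathscr{T}}{\mathbb{S}} \mathscr{U} = \mathscr{U}$ (condition (1)). With this in hand, Proposition~\ref{proposition.serrestable1} applies directly: it tells me both that projective and injective objects coincide in $\mod \mathscr{U}$ (so $\stabmod \mathscr{U}$ inherits a triangulated structure), and that its Serre functor is
\[ \leftsub{\stabmod \mathscr{U}}{\mathbb{S}} \;=\; \leftsub{\mathscr{T}}{\mathbb{S}} \circ [-1]_{\stabmod \mathscr{U}}. \]

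Next I would compute the $(n+1)$-th Serre shift. By definition
\[ \leftsub{\stabmod \mathscr{U}}{\mathbb{S}}_{n+1} \;=\; \leftsub{\stabmod \mathscr{U}}{\mathbb{S}} \circ [-(n+1)]_{\stabmod \mathscr{U}} \;=\; \leftsub{\mathscr{T}}{\mathbb{S}} \circ [-(n+2)]_{\stabmod \mathscr{U}}, \]
where I have just composed with the previous formula. Now Proposition~\ref{proposition.serrestable2} identifies $[n+2]_{\stabmod \mathscr{U}} = [n]_{\mathscr{T}}$ (this is where the $n$-cluster tilting hypothesis and $\mathscr{U}[n]=\mathscr{U}$ get used in an essential way), so $[-(n+2)]_{\stabmod \mathscr{U}} = [-n]_{\mathscr{T}}$ on $\stabmod \mathscr{U}$. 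Substituting,
\[ \leftsub{\stabmod \mathscr{U}}{\mathbb{S}}_{n+1} \;=\; \leftsub{\mathscr{T}}{\mathbb{S}} \circ [-n]_{\mathscr{T}} \;=\; \leftsub{\mathscr{T}}{\mathbb{S}}_n, \]
which is the desired identity.

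The only thing that could be considered an obstacle is the careful bookkeeping of which shift is taken inside $\stabmod \mathscr{U}$ and which comes from $\mathscr{T}$, but Proposition~\ref{proposition.serrestable2} has already done the hard combinatorics (the long exact sequence of Lemma~\ref{lemma.n_ind_n+2} that identifies the two shifts). So this proof is essentially a one-line assembly, and I would present it as such.
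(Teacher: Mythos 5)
Your proposal is correct and takes essentially the same approach as the paper: the paper's proof is exactly the two-step assembly you describe, substituting Proposition~\ref{proposition.serrestable1} for the Serre functor of $\stabmod\mathscr{U}$ and then Proposition~\ref{proposition.serrestable2} to trade $[-n-2]_{\stabmod\mathscr{U}}$ for $[-n]_{\mathscr{T}}$. Your extra remark that $\mathscr{U}[n]=\mathscr{U}$ is equivalent to $\leftsub{\mathscr{T}}{\mathbb{S}}\mathscr{U}=\mathscr{U}$ (via Proposition~\ref{prop.selfinjeq}) is left implicit in the paper but is the correct justification for invoking Proposition~\ref{proposition.serrestable1}.
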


\begin{proof}
We have
\begin{align*}
\leftsub{\stabmod \mathscr{U}}{\mathbb{S}} \circ [-n-1]_{\stabmod \mathscr{U}} & = \leftsub{\mathscr{T}}{\mathbb{S}} \circ [-n-2]_{\stabmod \mathscr{U}} && \text{(by Proposition~\ref{proposition.serrestable1})} \\
& = \leftsub{\mathscr{T}}{\mathbb{S}} \circ [-n]_{\mathscr{T}} && \text{(by Proposition~\ref{proposition.serrestable2}).} \qedhere
\end{align*}
\end{proof}

We have the following analogous result to \cite{KelRei} for the case $n = 2$.

\begin{corollary} \label{corollary.stabmod_CY}
Let $\mathscr{T}$ be an $n$-Calabi-Yau triangulated category with an $n$-cluster tilting subcategory $\mathscr{U}$. Assume $\mathscr{U} = \mathscr{U}[n]$. Then $\stabmod \mathscr{U}$ is $(n+1)$-Calabi-Yau.
\end{corollary}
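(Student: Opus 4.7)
The statement to prove is essentially an immediate consequence of Theorem~\ref{theorem.serrestable} combined with the Calabi-Yau hypothesis, so the plan is very short. First, I would verify that the hypotheses of Theorem~\ref{theorem.serrestable} are in place: we are given an $n$-cluster tilting subcategory $\mathscr{U}$ of $\mathscr{T}$ with $\mathscr{U}[n] = \mathscr{U}$, which is exactly what that theorem requires. In particular, by Proposition~\ref{proposition.serrestable1} (applied using $\leftsub{\mathscr{T}}{\mathbb{S}} \mathscr{U} = \mathscr{U}[n] = \mathscr{U}$, which follows from the $n$-Calabi-Yau property), $\stabmod \mathscr{U}$ is automatically a triangulated category carrying a Serre functor.

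Next, I would invoke Theorem~\ref{theorem.serrestable} directly to get the identity
\[ \leftsub{\stabmod \mathscr{U}}{\mathbb{S}}_{n+1} \;=\; \leftsub{\mathscr{T}}{\mathbb{S}}_n \]
as endofunctors on $\stabmod \mathscr{U}$. Since by hypothesis $\mathscr{T}$ is $n$-Calabi-Yau, we have $\leftsub{\mathscr{T}}{\mathbb{S}}_n \cong \id_{\mathscr{T}}$, and this identification restricts to $\stabmod \mathscr{U}$ (which makes sense because $\leftsub{\mathscr{T}}{\mathbb{S}}$ preserves $\mathscr{U}$, hence descends to an auto-equivalence of $\mod \mathscr{U}$ and of $\stabmod \mathscr{U}$). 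Combining the two displays yields $\leftsub{\stabmod \mathscr{U}}{\mathbb{S}}_{n+1} \cong \id_{\stabmod \mathscr{U}}$, which is the definition of $(n+1)$-Calabi-Yau.

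There is essentially no obstacle here: all the substantive work has already been done in Propositions~\ref{proposition.serrestable1} and \ref{proposition.serrestable2}, Lemma~\ref{lemma.n_ind_n+2}, and Theorem~\ref{theorem.serrestable}. The only minor subtlety is being careful that the isomorphism $\leftsub{\mathscr{T}}{\mathbb{S}}_n \cong \id$ on $\mathscr{T}$ genuinely induces an isomorphism on the subquotient $\stabmod \mathscr{U}$; this is automatic because $\leftsub{\mathscr{T}}{\mathbb{S}}$ restricts to an auto-equivalence of $\mathscr{U}$ (using $\mathbb{S}\mathscr{U}=\mathscr{U}[n]=\mathscr{U}$) and therefore descends functorially through the $\alpha \mapsto \alpha X$ construction preceding Proposition~\ref{proposition.serrestable1}. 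So the proof reduces to a one-line chain of equalities, and no further calculation is required.
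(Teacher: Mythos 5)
Your proof is correct and follows exactly the same route as the paper's: cite Theorem~\ref{theorem.serrestable} to identify $\leftsub{\stabmod \mathscr{U}}{\mathbb{S}}_{n+1}$ with $\leftsub{\mathscr{T}}{\mathbb{S}}_n$ on $\stabmod \mathscr{U}$, then observe this is the identity because $\mathscr{T}$ is $n$-Calabi-Yau. The additional remarks on why $\leftsub{\mathscr{T}}{\mathbb{S}}$ descends to $\stabmod\mathscr{U}$ are accurate but not strictly needed, as they are already contained in Theorem~\ref{theorem.serrestable} and Proposition~\ref{proposition.serrestable1}.
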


\begin{proof}
Since $\mathscr{T}$ is $n$-Calabi-Yau, we have $\leftsub{\mathscr{T}}{\mathbb{S}}_n = 1$ on $\mathscr{T}$. By Theorem~\ref{theorem.serrestable}, we have $\leftsub{\stabmod \mathscr{U}}{\mathbb{S}}_{n+1}= \leftsub{\mathscr{T}}{\mathbb{S}}_n = 1$ on $\stabmod \mathscr{U}$. Thus $\stabmod \mathscr{U}$ is $(n+1)$-Calabi-Yau.
\end{proof}

\subsection{A tilting object in \texorpdfstring{$\stabmod \mathscr{U}\psL$}{\_mod\_ U}} \label{subsect.stab_is_Db}

In the rest of this section, let $\Lambda$ be an $n$-representation-finite algebra, which is not semisimple. Recall that the $\Lambda$-module $\widetilde{\Lambda}$ is the unique basic $n$-cluster tilting object in $\mod \Lambda$. We denote by $\widetilde{\Lambda}_P$ (respectively, $\widetilde{\Lambda}_I$) the maximal direct summand of $\widetilde{\Lambda}$ (as a $\Lambda$-module) without non-zero projective (respectively, injective) direct summands.

For an algebra $\Gamma$, we denote by $\widehat{\Gamma}$ the \emph{repetitive category} of $\Gamma$ (see \cite{Ha}).

\begin{theorem} \label{theorem.stableisderived}
Let $\Gamma = \underline{\End}_{\Lambda}(\widetilde{\Lambda})$ be the stable $n$-Auslander algebra of $\Lambda$. Then we have $\mathscr{U}\psL \approx \widehat{\Gamma}$.
\end{theorem}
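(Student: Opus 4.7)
The plan is to construct an additive $k$-linear equivalence $F \colon \widehat{\Gamma} \to \mathscr{U}\psL$ by matching the morphism structures on both sides. First, I describe the morphism spaces of $\mathscr{U}\psL$ concretely. By Theorem~\ref{theorem.tilde_is_ctmod}(2), the indecomposables of $\mathscr{U}\psL$ are $M[in]$ for $M$ an indecomposable summand of $\widetilde{\Lambda}$ and $i \in \mathbb{Z}$. Since $\gld \Lambda \leqslant n$, the space $\Hom_{\mathscr{U}\psL}(M[in], M'[jn])$ equals $\Hom_\Lambda(M, M')$ when $j = i$, equals $\Ext^n_\Lambda(M, M')$ when $j = i+1$, and vanishes otherwise. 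Thus $\mathscr{U}\psL$ has the shape of a ``$\mathbb{Z}$-graded repetitive'' category with only level-preserving and one-step level-raising morphisms. By Serre duality in $\mathscr{D}_\Lambda$ combined with Proposition~\ref{prop.tauderived}, $\Ext^n_\Lambda(M, M') \cong D\,\underline{\Hom}_\Lambda(M', \tau_n M)$ whenever $M$ is non-projective.

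Second, I compare with $\widehat{\Gamma}$. Since $\Gamma = \underline{\End}_\Lambda(\widetilde{\Lambda})$, in the repetitive category $\widehat{\Gamma}$ the level-preserving morphism spaces are given by $\Gamma$ and those between adjacent levels by $D\Gamma$. The quotient $\Hom_\Lambda(M, M') \twoheadrightarrow \underline{\Hom}_\Lambda(M, M')$ (killing morphisms through $\Lambda$-projectives) together with the AR-duality isomorphism $\Ext^n_\Lambda(M, M') \cong D\,\underline{\Hom}_\Lambda(M', \tau_n M)$ identify the two Hom-structures on pairs of non-projective indecomposable summands, via the tautological matching $\underline{\Hom}_\Lambda(M, M') = e_{M'} \Gamma e_M$.

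Third, I build $F$: send each non-projective indecomposable summand $N_v$ of $\widetilde{\Lambda}$ (a level-$0$ vertex of $\widehat{\Gamma}$) to $N_v \in \mathscr{U}\psL$, and extend $\mathbb{S}$-equivariantly using Theorem~\ref{theorem.nrepfinderived} (which gives $\mathbb{S}\mathscr{U}\psL = \mathscr{U}\psL$), matching the Nakayama shift of $\widehat{\Gamma}$ with the Serre functor of $\mathscr{U}\psL$. The Hom-matching above yields full faithfulness. For essential surjectivity I use Proposition~\ref{prop.tauderived}: $\tau_n$ bijects non-projective and non-injective indecomposable summands of $\widetilde{\Lambda}$, so each projective summand $P$ of $\widetilde{\Lambda}$ sits at the end of a $\tau_n^-$-orbit starting from some injective summand $I$. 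Unwinding, $P \cong \mathbb{S}_n^{-\ell} I$ in $\mathscr{D}_\Lambda$ (up to a shift by $[-n]$) for some positive integer $\ell$, so $P$ and all its $[in]$-shifts lie in the $\mathbb{S}$-orbit of the non-projective summand $I$, hence in the image of $F$.

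The main obstacle is the bookkeeping required for the $\mathbb{S}$-equivariance: one must verify that the higher AR-duality isomorphism $\Ext^n_\Lambda(M, M') \cong D\,\underline{\Hom}_\Lambda(M', \tau_n M)$ is naturally compatible with the Serre-functor action on $\mathscr{U}\psL$ and the Nakayama shift on $\widehat{\Gamma}$, so that $F$ is genuinely a $k$-linear functor rather than merely a bijection on objects and morphisms. This reduces to carefully tracking how the projective and injective summands of $\widetilde{\Lambda}$ interact under $\mathbb{S}_n$ across levels; the self-injectivity of $\widetilde{\Lambda}$ (Corollary~\ref{corollary.isselfinj}) and the $\mathbb{S}$-invariance of $\mathscr{U}\psL$ make this coherent.
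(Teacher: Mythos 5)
Your general strategy matches the paper's: reparametrize $\mathscr{U}\psL$ so that level~$0$ consists of the non-projective summands of $\widetilde{\Lambda}$, use $\mathbb{S}$-invariance of $\mathscr{U}\psL$ (Theorem~\ref{theorem.nrepfinderived}) to spread this across all levels, and match the Hom-structure with $\widehat{\Gamma}$ via Serre/AR-duality. But there is a genuine gap, and it is exactly at the point you set aside as ``bookkeeping.'' Your Hom-computation in the first paragraph is carried out for the $[in]$-decomposition of $\mathscr{U}\psL$ coming from Theorem~\ref{theorem.tilde_is_ctmod}(2). However, your functor $F$ is built to intertwine the \emph{Serre functor} $\mathbb{S}$ on $\mathscr{U}\psL$ with the Nakayama shift on $\widehat{\Gamma}$, so the decomposition actually needed is $\mathscr{U}\psL = \bigvee_{i\in\mathbb{Z}}\add \mathbb{S}^i \widetilde{\Lambda}_P$ (the paper's Lemma~\ref{lemma.S_decom_U}), together with the vanishing $\Hom_{\mathscr{D}}(\widetilde{\Lambda}_P, \mathbb{S}^i \widetilde{\Lambda}_P) = 0$ for $i<0$ and $i>1$. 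These two decompositions do not coincide level-by-level: applying $\mathbb{S}$ to $\widetilde{\Lambda}_P$ sends a non-injective summand to a module shifted by $[n]$, but sends $\Lambda$ to $D\Lambda$ unshifted, and the injective summands under $\mathbb{S}_n^{-1}$ land on $\Lambda[n]$; so $\mathbb{S}^i\widetilde{\Lambda}_P$ is spread across several $[jn]$-degrees as $i$ grows. The vanishing for $|i|$ large is therefore \emph{not} an immediate consequence of your $[in]$-level computation. The paper establishes it by tracking inductively that $\mathbb{S}^i\widetilde{\Lambda}_P \in (\add \Lambda) \vee \mathscr{D}^{\geqslant n}$ for $i<0$ and $\mathbb{S}^i\widetilde{\Lambda}_P \in \add D\Lambda[n] \vee \mathscr{D}^{\leqslant -2n}$ for $i>1$, using the t-structure and $\gld\Lambda\leqslant n$. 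Without this estimate the ``repetitive'' shape of the Hom-structure is not justified.

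Two smaller points. First, you describe the canonical map $\Hom_\Lambda(M,M')\to\underline{\Hom}_\Lambda(M,M')$ only as a quotient; for full faithfulness of $F$ you need it to be an \emph{isomorphism} on the non-projective summands, which holds because $\Hom_\Lambda(\widetilde{\Lambda}_P,\Lambda)=0$ by Proposition~\ref{prop.tauderived}(2) --- this is also what yields the identification $\Gamma = \underline{\End}_\Lambda(\widetilde{\Lambda}) = \End_\Lambda(\widetilde{\Lambda}_P)$ that the proof runs on. Second, the orbit direction in your essential-surjectivity paragraph is reversed: a projective summand $P$ is the \emph{terminal} object of a $\tau_n$-orbit beginning at an injective (equivalently the initial object of the $\tau_n^-$-orbit ending at an injective), and one also needs the connectedness argument that rules out a projective-injective ``block'' fixed by $\mathbb{S}$ --- this is where the paper's Lemma~\ref{lemma.S_decom_U} does nontrivial work that your sketch does not reproduce.
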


In the proof we will use the following piece of notation.

\begin{notation}
As in \cite{R}, for full subcategories $\mathscr{A}$ and $\mathscr{B}$ of an additive category $\mathscr{T}$ we denote by $\mathscr{A} \vee \mathscr{B}$ the full subcategory whose objects are direct sums of an object in $\mathscr{A}$ and an object in $\mathscr{B}$.
\end{notation}

We start by proving the following observation.

\begin{lemma} \label{lemma.S_decom_U}
\[ \mathscr{U}\psL = \bigvee_{i\in \mathbb{Z}} \add \mathbb{S}^i \widetilde{\Lambda}_P. \]
\end{lemma}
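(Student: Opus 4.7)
The plan is as follows. The easy inclusion $\supseteq$ is immediate: $\widetilde{\Lambda}_P$ is a direct summand of $\widetilde{\Lambda} \in \mathscr{U}\psL$, and since $\mathscr{U}\psL$ is closed under $\mathbb{S}$ by Theorem~\ref{theorem.nrepfinderived}, every shift $\mathbb{S}^i \widetilde{\Lambda}_P$ lies in $\mathscr{U}\psL$.

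For the reverse inclusion, I will use $\mathscr{U}\psL = \add\{\widetilde{\Lambda}[in] : i \in \mathbb{Z}\}$ from Theorem~\ref{theorem.tilde_is_ctmod}(2), together with the decomposition $\widetilde{\Lambda} = \widetilde{\Lambda}_P \oplus \Lambda$. The non-projective indecomposable summands of $\widetilde{\Lambda}$ lie trivially in $\widetilde{\Lambda}_P = \mathbb{S}^0 \widetilde{\Lambda}_P$, so the main task is to show that each indecomposable projective summand $P$ of $\Lambda$ lies in $\add \mathbb{S}^i \widetilde{\Lambda}_P$ for some $i$. One then verifies separately that the right-hand side is closed under the shift $[n]$---using Proposition~\ref{prop.tauderived} together with the identity $\mathbb{S} N = \mathbb{S}_n N [n]$ on indecomposable summands---and this handles all the shifted copies $\widetilde{\Lambda}[in]$ at once.

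The central idea for an indecomposable projective $P$ is to iterate the Serre functor. On a projective, $\mathbb{S} P$ is just the corresponding indecomposable injective (the derived Nakayama). If $\mathbb{S} P$ is not projective, then $\mathbb{S} P \in \widetilde{\Lambda}_P$ and hence $P = \mathbb{S}^{-1}(\mathbb{S} P) \in \mathbb{S}^{-1} \widetilde{\Lambda}_P$. Otherwise $\mathbb{S} P$ is proj-injective and one iterates, considering $\mathbb{S}^2 P, \mathbb{S}^3 P, \ldots$; if after $k$ iterations the object $\mathbb{S}^k P$ is finally non-projective (an indecomposable injective), then it lies in $\widetilde{\Lambda}_P$ and we obtain $P = \mathbb{S}^{-k}(\mathbb{S}^k P) \in \mathbb{S}^{-k} \widetilde{\Lambda}_P$.

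The hard part will be ensuring this iteration terminates, that is ruling out the case where $P, \mathbb{S} P, \mathbb{S}^2 P, \ldots$ are all proj-injective. By finiteness such a sequence would have to form a cycle, but a brief degree calculation using Lemma~\ref{lemma.nuorders} (applied to $\mathbb{S}_n P_i = \mathbb{S} P_i [-n] = P_{i+1}[-n]$ for a hypothetical cycle $P_1, \ldots, P_m$, which places $\mathbb{S}_n P_i$ in cohomological degree $n \geqslant 1$ rather than in $\mod \Lambda$) will force any such cycle to have length one, that is $\mathbb{S} P = P$. Ruling out the existence of such an $\mathbb{S}$-fixed indecomposable projective in a non-semisimple $n$-representation-finite $\Lambda$ (noting that self-injectivity combined with $\gld \Lambda \leqslant n$ would force $\Lambda$ to be semisimple) is the subtle final step of the argument.
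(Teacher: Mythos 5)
Your overall plan matches the paper's: iterate the Serre functor on each indecomposable projective $P$ until some $\mathbb{S}^i P$ is non-projective (hence an injective summand of $\widetilde{\Lambda}_P$), and separately check that the right-hand side is stable under $\mathbb{S}_n^{\pm 1}$. The easy inclusion and the $\mathbb{S}_n^{\pm 1}$-stability are handled the same way.

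However, your argument for why the iteration must terminate has two genuine gaps. First, the degree calculation does not yield a contradiction. If $P_1, \ldots, P_m$ were a cycle of projective-injectives with $\mathbb{S} P_i = P_{i+1}$, then indeed $\mathbb{S}_n P_i = P_{i+1}[-n]$ sits in cohomological degree $n$. But nothing requires $\mathbb{S}_n P_i$ to lie in $\mod \Lambda$: Lemma~\ref{lemma.nuorders} only gives $\mathbb{S}_n \mathscr{D}^{\geqslant 0} \subseteq \mathscr{D}^{\geqslant 0}$, and $P_{i+1}[-n] \in \mathscr{D}^{\geqslant n}$ is perfectly consistent with this. In fact $\tau_n P_i = \Ho^0(\mathbb{S}_n P_i) = 0$ for any projective $P_i$, so $\mathbb{S}_n P_i$ being concentrated away from degree $0$ is exactly the expected behaviour — and it is equally true for a cycle of length $1$, so this computation cannot single out $m = 1$ or produce any contradiction. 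Second, even granting an $\mathbb{S}$-fixed proj-injective $P$, this alone does not make $\Lambda$ self-injective: $\Lambda$ could have other projectives that are not injective. The step from ``there exists a proj-injective with entirely proj-injective $\mathbb{S}$-orbit'' to ``$\Lambda$ is self-injective'' is precisely where the paper's proof does real work. It collects all such indecomposables into a summand $Q$ of $\Lambda$, observes $\mathbb{S}$ permutes $\add Q$, and then uses $\Hom_{\mathscr{D}_{\Lambda}}(Q',Q) = \Hom_{\mathscr{D}_{\Lambda}}(\mathbb{S}^i Q', \mathbb{S}^i Q) = 0$ for $i \ll 0$ (via Proposition~\ref{prop.tauderived}), together with the standing assumption that $\Lambda$ is \emph{connected}, to force $Q = 0$. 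Connectedness is essential here and is absent from your sketch.
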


\begin{proof}
For simplicity we set $\mathscr{U}' := \bigvee_{i\in \mathbb{Z}} \add \mathbb{S}^i \widetilde{\Lambda}_P$. Since, by Theorem~\ref{theorem.tilde_is_ctmod}(2) we have $\widetilde{\Lambda}_P \in \mathscr{U}\psL$, and by Theorem~\ref{theorem.nrepfinderived} we have $\mathbb{S} \mathscr{U}\psL = \mathscr{U}\psL$ we clearly have $\mathscr{U}' \subseteq \mathscr{U}\psL$. Hence it only remains to show that $\mathscr{U}\psL \subseteq \mathscr{U}'$. Since $\mathscr{U}\psL = \bigvee_{i\in \mathbb{Z}} \add \mathbb{S}_n^i \Lambda$ by definition, we only have to show
\begin{enumerate}
\item $\Lambda \in \mathscr{U}'$,
\item $\mathbb{S}_n^{\pm 1} \mathscr{U}' \subseteq \mathscr{U}'$, or, equivalently, that $\mathbb{S}_n^{\pm 1} \widetilde{\Lambda}_P \in \mathscr{U}'$.
\end{enumerate}

To show (1) we first decompose $\Lambda = Q \oplus Q'$, where $Q$ is the sum of all indecomposable projective $\Lambda$-modules $P$ such that $\mathbb{S}^iP$ is a projective $\Lambda$-module for any $i \geqslant 0$. Then $\mathbb{S}$ induces an autoequivalence of $\add Q$, and therefore $\mathbb{S}^i Q \in \add Q \, \forall i \in \mathbb{Z}$. In particular $Q$ is a projective and injective $\Lambda$-module. Since $\Lambda$ is not self-injective, we have $Q' \neq 0$. We have $\Hom_{\mathscr{D}_{\Lambda}}(Q',Q) = \Hom_{\mathscr{D}_{\Lambda}}(\mathbb{S}^i Q',\mathbb{S}^i Q) = 0$ for $i \ll 0$ by Proposition~\ref{prop.tauderived}. Similarly we have $\Hom_{\mathscr{D}_{\Lambda}}(Q,Q')=0$. Since $\Lambda$ is connected this means $Q = 0$. Hence, for any indecomposable projective $\Lambda$-module $P$, there is $i > 0$ such that $\mathbb{S}^i P \not\in \add \Lambda$. For the minimal such $i$ we have $\mathbb{S}^i P$ is an injective non-projective $\Lambda$-module, and hence $\mathbb{S}^i P \in \add \widetilde{\Lambda}_P$. This proves (1).

It follows that also $\widetilde{\Lambda} = \Lambda \oplus \widetilde{\Lambda}_P \in \mathscr{U}'$, and hence that $\Lambda[n] \in \add \mathbb{S} \widetilde{\Lambda} \subseteq \mathscr{U}'$. Now claim (2) follows, since
\begin{align*}
& \mathbb{S}_n \widetilde{\Lambda}_P = \widetilde{\Lambda}_I \in \add \widetilde{\Lambda} \subseteq \mathscr{U}' \text{, and} \\
& \mathbb{S}_n^{-1} \widetilde{\Lambda}_P \in \add \mathbb{S}_n^{-1} \widetilde{\Lambda} = \add (\mathbb{S}_n^{-1} \widetilde{\Lambda}_I \oplus \mathbb{S}_n^{-1} D\Lambda) = \add (\widetilde{\Lambda}_P \oplus \Lambda[n]) \subseteq \mathscr{U}'. \qedhere
\end{align*}
\end{proof}

\begin{proof}[Proof of Theorem~\ref{theorem.stableisderived}]
Since $\Hom_{\Lambda}(\widetilde{\Lambda}_P, \Lambda) = 0$ by Proposition~\ref{prop.tauderived}(2), we have $\Gamma = \underline{\End}_{\Lambda}(\widetilde{\Lambda}) = \End_{\Lambda}(\widetilde{\Lambda}_P)$. 

By Lemma~\ref{lemma.S_decom_U}, we only have to show that
\[ \Hom_{\mathscr{D}}(\widetilde{\Lambda}_P, \mathbb{S}^i(\widetilde{\Lambda}_P)) = \left\{ \begin{array}{ll} 0 & i < 0 \\ \End_{\Lambda}(\widetilde{\Lambda}_P) & i = 0 \\ D \End_{\Lambda}(\widetilde{\Lambda}_P) & i = 1 \\ 0 & i > 1 \end{array} \right.. \]
The second line is clear, the third follows immediately from the definition of $\mathbb{S}$. 

To show the other lines, note that $\mathbb{S}^{-1} \mathscr{D}^{\geqslant n} \subseteq \mathscr{D}^{\geqslant n}$, and $\mathbb{S} \mathscr{D}^{\leqslant n} \subseteq \mathscr{D}^{\leqslant n}$. From
\[ \mathbb{S}^{-1} \widetilde{\Lambda}_P \in \add \mathbb{S}^{-1} \widetilde{\Lambda} = \add(\mathbb{S}^{-1} D\Lambda \oplus \mathbb{S}^{-1} \widetilde{\Lambda}_I) = \add (\Lambda \oplus \widetilde{\Lambda}_P[-n]),\]
we have inductively
\[\mathbb{S}^i \widetilde{\Lambda}_P \in (\add \Lambda) \vee \mathscr{D}^{\geqslant n}\]
for any $i<0$. Thus we have the first line.

From
\[\mathbb{S}^2 \widetilde{\Lambda}_P = \mathbb{S} \widetilde{\Lambda}_I [n] \in \add \mathbb{S}(\Lambda \oplus \widetilde{\Lambda}_P)[n] \subseteq \add (D\Lambda[n]) \vee \mathscr{D}^{\leqslant -2n},\]
we have inductively
\[\mathbb{S}^i \widetilde{\Lambda}_P \in \add D\Lambda[n] \vee \mathscr{D}^{\leqslant -2n} \]
for any $i>1$. Thus, since $\gld \Lambda \leqslant n$, we have the last line.
\end{proof}

Our main motivation for looking at the repetitive category is that, by \cite{Ha}, it is closely related to the derived category. Hence we obtain the following immediate consequence of Theorem~\ref{theorem.stableisderived}. Here we denote by $\mathtt{R} \colon \mod \Gamma = \mod (\add \widetilde{\Lambda}_P) \sub \mod \mathscr{U}\psL$ the inclusion induced by the projection $\mathscr{U}\psL \epi \add_{\Lambda} \widetilde{\Lambda}_P$. Since $\mathtt{R}$ is exact it induces a functor $\mathscr{D}_{\Gamma} \to \mathscr{D}_{\mathscr{U}\psL}$, which will also be denoted by $\mathtt{R}$.

\begin{corollary} \label{corollary.stabmodisderived}
\begin{enumerate}
\item In the situation of Theorem~\ref{theorem.stableisderived} we have
\[ \stabmod \mathscr{U}\psL \approx \mathscr{D}_{\Gamma}. \]
\item The equivalence in (1) can be chosen to be the composition of the functor $\mathtt{R} \colon \mathscr{D}_{\Gamma} \to \mathscr{D}_{\mathscr{U}\psL}$ and the projection from $\mathscr{D}_{\mathscr{U}\psL}$ onto $\stabmod \mathscr{U}\psL$.
\end{enumerate}
\end{corollary}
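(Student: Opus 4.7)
The plan is to combine Theorem~\ref{theorem.stableisderived} with Happel's classical theorem \cite{Ha} that, for a finite dimensional algebra $\Gamma$ of finite global dimension, there is a triangle equivalence $\mathscr{D}^{\rm b}(\mod \Gamma) \approx \stabmod \widehat{\Gamma}$, constructed as the derived functor of the canonical embedding $\mod \Gamma \hookrightarrow \mod \widehat{\Gamma}$ (extension by zero on all other copies of $\Gamma$) followed by projection to the stable category. Since by Theorem~\ref{theorem.Auslalg_gld} we have $\gld \Gamma \leqslant n+1 < \infty$, Happel's theorem applies to $\Gamma = \underline{\End}_{\Lambda}(\widetilde{\Lambda})$.

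For part (1), the argument is then immediate. Theorem~\ref{theorem.stableisderived} provides an equivalence of $k$-categories $\mathscr{U}\psL \approx \widehat{\Gamma}$; this induces equivalences $\mod \mathscr{U}\psL \approx \mod \widehat{\Gamma}$ and hence $\stabmod \mathscr{U}\psL \approx \stabmod \widehat{\Gamma}$. Composing with Happel's equivalence $\stabmod \widehat{\Gamma} \approx \mathscr{D}_{\Gamma}$ yields the desired triangle equivalence $\stabmod \mathscr{U}\psL \approx \mathscr{D}_{\Gamma}$.

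For part (2), I would trace through the equivalences and identify the composed functor with the one in the statement. Under the identification $\mathscr{U}\psL \approx \widehat{\Gamma}$, the subcategory $\add_{\Lambda} \widetilde{\Lambda}_P \subseteq \mathscr{U}\psL$ corresponds to the copy of $\Gamma$ placed at degree $0$ in $\widehat{\Gamma}$: the proof of Theorem~\ref{theorem.stableisderived} identifies $\Gamma$ with $\End_{\Lambda}(\widetilde{\Lambda}_P)$, and the decomposition $\mathscr{U}\psL = \bigvee_{i \in \mathbb{Z}} \add \mathbb{S}^i \widetilde{\Lambda}_P$ of Lemma~\ref{lemma.S_decom_U} matches the $\mathbb{Z}$-grading of $\widehat{\Gamma}$ with the shifts by $\mathbb{S}$. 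The canonical embedding $\mod \Gamma \hookrightarrow \mod \widehat{\Gamma}$ used in Happel's construction therefore transports to precisely the functor $\mathtt{R} \colon \mod \Gamma \hookrightarrow \mod \mathscr{U}\psL$ of the statement. Deriving it and composing with projection to the stable category is exactly Happel's equivalence, which proves (2).

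The main obstacle, granting Theorem~\ref{theorem.stableisderived} and Happel's theorem as black boxes, lies entirely in the bookkeeping of part (2): one needs to be certain that the equivalence $\mathscr{U}\psL \approx \widehat{\Gamma}$ respects the $\mathbb{Z}$-gradings on both sides (powers of $\mathbb{S}$ on the left, position in $\widehat{\Gamma}$ on the right), so that the distinguished copy $\add \widetilde{\Lambda}_P$ is indeed sent to the degree $0$ copy of $\Gamma$ used in Happel's construction. This compatibility is built into the proof of Theorem~\ref{theorem.stableisderived}, where the equivalence is assembled from the shift functor $\mathbb{S}$, but it should be made explicit in order to read off the precise functor in (2).
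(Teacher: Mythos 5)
Part (1) of your proposal is exactly the paper's argument: transport along $\mathscr{U}\psL \approx \widehat{\Gamma}$ (Theorem~\ref{theorem.stableisderived}), note $\gld \Gamma \leqslant n+1 < \infty$ (Theorem~\ref{theorem.Auslalg_gld}), and invoke Happel's equivalence $\stabmod \widehat{\Gamma} \approx \mathscr{D}_{\Gamma}$.

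For part (2) the paper takes a lighter route than yours. Rather than trying to identify the composed functor $\mathscr{D}_{\Gamma} \to \mathscr{D}_{\mathscr{U}\psL} \to \stabmod \mathscr{U}\psL$ with Happel's equivalence up to isomorphism of functors, the paper only needs to show that the composed functor is \emph{some} equivalence. It does this by tilting theory: $\Gamma$ is a tilting object of $\mathscr{D}_{\Gamma}$, its image $\mathtt{R}\Gamma$ is a tilting object of $\stabmod \mathscr{U}\psL$ (because under the equivalence $\stabmod \widehat{\Gamma} \approx \stabmod \mathscr{U}\psL$ it corresponds to the degree-zero copy of $\Gamma$, which is a tilting object of $\stabmod \widehat{\Gamma}$ by general repetitive-category theory), and a triangle functor between algebraic triangulated categories with finite-dimensional graded Hom that takes a tilting object to a tilting object while preserving the endomorphism ring is an equivalence. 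This sidesteps the need to know exactly how Happel constructs his equivalence: only the \emph{existence} of Happel's equivalence is used, to conclude that $\Gamma$ is tilting in $\stabmod \widehat{\Gamma}$. Your route instead requires (i) the precise construction of Happel's functor as ``derive the canonical embedding $\mod \Gamma \hookrightarrow \mod \widehat{\Gamma}$ and project to the stable category'' and (ii) a functor-level compatibility with the grading under $\mathscr{U}\psL \approx \widehat{\Gamma}$. You correctly flag (ii) as the main gap, but you leave it unverified, and (i) is itself not immediate from Happel's book. Both proofs do rely on one object-level compatibility — that the degree-zero copy of $\Gamma$ corresponds to $\add \widetilde{\Lambda}_P$, equivalently that the grading by $\mathbb{S}$-powers matches — but the tilting argument reduces everything else to standard general theory, which is why the paper's version closes the argument in two lines where yours leaves a bookkeeping step open.
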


\begin{proof}
\begin{enumerate}
\item By Theorem~\ref{theorem.stableisderived} we have $\stabmod \mathscr{U}\psL \approx \stabmod \widehat{\Gamma}$. Since $\gld \Gamma \leqslant n+1 < \infty$ by Theorem~\ref{theorem.Auslalg_gld}, we have $\stabmod \widehat{\Gamma} \approx \mathscr{D}_{\Gamma}$ by \cite{Ha}.
\item Under the equivalence $\stabmod \widehat{\Gamma} \approx \stabmod \mathscr{U}\psL$ given by Theorem~\ref{theorem.stableisderived}, $\Gamma$ corresponds to $\mathtt{R}\Gamma$. It is a general fact for repetitive categories that $\Gamma$ is a tilting object in $\stabmod \widehat{\Gamma}$. Thus our composed functor $\mathscr{D}_{\Gamma} \to \mathscr{D}_{\mathscr{U}\psL} \to \stabmod \mathscr{U}\psL$ sends the tilting object $\Gamma\in \mathscr{D}_{\Gamma}$ to the tilting object $\mathtt{R}\Gamma \in \stabmod \mathscr{U}\psL$, hence is an equivalence. \qedhere
\end{enumerate}
\end{proof}

As an application for the case $n=1$, we have the following results, which show a surprising commutativity of $\stabmod -$ and $\widehat{-}$, and of $\stabmod -$ and $\mathscr{D}_{-}$.

\begin{corollary}
\begin{enumerate}
\item Let $\Lambda$ be a representation-finite hereditary algebra.
The we have equivalences
\[ \stabmod \widehat{\Lambda} \approx \widehat{\stabmod \Lambda}
\ \text{ and } \
\stabmod \mathscr{D}_{\Lambda} \approx \mathscr{D}_{\stabmod \Lambda}.\]
\item Let $\Lambda$ and $\Lambda'$ be representation-finite hereditary
algebras. If $\Lambda$ and $\Lambda'$ are derived equivalent, then their
stable Auslander algebras are derived equivalent.
\end{enumerate}
\end{corollary}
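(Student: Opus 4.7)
The plan is to derive both equivalences in (1) from Theorem~\ref{theorem.stableisderived} and Corollary~\ref{corollary.stabmodisderived} specialised to $n=1$, and then combine them with Happel's classical equivalence. First I would observe that a representation-finite hereditary algebra $\Lambda$ is exactly a $1$-representation-finite algebra, and its basic $1$-cluster tilting module $\widetilde{\Lambda}$ is the sum of all indecomposable $\Lambda$-modules. Consequently, by Theorem~\ref{theorem.tilde_is_ctmod}(2),
\[ \mathscr{U}\psL = \add\{\widetilde{\Lambda}[i] \mid i \in \mathbb{Z}\} \]
contains every indecomposable of $\mathscr{D}_\Lambda$, so $\mathscr{U}\psL = \mathscr{D}_\Lambda$. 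Moreover the stable $1$-Auslander algebra $\Gamma := \underline{\End}_\Lambda(\widetilde{\Lambda})$ is exactly (Morita equivalent to) the stable module category $\stabmod \Lambda$ of $\Lambda$ viewed as the endomorphism algebra of a basic additive generator.

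Next I would apply Theorem~\ref{theorem.stableisderived} with $n=1$, obtaining $\mathscr{D}_\Lambda = \mathscr{U}\psL \approx \widehat{\Gamma} = \widehat{\stabmod \Lambda}$. Combining this with Happel's classical equivalence $\stabmod \widehat{\Lambda} \approx \mathscr{D}_\Lambda$ (available precisely because $\Lambda$ is hereditary) yields the first equivalence $\stabmod \widehat{\Lambda} \approx \widehat{\stabmod \Lambda}$. For the second equivalence, Corollary~\ref{corollary.stabmodisderived} applied to the same data gives directly
\[ \stabmod \mathscr{D}_\Lambda = \stabmod \mathscr{U}\psL \approx \mathscr{D}_\Gamma = \mathscr{D}_{\stabmod \Lambda}. \]

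For part (2), suppose $\Lambda$ and $\Lambda'$ are derived equivalent, i.e.\ $\mathscr{D}_\Lambda \approx \mathscr{D}_{\Lambda'}$ as triangulated categories. Since the functor categories $\mod \mathscr{D}_\Lambda$ and $\mod \mathscr{D}_{\Lambda'}$ (and hence their stable quotients, which are well defined because both categories have a Serre functor and the corresponding projective-injective characterisation of Proposition~\ref{proposition.serrestable1} applies) are defined intrinsically, the given equivalence transports to $\stabmod \mathscr{D}_\Lambda \approx \stabmod \mathscr{D}_{\Lambda'}$. Applying the second equivalence of (1) to both sides yields $\mathscr{D}_{\stabmod \Lambda} \approx \mathscr{D}_{\stabmod \Lambda'}$, which is the derived equivalence of the stable Auslander algebras claimed.

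The only non-routine step is the identification $\mathscr{U}\psL = \mathscr{D}_\Lambda$ in the representation-finite hereditary case, i.e.\ checking that the $\mathbb{S}_1$-orbit of $\Lambda$ exhausts every indecomposable of $\mathscr{D}_\Lambda$ up to isomorphism; this is an easy consequence of the description of the AR-quiver of $\mathscr{D}_\Lambda$ as $\mathbb{Z}Q$ for $\Lambda$ Dynkin, together with the fact that each of the $|Q_0|$ many $\tau$-orbits contains some indecomposable summand of $\Lambda$. Every other step is either immediate from the already proved results of Section~\ref{sect.selfinj} or a direct appeal to Happel's theorem.
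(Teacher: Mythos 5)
Your proposal is correct and follows essentially the same route as the paper: Happel's equivalence $\stabmod\widehat{\Lambda}\approx\mathscr{D}_\Lambda$, the $n=1$ identification $\mathscr{U}\psL=\mathscr{D}_\Lambda$, and Theorem~\ref{theorem.stableisderived} / Corollary~\ref{corollary.stabmodisderived} give~(1), and~(2) follows by conjugating the induced equivalence $\stabmod\mathscr{D}_\Lambda\approx\stabmod\mathscr{D}_{\Lambda'}$ by the second equivalence of~(1). You merely make explicit the step $\mathscr{U}\psL=\mathscr{D}_\Lambda$ (which is tautological, since a $1$-cluster-tilting subcategory is the whole category) that the paper leaves unremarked.
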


\begin{proof}
(1) By \cite{Ha} and Theorem~\ref{theorem.stableisderived}, we have
\[ \stabmod \widehat{\Lambda} \approx \mathscr{D}_{\Lambda} \approx \mathscr{U}\psL \approx \widehat{\stabmod \Lambda}.\]
The second equivalence now follows from Corollary~\ref{corollary.stabmodisderived}.

(2) By (1) we have
\[\mathscr{D}_{\stabmod \Lambda} \approx \stabmod \mathscr{D}_{\Lambda} \approx \stabmod \mathscr{D}_{\Lambda'} \approx \mathscr{D}_{\stabmod \Lambda'}. \qedhere\]
\end{proof}

One can show the corresponding statement to (2) above for $n$-representation-finite algebras, see \cite{Lad_der_eq_n-APR}. Notice that (2) above is not valid if we replace ``stable Auslander algebras'' by ``Auslander algebras''.

\subsection{The \texorpdfstring{$(n+1)$}{(n+1)}-Amiot cluster category and \texorpdfstring{$\stabmod \widetilde{\Lambda}$}{\_mod\_ Lambda\~{}}} \label{subsec.amiot_cluster_selfinj}

Since, by construction, $\mathscr{U}\psL$ is a covering of $\add \pi\Lambda$, we have an exact push-down functor $\mod \mathscr{U}\psL \to \mod (\add \pi \Lambda) = \mod \widetilde{\Lambda}$. It maps projectives to projectives, so it also induces a push-down functor $\stabmod \mathscr{U}\psL \to \stabmod \widetilde{\Lambda}$. The aim of this subsection is to show that the upper equivalence in the following diagram (which we found in Corollary~\ref{corollary.stabmodisderived}) induces the lower equivalence.
\[ \begin{tikzpicture}[xscale=3,yscale=-1.5]
 \node (A) at (0,0) {$\mathscr{D}_{\Gamma}$};
 \node (B) at (1,0) {$\stabmod \mathscr{U}\psL$};
 \node (C) at (0,1) {$\mathscr{C}_{\Gamma}^{n+1}$};
 \node (D) at (1,1) {$\stabmod \widetilde{\Lambda}$};
 \draw (A) -- node [above] {$\approx$} (B);
 \draw (C) -- node [above] {$\approx$} (D);
 \draw [->] (A) -- node [left] {$\pi$} (C);
 \draw [->] (B) -- node [right] {push-down} (D);
\end{tikzpicture} \]
In particular this will show that $\stabmod \widetilde{\Lambda}$ is equivalent to the $(n+1)$-Amiot cluster category of $\Gamma$, and hence has an $(n+1)$-cluster tilting object.

Our first task is to construct a functor $\mathscr{C}_{\Gamma}^{n+1} \to \stabmod \widetilde{\Lambda}$. We use the universal property of the $(n+1)$-Amiot cluster category (see Appendix~\ref{appendix}, in particular Theorem~\ref{thm.universal_stable}).

Clearly $\widetilde{\Lambda}_P$ is an ideal of $\widetilde{\Lambda}$, hence in particular a $\widetilde{\Lambda} \otimes_k \widetilde{\Lambda}^{\op}$-module. Since $\Lambda$ is a subalgebra of $\widetilde{\Lambda}$, the right action of $\widetilde{\Lambda}$ on $\widetilde{\Lambda}_P$ gives a $k$-algebra homomorphism
\[ \widetilde{\Lambda} \to[30] \End_{\Lambda}( \widetilde{\Lambda}_P ) = \Gamma, \]
and hence a functor
\begin{equation} \label{eq.funct_widetilde-gamma}
\proj \widetilde{\Lambda} \to[30] \proj \Gamma.
\end{equation}
We denote by $\mathtt{A} \colon \mathscr{D}_{\Gamma} \to \mathscr{D}_{\widetilde{\Lambda}}$ the induced restriction functor.

\begin{lemma} \label{lemma.comm_res_pd}
We have the following commutative diagram:
\[ \begin{tikzpicture}[xscale=3,yscale=1.5]
\node (A) at (0,1) {$\mathscr{D}_{\Gamma}$};
\node (B) at (1,1) {$\mathscr{D}_{\mathscr{U}\psL}$};
\node (C) at (1,0) {$\mathscr{D}_{\widetilde{\Lambda}}$};
\draw [->] (A) -- node [above] {$\mathtt{R}$} (B);
\draw [->] (B) -- node [right] {push-down} (C);
\draw [->] (A) -- node [below left] {$\mathtt{A}$} (C);
\end{tikzpicture} \]
\end{lemma}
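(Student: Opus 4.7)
The plan is to reduce to commutativity at the module category level and then lift automatically, using that all three functors are exact. At the module level, $\mathtt{R}$ is pullback along the retraction $r \colon \mathscr{U}\psL \twoheadrightarrow \add \widetilde{\Lambda}_P$ of Lemma~\ref{lemma.S_decom_U}, the push-down is the left adjoint of pullback along the Galois covering $\pi \colon \mathscr{U}\psL \to \add \pi\Lambda$, and $\mathtt{A}$ is restriction along the algebra homomorphism $\phi \colon \widetilde{\Lambda} \to \Gamma$. Both composites $\pi_* \mathtt{R}$ and $\mathtt{A}$ are compositions of adjoints, hence preserve colimits, so by Eilenberg--Watts each is determined by its value on the projective generator $\Gamma \in \mod \Gamma$, regarded as a $\Gamma$-$\widetilde{\Lambda}$-bimodule. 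I therefore aim to produce a natural bimodule isomorphism $(\pi_* \mathtt{R})(\Gamma) \iso \mathtt{A}(\Gamma)$.

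The right-hand side is $\Gamma$ itself with right $\widetilde{\Lambda}$-action via $\phi$. For the left-hand side, the $\mathscr{U}\psL$-module $\mathtt{R}\Gamma$ equals $\Hom_\Lambda(-, \widetilde{\Lambda}_P)$ on $\add \widetilde{\Lambda}_P$ and vanishes elsewhere, and the Galois-orbit formula $(\pi_* N)(\pi V) = \bigoplus_{i \in \mathbb{Z}} N(\mathbb{S}_n^i V)$ yields
\[
(\pi_* \mathtt{R}\Gamma)(\pi\Lambda) = \bigoplus_{i \in \mathbb{Z}} \Hom_\Lambda(r(\mathbb{S}_n^i\Lambda), \widetilde{\Lambda}_P).
\]
By Proposition~\ref{prop.tauderived} one has $\mathbb{S}_n^i\Lambda \iso \tau_n^{-i}\Lambda \in \add \widetilde{\Lambda}_P$ precisely for $1 \leq i \leq \ell$ (where $\widetilde{\Lambda}_P = \bigoplus_{i=1}^\ell \tau_n^{-i}\Lambda$); for the remaining values of $i$ the object $\mathbb{S}_n^i\Lambda$ is either projective over $\Lambda$ or sits outside $\mod\Lambda$, hence is sent to $0$ by $r$. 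This collapses the sum to $\bigoplus_{i=1}^\ell \Hom_\Lambda(\tau_n^{-i}\Lambda,\widetilde{\Lambda}_P) = \Hom_\Lambda(\widetilde{\Lambda}_P,\widetilde{\Lambda}_P) = \Gamma$.

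It remains to check that the two resulting right $\widetilde{\Lambda}$-actions on $\Gamma$ coincide. An element $\widetilde{\lambda} \in \widetilde{\Lambda} = \End_{\mathscr{C}_\Lambda^n}(\pi\Lambda) = \bigoplus_j \Hom_{\mathscr{D}_\Lambda}(\Lambda, \mathbb{S}_n^j\Lambda)$ decomposes as $\sum_j g_j$ with $g_j \colon \Lambda \to \mathbb{S}_n^j\Lambda$, and under the Galois push-down it acts on $\Hom_\Lambda(\widetilde{\Lambda}_P, \widetilde{\Lambda}_P)$ by precomposition with the endomorphism of $\widetilde{\Lambda}_P$ whose $(i,j)$-block is $\mathbb{S}_n^j g_{i-j} \colon \mathbb{S}_n^j\Lambda \to \mathbb{S}_n^i\Lambda$. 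Unwinding the product rule of $\widetilde{\Lambda} = \End_{\mathscr{C}_\Lambda^n}(\pi\Lambda)$ together with the Yoneda identification $\widetilde{\Lambda}_P \iso \bigoplus_j \Hom_{\mathscr{D}_\Lambda}(\Lambda, \mathbb{S}_n^j\Lambda)$ identifies this block morphism with left multiplication by $\widetilde{\lambda}$ on the ideal $\widetilde{\Lambda}_P \subset \widetilde{\Lambda}$, which is $\phi(\widetilde{\lambda}) \in \Gamma$. Hence the two actions agree. The main technical hurdle is keeping track of the two different gradings $\mathscr{U}\psL = \bigvee_i \add \mathbb{S}^i \widetilde{\Lambda}_P = \bigvee_i \add \mathbb{S}_n^i\Lambda$ that enter $r$ and the push-down, respectively.
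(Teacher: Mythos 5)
Your plan takes a genuinely different route from the paper's one-line argument. The paper's proof observes that \emph{all three} functors are restriction (precomposition) functors between module categories: the push-down is restriction along $\proj\widetilde{\Lambda}\to\Add\mathscr{U}\psL$, $\widetilde{\Lambda}\mapsto\bigoplus_i\mathbb{S}_n^i\Lambda$; the functor $\mathtt{R}$ is restriction along the projection $\mathscr{U}\psL\twoheadrightarrow\add\widetilde{\Lambda}_P$; and $\mathtt{A}$ is restriction along $\proj\widetilde{\Lambda}\to\proj\Gamma$ coming from $\phi\colon\widetilde{\Lambda}\to\Gamma$. Commutativity therefore reduces to checking that the composite additive functor $\proj\widetilde{\Lambda}\to\Add\mathscr{U}\psL\twoheadrightarrow\Add_\Lambda\widetilde{\Lambda}_P=\Proj\Gamma$ coincides with (4.1), which is a direct bookkeeping check. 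You instead treat the push-down as a left adjoint to pullback, invoke Eilenberg--Watts to reduce the derived-level claim to an isomorphism of $\Gamma$-$\widetilde{\Lambda}$-bimodules, and then compute. This is workable and in fact encodes the same content (Eilenberg--Watts is just rephrasing ``agree on $\proj\Gamma$'' as ``agree as bimodules''), but it adds machinery that the paper's restriction-only viewpoint avoids. The main thing the paper's phrasing buys is that the action compatibility becomes essentially tautological once all three functors are presented as restrictions along the same kind of data.

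Within your plan there are, however, real gaps beyond terseness. First, a sign/index confusion: the non-projective module summands of $\widetilde{\Lambda}$ are $\tau_n^{-i}\Lambda\iso\mathbb{S}_n^{-i}\Lambda$ for $i\geq1$ (Proposition~\ref{prop.tauderived}(2) dualized), not $\mathbb{S}_n^{+i}\Lambda$; and the identification $\widetilde{\Lambda}_P\iso\bigoplus_j\Hom_{\mathscr{D}_\Lambda}(\Lambda,\mathbb{S}_n^j\Lambda)$ needs the restriction $j\leq-1$ (the $j=0$ term gives $\Lambda$, the $j>0$ terms vanish). Second and more seriously, the ``collapse'' of $\bigoplus_i\Hom_\Lambda(r(\mathbb{S}_n^i\Lambda),\widetilde{\Lambda}_P)$ is not as clean as stated: for a fixed $i$ the object $\mathbb{S}_n^i\Lambda$ can have some indecomposable summands inside $\add\widetilde{\Lambda}_P$ and others outside (the bound $\ell$ after which $\mathbb{S}_n^{-i}P$ leaves $\mod\Lambda$ depends on the indecomposable projective $P$), because the two gradings $\bigvee_j\add\mathbb{S}^j\widetilde{\Lambda}_P$ and $\bigvee_j\add\mathbb{S}_n^j\Lambda$ of $\mathscr{U}\psL$ do not refine one another. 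Third, the decisive step — that the right $\widetilde{\Lambda}$-action obtained from the Galois push-down agrees with the one via $\phi$ — is sketched in a single sentence and then explicitly deferred as ``the main technical hurdle.'' That hurdle \emph{is} the content of the lemma; naming it but not crossing it leaves the argument incomplete, whereas the paper's choice to present the push-down as a restriction functor is precisely what dissolves it.
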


\begin{proof}
The push-down functor $\mathscr{D}_{\mathscr{U}\psL} \to \mathscr{D}_{\widetilde{\Lambda}}$ is induced by the functor
\begin{align*}
\proj \widetilde{\Lambda} & \to[30] \Add \mathscr{U}\psL \\
\widetilde{\Lambda} & \mapsto[30] \bigoplus_{i\in \mathbb{Z}} \mathbb{S}_n^i \Lambda,
\end{align*}
where $\Add \mathscr{U}\psL$ is the smallest full subcategory of $D(\Mod \Lambda)$ containing $\mathscr{U}\psL$ and closed under arbitrary direct sums. Recall that the functor $\mathtt{R}$ is induced by the projection functor $\mathscr{U} \epi \add_{\Lambda} \widetilde{\Lambda}_P$. The composition of these functors
\[ \proj \widetilde{\Lambda} \to[30] \Add \mathscr{U} \epi[30] \Add_{\Lambda} \widetilde{\Lambda}_P = \Proj \Gamma \]
coincides with the functor in \eqref{eq.funct_widetilde-gamma}. Thus we have the commutativity.
\end{proof}

We have the following commutative diagram.
\begin{equation} \label{diagram.functors}
\begin{tikzpicture}[baseline=-.75cm,xscale=4,yscale=2]
 \node (A) at (0,0) {$\mathscr{D}_{\Gamma}$};
 \node (B) at (1,0) {$\mathscr{D}_{\mathscr{U}\psL}$};
 \node (C) at (2,0) {$\stabmod \mathscr{U}\psL$};
 \node (D) at (1,-1) {$\mathscr{D}_{\widetilde{\Lambda}}$};
 \node (E) at (2,-1) {$\stabmod \widetilde{\Lambda}$};
 \node (F) at (0,-1) {$\mathscr{C}_{\Gamma}^{n+1}$};
 \draw [->] (A) -- node [above,pos=.7] {$\mathtt{R}$} (B);
 \draw [->,out=45,in=135] (A) to node [pos=.7,right=15pt] {$\approx$} (C);
 \draw [->>] (B) -- (C);
 \draw [->>] (D) -- (E);
 \draw [->] (B) -- node [fill=white,inner sep=1pt] {push-down} (D);
 \draw [->] (C) -- node [fill=white,inner sep=1pt] {push-down} (E);
 \draw [->] (A) -- node [below left] {$\mathtt{A}$} (D);
 \draw [->] (A) -- node [left] {$\pi$} (F);
\end{tikzpicture}
\end{equation}
We will find a triangle functor $\mathtt{H} \colon \mathscr{C}_{\Gamma}^{n+1} \to \stabmod \widetilde{\Lambda}$ making \eqref{diagram.functors} commutative. We need the following observation.

\begin{proposition} \label{prop.amiot.factorization.conditions}
In the setup above there is a triangle
\[ X \to[30] \mathtt{A}(D \Gamma[-n-1]) \to[30] \mathtt{A}(\Gamma) \to[30] X[1] \]
in $\mathscr{D}_{\widetilde{\Lambda} \otimes_k \Gamma^{\op}}$, such that the image of $X$ under the forgetful functor $\mathscr{D}_{\widetilde{\Lambda} \otimes_k \Gamma^{\op}} \to \mathscr{D}_{\widetilde{\Lambda}}$ belongs to $\perf \widetilde{\Lambda}$. (Note that $\mathtt{A}(\Gamma)$ and $\mathtt{A}(D\Gamma)$ are naturally $\widetilde{\Lambda} \otimes_k \Gamma^{\op}$-modules, where the right $\Gamma$-module structure comes from the natural action on $\Gamma$ and $D \Gamma$, respectively.)
\end{proposition}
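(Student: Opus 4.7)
The plan is to exhibit a bimodule morphism $\phi \colon \mathtt{A}(D\Gamma[-n-1]) \to \mathtt{A}(\Gamma)$ in $\mathscr{D}_{\widetilde{\Lambda} \otimes_k \Gamma^{\op}}$ whose underlying cone in $\mathscr{D}_{\widetilde{\Lambda}}$ lies in $\perf \widetilde{\Lambda}$; the desired $X$ is then a shift of this cone. The conceptual reason to expect such a $\phi$ is the following: $\widetilde{\Lambda}$ is self-injective by Corollary~\ref{corollary.isselfinj}, so $\perf \widetilde{\Lambda}$ is precisely the kernel of the Verdier projection $\mathscr{D}_{\widetilde{\Lambda}} \to \stabmod \widetilde{\Lambda}$, and by Corollary~\ref{corollary.stabmod_CY} this quotient is $(n+1)$-Calabi-Yau. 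Consequently $\mathtt{A}(D\Gamma[-n-1])$ and $\mathtt{A}(\Gamma)$ must become canonically isomorphic in $\stabmod \widetilde{\Lambda}$, and the proof amounts to lifting this abstract isomorphism to a genuine bimodule morphism in $\mathscr{D}_{\widetilde{\Lambda} \otimes_k \Gamma^{\op}}$ with perfect $\widetilde{\Lambda}$-cone.

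I would first identify the two bimodules concretely. Since $\Hom_\Lambda(\widetilde{\Lambda}_P, \Lambda) = 0$ by Proposition~\ref{prop.tauderived}(2), we have $\Gamma = \End_\Lambda(\widetilde{\Lambda}_P)$, which endows $\mathtt{A}(\Gamma)$ with its natural $(\widetilde{\Lambda}, \Gamma^{\op})$-bimodule structure. Auslander-Reiten-Serre duality in $\mathscr{D}_\Lambda$ yields a bimodule-natural isomorphism $D\Gamma \cong \Hom_{\mathscr{D}_\Lambda}(\widetilde{\Lambda}_P, \mathbb{S}_\Lambda \widetilde{\Lambda}_P)$. Since $\Lambda$ is $n$-representation-finite, Proposition~\ref{prop.tauderived} combined with Theorem~\ref{theorem.nrepfinderived} gives $\mathbb{S}_\Lambda \widetilde{\Lambda}_P \cong \widetilde{\Lambda}_I[n]$, so that $\mathtt{A}(D\Gamma) \cong \Ext^n_\Lambda(\widetilde{\Lambda}_P, \widetilde{\Lambda}_I)$ as $(\widetilde{\Lambda}, \Gamma^{\op})$-bimodules.

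Next I would construct $\phi$ from a projective resolution $0 \to Q_n \to \cdots \to Q_0 \to \widetilde{\Lambda}_P \to 0$ in $\mod \Lambda$ with each $Q_i \in \add \Lambda$. Applying $\Hom_\Lambda(-, \widetilde{\Lambda}_I)$ produces a length-$n$ complex $C^\bullet$ of $(\widetilde{\Lambda}, \Gamma^{\op})$-bimodules whose top cohomology equals $D\Gamma$; splicing this with the natural pairing at the zeroth cohomology coming from Serre duality (realizing the identity class in $\End \Gamma$) gives the desired $\phi$. The intermediate terms $\Hom_\Lambda(Q_i, \widetilde{\Lambda}_I) \cong e_i \widetilde{\Lambda}_I$ (where $Q_i = \Lambda e_i$) are direct summands of $\widetilde{\Lambda}$ as $\Lambda$-modules, so, by self-injectivity of $\widetilde{\Lambda}$, they correspond to projective-injective $\widetilde{\Lambda}$-modules, placing the cone of $\phi$ in $\perf \widetilde{\Lambda}$ once the $\Gamma^{\op}$-action is forgotten.

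The main obstacle is bimodule-linearity. While the existence of $\phi$ as a morphism of $\widetilde{\Lambda}$-modules is straightforward from the resolution, ensuring $\Gamma^{\op}$-linearity at each step requires carrying the right $\Gamma$-action coming from $\Gamma = \End_\Lambda(\widetilde{\Lambda}_P)$ through the entire construction and through the Serre-duality identification. This is possible because the Serre functor on $\mathscr{D}_\Lambda$ is itself a bimodule functor (given by $-\otimes_\Lambda^L D\Lambda$), so the AR-Serre identifications used above respect the $\Gamma^{\op}$-action automatically; the careful tracking of this structure through the splicing of cohomology classes is the technical heart of the argument.
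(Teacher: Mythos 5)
Your high-level idea — realize $\mathtt{A}(\Gamma)$ and $\mathtt{A}(D\Gamma)$ as the two non-vanishing cohomologies of a complex obtained from a projective $\Lambda$-resolution of $\widetilde{\Lambda}_P$ and then take the truncation triangle — is exactly the paper's idea, but your specific choice of complex does not work. Applying $\Hom_{\Lambda}(-,\widetilde{\Lambda}_I)$ to a projective resolution of $\widetilde{\Lambda}_P$ yields a complex computing $R\Hom_{\Lambda}(\widetilde{\Lambda}_P,\widetilde{\Lambda}_I)$, whose degree-$n$ cohomology is indeed $\Ext^n_{\Lambda}(\widetilde{\Lambda}_P,\widetilde{\Lambda}_I)\cong D\Gamma$, but whose degree-$0$ cohomology is $\Hom_{\Lambda}(\widetilde{\Lambda}_P,\widetilde{\Lambda}_I)$, which is \emph{not} $\Gamma=\Hom_{\Lambda}(\widetilde{\Lambda}_P,\widetilde{\Lambda}_P)$ and is not naturally isomorphic to it. Hence the truncation triangle of your $C^{\bullet}$ connects $D\Gamma[-n-1]$ with $\Hom_{\Lambda}(\widetilde{\Lambda}_P,\widetilde{\Lambda}_I)$, not with $\mathtt{A}(\Gamma)$, and the ``splicing with the natural pairing'' step you invoke to bridge this gap is not actually constructed: you never produce the required morphism $\mathtt{A}(D\Gamma[-n-1])\to\mathtt{A}(\Gamma)$.

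The paper avoids this by working with $DR\Hom_{\Lambda}(\widetilde{\Lambda}_P,\widetilde{\Lambda})$ instead. Using $\Hom_{\Lambda}(\widetilde{\Lambda}_P,\Lambda)=0$ one gets $H^0(DR\Hom_{\Lambda}(\widetilde{\Lambda}_P,\widetilde{\Lambda}))=D\Hom_{\Lambda}(\widetilde{\Lambda}_P,\widetilde{\Lambda}_P)=\mathtt{A}(D\Gamma)$, while the other cohomology is $D\Ext^n_{\Lambda}(\widetilde{\Lambda}_P,\widetilde{\Lambda})$, which is identified with $\mathtt{A}(\Gamma)=\Hom_{\Lambda}(\widetilde{\Lambda}_P,\widetilde{\Lambda}_P)$ using $\widetilde{\Lambda}_P=\tau_n^-\widetilde{\Lambda}$ and the AR-type formula, \emph{not} via $\widetilde{\Lambda}_I$. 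The standard truncation triangle then gives the desired triangle without constructing any morphism by hand, and all terms are manifestly $\widetilde{\Lambda}\otimes_k\Gamma^{\op}$-modules. The perfectness is likewise handled more cleanly: since $\gld\Lambda<\infty$ gives $\widetilde{\Lambda}_P\in\perf\Lambda$, one has $DR\Hom_{\Lambda}(\widetilde{\Lambda}_P,\widetilde{\Lambda})\in\thick_{\widetilde{\Lambda}}D\widetilde{\Lambda}=\perf\widetilde{\Lambda}$; this is more robust than your argument via summands $e_i\widetilde{\Lambda}_I$, for which the needed left $\widetilde{\Lambda}$-module structure is not justified (note $\widetilde{\Lambda}_I$ is not a $\widetilde{\Lambda}$-bimodule ideal of $\widetilde{\Lambda}$ the way $\widetilde{\Lambda}_P$ is). To salvage your argument, replace $\widetilde{\Lambda}_I$ by $\widetilde{\Lambda}$ throughout, dualize, and replace the ``splicing'' by the truncation triangle.
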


\begin{proof}
Note that
\[ \mathtt{A}(\Gamma) = \Hom_{\Lambda}(\widetilde{\Lambda}_P, \widetilde{\Lambda}_P) = \Hom_{\Lambda}(\tau_n^- \widetilde{\Lambda}, \widetilde{\Lambda}_P) = D \Ext_{\Lambda}^n(\widetilde{\Lambda}_P, \widetilde{\Lambda}) \]
as $\widetilde{\Lambda} \otimes_k \Gamma^{\op}$-modules, and similarly
\[ \mathtt{A}(D \Gamma [-n-1]) = D \Hom_{\Lambda}(\widetilde{\Lambda}_P, \widetilde{\Lambda}_P)[-n-1] = D \Hom_{\Lambda}(\widetilde{\Lambda}_P, \widetilde{\Lambda})[-n-1]. \]
Since $R\Hom_{\Lambda}(\widetilde{\Lambda}_P, \widetilde{\Lambda})$ is concentrated in degrees $0$ and $(-n)$ we have a triangle
\[ D \Ext_{\Lambda}^n(\widetilde{\Lambda}_P, \widetilde{\Lambda})[n] \to[30] D R\Hom_{\Lambda}(\widetilde{\Lambda}_P, \widetilde{\Lambda}) \to[30] \underbrace{D \Hom_{\Lambda}(\widetilde{\Lambda}_P, \widetilde{\Lambda})}_{= \mathtt{A}(D \Gamma)} \to[30] \underbrace{D \Ext_{\Lambda}^n(\widetilde{\Lambda}_P, \widetilde{\Lambda})}_{= \mathtt{A}(\Gamma)}[n+1] \]
in $\mathscr{D}_{\widetilde{\Lambda} \otimes_k \Gamma^{\op}}$. It remains to show that $DR\Hom_{\Lambda}(\widetilde{\Lambda}_P, \widetilde{\Lambda}) \in \perf \widetilde{\Lambda}$. We have $\widetilde{\Lambda}_P \in \perf \Lambda$ since $\gld \Lambda < \infty$, so in $\mathscr{D}_{\widetilde{\Lambda}}$ we have
\begin{align*}
DR\Hom_{\Lambda}(\widetilde{\Lambda}_P, \widetilde{\Lambda}) & \in \thick DR\Hom_{\Lambda}(\Lambda, \widetilde{\Lambda}) \\
& = \thick D \widetilde{\Lambda} = \perf \widetilde{\Lambda} \qedhere
\end{align*}
\end{proof}

Using the universal property of $(n+1)$-Amiot cluster categories we have the following consequence of Proposition~\ref{prop.amiot.factorization.conditions}.

\begin{proposition} \label{prop.H_exists}
There is a triangle functor $\mathtt{H} \colon \mathscr{C}_{\Gamma}^{n+1} \to \stabmod \widetilde{\Lambda}$ making Diagram~\eqref{diagram.functors} commutative.
\end{proposition}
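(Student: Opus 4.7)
The plan is to apply the universal property of the $(n+1)$-Amiot cluster category, as formulated in Theorem~\ref{thm.universal_stable} of the appendix. By that universal property, to produce the required triangle functor $\mathtt{H}$ it suffices to exhibit a triangle functor $F \colon \mathscr{D}_{\Gamma} \to \stabmod \widetilde{\Lambda}$ equipped with the bimodule data that trivialises the difference between $\Gamma$ and $D\Gamma[-n-1]$ in the target.

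First I would take $F$ to be the composition of $\mathtt{A} \colon \mathscr{D}_{\Gamma} \to \mathscr{D}_{\widetilde{\Lambda}}$ with the canonical projection $\mathscr{D}_{\widetilde{\Lambda}} \twoheadrightarrow \stabmod \widetilde{\Lambda}$; the latter is a triangle functor because $\widetilde{\Lambda}$ is self-injective by Corollary~\ref{corollary.isselfinj}. I would then invoke the triangle
\[ X \to[30] \mathtt{A}(D\Gamma[-n-1]) \to[30] \mathtt{A}(\Gamma) \to[30] X[1] \]
in $\mathscr{D}_{\widetilde{\Lambda} \otimes_k \Gamma^{\op}}$ supplied by Proposition~\ref{prop.amiot.factorization.conditions}. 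The decisive point is that, after forgetting the right $\Gamma$-action, $X$ lies in $\perf \widetilde{\Lambda}$; since $\widetilde{\Lambda}$ is self-injective, $\perf \widetilde{\Lambda}$ coincides with $\Ho^{\rm b}(\proj \widetilde{\Lambda})$ and therefore maps to zero under the projection onto $\stabmod \widetilde{\Lambda}$. Consequently the triangle above induces an isomorphism $F(D\Gamma[-n-1]) \iso F(\Gamma)$ in $\stabmod \widetilde{\Lambda}$, and this isomorphism is natural in the right $\Gamma$-action because the triangle itself lives in the bimodule derived category.

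This is precisely the datum required by Theorem~\ref{thm.universal_stable}, which then furnishes a triangle functor $\mathtt{H} \colon \mathscr{C}_{\Gamma}^{n+1} \to \stabmod \widetilde{\Lambda}$ with $\mathtt{H} \circ \pi \iso F$. This identity gives the commutativity of the left-and-bottom path of Diagram~\eqref{diagram.functors}. The commutativity of the remaining parts reduces to Lemma~\ref{lemma.comm_res_pd} combined with Corollary~\ref{corollary.stabmodisderived}(2), which identifies the top equivalence $\mathscr{D}_{\Gamma} \approx \stabmod \mathscr{U}\psL$ with the composition of $\mathtt{R}$ and the push-down $\mathscr{D}_{\mathscr{U}\psL} \twoheadrightarrow \stabmod \mathscr{U}\psL$.

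The main subtlety I anticipate is verifying that the bimodule structure on $X$ matches the form in which Theorem~\ref{thm.universal_stable} is stated -- in other words, that the isomorphism $F(D\Gamma[-n-1]) \iso F(\Gamma)$ arising from $X$ really is the trivialisation demanded by the universal property, not merely some isomorphism between those two objects. This is the reason Proposition~\ref{prop.amiot.factorization.conditions} is phrased at the level of $\mathscr{D}_{\widetilde{\Lambda} \otimes_k \Gamma^{\op}}$ rather than of $\mathscr{D}_{\widetilde{\Lambda}}$. Once this naturality in $\Gamma^{\op}$ is unpacked and checked against the universal property, the rest of the argument is formal.
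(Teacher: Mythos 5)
Your proposal is correct and follows essentially the same route as the paper: the paper also invokes the universal property formulated in Theorem~\ref{thm.universal_stable}, fed by the bimodule triangle from Proposition~\ref{prop.amiot.factorization.conditions}, precisely because $X$ lands in $\perf\widetilde{\Lambda}$ and thus vanishes under the projection to $\stabmod\widetilde{\Lambda}$. One small terminological slip: the arrow $\mathscr{D}_{\mathscr{U}\psL}\twoheadrightarrow\stabmod\mathscr{U}\psL$ appearing in Corollary~\ref{corollary.stabmodisderived}(2) is the canonical projection, not the push-down functor, but this does not affect the argument.
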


\begin{proof}
This follows from the universal property of the $(n+1)$-Amiot cluster category and Proposition~\ref{prop.amiot.factorization.conditions} above. We will give details on the proof in Appendix~\ref{appendix}, since this requires a lot of background on DG categories, which is not closely related to the main subjects of this paper.
\end{proof}

Now we are ready to prove the following result.

\begin{theorem} \label{theorem.H_equiv_selfinj}
The functor $\mathtt{H} \colon \mathscr{C}_{\Gamma}^{n+1} \to \stabmod \widetilde{\Lambda}$ is an equivalence.
\end{theorem}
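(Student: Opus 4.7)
The plan is to show $\mathtt{H}$ is an equivalence by identifying an $(n+1)$-cluster tilting object on each side, matching them via $\mathtt{H}$, and invoking a recognition principle for algebraic $(n+1)$-Calabi-Yau triangulated categories.

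First, I would trace Diagram~\eqref{diagram.functors} to identify $M := \mathtt{H}(\pi\Gamma) \in \stabmod \widetilde{\Lambda}$. Since the composition $\mathscr{D}_\Gamma \to \stabmod \mathscr{U}\psL \to \stabmod \widetilde{\Lambda}$ (the first arrow being the equivalence of Corollary~\ref{corollary.stabmodisderived}, the second the push-down) coincides with $\mathtt{H}\circ \pi$, the object $M$ is obtained by pushing down the tilting object $\mathtt{R}\Gamma = \Hom_{\mathscr{U}\psL}(-, \widetilde{\Lambda}_P)$ of $\stabmod \mathscr{U}\psL$ along the Galois covering $\mathscr{U}\psL \to \add \pi\Lambda$; concretely, $M$ is the image of the $\widetilde{\Lambda}$-module corresponding to $\widetilde{\Lambda}_P$ under this covering.

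Next, I would verify that $M$ is an $(n+1)$-cluster tilting object in $\stabmod \widetilde{\Lambda}$, and compute
\[ \End_{\stabmod \widetilde{\Lambda}}(M) \iso \End_{\mathscr{C}_\Gamma^{n+1}}(\pi\Gamma). \]
The endomorphism ring identification combines the commutativity of Diagram~\eqref{diagram.functors} with a direct calculation using the Galois covering and the exactness behaviour of the push-down on the relevant subcategories. The $(n+1)$-cluster tilting property of $M$ uses that $\stabmod \widetilde{\Lambda}$ is $(n+1)$-Calabi-Yau (Corollary~\ref{corollary.stabmod_CY}) together with tracking the $\Ext$-vanishing of $\pi\Gamma$ in $\mathscr{C}_\Gamma^{n+1}$ (Theorem~\ref{theorem.clairemain2}) down to $\stabmod \widetilde{\Lambda}$ via $\mathtt{H}$.

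Finally, once $M$ is known to be an $(n+1)$-cluster tilting object with matching endomorphism ring, a recognition principle in the spirit of Keller-Reiten for algebraic Hom-finite $(n+1)$-Calabi-Yau triangulated categories allows one to conclude that any triangle functor between such categories matching the $(n+1)$-cluster tilting objects and inducing an isomorphism on their endomorphism rings is automatically an equivalence. Applying this to $\mathtt{H}$ yields the result. The main obstacle is the careful verification of the $(n+1)$-cluster tilting property for $M$ and the endomorphism ring identification via the covering; once those two points are settled, the recognition argument is essentially formal.
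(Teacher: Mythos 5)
Your proposal has two genuine gaps that prevent it from constituting a proof.

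First, the ``recognition principle'' you invoke at the end is not a theorem that can be cited. What Keller--Reiten (and Amiot) prove are recognition results saying when a $\Hom$-finite algebraic $(n+1)$-Calabi--Yau category with an $(n+1)$-cluster tilting object is \emph{abstractly} equivalent to a given (Amiot) cluster category. They do not give a criterion under which a \emph{given} triangle functor $F$ that matches cluster tilting objects and induces an isomorphism on their endomorphism algebras is itself an equivalence --- and that stronger statement is precisely what would be needed to conclude anything about $\mathtt{H}$ rather than just about the abstract existence of some equivalence. Indeed, knowing $\End(\pi\Gamma)\iso\End(M)$ only controls $\Hom(\pi\Gamma,\pi\Gamma)$; to make a d\'evissage argument run along the approximation triangles from Proposition~\ref{prop.ct-resolution} one needs $\mathtt{H}$ to be an isomorphism on $\Hom_{\mathscr{C}_\Gamma^{n+1}}(\pi\Gamma,\pi\Gamma[i])$ for \emph{all} $i\in\mathbb{Z}$, which neither the cluster tilting property nor the Calabi--Yau property alone provides. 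Second, your plan for establishing that $M=\mathtt{H}(\pi\Gamma)$ is $(n+1)$-cluster tilting is circular: you propose to ``track the $\Ext$-vanishing of $\pi\Gamma$ down to $\stabmod\widetilde{\Lambda}$ via $\mathtt{H}$'', but that tracking is only legitimate once $\mathtt{H}$ is known to be fully faithful, which is what you are trying to prove.

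The paper's argument is much more direct and avoids both issues. Using the commutativity of Diagram~\eqref{diagram.functors}, full faithfulness of $\mathtt{H}$ on the image of $\mathscr{D}_\Gamma \approx \stabmod\mathscr{U}\psL$ is obtained by a concrete $\Hom$-computation: both $\mathscr{C}_\Gamma^{n+1}$ and $\stabmod\widetilde{\Lambda}$ are realized as orbit categories of $\stabmod\mathscr{U}\psL$ (by the functor $\leftsub{\stabmod\mathscr{U}\psL}{\mathbb{S}}_{n+1}$ on one side and $\leftsub{\mathscr{D}_\Lambda}{\mathbb{S}}_n$ on the other), and Theorem~\ref{theorem.serrestable} identifies these two functors. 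This gives the $\Hom$-isomorphism on the image of $\mathscr{D}_\Gamma$; since that image generates $\mathscr{C}_\Gamma^{n+1}$ as a triangulated category, d\'evissage extends full faithfulness to everything. Density then follows because the push-down hits every simple $\widetilde{\Lambda}$-module. The identification via Theorem~\ref{theorem.serrestable} is the key technical input, and it is exactly what is missing if one tries to shortcut via a ``recognition principle''.
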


\begin{proof}
We first check that $\mathtt{H}$ is fully faithful on the image of $\mathscr{D}_{\Gamma} = \stabmod \mathscr{U}\psL$. Indeed, for $X, Y \in \mod \mathscr{U}\psL$ we have
\begin{align*}
\Hom_{\mathscr{C}_{\Gamma}^{n+1}}(X, Y) & = \Hom_{\mathscr{D}_{\Gamma} / (\leftsub{\mathscr{D}_{\Gamma}}{\mathbb{S}}_{n+1})}(X, Y) \\
& = \Hom_{\stabmod \mathscr{U}\psL / (\leftsub{\stabmod \mathscr{U}\psL}{\mathbb{S}}_{n+1})}(X, Y) \\
& = \coprod_{i \in \mathbb{Z}} \underline{\Hom}_{\mathscr{U}\psL}(X, \leftsub{\stabmod \mathscr{U}\psL}{\mathbb{S}}_{n+1}^i Y) \\
& = \coprod_{i \in \mathbb{Z}} \underline{\Hom}_{\mathscr{U}\psL}(X, \leftsub{\mathscr{D}_{\Lambda}}{\mathbb{S}}_{n}^i Y) \qquad \qquad \text{(by Theorem~\ref{theorem.serrestable})} \\
& = \underline{\Hom}_{\underbrace{\mathscr{U}\psL/(\leftsub{\mathscr{D}_{\Lambda}}{\mathbb{S}}_n)}_{= \widetilde{\Lambda}}}(X, Y)
\end{align*}
(here, by abuse of notation, we denote the images of $X$ and $Y$ in the various categories by $X$ and $Y$).

Now, since $\mathscr{C}_{\Gamma}^{n+1}$ is generated as a triangulated category by the image of $\mathscr{D}_{\Gamma}$, the functor $\mathtt{H}$ is fully faithful on the entire $\mathscr{C}_{\Gamma}^{n+1}$.

Finally note that all simple $\widetilde{\Lambda}$-modules are in the image of the push-down functor $\stabmod \mathscr{U}\psL \to \stabmod \widetilde{\Lambda}$, so they are also in the image of $\mathtt{H}$. Therefore $\mathtt{H}$ is dense.
\end{proof}

\begin{corollary} \label{cor.stab_is_Amiot}
Let $\Lambda$ be an $n$-representation-finite algebra.
\begin{enumerate}
\item The category $\stabmod \widetilde{\Lambda}$ is $(n+1)$-Calabi-Yau triangulated with an $(n+1)$-cluster tilting object.
\item The algebra $\widetilde{\Lambda}$ is weakly $(n+1)$-representation-finite. A cluster tilting object in $\mod \widetilde{\Lambda}$ is given by $\Hom_{\Lambda}(\widetilde{\Lambda}, \widetilde{\Lambda})$.
\end{enumerate}
\end{corollary}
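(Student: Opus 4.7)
The plan is to deduce both parts from the triangle equivalence $\mathtt{H} \colon \mathscr{C}_{\Gamma}^{n+1} \to \stabmod \widetilde{\Lambda}$ provided by Theorem~\ref{theorem.H_equiv_selfinj}, together with the general theory of Amiot cluster categories. For Part~(1), the $(n+1)$-Calabi-Yau property is immediate from Theorem~\ref{theorem.clairemain1} applied to $\Gamma$: one has $\gld \Gamma \leqslant n+1$ by Theorem~\ref{theorem.Auslalg_gld}, while $\Hom$-finiteness of $\stabmod \widetilde{\Lambda}$ forces $\Gamma$ to be $\tau_{n+1}$-finite. Alternatively this property follows from Corollary~\ref{corollary.stabmod_CY} applied to $\mathscr{T} = \mathscr{C}_{\Lambda}^n$ with $\mathscr{U} = \add \pi\Lambda$, the hypothesis $\mathscr{U}[n] = \mathscr{U}$ being supplied by Corollary~\ref{corollary.isselfinj} read through Proposition~\ref{prop.selfinjeq}, after identifying $\stabmod(\add\pi\Lambda) = \stabmod \widetilde{\Lambda}$. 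The existence of an $(n+1)$-cluster tilting object in $\stabmod \widetilde{\Lambda}$ is then $\mathtt{H}(\pi\Gamma)$, transported from Theorem~\ref{theorem.clairemain2} applied to $\Gamma$.

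For Part~(2), I would lift the stable cluster tilting object to $\mod \widetilde{\Lambda}$. Let $T = \mathtt{H}(\pi\Gamma)$, choose a representative $\widehat T \in \mod \widetilde{\Lambda}$ without projective summands, and set $X := \widehat T \oplus \widetilde{\Lambda}$. Self-injectivity of $\widetilde{\Lambda}$ (Corollary~\ref{corollary.isselfinj}) gives $\Ext^i_{\widetilde{\Lambda}}(\widetilde{\Lambda}, -) = 0 = \Ext^i_{\widetilde{\Lambda}}(-, \widetilde{\Lambda})$ for $i > 0$, and for $0 < i < n+1$ one has $\Ext^i_{\widetilde{\Lambda}}(\widehat T, \widehat T) \iso \underline{\Hom}_{\widetilde{\Lambda}}(T, T[i]) = 0$ by $(n+1)$-rigidity of $T$; hence $X$ is $(n+1)$-rigid. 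The converse transfers from $T$: any $Y \in \mod \widetilde{\Lambda}$ with $\Ext^i_{\widetilde{\Lambda}}(X, Y) = 0$ for all $0 < i < n+1$ satisfies $\underline{\Hom}_{\widetilde{\Lambda}}(T, Y[i]) = 0$ in this range, hence belongs to $\add T$ in $\stabmod \widetilde{\Lambda}$, and therefore to $\add X$ in $\mod \widetilde{\Lambda}$.

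To identify $X$ with $\Hom_\Lambda(\widetilde{\Lambda}, \widetilde{\Lambda})$, trace $\pi\Gamma$ through Diagram~\eqref{diagram.functors}. The object $\mathtt{R}\Gamma$ is the representable functor $\Hom_{\mathscr{U}\psL}(-,\widetilde{\Lambda}_P)$ on $\mathscr{U}\psL$, and its push-down along the quotient $\mathscr{U}\psL \twoheadrightarrow \add \pi\Lambda$ is computed using the covering identity
\[ \Hom_{\mathscr{C}_{\Lambda}^n}(\pi\Lambda, \pi\widetilde{\Lambda}_P) = \bigoplus_{i \in \mathbb{Z}} \Hom_{\mathscr{D}_{\Lambda}}(\Lambda, \mathbb{S}_n^i \widetilde{\Lambda}_P), \]
which, using the decomposition $\widetilde{\Lambda} \iso \bigoplus_{i \geqslant 0} \tau_n^{-i} \Lambda$ together with Serre duality, collapses to $\Hom_\Lambda(\widetilde{\Lambda}_P, \widetilde{\Lambda})$. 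Adjoining the projective summand $\widetilde{\Lambda} \iso \Hom_\Lambda(\Lambda, \widetilde{\Lambda})$ then completes $X$ to $\Hom_\Lambda(\widetilde{\Lambda}_P, \widetilde{\Lambda}) \oplus \Hom_\Lambda(\Lambda, \widetilde{\Lambda}) = \Hom_\Lambda(\widetilde{\Lambda}, \widetilde{\Lambda})$.

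The hard part will be the compatibility check in this final identification: one must verify that the right $\widetilde{\Lambda}$-module structure on the push-down (induced from the algebra map $\widetilde{\Lambda} \to \Gamma = \End_\Lambda(\widetilde{\Lambda}_P)$ arising from the right action of $\widetilde{\Lambda}$ on itself) coincides with the natural structure on $\Hom_\Lambda(\widetilde{\Lambda}, \widetilde{\Lambda})$ coming from the bimodule structure on $\widetilde{\Lambda}$. This amounts to tracking how the $\tau_n$-orbit identifications inherent in the covering $\mathscr{U}\psL \to \add \pi\Lambda$ interact with the multiplicative structure of the tensor algebra $\widetilde{\Lambda}$, and choosing the correct variance of $\Hom$ throughout.
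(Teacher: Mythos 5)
Your overall strategy matches the paper's, but you have missed the short path to the final identification, and the gap you flag at the end is exactly what that short path eliminates.

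For Part~(1) you are correct, and both routes you mention work (the paper just cites Theorems~\ref{theorem.H_equiv_selfinj} and~\ref{theorem.clairemain2}). For Part~(2), the lift from the stable category to $\mod \widetilde{\Lambda}$ is right, and your rigidity/maximality check spells out the ``$M$ stably cluster tilting $\iff$ $M \oplus \widetilde{\Lambda}$ cluster tilting'' fact that the paper invokes without proof. The trouble is in how you identify the cluster tilting object. You compute the push-down of $\mathtt{R}\Gamma$ by covering theory and then worry, correctly, that you must check the right $\widetilde{\Lambda}$-module structure you get this way agrees with the one on $\Hom_\Lambda(\widetilde{\Lambda},\widetilde{\Lambda})$. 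But the paper sidesteps this entirely: by construction $\mathtt{H}$ makes Diagram~\eqref{diagram.functors} commute, so $\mathtt{H}(\Gamma) = \mathtt{A}(\Gamma)$, and $\mathtt{A}$ is simply restriction along the algebra map $\widetilde{\Lambda} \to \Gamma = \End_\Lambda(\widetilde{\Lambda}_P)$ coming from the right action of $\widetilde{\Lambda}$ on the ideal $\widetilde{\Lambda}_P$. Thus $\mathtt{A}(\Gamma) = \Hom_\Lambda(\widetilde{\Lambda}_P,\widetilde{\Lambda}_P)$ with the visibly correct module structure, no covering bookkeeping required. Then $\Hom_\Lambda(\widetilde{\Lambda}_P,\widetilde{\Lambda}_P) \oplus \widetilde{\Lambda} = \Hom_\Lambda(\widetilde{\Lambda}_I,\widetilde{\Lambda}_I) \oplus \Hom_\Lambda(\widetilde{\Lambda},D\Lambda) = \Hom_\Lambda(\widetilde{\Lambda},\widetilde{\Lambda})$, using Proposition~\ref{prop.tauderived}. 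Your computation, once you note $\Hom_\Lambda(\widetilde{\Lambda}_P,\Lambda)=0$ (by Proposition~\ref{prop.tauderived}(2)) so that $\Hom_\Lambda(\widetilde{\Lambda}_P,\widetilde{\Lambda}) = \Hom_\Lambda(\widetilde{\Lambda}_P,\widetilde{\Lambda}_P)$, lands in the same place; but the compatibility check you call ``the hard part'' is genuinely not done in your write-up, whereas the $\mathtt{A}(\Gamma)$-observation makes it a non-issue. So: not a fundamentally different route, but an unnecessarily long detour with an unresolved step that the paper's formulation avoids.
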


\begin{proof}
(1) follows immediately from Theorems~\ref{theorem.H_equiv_selfinj} and \ref{theorem.clairemain2}.

(2) follows from (1), since for any $M \in \mod \widetilde{\Lambda}$, we have that the image of $M$ in $\stabmod \widetilde{\Lambda}$ is $(n+1)$-cluster tilting if and only if $M \oplus \widetilde{\Lambda}$ is $(n+1)$-cluster tilting in $\mod \widetilde{\Lambda}$. By Theorem~\ref{theorem.clairemain2} the image of $\Gamma$ is a cluster tilting object in $\mathscr{C}_{\Gamma}^{n+1}$. By Theorem~\ref{theorem.H_equiv_selfinj}
\[ \mathtt{H}(\Gamma) = \mathtt{A}(\Gamma) = \Hom_{\Lambda}(\widetilde{\Lambda}_P,\widetilde{\Lambda}_P) \]
is an $n$-cluster tilting object in $\stabmod \widetilde{\Lambda}$. Adding $\widetilde{\Lambda}$ we obtain the cluster tilting object $\Hom_{\Lambda}(\widetilde{\Lambda}_P, \widetilde{\Lambda}_P) \oplus \widetilde{\Lambda}$ in $\mod \widetilde{\Lambda}$. Finally note that
\[ \Hom_{\Lambda}(\widetilde{\Lambda}_P, \widetilde{\Lambda}_P) \oplus \widetilde{\Lambda} = \underbrace{\Hom_{\Lambda}(\widetilde{\Lambda}_I, \widetilde{\Lambda}_I)}_{= \Hom_{\Lambda}(\widetilde{\Lambda}, \widetilde{\Lambda}_I)} \oplus \Hom_{\Lambda}(\widetilde{\Lambda}, D\Lambda) = \Hom_{\Lambda}(\widetilde{\Lambda}, \widetilde{\Lambda}), \]
so $\Hom_{\Lambda}(\widetilde{\Lambda}, \widetilde{\Lambda})$ is $(n+1)$-cluster tilting in $\mod \widetilde{\Lambda}$ as claimed.
\end{proof}

\begin{remark}
In \cite[Section~6]{Iy_n-Auslander} (also see \cite[Section~5]{IO}), the first author described explicitly the quivers and relations of the relative $n$-Auslander algebra $n$-Aus$(Q)$ of Dynkin quivers $Q$. In the case of linear oriented $A_s$ the $(n-1)$-Auslander algebra $(n-1)$-Aus$(A_s)$ is $n$-representation-finite, and has the $n$-Auslander algebra $n$-Aus$(A_s)$ and the stable $n$-Auslander algebra $n$-Aus$(A_{s-1})$. Hence Theorem~\ref{theorem.H_equiv_selfinj} implies
\[ \stabmod \widetilde{(n-1)\text{-Aus}(A_s)} \approx \mathscr{C}^{n+1}_{n\text{-Aus}(A_{s-1})} \]
(see also \cite[Theorem~5.7]{IO}).
\end{remark}

\begin{example}
The $1$-Auslander algebra (= classical Auslander algebra) of $A_4$ is given by the left quiver with relations below. Its $2$-preprojective algebra is given by the right quiver below, with commutativity relations in all squares, and zero-relations where the squares are cut off on the border.
\[
\begin{tikzpicture}[xscale=.7,yscale=-.8]
 \node at (-3.5,.25) {$1$-Aus$(A_4)$:};
 \node (1) at (0,0) [vertex] {};
 \node (2) at (-1,1) [vertex] {};
 \node (3) at (1,1) [vertex] {};
 \node (4) at (-2,2) [vertex] {};
 \node (5) at (0,2) [vertex] {};
 \node (6) at (2,2) [vertex] {};
 \node (7) at (-3,3) [vertex] {};
 \node (8) at (-1,3) [vertex] {};
 \node (9) at (1,3) [vertex] {};
 \node (10) at (3,3) [vertex] {};
 \draw [->] (2) -- (1);
 \draw [->] (1) -- (3);
 \draw [dashed] (3) -- (2);
 \draw [->] (4) -- (2);
 \draw [->] (2) -- (5);
 \draw [->] (5) -- (3);
 \draw [->] (3) -- (6);
 \draw [dashed] (6) -- (5);
 \draw [dashed] (5) -- (4);
 \draw [->] (7) -- (4);
 \draw [->] (4) -- (8);
 \draw [->] (8) -- (5);
 \draw [->] (5) -- (9);
 \draw [->] (9) -- (6);
 \draw [->] (6) -- (10);
 \draw [dashed] (10) -- (9);
 \draw [dashed] (9) -- (8);
 \draw [dashed] (8) -- (7);
\end{tikzpicture}
\qquad \qquad
\begin{tikzpicture}[xscale=.7,yscale=-.8]
 \node at (-3.5,.25) {$\widetilde{1\text{-Aus}(A_4)}$:};
 \node (1) at (0,0) [vertex] {};
 \node (2) at (-1,1) [vertex] {};
 \node (3) at (1,1) [vertex] {};
 \node (4) at (-2,2) [vertex] {};
 \node (5) at (0,2) [vertex] {};
 \node (6) at (2,2) [vertex] {};
 \node (7) at (-3,3) [vertex] {};
 \node (8) at (-1,3) [vertex] {};
 \node (9) at (1,3) [vertex] {};
 \node (10) at (3,3) [vertex] {};
 \draw [->] (2) -- (1);
 \draw [->] (1) -- (3);
 \draw [->] (3) -- (2);
 \draw [->] (4) -- (2);
 \draw [->] (2) -- (5);
 \draw [->] (5) -- (3);
 \draw [->] (3) -- (6);
 \draw [->] (6) -- (5);
 \draw [->] (5) -- (4);
 \draw [->] (7) -- (4);
 \draw [->] (4) -- (8);
 \draw [->] (8) -- (5);
 \draw [->] (5) -- (9);
 \draw [->] (9) -- (6);
 \draw [->] (6) -- (10);
 \draw [->] (10) -- (9);
 \draw [->] (9) -- (8);
 \draw [->] (8) -- (7);
\end{tikzpicture} \]
The stable module category of the algebra $\widetilde{1\text{-Aus}(A_4)}$ is the same at the $3$-Amiot cluster category of the $2$-Auslander algebra of $A_3$, whose quiver is depicted below.
\[ \begin{tikzpicture}[xscale=.7,yscale=-.8]
 \node at (-6,2) {$2$-Aus$(A_3)$:};
 \node (1) at (0,0) [vertex] {};
 \node (2) at (-1.25,.75) [vertex] {};
 \node (3) at (.75,.75) [vertex] {};
 \node (4) at (-2.5,1.5) [vertex] {};
 \node (5) at (-.5,1.5) [vertex] {};
 \node (6) at (1.5,1.5) [vertex] {};
 \node (7) at (0,2.5) [vertex] {};
 \node (8) at (-1.25,3.35) [vertex] {};
 \node (9) at (.75,3.25) [vertex] {};
 \node (10) at (0,5) [vertex] {};
 \draw [->] (2) -- (1);
 \draw [->] (1) -- (3);
 \draw [->] (4) -- (2);
 \draw [->] (2) -- (5);
 \draw [->] (5) -- (3);
 \draw [->] (3) -- (6);
 \draw [->] (3) -- (7);
 \draw [->] (5) -- (8);
 \draw [->] (6) -- (9);
 \draw [->] (8) -- (7);
 \draw [->] (7) -- (9);
 \draw [->] (9) -- (10);
\end{tikzpicture} \]
\end{example}

\section{The stable category II: the vosnex case} \label{sect.non-selfinj}

In this section we generalize the results of the previous section to the following setup: $\Lambda$ is an algebra of global dimension $\gld \Lambda \leqslant n$, satisfying the vosnex property. Recall (see \ref{notation.vosnex}) that this means that $\Lambda$ is $\tau_n$-finite (i.e.\ $\tau_n^{-i} \Lambda = 0$ for $i \gg 0$ -- see \ref{def.tau_n}), and the cluster tilting subcategory $\mathscr{U}\psL \subseteq \mathscr{D}_{\Lambda}$ (see \ref{theorem.ct_in_Db}) satisfies $\Hom_{\mathscr{D}_{\Lambda}}(\mathscr{U}\psL[i], \mathscr{U}\psL) = 0$ for $i \in \{1, \ldots, n-2\}$. Recall that for $n=2$ the latter condition vanishes, so that in that case we only assume the algebra $\Lambda$ to be $\tau_2$-finite.

\subsection{One-sided \texorpdfstring{$n$}{n}-cluster tilting objects in \texorpdfstring{$\mod \Lambda$}{mod Lambda}}

The aim of this subsection is to show that $\widetilde{\Lambda}$ is the unique basic right $n$-cluster tilting object in $\mod \Lambda$ (Theorem~\ref{theorem.rightcto}), and dually $D \widetilde{\Lambda}$ is the unique basic left $n$-cluster tilting object in $\mod \Lambda$. Moreover we will show that $\gld \End_{\Lambda}(\widetilde{\Lambda}) \leqslant n+1$.

\begin{definition}
Let $\mathscr{A}$ be an abelian category. An object $M \in \mathscr{A}$ is called \emph{right $n$-cluster tilting object}, if
\[ \add M = \{X \in \mathscr{A} \mid \Ext^i_{\mathscr{A}}(X, M) = 0 \, \forall i \in \{1, \ldots, n-1\} \}. \]
The object $M$  is called \emph{left $n$-cluster tilting object}, if
\[ \add M = \{X \in \mathscr{A} \mid \Ext^i_{\mathscr{A}}(M, X) = 0 \, \forall i \in \{1, \ldots, n-1\} \}. \]
Clearly an object $M$ is an $n$-cluster tilting object if and only if it is a right $n$-cluster tilting object and a left $n$-cluster tilting object.
\end{definition}

The first aim of this subsection is to prove the following result.

\begin{theorem} \label{theorem.rightcto}
Let $\Lambda$ be an algebra of global dimension at most $n$ having the vosnex property.
\begin{enumerate}
\item The $\Lambda$-module $\widetilde{\Lambda}$ is the unique basic right $n$-cluster tilting object in $\mod \Lambda$.
\item The $\Lambda$-module $D \widetilde{\Lambda}$ is the unique basic left $n$-cluster tilting object in $\mod \Lambda$.
\end{enumerate}
\end{theorem}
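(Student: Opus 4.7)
I plan to prove part~(1) directly and deduce part~(2) by applying part~(1) to $\Lambda^{\op}$, since the opposite algebra inherits $\gld \leqslant n$, $\tau_n$-finiteness and the vosnex property (all symmetric under the $k$-duality $D$), and $D\widetilde{\Lambda^{\op}} \cong \widetilde{\Lambda}$ exchanges right with left $n$-cluster tilting. Uniqueness of the basic representative in each case will follow formally from the characterizing Ext-condition.

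The central tool for part~(1) is the $n$-cluster tilting subcategory $\mathscr{U}\psL \subseteq \mathscr{D}_\Lambda$ of Theorem~\ref{theorem.ct_in_Db}, together with the identity $\tau_n^{-j}\Lambda = H^0(\mathbb{S}_n^{-j}\Lambda)$ for $j \geqslant 0$. By Lemma~\ref{lemma.nuorders}, $\mathbb{S}_n^{-j}\Lambda \in \mathscr{D}^{\leqslant 0}$, and the vosnex property forces
\[ H^{-i}(\mathbb{S}_n^{-j}\Lambda) = \Hom_{\mathscr{D}_\Lambda}(\Lambda[i], \mathbb{S}_n^{-j}\Lambda) = 0 \quad\text{for } 1 \leqslant i \leqslant n-2, \]
producing a truncation triangle $Z_j \to \mathbb{S}_n^{-j}\Lambda \to \tau_n^{-j}\Lambda \to Z_j[1]$ with $Z_j \in \mathscr{D}^{\leqslant -(n-1)}$. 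To prove the self-Ext vanishing $\Ext^i_\Lambda(\widetilde{\Lambda}, \widetilde{\Lambda}) = 0$ for $1 \leqslant i \leqslant n-1$, I would apply $\Hom_{\mathscr{D}_\Lambda}(-, \tau_n^{-k}\Lambda[i])$ to the triangle for $j$ and then $\Hom_{\mathscr{D}_\Lambda}(\mathbb{S}_n^{-j}\Lambda, -[i])$ to the triangle for $k$; the peripheral Hom-terms involving $Z_j$ are killed by t-structure orthogonality, while those involving $Z_k$ are handled by translating via Serre duality into Hom-terms from $Z_k$ into shifts of $\mathbb{S}_n^{-j+1}\Lambda$, which vanish thanks to the $n$-cluster tilting of $\mathscr{U}\psL$. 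What remains is $\Hom_{\mathscr{D}_\Lambda}(\mathbb{S}_n^{-j}\Lambda, \mathbb{S}_n^{-k}\Lambda[i])$, which is zero for $1 \leqslant i \leqslant n-1$.

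The characterization --- that any $X \in \mod \Lambda$ with $\Ext^i_\Lambda(X, \widetilde{\Lambda}) = 0$ for $1 \leqslant i \leqslant n-1$ lies in $\add \widetilde{\Lambda}$ --- proceeds in parallel. The same truncation triangle propagates the hypothesis to $\Hom_{\mathscr{D}_\Lambda}(X, \mathbb{S}_n^{-j}\Lambda[i]) = 0$ for $j \geqslant 0$, where the Hom-terms from $X$ into the tail $Z_j$ are controlled by t-structure orthogonality together with a Serre-duality argument at the borderline index. For positive powers $\mathbb{S}_n^j \Lambda$ with $j > 0$, projectivity of $\Lambda$ implies $\tau_n^j \Lambda = 0$, and the $n$-cluster tilting of $\mathscr{U}\psL$ then forces $\mathbb{S}_n^j \Lambda \in \mathscr{D}^{\geqslant n}$, so the vanishing $\Hom_{\mathscr{D}_\Lambda}(X, \mathbb{S}_n^j \Lambda[i]) = 0$ is automatic from t-structure. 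Thus $\Hom_{\mathscr{D}_\Lambda}(X, \mathscr{U}\psL[i]) = 0$ for $1 \leqslant i \leqslant n-1$, which places $X$ in $\mathscr{U}\psL$; a final identification $\mathscr{U}\psL \cap \mod \Lambda = \add \widetilde{\Lambda}$ via the same truncation triangle completes the proof.

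The main obstacle throughout is the borderline index $i = n-1$, where t-structure orthogonality is only marginal and the vosnex property must be used in its full strength --- combined with Serre duality --- to control the tail $Z_j$. A secondary subtlety is the identification $\mathscr{U}\psL \cap \mod \Lambda = \add \widetilde{\Lambda}$, which in the non-$n$-representation-finite setting is not immediate since the generators $\mathbb{S}_n^{-j}\Lambda$ themselves no longer lie in $\mod \Lambda$.
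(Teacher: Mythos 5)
There is a genuine gap, and it is structural: your strategy for the characterization step is to promote $\Ext^i_\Lambda(X,\widetilde{\Lambda})=0$ to $\Hom_{\mathscr{D}_\Lambda}(X,\mathscr{U}\psL[i])=0$ and conclude $X\in\mathscr{U}\psL$. But in the vosnex (non--$n$-representation-finite) setting $\widetilde{\Lambda}$ itself is typically \emph{not} an object of $\mathscr{U}\psL$: the summand $\tau_n^{-\ell}\Lambda=\Ho^0(\mathbb{S}_n^{-\ell}\Lambda)$ is only the $0$-th cohomology, and the triangle $Z_\ell\to\mathbb{S}_n^{-\ell}\Lambda\to\tau_n^{-\ell}\Lambda\to Z_\ell[1]$ does not split, so $\tau_n^{-\ell}\Lambda$ is not a summand of $\mathbb{S}_n^{-\ell}\Lambda$. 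Since $\widetilde{\Lambda}$ satisfies your Ext-hypothesis trivially, a successful run of your argument would force $\widetilde{\Lambda}\in\mathscr{U}\psL$, which is false. The place it actually breaks is the term $\Hom_{\mathscr{D}_\Lambda}(X,Z_j[i])$: with $Z_j\in\mathscr{D}^{\leqslant 1-n}$ and $X$ a module of projective dimension up to $n$, this space does not vanish at the borderline $i=1$ by $t$-structure alone, and there is no Serre-duality fix available, exactly because $\mathscr{U}\psL\cap\mod\Lambda$ is strictly smaller than $\add\widetilde{\Lambda}$. The same issue kills your self-Ext argument: after the double triangle application the surviving obstruction is $\Hom_{\mathscr{D}_\Lambda}(\mathbb{S}_n^{-j}\Lambda,Z_k[i+1])$, which Serre duality turns into $D\Hom_{\mathscr{D}_\Lambda}(Z_k,\mathbb{S}_n^{1-j}\Lambda[n-i-1])$; since $Z_k\notin\mathscr{U}\psL$, invoking ``$n$-cluster tilting of $\mathscr{U}\psL$'' is not legitimate there, and for $i=n-1$ the shift is $0$ so no vanishing is expected anyway.

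The paper's route is different and avoids the trap: rather than lifting $X$ into $\mathscr{U}\psL$, it takes $\Ho^0$ of the $\mathscr{U}\psL$-resolution triangles of Proposition~\ref{prop.ct-resolution}, using the vosnex property to kill the small cohomologies, and thereby produces an honest exact sequence $0\to M_{n-1}\to\cdots\to M_0\to X\to 0$ with $M_i\in\add\widetilde{\Lambda}$ in $\mod\Lambda$ (Lemma~\ref{lemma.exactsequence}). Combined with $n$-rigidity of $\widetilde{\Lambda}$ (Proposition~\ref{prop.prepare_right_cto}(2), which in turn rests on $\Ext^i_\Lambda(D\Lambda,\widetilde{\Lambda})=0$ for $2\leqslant i\leqslant n-1$ via a cited result from \cite{Iy_n-Auslander}), one splits the sequence by a dimension shift and lands directly in $\add\widetilde{\Lambda}$. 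Finally, uniqueness is not ``formal from the Ext-condition'': because $T$ is only assumed to be a \emph{right} $n$-cluster tilting object, the paper has to argue inductively that $\tau_n^{-\ell}\Lambda\in\add T$ using an Auslander--Reiten-type duality surjection (Lemma~\ref{lemma.ARformula_Ext}), which is a genuine extra step your plan omits.
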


For the proof we will need the following preparation.

\begin{proposition} \label{prop.prepare_right_cto}
Assume $\gld \Lambda \leqslant n$ and $\Lambda$ has the vosnex property. Then
\begin{enumerate}
\item $\Ext_{\Lambda}^i(D \Lambda, \widetilde{\Lambda}) = 0$ for any $i \in \{2, \ldots, n-1\}$.
\item $\Ext_{\Lambda}^i(\widetilde{\Lambda}, \widetilde{\Lambda}) = 0$ for any $i \in \{1, \ldots, n-1\}$.
\end{enumerate}
\end{proposition}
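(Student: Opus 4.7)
My plan is as follows. Since $\Lambda$ is $\tau_n$-finite, $\widetilde{\Lambda} \cong \bigoplus_{k \geqslant 0} T_k$ as a $\Lambda$-module, where I set $T_k := \tau_n^{-k}\Lambda = \Ho^0(X_k)$ and $X_k := \leftsub{\mathscr{D}_{\Lambda}}{\mathbb{S}}_n^{-k}\Lambda \in \mathscr{U}\psL$. By Lemma~\ref{lemma.nuorders}, $X_k \in \mathscr{D}^{\leqslant 0}$, so there is a canonical truncation triangle
\[ Y_k \to[30] X_k \to[30] T_k \to[30] Y_k[1] \qquad (\star) \]
with $Y_k := \tau^{\leqslant -1} X_k \in \mathscr{D}^{\leqslant -1}$. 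The $\tau_n$-finiteness together with vosnex give the combined vanishing
\[ \Hom_{\mathscr{D}_{\Lambda}}(U, V[s]) = 0 \quad \text{for all } U, V \in \mathscr{U}\psL \text{ and } s \in \{-(n-2), \ldots, n-1\} \setminus \{0\}, \]
where positive shifts come from the $n$-cluster tilting property of $\mathscr{U}\psL$ (Theorem~\ref{theorem.ct_in_Db}) and negative shifts from vosnex.

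For part~(1), I would compute $\Ext^i_{\Lambda}(D\Lambda, T_k) = \Hom_{\mathscr{D}_{\Lambda}}(D\Lambda, T_k[i])$ for $i \in \{2, \ldots, n-1\}$ by applying $\Hom_{\mathscr{D}_\Lambda}(D\Lambda, -)$ to $(\star)$ shifted by $[i]$. Using $D\Lambda = \mathbb{S}\Lambda = \leftsub{\mathscr{D}_{\Lambda}}{\mathbb{S}}_n\Lambda\,[n] \in \mathscr{U}\psL$, I rewrite
\[ \Hom_{\mathscr{D}_{\Lambda}}(D\Lambda, X_k[i]) = \Hom_{\mathscr{D}_{\Lambda}}(\leftsub{\mathscr{D}_{\Lambda}}{\mathbb{S}}_n\Lambda,\, X_k[i-n]), \]
which vanishes because $i - n \in \{-(n-2), \ldots, -1\}$ falls in the vosnex range. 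An analogous argument handles $\Hom(D\Lambda, X_k[i+1])$ for $i \leqslant n-2$; the case $i = n-1$, where $\Hom(D\Lambda, X_k[n])$ need not vanish, requires a separate argument via Serre duality together with the other end of the long exact sequence.

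For part~(2), I would apply $\Hom_{\mathscr{D}_\Lambda}(-, T_k[i])$ to the triangle $(\star)$ for $T_j$, reducing the vanishing of $\Ext^i_\Lambda(T_j, T_k)$ for $i \in \{1, \ldots, n-1\}$ to controlling $\Hom(X_j, T_k[i])$ and $\Hom(Y_j[1], T_k[i])$. The first further reduces via $(\star)[i]$ applied to $T_k$ to $\Hom(X_j, X_k[i])$ (which is $0$ by $n$-cluster tilting) plus terms involving $Y_k$. Decomposing $\widetilde{\Lambda}$ so as to induct on $k + j$, I can use the base case $k = 0$ or $j = 0$ (where $T_0 = \Lambda$ is projective and the Ext groups vanish trivially) to start the argument.

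The crux of the proof, and the main obstacle, will be controlling the contributions involving the truncation pieces $Y_k$ (and, for~(2), $Y_j$), which are zero in the $n$-representation-finite case by Proposition~\ref{prop.tauderived}(2) but are genuinely present under the weaker vosnex hypothesis. My intended approach is to resolve $Y_k$ further by applying Proposition~\ref{prop.ct-resolution} to $X_k[-1]$ so as to write $Y_k$ as an iterated extension of shifts of objects in $\mathscr{U}\psL$, and then to verify case-by-case that every resulting $\Hom$-contribution lies in the combined vanishing range provided by $n$-cluster tilting and vosnex. The boundary case $i = n-1$ must be handled separately using Serre duality, exploiting that $\leftsub{\mathscr{D}_{\Lambda}}{\mathbb{S}}_n$ permutes $\mathscr{U}\psL$ and interacts predictably with the canonical truncation.
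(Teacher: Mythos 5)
You correctly set up the problem, and in particular you identify the combined vanishing range (positive shifts from $n$-cluster tilting, negative shifts from vosnex) and the fact that the truncation pieces $Y_k = \trunc^{<0}(\mathbb{S}_n^{-k}\Lambda)$ are the genuine obstacle. You also correctly handle the term $\Hom_{\mathscr{D}}(D\Lambda, X_k[i])$ via the rewriting $D\Lambda = \mathbb{S}_n\Lambda[n]$.

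However, your proposed remedy for the $Y_k$-terms does not work. Applying Proposition~\ref{prop.ct-resolution} to $X_k[-1]$ produces an expression of $X_k[-1]$ as an iterated extension of shifts of $\mathscr{U}\psL$-objects; this is a statement about the triangulated structure, and it carries no information about the $t$-structure truncation $Y_k$ of $X_k$. These two decompositions are genuinely unrelated, and chasing $\Hom(D\Lambda, Y_k[i+1])$ through a cluster-tilting resolution of $X_k[-1]$ does not terminate in the vanishing range: a priori $Y_k$ only lies in $\mathscr{D}^{\leq -1}$, which is far too weak. The key idea you are missing is the sharp bound $Y_\ell \in \mathscr{D}^{\leq 1-n}$, and this is not automatic: the paper proves it by a \emph{simultaneous} induction on $\ell$ which establishes both the Ext-vanishing $\Ext_\Lambda^i(D\Lambda, \tau_n^{-\ell}\Lambda) = 0$ for $i\in\{2,\ldots,n-1\}$ \emph{and} the location statement $\trunc^{<0}(\mathbb{S}_n^{-\ell}\Lambda) \in \mathscr{D}^{\leq 1-n}$. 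The inductive step uses an octahedron linking $\mathbb{S}_n^{-1}$ applied to the truncation triangle for $\ell-1$ with the truncation triangle for $\mathbb{S}_n^{-1}\tau_n^{-(\ell-1)}\Lambda$, and the Ext-vanishing at stage $\ell-1$ is precisely what forces $\mathbb{S}_n^{-1}\tau_n^{-(\ell-1)}\Lambda$ to have homology only in degrees $0$, $1-n$, $-n$, which in turn feeds the location estimate at stage $\ell$. Once that bound is available, $\Hom(D\Lambda,Y_\ell[i+1])=0$ follows immediately from $\gld\Lambda \leqslant n$, with no need for a separate Serre-duality argument at $i=n-1$. Finally, part~(2) does not need its own extension-chasing computation: it is a direct consequence of part~(1) via (the dual of) \cite[Proposition~2.5(a)]{Iy_n-Auslander}; your plan to attack it by inducting on $j+k$ would require re-establishing essentially the same location bounds for $Y_j$ and would run into the same gap.
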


\begin{remark} \label{rem.t-trunc-triang}
We have the functorial triangle
\[ \underbrace{\trunc^{< i} X}_{\in \mathscr{D}_{\Lambda}^{< i}} \to[30] X \to[30] \underbrace{\trunc^{\geqslant i} X}_{\in \mathscr{D}_{\Lambda}^{\geqslant i}} \to[30] (\trunc^{<i} X)[1] \]
in $X \in \mathscr{D}_{\Lambda}$ induced by the standard t-structure on $\mathscr{D}_{\Lambda}$.
\end{remark}

\begin{proof}[Proof of Proposition~\ref{prop.prepare_right_cto}]
For (1) we consider the following triangles (see Remark~\ref{rem.t-trunc-triang} above)
\[ \underbrace{\trunc^{< 0} (\mathbb{S}_n^{-\ell} \Lambda)}_{\in \mathscr{D}^{< 0}} \to[30] \mathbb{S}_n^{-\ell} \Lambda \to[30] \underbrace{\Ho^0(\mathbb{S}_n^{-\ell} \Lambda)}_{= \tau_n^{-\ell} \Lambda} \to[30] (\trunc^{< 0} (\mathbb{S}_n^{-\ell} \Lambda))[1] \]
for $\ell \geqslant 0$. We show the following statements by induction on $\ell \geqslant 0$:
\begin{itemize}
\item $\trunc^{< 0} (\mathbb{S}_n^{-\ell} \Lambda) \in \mathscr{D}^{\leqslant 1-n}$, and
\item $\Ext_{\Lambda}^i(D \Lambda, \tau_n^{- \ell} \Lambda) = 0$ for any $i \in \{2, \ldots, n-1\}$.
\end{itemize}

For $\ell=0$ the first claim is clear. The second claim follows since
\[ \Ext_{\Lambda}^i(D\Lambda, \Lambda) = \Hom_{\mathscr{D}_{\Lambda}}(\underbrace{D \Lambda [-n]}_{\in \mathscr{U}\psL}, \underbrace{\Lambda}_{\in \mathscr{U}\psL} [i-n]) = 0 \qquad \text{for } i \in \{2, \ldots, n-1\} \]
by the vosnex property.

Now assume the claims above holds true for $\ell - 1$. We apply $\mathbb{S}_n^{-1}$ to the triangle for $\ell - 1$ and obtain the triangle
\begin{equation} \label{eq.loc_rest_1} \mathbb{S}_n^{-1} (\trunc^{< 0} (\mathbb{S}_n^{-(\ell-1)} \Lambda)) \to[30] \mathbb{S}_n^{-\ell} \Lambda \to[30] \mathbb{S}_n^{-1} (\tau_n^{-(\ell-1)} \Lambda) \to[30] \mathbb{S}_n^{-1} ((\trunc^{< 0} \mathbb{S}_n^{-(\ell-1)} \Lambda))[1]. \end{equation}
Note that $\mathbb{S}_n^{-1} (\trunc^{< 0} \mathbb{S}_n^{-(\ell-1)} \Lambda) \in \mathscr{D}_{\Lambda}^{\leqslant 1 - n}$, since inductively $\trunc^{< 0} \mathbb{S}_n^{-(\ell-1)} \Lambda \in \mathscr{D}_{\Lambda}^{\leqslant 1 - n}$.

By inductive assumption we also have $\Ext_{\Lambda}^i(D \Lambda, \tau_n^{- (\ell-1)} \Lambda) = 0$ for $i \in \{2, \ldots, n-1\}$. Therefore $\mathbb{S}_n^{-1} \tau_n^{- (\ell-1)} \Lambda$ can have non-zero homology only in degrees $0$, $(1-n)$, and $(-n)$. Hence in the triangle
\begin{equation} \label{eq.loc_rest_2} \tau_n^{< 0}(\mathbb{S}_n^{-1} \tau_n^{- (\ell-1)} \Lambda) \to[30] \mathbb{S}_n^{-1} \tau_n^{- (\ell-1)} \Lambda \to[30] \underbrace{\Ho^0(\mathbb{S}_n^{-1} \tau_n^{-(\ell-1)} \Lambda)}_{= \tau_n^{-\ell} \Lambda} \to[30] (\tau_n^{< 0}(\mathbb{S}_n^{-1} \tau_n^{- (\ell-1)} \Lambda))[1] \end{equation}
we have $\tau_n^{< 0}(\mathbb{S}_n^{-1} \tau_n^{- (\ell-1)} \Lambda) \in \mathscr{D}^{\leqslant 1-n}$.

We obtain the following octahedron, where the triangle we are aiming for is the one in the dotted box.
\[ \begin{tikzpicture}[xscale=5,yscale=-1.5]
 \node (A) at (1,0) {$\mathbb{S}_n^{-1} \trunc^{< 0}(\mathbb{S}_n^{-(\ell-1)} \Lambda)$};
 \node (B) at (2,0) {$\mathbb{S}_n^{-1} \trunc^{< 0}(\mathbb{S}_n^{-(\ell-1)} \Lambda)$};
 \node (C) at (0.25,1) {$\tau_n^{-\ell} \Lambda[-1]$};
 \node (D) at (1,1) {$\trunc^{< 0}(\mathbb{S}_n^{- \ell})$};
 \node (E) at (2,1) {$\mathbb{S}_n^{- \ell} \Lambda$};
 \node (F) at (2.75,1) {$\tau_n^{-\ell} \Lambda$};
 \node (G) at (0.25,2) {$\tau_n^{-\ell} \Lambda[-1]$};
 \node (H) at (1,2) {$\trunc^{< 0}(\mathbb{S}_n^{-1} \tau_n^{-(\ell - 1)} \Lambda)$};
 \node (I) at (2,2) {$\mathbb{S}_n^{-1} \tau_n^{-(\ell - 1)} \Lambda$};
 \node (J) at (2.75,2) {$\tau_n^{-\ell} \Lambda$};
 \node (K) at (1,3) {$\mathbb{S}_n^{-1} (\trunc^{< 0}(\mathbb{S}_n^{-(\ell-1)} \Lambda))[1]$};
 \node (L) at (2,3) {$\mathbb{S}_n^{-1} (\trunc^{< 0}(\mathbb{S}_n^{-(\ell-1)} \Lambda))[1]$};
 \draw [double distance=1.5pt] (A) -- (B);
 \draw [->] (C) -- (D);
 \draw [->] (D) -- (E);
 \draw [->] (E) -- (F);
 \draw [->] (G) -- (H);
 \draw [->] (H) -- (I);
 \draw [->] (I) -- (J);
 \draw [double distance=1.5pt] (K) -- (L);
 \draw [double distance=1.5pt] (C) -- (G);
 \draw [->] (A) -- (D);
 \draw [->] (D) -- (H);
 \draw [->] (H) -- (K);
 \draw [->] (B) -- (E);
 \draw [->] (E) -- (I);
 \draw [->] (I) -- (L);
 \draw [double distance=1.5pt] (F) -- (J);
 \node at (-.1,2) {\eqref{eq.loc_rest_1}:};
 \node at (2,-.6) {\eqref{eq.loc_rest_2}};
 \node at (2,-.4) [rotate=270] {:};
 \draw [dotted] (0,.75) -- (0,1.25) -- (3,1.25) -- (3,.75) -- cycle;
\end{tikzpicture} \]
We see that
\[ \trunc^{<0}(\mathbb{S}_n^{-\ell}) \in (\mathbb{S}_n^{-1} \trunc^{< 0}(\mathbb{S}_n^{-(\ell-1)} \Lambda)) * (\trunc^{< 0}(\mathbb{S}_n^{-1} \tau_n^{-(\ell - 1)} \Lambda)) \subseteq \mathscr{D}^{\leqslant 1-n} * \mathscr{D}^{\leqslant 1-n} = \mathscr{D}^{\leqslant 1-n}, \]
thus that the first claim holds for $\ell$.

To see that $\Ext_{\Lambda}^i(D \Lambda, \tau_n^{- \ell} \Lambda) = 0$ for $i \in \{2, \ldots, n-1\}$ we apply $\Hom_{\mathscr{D}}(D \Lambda, -)$ to the triangle $X_{\ell} \to \mathbb{S}_n^{- \ell} \Lambda \to \tau_n^{- \ell} \Lambda \to X_{\ell}[1]$, and note that $\Hom_{\mathscr{D}}(D\Lambda, \mathbb{S}_n^{-\ell} \Lambda[i]) = \Hom_{\mathscr{D}}(\Lambda, \mathbb{S}_n^{-\ell-1} \Lambda[i-n]) = 0$ by the vosnex property, and $\Hom_{\mathscr{D}}(D\Lambda, X_{\ell} [i+1]) = 0$ since $X_{\ell} \in \mathscr{D}_{\Lambda}^{\leqslant 1-n}$.

This completes the induction, and hence also the proof of (1).

(2) follows immediately from (1) by (the dual of) \cite[Proposition~2.5(a)]{Iy_n-Auslander}.
\end{proof}

\begin{lemma} \label{lemma.exactsequence}
For any $X \in \mod \Lambda$ there is an exact sequence $0 \to M_{n-1} \to \cdots \to M_0 \to X \to 0$ with $M_i \in \add \widetilde{\Lambda}$.
\end{lemma}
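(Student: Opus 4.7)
The plan is to build the resolution by iterating right $(\add\widetilde{\Lambda})$-approximations in $\mod\Lambda$, and to invoke Proposition~\ref{prop.ct-resolution} applied to the $n$-cluster tilting subcategory $\mathscr{U}\psL \subseteq \mathscr{D}_\Lambda$ (Theorem~\ref{theorem.ct_in_Db}) to control the final term.

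Set $X_0 := X$. Since $\widetilde{\Lambda}$ is finite-dimensional (by $\tau_n$-finiteness), $\add\widetilde{\Lambda}$ is contravariantly finite in $\mod\Lambda$, so for each $i = 0, \ldots, n-2$ one can choose a right $(\add\widetilde{\Lambda})$-approximation $f_i\colon M_i \to X_i$ in $\mod\Lambda$. Because $\Lambda \in \add\widetilde{\Lambda}$, the map $f_i$ is surjective; set $X_{i+1} := \ker f_i \in \mod\Lambda$, and $M_{n-1} := X_{n-1}$. If $M_{n-1} \in \add\widetilde{\Lambda}$, concatenating the short exact sequences $0 \to X_{i+1} \to M_i \to X_i \to 0$ gives the desired resolution.

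The key step is the reinterpretation of $f_i$ inside $\mathscr{D}_\Lambda$: for any $U = \mathbb{S}_n^j \Lambda \in \mathscr{U}\psL$ and any $Y \in \mod\Lambda$, Lemma~\ref{lemma.nuorders} places $U$ in $\mathscr{D}^{\geqslant 0}$ when $j \geqslant 0$ and in $\mathscr{D}^{\leqslant 0}$ when $j \leqslant 0$, and in either case a direct computation with the standard t-structure (using orthogonality $\mathscr{D}^{\leqslant -1} \perp \mathscr{D}^{\geqslant 0}$) shows that the natural map $\Hom_{\mathscr{D}_\Lambda}(U, Y) \iso \Hom_\Lambda(H^0(U), Y)$ is an isomorphism. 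Now $H^0(\mathbb{S}_n^j \Lambda) = \tau_n^j \Lambda$ vanishes for $j \geqslant 1$ (since $\tau_n$ annihilates projectives) and equals $\tau_n^{-k}\Lambda \in \add\widetilde{\Lambda}$ for $j = -k \leqslant 0$. Consequently every morphism from an object of $\mathscr{U}\psL$ into $X_i$ factors through some object of $\add\widetilde{\Lambda}$ and hence through $f_i$, so the triangle $X_{i+1} \to M_i \to X_i \to X_{i+1}[1]$ arising from the short exact sequence in $\mod\Lambda$ is of the type produced by Proposition~\ref{prop.ct-resolution}'s construction via iterated right approximations.

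Applying Proposition~\ref{prop.ct-resolution} to $X$ (more precisely, its proof, which proceeds by exactly this sort of iterated approximation), the $(n-1)$-th term $X_{n-1}$ lies in $\mathscr{U}\psL$. Combined with $X_{n-1} \in \mod\Lambda$, and with the fact that $H^0$ sends $\mathscr{U}\psL = \add\{\mathbb{S}_n^j\Lambda \mid j \in \mathbb{Z}\}$ into $\add\widetilde{\Lambda}$ (as $H^0(\mathbb{S}_n^j\Lambda) = \tau_n^j\Lambda$ is either zero or in $\add\widetilde{\Lambda}$), the identity $X_{n-1} = H^0(X_{n-1})$ exhibits $X_{n-1}$ as a direct summand of a module in $\add\widetilde{\Lambda}$, proving $M_{n-1} = X_{n-1} \in \add\widetilde{\Lambda}$.

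The main obstacle is the justification that $f_i$ truly serves as a right $\mathscr{U}\psL$-approximation in $\mathscr{D}_\Lambda$ -- i.e.\ that the Iyama-Yoshino mechanism of Proposition~\ref{prop.ct-resolution} applies even though $M_i$ a priori lies in $\add\widetilde{\Lambda}$ rather than $\mathscr{U}\psL$. This is precisely handled by the t-structure factorization above, which is the only nontrivial input.
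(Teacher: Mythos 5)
There is a genuine gap. Your plan is to interpret each right $\add\widetilde\Lambda$-approximation $f_i\colon M_i\to X_i$ in $\mod\Lambda$ as a right $\mathscr{U}\psL$-approximation in $\mathscr{D}_\Lambda$, and then invoke Proposition~\ref{prop.ct-resolution} to conclude $X_{n-1}\in\mathscr{U}\psL$. But a right $\mathscr{U}\psL$-approximation of $X_i$ must have its \emph{source} in $\mathscr{U}\psL$, not merely have the universal factorization property; your $M_i$ lies in $\add\widetilde\Lambda$, and under the vosnex hypothesis alone $\add\widetilde\Lambda$ is \emph{not} contained in $\mathscr{U}\psL$. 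Concretely, $\tau_n^{-\ell}\Lambda = \Ho^0(\mathbb S_n^{-\ell}\Lambda)$ need not coincide with $\mathbb S_n^{-\ell}\Lambda$: as the proof of Proposition~\ref{prop.prepare_right_cto} shows, $\trunc^{<0}(\mathbb S_n^{-\ell}\Lambda)$ only lies in $\mathscr D^{\leqslant 1-n}$, and for $n\geqslant 2$ that truncation can be nonzero (e.g.\ $\Ho^{-2}(\mathbb S_2^{-1}\Lambda)=\Hom_\Lambda(D\Lambda,\Lambda)$ need not vanish). So $M_i$ is only the degree-zero homology of a genuine $\mathscr{U}\psL$-approximation $U_i\to X_i$, and the cocone of $M_i\to X_i$ differs from the cocone of $U_i\to X_i$ by the truncation $\trunc^{<0}U_i$. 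Thus the Iyama--Yoshino mechanism does not apply to your sequence of kernels, and the step ``$X_{n-1}\in\mathscr{U}\psL$'' is unjustified; the paragraph you flag as ``the main obstacle'' is indeed the obstacle, and the t-structure factorization does not remove it.

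A further warning sign is that your argument never uses the vosnex hypothesis: you only use $\tau_n$-finiteness and the location of indecomposable objects of $\mathscr{U}\psL$ relative to the standard t-structure. The paper's proof uses vosnex in an essential way: it runs Proposition~\ref{prop.ct-resolution} honestly in $\mathscr{D}_\Lambda$ (with $U_i\in\mathscr{U}\psL$, which need not be modules), then uses vosnex to show $\Ho^j(U)=0$ for $j\in\{-1,\ldots,-(n-2)\}$ (in addition to $j\in\{1,\ldots,n-1\}$ from cluster tilting), deduces $\Ho^{\pm1}(X_i)=0$ for the intermediate cones $X_i$, and only then takes $\Ho^0$ of the whole chain of triangles to produce the exact sequence, using $\Ho^0(U)\in\add\widetilde\Lambda$. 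Working inside $\mod\Lambda$ from the start, as you propose, looks simpler but conflicts with the fact that in the vosnex setting the controlling subcategory $\mathscr{U}\psL$ lives in the derived category and is not reflected faithfully by $\add\widetilde\Lambda$.
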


\begin{proof}
By Proposition~\ref{prop.ct-resolution} there are triangles
\[ \begin{tikzpicture}[xscale=1.3,yscale=1.5]
 \node (X0) at (0,0) {$U_{n-1}$};
 \node (X1) at (2,0) {$X_{n-2}$};
 \node (X7) at (6,0) {$X_2$};
 \node (X8) at (8,0) {$X_1$};
 \node (X9) at (10,0) {$X_0$};
 \node (U0) at (1,1) {$U_{n-2}$};
 \node (U1) at (3,1) {$U_{n-3}$};
 \node (U6) at (5,1) {$U_2$};
 \node (U7) at (7,1) {$U_1$};
 \node (U8) at (9,1) {$U_0$};
 \node at (4,.5) {$\cdots$};
 \draw [->] (U0) -- (U1);
 \draw [->] (U6) -- (U7);
 \draw [->] (U7) -- (U8);
 \draw [->] (X0) -- (U0);
 \draw [->] (X1) -- (U1);
 \draw [->] (X7) -- (U7);
 \draw [->] (X8) -- (U8);
 \draw [->] (U0) -- (X1);
 \draw [->] (U6) -- (X7);
 \draw [->] (U7) -- (X8);
 \draw [->] (U8) -- (X9);
 \draw [->] (X1) -- node [pos=.15] {\tikz{\draw[-] (0,.15) -- (0,-.15);}} (X0);
 \draw [->] (X8) -- node [pos=.15] {\tikz{\draw[-] (0,.15) -- (0,-.15);}} (X7);
 \draw [->] (X9) -- node [pos=.15] {\tikz{\draw[-] (0,.15) -- (0,-.15);}} (X8);
 \node (X9') at (10.8,0) {$X$};
 \draw [double distance=1.5pt] (X9) -- (X9');
\end{tikzpicture} \]
with $U_i \in \mathscr{U}\psL$. Note that, since $\Lambda \in \mathscr{U}\psL$, for any $U \in \mathscr{U}\psL$ we have
\[ \Ho^i(U) = \Hom_{\mathscr{D}_{\Lambda}}(\Lambda, U[i]) = 0 \begin{array}{ll} \text{for } i \in \{1, \ldots, n-1\} & \text{since } \mathscr{U}\psL \text{ is $n$-cluster tilting,} \\ \text{for } i \in \{-1, \ldots, -(n-2) \} & \text{since } \mathscr{U}\psL \text{ has the vosnex property.} \end{array} \]
Hence, on the one hand we have
\begin{align*}
& \underbrace{\Ho^i(U_{n-2})}_{=0} \to[30] \Ho^i(X_{n-2}) \to[30] \underbrace{\Ho^{i+1}(U_{n-1})}_{=0} && \text{for } i \in \{1, \ldots, n-2\} \\
& \underbrace{\Ho^i(U_{n-3})}_{=0} \to[30] \Ho^i(X_{n-3}) \to[30] \underbrace{\Ho^{i+1}(X_{n-2})}_{=0} && \text{for } i \in \{1, \ldots, n-3\} \\
& \qquad \vdots && \qquad \vdots \\
& \underbrace{\Ho^i(U_1)}_{=0} \to[30] \Ho^i(X_1) \to[30] \underbrace{\Ho^{i+1}(X_2)}_{=0} && \text{for } i = 1,
\end{align*}
and on the other hand we have
\begin{align*}
& \underbrace{\Ho^{-(i+1)}(X)}_{=0} \to[30] \Ho^{-i}(X_1) \to[30] \underbrace{\Ho^{-i}(U_0)}_{=0} && \text{for } i \in \{1, \ldots, n-2\} \\
& \underbrace{\Ho^{-(i+1)}(X_1)}_{=0} \to[30] \Ho^{-i}(X_2) \to[30] \underbrace{\Ho^{-i}(X_2)}_{=0} && \text{for } i \in \{1, \ldots, n-3\} \\
& \qquad \vdots && \qquad \vdots \\
& \underbrace{\Ho^{-(i+1)}(X_{n-3})}_{=0} \to[30] \Ho^{-i}(X_{n-2}) \to[30] \underbrace{\Ho^{-i}(U_{n-1})}_{=0} && \text{for } i = 1.
\end{align*}
In particular $\Ho^1(X_i) = \Ho^{-1}(X_i) = 0 \, \forall i \in \{1, \ldots, n-2\}$. Thus we get exact sequences
\begin{align*}
& \underbrace{\Ho^{-1}(X_{n-2})}_{=0} \to \Ho^0(U_{n-1}) \to \Ho^0(U_{n-2}) \to \Ho^0(X_{n-2}) \to \underbrace{\Ho^1(U_{n-1})}_{=0} \\
& \underbrace{\Ho^{-1}(X_i)}_{=0} \to \Ho^0(X_{i+1}) \to \Ho^0(U_i) \to \Ho^0(X_i) \to \underbrace{\Ho^1(U_{i+1})}_{=0} && i \in \{1, \ldots, n-3\} \\
& \underbrace{\Ho^{-1}(X)}_{=0} \to \Ho^0(X_1) \to \Ho^0(U_0) \to \Ho^0(X) \to \underbrace{\Ho^1(X_1)}_{=0}.
\end{align*}
Gluing them, we have an exact sequence
\[ \Ho^0(U_{n-1}) \mono[30] \Ho^0(U_{n-2}) \to[30] \cdots \to[30] \Ho^0(U_0) \epi[30] \underbrace{\Ho^0(X)}_{= X}. \]
Now the claim follows, since $\Ho^0(U) \in \add \widetilde{\Lambda}$ for any $U \in \mathscr{U}$.
\end{proof}

\begin{lemma}[{\cite[Lemma~2.6]{Iy_n-Auslander}}] \label{lemma.ARformula_Ext}
Let $X, Y \in \mod \Lambda$, and $i \in \{1, \ldots, n-1\}$ such that
\[ \Ext^j_{\Lambda}(D \Lambda, X) = 0 \, \forall j \in \{n-i+1, \ldots, n-1\}. \] Then there is a surjection
\[ \Ext^{n-i}_{\Lambda}(Y, X) \epi[30] D \Ext^i_{\Lambda}(\tau_n^- X, Y). \]
\end{lemma}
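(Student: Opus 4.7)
The plan is to lift everything to the derived category $\mathscr{D} = \mathscr{D}_\Lambda$ and extract the desired surjection from Serre duality combined with a t-structure truncation argument. Since $\gld \Lambda \leqslant n$, we have $\leftsub{\mathscr{D}}{\mathbb{S}}^{-1} = \RHom_\Lambda(D\Lambda, -)$, and the derived $n$-th Auslander-Reiten shift $\leftsub{\mathscr{D}}{\mathbb{S}}_n^{-1} X = \leftsub{\mathscr{D}}{\mathbb{S}}^{-1} X [n]$ has cohomology
\[ \Ho^{-k}(\leftsub{\mathscr{D}}{\mathbb{S}}_n^{-1} X) = \Ext^{n-k}_\Lambda(D\Lambda, X) \qquad (0 \leqslant k \leqslant n), \]
in particular $\Ho^0(\leftsub{\mathscr{D}}{\mathbb{S}}_n^{-1} X) = \tau_n^- X$. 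Moreover, Serre duality gives
\[ \Hom_{\mathscr{D}}(\leftsub{\mathscr{D}}{\mathbb{S}}_n^{-1} X, Y[i]) \iso D\Hom_{\mathscr{D}}(Y[i], X[n]) = D\Ext^{n-i}_\Lambda(Y, X). \]
So it suffices to produce an injection $\Ext^i_\Lambda(\tau_n^- X, Y) = \Hom_{\mathscr{D}}(\tau_n^- X, Y[i]) \hookrightarrow \Hom_{\mathscr{D}}(\leftsub{\mathscr{D}}{\mathbb{S}}_n^{-1} X, Y[i])$; dualizing then yields the desired surjection.

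Next I would exploit the vanishing hypothesis. The assumption $\Ext^j_\Lambda(D\Lambda, X) = 0$ for $j \in \{n-i+1, \ldots, n-1\}$ translates, via the formula above, into $\Ho^{-k}(\leftsub{\mathscr{D}}{\mathbb{S}}_n^{-1} X) = 0$ for $k \in \{1, \ldots, i-1\}$. Consequently the truncation triangle of $\leftsub{\mathscr{D}}{\mathbb{S}}_n^{-1} X$ relative to the standard t-structure takes the form
\[ \trunc^{\leqslant -i}(\leftsub{\mathscr{D}}{\mathbb{S}}_n^{-1} X) \to[30] \leftsub{\mathscr{D}}{\mathbb{S}}_n^{-1} X \to[30] \tau_n^- X \to[30] (\trunc^{\leqslant -i}\leftsub{\mathscr{D}}{\mathbb{S}}_n^{-1} X)[1], \]
with the right-hand term concentrated in degree $0$ because of the gap in cohomology.

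Finally, I would apply $\Hom_{\mathscr{D}}(-, Y[i])$ to this triangle. The resulting long exact sequence contains the segment
\[ \Hom_{\mathscr{D}}(\trunc^{\leqslant -i}\leftsub{\mathscr{D}}{\mathbb{S}}_n^{-1} X, Y[i-1]) \to[30] \Hom_{\mathscr{D}}(\tau_n^- X, Y[i]) \to[30] \Hom_{\mathscr{D}}(\leftsub{\mathscr{D}}{\mathbb{S}}_n^{-1} X, Y[i]). \]
Since $\trunc^{\leqslant -i}\leftsub{\mathscr{D}}{\mathbb{S}}_n^{-1} X \in \mathscr{D}^{\leqslant -i}$ and $Y[i-1] \in \mathscr{D}^{\geqslant 1-i}$, the orthogonality axiom of the t-structure forces the left term to vanish, giving the wanted injection, and the proof is concluded by dualizing and combining with the Serre duality identification above. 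The only real delicacy is keeping track of shift conventions and of the half-open range $\{1, \ldots, i-1\}$ that makes the truncation sit exactly where needed; once this bookkeeping is correct, the argument is essentially mechanical.
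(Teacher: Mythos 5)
Your argument is correct, but note that the paper simply cites \cite[Lemma~2.6]{Iy_n-Auslander} here and gives no proof of its own, so there is no internal argument to compare against. Two remarks are nonetheless worth making. First, your derived-category proof silently assumes $\gld\Lambda\leqslant n$: you need it both to identify $\mathbb{S}_n^{-1}X$ with $\RHom_\Lambda(D\Lambda,X)[n]$ and, crucially, to know that this complex lies in $\mathscr{D}^{\leqslant 0}$. It is the latter fact, combined with the vanishing $\Ho^{j}(\mathbb{S}_n^{-1}X)=0$ for $1-i\leqslant j\leqslant -1$ coming from the hypothesis, that forces $\trunc^{\geqslant 1-i}\mathbb{S}_n^{-1}X$ to be concentrated in degree $0$ and hence equal to $\tau_n^{-}X$; you should state this explicitly, since the stated hypothesis alone says nothing about positive degrees. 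Fortunately the global-dimension bound is in force wherever the lemma is used in this paper (Section~\ref{sect.non-selfinj}, proof of Theorem~\ref{theorem.rightcto}), and it is exactly the hypothesis under which Notation~\ref{def.tau_n} identifies $\tau_n^{\pm}$ with $\Ho^0(\mathbb{S}_n^{\pm1}-)$, so the restricted version you prove is all this paper needs. Second, the source \cite{Iy_n-Auslander} proves the lemma module-theoretically by dimension-shifting along an injective coresolution and invoking the classical Auslander--Reiten formula, with no restriction on $\gld\Lambda$; your route via the truncation triangle, Serre duality, and t-structure orthogonality is cleaner but yields the lemma only in this more limited generality.
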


Now we are ready to prove the main results of this subsection.

\begin{proof}[Proof of Theorem~\ref{theorem.rightcto}]
We only show (1), claim (2) is dual.

We first show that $\widetilde{\Lambda}$ is a right $n$-cluster tilting object. By Proposition~\ref{prop.prepare_right_cto}(2) we know that $\widetilde{\Lambda}$ is $n$-rigid. For $X \in \mod \Lambda$ consider the following diagram, where the upper sequence is exact (such a diagram exists by Lemma~\ref{lemma.exactsequence}), the $M_i$ are in $\add \widetilde{\Lambda}$, and the $X_i$ are the respective images of the maps.
\[ \begin{tikzpicture}[yscale=-1,xscale=2]
 \node (A9) at (.2,0) {$0$};
 \node (A8) at (1,0) {$M_{n-1}$};
 \node (A7) at (2,0) {$M_{n-2}$};
 \node (A5) at (3,0) {$\cdots$};
 \node (A3) at (4,0) {$M_1$};
 \node (A2) at (5,0) {$M_0$};
 \node (A1) at (6,0) {$X$};
 \node (A0) at (6.7,0) {$0$};
 \node (B8) at (1.5,1) {$X_{n-1}$};
 \node (B7) at (2.5,1) {$X_{n-2}$};
 \node (B5) at (3,1) {$\cdots$};
 \node (B4) at (3.5,1) {$X_2$};
 \node (B3) at (4.5,1) {$X_1$};
 \draw [->] (A9) -- (A8);
 \draw [->] (A8) -- (A7);
 \draw [->] (A7) -- (A5);
 \draw [->] (A5) -- (A3);
 \draw [->] (A3) -- (A2);
 \draw [->] (A2) -- (A1);
 \draw [->] (A1) -- (A0);
 \draw [double distance=1.5pt] (A8) -- (B8);
 \draw [>->] (B8) -- (A7);
 \draw [->>] (A7) -- (B7);
 \draw [>->] (B7) -- (A5);
 \draw [->>] (A5) -- (B4);
 \draw [>->] (B4) -- (A3);
 \draw [->>] (A3) -- (B3);
 \draw [>->] (B3) -- (A2);
\end{tikzpicture} \]
Now assume $\Ext^i_{\Lambda}(X, \widetilde{\Lambda}) = 0$ for any $i \in \{1, \ldots, n-1\}$. Then 
\[ \Ext^1_{\Lambda}(X, X_1) = \Ext^2_{\Lambda}(X, X_2) = \cdots = \Ext^{n-1}_{\Lambda}(X, X_{n-1}) = 0.\]
Hence the sequence $X_1 \mono M_0 \epi X$ splits, and we have $X \in \add \widetilde{\Lambda}$. This completes the proof that $\widetilde{\Lambda}$ is a right $n$-cluster tilting object.

Now let $T$ be any right $n$-cluster tilting object in $\mod \Lambda$. By definition we have $\Lambda \in \add T$. Now note that by Proposition~\ref{prop.prepare_right_cto}(1) and Lemma~\ref{lemma.ARformula_Ext} we have epimorphisms
\[ \Ext^{n-i}_{\Lambda}(T, \tau_n^{- \ell} \Lambda) \epi[30] D \Ext^i_{\Lambda}(\tau_n^{- (\ell + 1)} \Lambda, T) \]
for any $\ell \geqslant 0$. Hence we can see inductively that $\tau_n^{- \ell} \Lambda \in \add T$ for all $\ell \geqslant 0$. Hence $\widetilde{\Lambda} \in \add T$, and since $\widetilde{\Lambda}$ is also a right $n$-cluster tilting object $\add \widetilde{\Lambda} = \add T$.
\end{proof}

\begin{proposition} \label{prop.fingld}
Let $\Lambda$ be an algebra of global dimension at most $n$ satisfying the vosnex property. Then $\gld \End_{\Lambda}(\widetilde{\Lambda}) \leqslant n+1$ and $\gld \End_{\Lambda}(D \widetilde{\Lambda}) \leqslant n+1$.
\end{proposition}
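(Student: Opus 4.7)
The strategy combines two observations, patterned on the classical Auslander argument. Set $\Gamma := \End_\Lambda(\widetilde{\Lambda})$ and $F := \Hom_\Lambda(\widetilde{\Lambda}, -) \colon \mod \Lambda \to \mod \Gamma$; the functor $F$ restricts to an equivalence $\add \widetilde{\Lambda} \simeq \proj \Gamma$. First, every $\Gamma$-module $Y$ has its second syzygy in the essential image of $F$: choose a projective presentation $FN_1 \to FN_0 \to Y \to 0$ coming from some $f \colon N_1 \to N_0$ in $\add \widetilde{\Lambda}$, and use the left-exactness of $F$ to get $\Omega^2_\Gamma Y = \ker(Ff) = F(\ker f)$. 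Second (the main step), for every $X \in \mod \Lambda$ we claim $\pd_\Gamma(FX) \leq n-1$. Combining the two yields $\pd_\Gamma Y \leq 2 + (n-1) = n+1$, hence $\gld \Gamma \leq n+1$.

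For the claim $\pd_\Gamma(FX) \leq n-1$, apply Lemma~\ref{lemma.exactsequence} to produce $0 \to M_{n-1} \to \cdots \to M_0 \to X \to 0$ with $M_j \in \add \widetilde{\Lambda}$. Set $K_0 := X$ and $K_{j+1} := \ker(M_j \to K_j)$, giving short exact sequences $0 \to K_{j+1} \to M_j \to K_j \to 0$ for $0 \leq j \leq n-2$, with $K_{n-1} = M_{n-1}$. Applying $F$ and using $\Ext^1_\Lambda(\widetilde{\Lambda}, M_j) = 0$ from Proposition~\ref{prop.prepare_right_cto}(2), the long exact sequence of $\Ext^\bullet_\Lambda(\widetilde{\Lambda}, -)$ yields four-term exact sequences
\[ 0 \to FK_{j+1} \to FM_j \to FK_j \to \Ext^1_\Lambda(\widetilde{\Lambda}, K_{j+1}) \to 0. \]
To splice these into an exact sequence $0 \to FM_{n-1} \to \cdots \to FM_0 \to FX \to 0$ it suffices to show $\Ext^1_\Lambda(\widetilde{\Lambda}, K_\ell) = 0$ for $\ell = 1, \ldots, n-1$. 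For $\ell = n-1$ this is immediate since $K_{n-1} = M_{n-1} \in \add \widetilde{\Lambda}$. For $1 \leq \ell \leq n-2$, dimension-shifting in the long exact sequences—legal because $\Ext^i_\Lambda(\widetilde{\Lambda}, M_\bullet) = 0$ for every $i \in \{1, \ldots, n-1\}$—gives isomorphisms $\Ext^i(\widetilde{\Lambda}, K_\ell) \cong \Ext^{i+1}(\widetilde{\Lambda}, K_{\ell+1})$ for $i \in \{1, \ldots, n-2\}$, and iterating yields $\Ext^1(\widetilde{\Lambda}, K_\ell) \cong \Ext^{n-\ell}(\widetilde{\Lambda}, M_{n-1}) = 0$.

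The statement for $\End_\Lambda(D\widetilde{\Lambda})$ is obtained by the evident dual argument: replace $\widetilde{\Lambda}$ by the left $n$-cluster tilting object $D\widetilde{\Lambda}$ (Theorem~\ref{theorem.rightcto}(2)), use the dual of Proposition~\ref{prop.prepare_right_cto}(2) providing $\Ext^i_\Lambda(D\widetilde{\Lambda}, D\widetilde{\Lambda}) = 0$ for $1 \leq i \leq n-1$, and use the dual of Lemma~\ref{lemma.exactsequence} giving $\add D\widetilde{\Lambda}$-coresolutions of length $n-1$. The main obstacle is the $\Ext$-vanishing bookkeeping in the primary case; the key observation is that the vosnex window $\{1, \ldots, n-1\}$ matches exactly the length of the $\add \widetilde{\Lambda}$-resolution of Lemma~\ref{lemma.exactsequence}, so the dimension-shifting fits precisely.
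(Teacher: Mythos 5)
Your proof is correct and takes essentially the same route as the paper's. The paper applies Lemma~\ref{lemma.exactsequence} and Proposition~\ref{prop.prepare_right_cto}(2) and then cites a ``standard argument'' (\cite[Lemma~2.1]{EHIS}); your dimension-shift showing $\Ext^1_\Lambda(\widetilde{\Lambda}, K_\ell)=0$ together with the second-syzygy reduction $\Omega^2_\Gamma Y = F(\ker f)$ is exactly that standard argument written out.
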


\begin{proof}
We only prove the first claim, the second one is dual.

By Lemma~\ref{lemma.exactsequence}, for any $X \in \mod \Lambda$ there is an exact sequence
\[ M_{n-1} \mono[30] M_n \to[30] \cdots \to[30] M_0 \epi[30] X \]
with $M_i \in \add \widetilde{\Lambda}$. Since, by Proposition~\ref{prop.prepare_right_cto}(2) we have $\Ext_{\Lambda}^i(\widetilde{\Lambda}, \widetilde{\Lambda}) = 0 \, \forall i \in \{1, \ldots, n-1\}$ such a sequence is automatically a $\widetilde{\Lambda}$-resolution. Now the claim follows by a standard argument (e.g.\ \cite[Lemma~2.1]{EHIS}).
\end{proof}

\subsection{The Iwanaga-Gorenstein property}

\begin{definition}
An algebra (or category) is called \emph{Iwanaga-Gorenstein of dimension $d$} if all finitely generated projective left and right modules have injective dimension at most $d$.
\end{definition}

The next lemma shows that the categories we are interested in here are Iwanaga-Gorenstein of dimension at most $1$.

\begin{lemma} \label{lem.selinj_dim_1}
Let $\Lambda$ be an algebra of global dimension at most $n$, having the vosnex property. Then $\mathscr{U}\psL$ and $\widetilde{\Lambda}$ are Iwanaga-Gorenstein of dimension at most $1$.
\end{lemma}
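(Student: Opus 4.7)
The plan is to exploit the two equivalent reformulations of the vosnex property provided by Lemma~\ref{lemma.equiv_vosnex}, namely $\mathbb{S}\,\mathscr{U}\psL \subseteq \mathscr{U}\psL * \mathscr{U}\psL[1]$ and $\mathbb{S}^{-1}\mathscr{U}\psL \subseteq \mathscr{U}\psL[-1] * \mathscr{U}\psL$. From each two-term decomposition of $\mathbb{S}^{\pm 1}U$ with $U \in \mathscr{U}\psL$, Serre duality together with the Yoneda functor will produce a short exact sequence of length one in $\mod \mathscr{U}\psL$ exhibiting that a projective has injective dimension $\leqslant 1$ or that an injective has projective dimension $\leqslant 1$. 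The analogous assertion for $\mod \widetilde{\Lambda}$ will then follow by pushing these sequences down to the $n$-Amiot cluster category.

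For $\mathscr{U} := \mathscr{U}\psL$, I fix $U \in \mathscr{U}$, take a triangle $\mathbb{S}^{-1}U \to V \to W \to \mathbb{S}^{-1}U[1]$ with $V,W \in \mathscr{U}$ (from the vosnex inclusion), and apply $\mathbb{S}$ to obtain $U \to \mathbb{S} V \to \mathbb{S} W \to U[1]$ in $\mathscr{D}_\Lambda$. Applying $\Hom_{\mathscr{D}_\Lambda}(X,-)$ for $X \in \mathscr{U}$, the two flanking terms $\Hom_{\mathscr{D}_\Lambda}(X,\mathbb{S} W[-1]) = D\Hom_{\mathscr{D}_\Lambda}(W,X[1])$ and $\Hom_{\mathscr{D}_\Lambda}(X,U[1])$ vanish by the $n$-cluster-tilting identity $\Hom_{\mathscr{D}_\Lambda}(\mathscr{U},\mathscr{U}[1])=0$, so Serre duality $D\Hom_{\mathscr{D}_\Lambda}(V,-)|_\mathscr{U} = \Hom_{\mathscr{D}_\Lambda}(-,\mathbb{S} V)|_\mathscr{U}$ yields a short exact sequence
\[
0 \to \Hom_{\mathscr{D}_\Lambda}(-,U)|_\mathscr{U} \to D\Hom_{\mathscr{D}_\Lambda}(V,-)|_\mathscr{U} \to D\Hom_{\mathscr{D}_\Lambda}(W,-)|_\mathscr{U} \to 0
\]
of $\mathscr{U}$-modules whose last two entries are injective, giving $\id \Hom_{\mathscr{D}_\Lambda}(-,U)|_\mathscr{U} \leqslant 1$. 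Symmetrically, $\mathbb{S} U \in \mathscr{U} * \mathscr{U}[1]$ furnishes a length-one projective resolution of the injective $D\Hom_{\mathscr{D}_\Lambda}(U,-)|_\mathscr{U}$, establishing the Iwanaga-Gorenstein bound for $\mathscr{U}\psL$.

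For $\widetilde{\Lambda}$ I transport the same two constructions to $\mathscr{C}_\Lambda^n$ via the isomorphism $\widetilde{\Lambda} \iso \End_{\mathscr{C}_\Lambda^n}(\pi\Lambda)$ from Theorem~\ref{theorem.clairemain2}. The finitely generated projective $\widetilde{\Lambda}$-modules are $\Hom_{\mathscr{C}_\Lambda^n}(\pi\Lambda,\pi X)$, and, using that $\mathscr{C}_\Lambda^n$ is $n$-Calabi-Yau so that its Serre functor coincides with $[n]$, the finitely generated injective $\widetilde{\Lambda}$-modules are $\Hom_{\mathscr{C}_\Lambda^n}(\pi\Lambda,\pi X[n])$, both for $X \in \add\Lambda$. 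Pushing the same triangles down via $\pi$ and applying $\Hom_{\mathscr{C}_\Lambda^n}(\pi\Lambda,-)$, the needed vanishings of flanking terms reduce, through the covering identity $\Hom_{\mathscr{C}_\Lambda^n}(\pi X,\pi Y) = \bigoplus_{i \in \mathbb{Z}} \Hom_{\mathscr{D}_\Lambda}(X,\mathbb{S}_n^i Y)$, to the same $n$-cluster-tilting vanishing applied termwise (valid since each $\mathbb{S}_n^i Y$ lies in $\mathscr{U}\psL$), producing the analogous short exact sequences in $\mod\widetilde{\Lambda}$.

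The main (and essentially only) technical point throughout is the verification of the vanishing of the flanking $\Hom$-terms in each long exact sequence, all of which reduce to $\Hom_{\mathscr{D}_\Lambda}(\mathscr{U}\psL,\mathscr{U}\psL[j]) = 0$ for $j \in \{1,\ldots,n-1\}$, which is part of the $n$-cluster-tilting axiom. The vosnex hypothesis itself enters only at the outset, through Lemma~\ref{lemma.equiv_vosnex}, to guarantee that the two-term decompositions of $\mathbb{S}^{\pm 1}U$ exist.
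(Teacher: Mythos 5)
Your proof is correct and follows essentially the same route as the paper: use Lemma~\ref{lemma.equiv_vosnex} to obtain two-term decompositions of $\mathbb{S}^{\pm 1}U$, apply $\Hom$ and the $n$-cluster-tilting vanishing to kill the flanking terms, and read off length-one resolutions. The paper handles $\widetilde{\Lambda}$ by remarking ``the same proof works,'' which your push-down through $\pi$ to $\mathscr{C}_\Lambda^n$ (using the orbit formula and the $n$-Calabi-Yau Serre functor) makes precise.
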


\begin{proof}
By Lemma~\ref{lemma.equiv_vosnex}(1 \iff\ 2) we know that $\Lambda$ has the vosnex property if and only if $\mathbb{S} \mathscr{U}\psL \subseteq \mathscr{U}\psL * \mathscr{U}\psL[1]$. Hence for any $U \in \mathscr{U}\psL$ we have a triangle $U_1 \to U_0 \to \mathbb{S} U \to U_1[1]$. Now we apply $\Hom_{\mathscr{T}}(\mathscr{U}\psL, -)$ to this triangle, and note that $\Hom_{\mathscr{T}}(\mathscr{U}\psL, \mathbb{S}U[-1]) = D\Hom_{\mathscr{T}}(U, \mathscr{U}\psL[1]) = 0$. This gives a projective resolution of the injective $\mathscr{U}\psL$-module $\Hom_{\mathscr{T}}(-, \mathbb{S}U)$ of length $1$.

Dually, by Lemma~\ref{lemma.equiv_vosnex}(1 \iff\ 3) we obtain an injective resolution of length $1$ of projective $\mathscr{U}\psL$-modules.

The same proof works for $\widetilde{\Lambda}$.
\end{proof}

In the case of Iwanaga-Gorenstein algebras (or categories) we have the following natural replacement for (stable) module categories.

\begin{definition}
Let $\mathscr{U}$ be an Iwanaga-Gorenstein category. We denote the category of \emph{Cohen-Macaulay modules} over $\mathscr{U}$ by
\[ \CM(\mathscr{U}) = \{ X \in \mod \mathscr{U} \mid \Ext^i_{\mathscr{U}}(X, P) = 0 \, \forall i > 0 \text{ and any projective $\mathscr{U}$-module } P \}.\]
It is well-known that $\CM(\mathscr{U})$ forms a Frobenius category, and that it's stable category $\stabCM(\mathscr{U}) = \CM(\mathscr{U}) / (\proj \mathscr{U})$ is triangulated \cite{Ha}. The inverse translation and translation are given by $\Omega$ and $\Omega_{\CM}^-$, respectively. Here $\Omega$ denotes the ``usual'' syzygy functor, and $\Omega_{\CM}^- X$ is the cokernel of a left projective approximation of $X$.
\end{definition}

\subsection{The \texorpdfstring{$(n+1)$}{(n+1)}-Calabi-Yau property}

We have the following generalization of Theorem~\ref{theorem.serrestable} and the main result of \cite{KelRei}.

\begin{theorem} \label{theorem.serre_CM}
Let $\mathscr{T}$ be a triangulated category with an $n$-cluster tilting subcategory $\mathscr{U}$ satisfying the vosnex property. Then $\stabCM(\mathscr{U})$ is a triangulated category, and
\[ \leftsub{\stabCM(\mathscr{U})}{\mathbb{S}}_{n+1} = \leftsub{\mathscr{T}}{\mathbb{S}}_n \qquad \text{ on } \stabCM(\mathscr{U}). \]
\end{theorem}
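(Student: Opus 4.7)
The strategy is to follow that of Theorem~\ref{theorem.serrestable}, generalizing Propositions~\ref{proposition.serrestable1} and~\ref{proposition.serrestable2} (together with the underlying Lemma~\ref{lemma.n_ind_n+2}) from the self-injective setting to the Iwanaga--Gorenstein vosnex setting.

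By Lemma~\ref{lem.selinj_dim_1}, $\mathscr{U}$ is Iwanaga--Gorenstein of dimension at most $1$, so $\CM(\mathscr{U})$ is a Frobenius exact category whose projective--injective objects coincide with $\proj \mathscr{U}$. Hence $\stabCM(\mathscr{U})$ carries a triangulated structure with suspension $[1]_{\stabCM(\mathscr{U})} = \Omega^-_{\CM}$. As a first substantive step, I would prove the vosnex analog of Proposition~\ref{proposition.serrestable1}:
\[ \leftsub{\stabCM(\mathscr{U})}{\mathbb{S}} = \leftsub{\mathscr{T}}{\mathbb{S}} \circ [-1]_{\stabCM(\mathscr{U})}. \]
Whereas the original proof used $\leftsub{\mathscr{T}}{\mathbb{S}}\mathscr{U} = \mathscr{U}$, in the present setting we only have $\leftsub{\mathscr{T}}{\mathbb{S}}\mathscr{U} \subseteq \mathscr{U} * \mathscr{U}[1]$ from Lemma~\ref{lemma.equiv_vosnex}. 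Nevertheless, by Lemma~\ref{lemma.factorismod} the functor $D\Hom_{\mathscr{T}}(U, -)|_{\mathscr{U}} = \Hom_{\mathscr{T}}(-, \leftsub{\mathscr{T}}{\mathbb{S}} U)|_{\mathscr{U}}$ still lies in $\mod \mathscr{U}$, so the same ``undo-Hom'' computation of the Auslander--Reiten translate in the Frobenius category $\CM(\mathscr{U})$ produces the claimed Serre functor formula.

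The main technical step is then the analog of Proposition~\ref{proposition.serrestable2}: showing $[n+2]_{\stabCM(\mathscr{U})} = [n]_{\mathscr{T}}$ on $\stabCM(\mathscr{U})$. Given $X \in \CM(\mathscr{U})$, I would lift it via Lemma~\ref{lemma.factorismod} to $\tilde{X} \in \mathscr{U} * \mathscr{U}[1]$ with defining triangle $V_1 \to V_0 \to \tilde{X} \to V_1[1]$, and then apply Proposition~\ref{prop.ct-resolution} to $\tilde{X}[-1]$ to obtain a cluster tilting resolution. Combining these as in the proof of Proposition~\ref{proposition.serrestable2} produces a chain of triangles analogous to those in Lemma~\ref{lemma.n_ind_n+2}, but without the periodicity that the hypothesis $\mathscr{U}[n] = \mathscr{U}$ previously provided. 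Applying $\Hom_{\mathscr{T}}(-, ?)|_{\mathscr{U}}$ and using the vosnex vanishing $\Hom_{\mathscr{T}}(\mathscr{U}[i], \mathscr{U}) = 0$ for $i \in \{1, \ldots, n-2\}$ to kill the middle-degree terms splices the pieces into a length-$(n+2)$ projective resolution of $X$ in $\mod \mathscr{U}$ whose leftmost kernel identifies, in $\stabCM(\mathscr{U})$, the syzygy $\Omega^{n+2}_{\CM} X$ with $\Hom_{\mathscr{T}}(-, \tilde{X}[-n])|_{\mathscr{U}}$; this is the desired shift identity.

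Combining the two steps,
\[ \leftsub{\stabCM(\mathscr{U})}{\mathbb{S}}_{n+1} = \leftsub{\stabCM(\mathscr{U})}{\mathbb{S}} \circ [-n-1]_{\stabCM(\mathscr{U})} = \leftsub{\mathscr{T}}{\mathbb{S}} \circ [-n-2]_{\stabCM(\mathscr{U})} = \leftsub{\mathscr{T}}{\mathbb{S}} \circ [-n]_{\mathscr{T}} = \leftsub{\mathscr{T}}{\mathbb{S}}_n \]
on $\stabCM(\mathscr{U})$, completing the argument. The main obstacle is the vosnex adaptation of Lemma~\ref{lemma.n_ind_n+2}: without $\mathscr{U}[n] = \mathscr{U}$, the long exact sequence loses its periodicity, so only a single finite-length resolution is directly available. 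The Iwanaga--Gorenstein property of $\mathscr{U}$ (giving arbitrarily long projective resolutions of CM modules) compensates for this, but careful homological bookkeeping is required since $\tilde{X}[-n]$ need not itself lie in $\mathscr{U} * \mathscr{U}[1]$, so the identification of the $(n+2)$-nd syzygy with the lift of $\tilde{X}[-n]$ must be carried out within the stable category $\stabCM(\mathscr{U})$ rather than directly in $\mathscr{T}$.
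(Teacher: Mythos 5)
Your plan correctly identifies the overall structure --- reuse Proposition~\ref{proposition.serrestable1}'s Serre-functor computation and Proposition~\ref{proposition.serrestable2}'s shift identity --- and correctly flags the twin facts that the periodicity $\mathscr{U}[n]=\mathscr{U}$ is unavailable and that $X[-n]$ need not lie in $\mathscr{U}*\mathscr{U}[1]$. But the way you propose to split the proof does not typecheck in the vosnex setting, and a key technical ingredient is missing.

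The two intermediate identities you want to prove, $\leftsub{\stabCM(\mathscr{U})}{\mathbb{S}} = \leftsub{\mathscr{T}}{\mathbb{S}}\circ[-1]_{\stabCM(\mathscr{U})}$ and $[n+2]_{\stabCM(\mathscr{U})}=[n]_{\mathscr{T}}$, are not literally well-formed here: in the self-injective case, $\leftsub{\mathscr{T}}{\mathbb{S}}$ and $[n]_{\mathscr{T}}$ each restrict to autoequivalences of $\mathscr{U}$ and thus act on $\stabmod \mathscr{U}$, but in the vosnex case only the \emph{composite} $\leftsub{\mathscr{T}}{\mathbb{S}}_n = \leftsub{\mathscr{T}}{\mathbb{S}}\circ[-n]_{\mathscr{T}}$ preserves $\mathscr{U}$, so neither factor by itself defines a functor on $\mod\mathscr{U}$ or $\stabCM(\mathscr{U})$. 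The right-hand sides of both of your intermediate claims are therefore undefined. The paper circumvents this by avoiding the factorization entirely: fixing $M\in\CM(\mathscr{U})$ presented by $X\in\mathscr{U}*\mathscr{U}[1]$, it works with the single auxiliary $\mathscr{U}$-module $\Hom_{\mathscr{T}}(-,X[n-1])$, which need not lie in $\CM(\mathscr{U})$, and shows (Step~I) $\leftsub{\mathscr{T}}{\mathbb{S}}_n^{-1}M \iso \tau^-_{\mod\mathscr{U}}\Hom_{\mathscr{T}}(-,X[n-1])$ and (Step~II) $M\iso\Omega^n\Hom_{\mathscr{T}}(-,X[n-1])$, then deduces the theorem in one pass from Auslander--Reiten duality. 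Only the $\mathscr{U}$-preserving $\leftsub{\mathscr{T}}{\mathbb{S}}_n$ ever appears.

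Second, your ``careful homological bookkeeping'' hides a genuine ingredient: in the self-injective case, left $\mathscr{U}$-approximations $X^i\to U^i$ automatically induce left projective approximations $\Hom_{\mathscr{T}}(-,X^i)\to\Hom_{\mathscr{T}}(-,U^i)$ in $\mod\mathscr{U}$ because maps from these modules to projectives are representable by Lemma~\ref{lemma.factorismod}. In the vosnex case, the $X^i$ land only in $\mathscr{U}*\cdots*\mathscr{U}[i+1]$, so representability of maps to projective $\mathscr{U}$-modules is not automatic; this is exactly what Proposition~\ref{prop.representable} and Corollary~\ref{cor.representable} are for, with hypotheses tailored to $\leftsub{\mathscr{T}}{\mathbb{S}}\mathscr{U}\subseteq\mathscr{U}*\mathscr{U}[1]$. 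Without this, the splicing argument you sketch for Step~II does not go through.
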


\begin{corollary} \label{cor.stabCM_is_CY}
In the setup of Theorem~\ref{theorem.serre_CM} assume $\mathscr{T}$ is $n$-Calabi-Yau. Then $\stabCM(\mathscr{U})$ is $(n+1)$-Calabi-Yau.
\end{corollary}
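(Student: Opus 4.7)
The plan is to deduce this immediately from Theorem~\ref{theorem.serre_CM}, exactly in parallel with how Corollary~\ref{corollary.stabmod_CY} was deduced from Theorem~\ref{theorem.serrestable}. The only ingredient beyond the theorem is the definition of $n$-Calabi-Yau: if $\mathscr{T}$ is $n$-Calabi-Yau then $\leftsub{\mathscr{T}}{\mathbb{S}}_n = \id_{\mathscr{T}}$, in particular its restriction to the subcategory $\mathscr{U}$ (and hence its induced action on $\stabCM(\mathscr{U})$) is the identity.

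First I would invoke Theorem~\ref{theorem.serre_CM} to get the identification
\[ \leftsub{\stabCM(\mathscr{U})}{\mathbb{S}}_{n+1} = \leftsub{\mathscr{T}}{\mathbb{S}}_n \qquad \text{on } \stabCM(\mathscr{U}). \]
Then I would observe that under the $n$-Calabi-Yau hypothesis on $\mathscr{T}$ the right-hand side is the identity functor, so $\leftsub{\stabCM(\mathscr{U})}{\mathbb{S}}_{n+1} \iso \id$, which by definition means that $\stabCM(\mathscr{U})$ is $(n+1)$-Calabi-Yau. (For this to make sense one should also note that $\stabCM(\mathscr{U})$ is $\Hom$-finite and has a Serre functor, but both facts are built into Theorem~\ref{theorem.serre_CM} via the existence of $\leftsub{\stabCM(\mathscr{U})}{\mathbb{S}}$.)

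There is no real obstacle: the whole content has been packaged into Theorem~\ref{theorem.serre_CM} and Lemma~\ref{lem.selinj_dim_1}, so the corollary is a one-line consequence. The only mild subtlety worth a remark is that $\leftsub{\mathscr{T}}{\mathbb{S}}_n$ needs to be interpreted as an endofunctor of $\stabCM(\mathscr{U})$; this is justified because Theorem~\ref{theorem.serre_CM} (together with the vosnex property via Lemma~\ref{lemma.equiv_vosnex}) guarantees that $\leftsub{\mathscr{T}}{\mathbb{S}}$ and $[-n]$ preserve $\mathscr{U}$ up to summands in $\mathscr{U}*\mathscr{U}[1]$, so the composite descends to $\stabCM(\mathscr{U})$.
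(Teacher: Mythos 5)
Your proposal is correct and matches the paper's approach exactly: the paper leaves this corollary without an explicit proof, treating it as an immediate consequence of Theorem~\ref{theorem.serre_CM}, in the same way that Corollary~\ref{corollary.stabmod_CY} is derived from Theorem~\ref{theorem.serrestable} by noting $\leftsub{\mathscr{T}}{\mathbb{S}}_n = 1$ on $\mathscr{T}$ and hence on $\stabCM(\mathscr{U})$. Your closing remark about descending $\leftsub{\mathscr{T}}{\mathbb{S}}_n$ to $\stabCM(\mathscr{U})$ is even simpler than you make it sound: under the $n$-Calabi-Yau hypothesis that functor is the identity on $\mathscr{T}$, so its induced action on $\stabCM(\mathscr{U})$ is trivially the identity.
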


\begin{remark}
Lemma~\ref{lem.selinj_dim_1} and Theorem~\ref{theorem.serre_CM} are obtained independently by Beligiannis \cite{Bel_stC}
\end{remark}

The rest of this subsection leads to the proof of Theorem~\ref{theorem.serre_CM}. We will calculate $\Omega^-_{\CM}$ by using the following observation.

\begin{lemma} \label{lemma.approx=approx}
Let $\mathscr{T}$ be a triangulated category, $\mathscr{U} \subseteq \mathscr{T}$. Assume $X \in \mathscr{T}$ is such that any map from $\Hom_{\mathscr{T}}(-, X)$ to a projective $\mathscr{U}$-module is representable. Then any left $\mathscr{U}$-approximation $X \to U$ gives rise to a left projective approximation $\Hom_{\mathscr{T}}(-, X) \to \Hom_{\mathscr{T}}(-, U)$.
\end{lemma}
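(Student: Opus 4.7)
The plan is to unwind the two defining properties (left $\mathscr{U}$-approximation in $\mathscr{T}$ and left projective approximation in $\mod \mathscr{U}$) and bridge them via the Yoneda embedding, using the representability hypothesis to pull arrows in $\mod \mathscr{U}$ back to arrows in $\mathscr{T}$.

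Concretely, let $f\colon X \to U$ be a left $\mathscr{U}$-approximation. Since $U \in \mathscr{U}$, the $\mathscr{U}$-module $\Hom_{\mathscr{T}}(-, U)$ is projective by Yoneda, and $\Hom_{\mathscr{T}}(-, f)$ is a morphism from $\Hom_{\mathscr{T}}(-, X)$ to this projective module. To verify the left projective approximation property, I would take an arbitrary morphism $g \colon \Hom_{\mathscr{T}}(-, X) \to P$ in $\mod \mathscr{U}$ with $P$ projective. Every projective $\mathscr{U}$-module is of the form $\Hom_{\mathscr{T}}(-, U')$ for some $U' \in \mathscr{U}$, so by the representability hypothesis on $X$ there exists a morphism $h \colon X \to U'$ in $\mathscr{T}$ with $g = \Hom_{\mathscr{T}}(-, h)$.

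Now the left $\mathscr{U}$-approximation property of $f$ gives a factorization $h = h' \circ f$ for some $h' \colon U \to U'$. Applying the Yoneda functor, we obtain
\[ g \;=\; \Hom_{\mathscr{T}}(-, h) \;=\; \Hom_{\mathscr{T}}(-, h') \circ \Hom_{\mathscr{T}}(-, f), \]
which exhibits the desired factorization of $g$ through $\Hom_{\mathscr{T}}(-, f)$. Hence $\Hom_{\mathscr{T}}(-, f)$ is a left projective approximation of $\Hom_{\mathscr{T}}(-, X)$, as required.

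The argument is essentially formal once the hypotheses are correctly matched, so there is no genuine obstacle; the only subtle point is remembering that the representability assumption is exactly what is needed to replace an abstract map of $\mathscr{U}$-modules landing in a projective by an actual morphism in $\mathscr{T}$, at which point the universal property of $f$ finishes the job.
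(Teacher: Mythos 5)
Your argument is correct and is exactly what the paper means by ``follows immediately from the definitions'': you unwind the left $\mathscr{U}$-approximation property, use that projective $\mathscr{U}$-modules are representables, apply the representability hypothesis to lift the test map to a morphism $X\to U'$ in $\mathscr{T}$, factor it through the approximation, and push back through Yoneda. No divergence from the paper's (implicit) reasoning.
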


\begin{proof}
This follows immediately from the definitions.
\end{proof}

To apply this, we need the following Yoneda-type result.

\begin{proposition} \label{prop.representable}
Let $\mathscr{T}$ be a triangulated category, and $\mathscr{U} \subseteq \mathscr{T}$ an arbitrary subcategory.
\begin{enumerate}
\item Assume $\Hom_{\mathscr{T}}(\mathscr{U}, \mathscr{U}[i]) = 0$ for any $i \in \{1, \ldots, m\}$ and $\Hom_{\mathscr{T}}(\mathscr{U}[i], \mathscr{U}) = 0$ for any $i \in \{1, \ldots, m\}$. Then for any 
\[ X \in \mathscr{U} * \mathscr{U}[1] * \cdots * \mathscr{U}[m] \]
and any $U \in \mathscr{U}$ the natural map
\[ \Hom_{\mathscr{T}}(X, U) \to[30] \Hom_{\mathscr{U}}(\Hom_{\mathscr{T}}(-, X), \Hom_{\mathscr{T}}(-, U)) \]
is an isomorphism.
\item Assume $\Hom_{\mathscr{T}}(\mathscr{U}, \mathscr{U}[i]) = 0$ for any $i \in \{1, \ldots, m\}$ and $\Hom_{\mathscr{T}}(\mathscr{U}[i], \mathscr{U}) = 0$ for any $i \in \{1, \ldots, m-1\}$. Then for any 
\[ X \in \mathscr{U} * \mathscr{U}[1] * \cdots * \mathscr{U}[m] \]
and any $U \in \mathscr{U}$ the natural map
\[ \Hom_{\mathscr{T}}(X, U) \to[30] \Hom_{\mathscr{U}}(\Hom_{\mathscr{T}}(-, X), \Hom_{\mathscr{T}}(-, U)) \]
is an epimorphism.
\end{enumerate}
\end{proposition}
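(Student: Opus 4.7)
The plan is to prove both parts simultaneously by induction on $m$. Write $F_Z := \Hom_{\mathscr{T}}(-,Z)|_{\mathscr{U}}$ and $\Phi_Z \colon \Hom_{\mathscr{T}}(Z, U) \to \Hom_{\mathscr{U}}(F_Z, F_U)$ for the natural map under consideration; naturality in $Z$ means that every morphism $Y \to Z$ in $\mathscr{T}$ gives a commuting square relating $\Phi_Y$ and $\Phi_Z$, with vertical maps given by precomposition. The base case $m=0$ is the ordinary Yoneda lemma applied inside $\mod \mathscr{U}$, which makes $\Phi_X$ an isomorphism for all $X \in \mathscr{U}$ under either hypothesis.

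For the inductive step, by associativity of $\ast$ choose a triangle $X_0 \to X \to U'[m] \to X_0[1]$ with $X_0 \in \mathscr{U} \ast \cdots \ast \mathscr{U}[m-1]$ and $U' \in \mathscr{U}$. Applying $F$ and $\Hom_{\mathscr{T}}(-, U)$ to this triangle yields exact sequences
\begin{align*}
& F_{U'[m-1]} \to F_{X_0} \to F_X \to F_{U'[m]}, \\
& \Hom(U'[m], U) \to \Hom(X, U) \to \Hom(X_0, U) \to \Hom(U'[m-1], U).
\end{align*}
By a d\'evissage using $\Hom(\mathscr{U},\mathscr{U}[i])=0$ for $1 \le i \le m$, one sees $F_{U'[m]} = 0$, and when $m \ge 2$ also $F_{U'[m-1]} = 0$ and (in both hypotheses) $\Hom(U'[m-1], U) = 0$. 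So for $m \ge 2$ we get $F_X \cong F_{X_0}$ (naturally via $X_0 \to X$); under hypothesis (1) we additionally get $\Hom(X, U) \cong \Hom(X_0, U)$, while under hypothesis (2) only the surjection $\Hom(X,U) \twoheadrightarrow \Hom(X_0,U)$ survives. A diagram chase in the commuting square for $\Phi_X$ and $\Phi_{X_0}$ combined with the inductive hypothesis applied to $X_0$ then yields that $\Phi_X$ is iso, respectively epi.

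The edge case $m = 1$ needs separate treatment since now $F_X$ is no longer isomorphic to $F_{X_0}$ but is the cokernel of a map between representable functors. Rotating to $U' \to X_0 \to X \to U'[1]$ with $X_0, U' \in \mathscr{U}$ and applying $F$ gives $F_X = \cok(F_{U'} \to F_{X_0})$; applying $\Hom_{\mathscr{U}}(-, F_U)$ and Yoneda identifies $\Hom_{\mathscr{U}}(F_X, F_U)$ with $\ker(\Hom(X_0, U) \to \Hom(U', U))$. Comparing this to the exact sequence $\Hom(U'[1], U) \to \Hom(X, U) \to \Hom(X_0, U) \to \Hom(U', U)$ yields the desired isomorphism when $\Hom(\mathscr{U}[1], \mathscr{U}) = 0$ (case (1)) and the desired surjection in general (case (2)). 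The main obstacle is the bookkeeping of edge cases: tracking precisely which vanishing assumptions are available in (1) vs.\ (2), and ensuring the maps produced by the diagram chase are indeed $\Phi_X$ rather than some other abstract iso -- but naturality of $\Phi$ makes this automatic once the square is set up.
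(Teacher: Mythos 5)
Your proof is correct, and it uses the same broad strategy as the paper — induction on the $\ast$-length of $X$, combining the Yoneda Lemma with a diagram chase for the natural map $\Phi_X$ — but it peels off the opposite end of the $\ast$-decomposition. You choose a triangle $X_0 \to X \to U'[m] \to X_0[1]$ with $U' \in \mathscr{U}$ and $X_0 \in \mathscr{U}\ast\cdots\ast\mathscr{U}[m-1]$, splitting off the \emph{rightmost} factor; the paper instead chooses $Y \to U_0 \to X \to Y[1]$ with $U_0 \in \mathscr{U}$ and $Y \in \mathscr{U}\ast\cdots\ast\mathscr{U}[m-1]$, splitting off the \emph{leftmost}. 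The paper's choice is uniform: $F_{Y[1]}$ vanishes on $\mathscr{U}$ for every $m \geqslant 1$, so one always has the projective presentation $F_Y \to F_{U_0} \twoheadrightarrow F_X$, identifies $\Hom_{\mathscr{U}}(F_X,F_U)$ as a kernel inside $\Hom_{\mathscr{U}}(F_{U_0},F_U) \cong \Hom_{\mathscr{T}}(U_0,U)$, and compares it with $\Hom_{\mathscr{T}}(X,U)$ using the inductive hypothesis on $Y$ for the remaining term. Your choice instead yields, for $m \geqslant 2$, the sharper fact $F_X \cong F_{X_0}$, so the inductive hypothesis on $X_0$ does all the work and no cokernel/kernel bookkeeping is needed — but precisely the vanishing $F_{U'[m-1]}|_{\mathscr{U}} = 0$ required for that fails at $m=1$, which is why you have to run the separate rotated argument there (which, incidentally, is essentially the paper's step specialised to $Y \in \mathscr{U}$). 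So the trade-off is: your route avoids the cokernel discussion in the generic step, at the cost of a genuine case distinction; the paper's route avoids the case split, at the cost of always working with a two-term presentation. Both are fine, and your handling of which vanishing conditions are available under hypotheses (1) versus (2), as well as the remark that naturality of $\Phi$ guarantees the chased-out isomorphism really is $\Phi_X$, are exactly the points that need care and you get them right. One small nit: the vanishing $F_{U'[m]}|_{\mathscr{U}} = 0$ is immediate from $\Hom_{\mathscr{T}}(\mathscr{U},\mathscr{U}[m])=0$ since $U'$ is a single object of $\mathscr{U}$; calling it a ``d\'evissage'' overstates what is needed there (in the paper's version a d\'evissage genuinely is needed to kill $\Hom_{\mathscr{T}}(\mathscr{U},Y[1])$, since $Y$ is not in $\mathscr{U}$).
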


\begin{proof}
We will use induction as follows: (1$_{m=0}$) and (2$_{m=0}$) are clear by the Yoneda Lemma. We show (1$_m$) \then\ $[$ (2$_{m+1}$) and (1$_{m+1}$) $]$.

Assume (1) it true for some $m$, and, in addition, the assumptions of (2$_{m+1}$) are satisfied. Let $X \in \mathscr{U} * \mathscr{U}[1] * \cdots * \mathscr{U}[m+1]$. Then there is a triangle
\[ Y \to[30] U_0 \to[30] X \to[30] Y[1] \]
with $U_0 \in \mathscr{U}$ and $Y \in \mathscr{U} * \mathscr{U}[1] * \cdots * \mathscr{U}[m]$. This gives rise to the exact sequence of $\mathscr{U}$-modules
\[ \Hom_{\mathscr{T}}(-, Y) \to[30] \Hom_{\mathscr{T}}(-, U_0) \to[30] \Hom_{\mathscr{T}}(-, X) \to[30] \Hom_{\mathscr{T}}(-, Y[1]). \]
The term $\Hom_{\mathscr{T}}(\mathscr{U}, Y[1])$ vanishes, since $\Hom_{\mathscr{T}}(\mathscr{U}, \mathscr{U}[i]) = 0$ for $i \in \{1, \ldots, m+1\}$. Applying $\Hom_{\mathscr{U}}(-, \Hom_{\mathscr{T}}(-,U))$ to this exact sequence, and $\Hom_{\mathscr{T}}(-, U)$ to the triangle above, we obtain the following commutative diagram.
\[ \begin{tikzpicture}[xscale=3.8,yscale=-2]
 \node (11) at (0,0) {$\Hom_{\mathscr{T}}(Y[1], U)$};
 \node (21) at (1,0) {$\Hom_{\mathscr{T}}(X, U)$};
 \node (31) at (2,0) {$\Hom_{\mathscr{T}}(U_0, U)$};
 \node (41) at (3,0) {$\Hom_{\mathscr{T}}(Y, U)$};
 \node (22) at (1,1) [rotate=15] {$\leftsub{\mathscr{U}}{(\leftsub{\mathscr{T}}{(-,X)}, \leftsub{\mathscr{T}}{(-, U)})}$};
 \node (32) at (2,1) [rotate=15] {$\leftsub{\mathscr{U}}{(\leftsub{\mathscr{T}}{(-,U_0)}, \leftsub{\mathscr{T}}{(-, U)})}$};
 \node (42) at (3,1) [rotate=15] {$\leftsub{\mathscr{U}}{(\leftsub{\mathscr{T}}{(-,Y)}, \leftsub{\mathscr{T}}{(-, U)})}$};
 \draw [->] (11) -- (21);
 \draw [->] (21) -- (31);
 \draw [->] (31) -- (41);
 \draw [>->] (22) -- (32);
 \draw [->] (32) -- (42);
 \draw [->] (21) -- (22);
 \draw [->] (31) -- node [left] {$\iso$} (32);
 \draw [->] (41) -- (42);
\end{tikzpicture} \]
By (1$_m$) the right vertical map is an isomorphism, hence the left vertical map is onto, as claimed in (2$_{m+1}$).

If moreover $\Hom_{\mathscr{T}}(\mathscr{U}[m+1], \mathscr{U}) = 0$, then $\Hom_{\mathscr{T}}(Y[1], U) = 0$. Hence the left vertical map is an isomorphism, as claimed in (1$_{m+1}$).
\end{proof}

\begin{corollary} \label{cor.representable}
Let $\mathscr{T}$ be a triangulated category with a Serre functor $\leftsub{\mathscr{T}}{\mathbb{S}}$. Let $\mathscr{U} \subseteq \mathscr{T}$ be an $n$-cluster tilting subcategory with $\leftsub{\mathscr{T}}{\mathbb{S}}\mathscr{U} \subseteq \mathscr{U} * \mathscr{U}[1] * \cdots * \mathscr{U}[\ell]$. Then
\begin{enumerate}
\item for any $X \in \mathscr{U} * \mathscr{U}[1] * \cdots * \mathscr{U}[n-1-\ell]$ any morphism from $\Hom_{\mathscr{T}}(-, X)$ to a projective $\mathscr{U}$-module $\Hom_{\mathscr{T}}(-, U)$ is uniquely representable by a morphism in $\Hom_{\mathscr{T}}(X,U)$.
\item If $\ell > 0$, then for any $X \in \mathscr{U} * \mathscr{U}[1] * \cdots * \mathscr{U}[n - \ell]$ any morphism from $\Hom_{\mathscr{T}}(-, X)$ to a projective $\mathscr{U}$-module $\Hom_{\mathscr{T}}(-, U)$ is representable by a morphism in $\Hom_{\mathscr{T}}(X,U)$.
\end{enumerate}
\end{corollary}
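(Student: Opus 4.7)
The plan is to reduce both statements directly to Proposition~\ref{prop.representable} by verifying its hypotheses in the given setup. The key point is to translate the assumption $\leftsub{\mathscr{T}}{\mathbb{S}}\mathscr{U} \subseteq \mathscr{U} * \mathscr{U}[1] * \cdots * \mathscr{U}[\ell]$, which controls positive $\Ext$-vanishing $\Hom_{\mathscr{T}}(\mathscr{U},\mathscr{U}[i])=0$, into a negative $\Ext$-vanishing $\Hom_{\mathscr{T}}(\mathscr{U}[i],\mathscr{U})=0$, via Serre duality; the rest is bookkeeping of indices.

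First, since $\mathscr{U}$ is $n$-cluster tilting we automatically have $\Hom_{\mathscr{T}}(\mathscr{U},\mathscr{U}[i])=0$ for all $i \in \{1,\ldots,n-1\}$. For the negative side, Serre duality gives
\[
\Hom_{\mathscr{T}}(\mathscr{U}[i],\mathscr{U}) \iso D\Hom_{\mathscr{T}}(\mathscr{U}, \leftsub{\mathscr{T}}{\mathbb{S}}\mathscr{U}[i]).
\]
By assumption $\leftsub{\mathscr{T}}{\mathbb{S}}\mathscr{U}[i] \subseteq \mathscr{U}[i] * \mathscr{U}[i+1] * \cdots * \mathscr{U}[i+\ell]$, so unrolling along this filtration and invoking $n$-cluster tilting on each factor shows that $\Hom_{\mathscr{T}}(\mathscr{U},\leftsub{\mathscr{T}}{\mathbb{S}}\mathscr{U}[i])=0$ whenever $\{i,i+1,\ldots,i+\ell\} \subseteq \{1,\ldots,n-1\}$, that is, whenever $1 \leqslant i \leqslant n-1-\ell$. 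In other words,
\[
\Hom_{\mathscr{T}}(\mathscr{U}[i],\mathscr{U}) = 0 \quad \text{for all } i \in \{1,\ldots,n-1-\ell\}.
\]

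With these two vanishing ranges in hand, statement (1) is immediate from Proposition~\ref{prop.representable}(1) applied with $m = n-1-\ell$: both required conditions ($\Hom_{\mathscr{T}}(\mathscr{U},\mathscr{U}[i])=0$ and $\Hom_{\mathscr{T}}(\mathscr{U}[i],\mathscr{U})=0$ for $i \in \{1,\ldots,m\}$) are exactly what we have verified, and the conclusion that the natural map $\Hom_{\mathscr{T}}(X,U) \to \Hom_{\mathscr{U}}(\Hom_{\mathscr{T}}(-,X),\Hom_{\mathscr{T}}(-,U))$ is an isomorphism is precisely unique representability.

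For statement (2), assuming $\ell > 0$, we apply Proposition~\ref{prop.representable}(2) with $m = n-\ell$. Here we need $\Hom_{\mathscr{T}}(\mathscr{U},\mathscr{U}[i])=0$ for $i \in \{1,\ldots,n-\ell\}$, which holds since $n-\ell \leqslant n-1$, and $\Hom_{\mathscr{T}}(\mathscr{U}[i],\mathscr{U})=0$ for $i \in \{1,\ldots,m-1\} = \{1,\ldots,n-1-\ell\}$, which is exactly the range established above. The proposition then yields surjectivity of the Yoneda map, i.e.\ representability (no longer unique, since the hypothesis on negative extensions is one index short). No serious obstacle is anticipated; the only subtle point is keeping the index shift in the Serre-duality step correct so that the two cases $\ell$ and $\ell-1$ of Proposition~\ref{prop.representable} line up with the right values of $m$.
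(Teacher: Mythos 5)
Your proposal is correct and takes essentially the same approach as the paper: both convert the hypothesis $\leftsub{\mathscr{T}}{\mathbb{S}}\mathscr{U} \subseteq \mathscr{U} * \cdots * \mathscr{U}[\ell]$ via Serre duality into the vanishing $\Hom_{\mathscr{T}}(\mathscr{U}[i],\mathscr{U}) = 0$ for $0 < i < n-\ell$, and then invoke Proposition~\ref{prop.representable} with $m = n-1-\ell$ for part~(1) and $m = n-\ell$ for part~(2). The index bookkeeping in your Serre-duality step matches the paper's exactly.
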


\begin{proof}
We have
\begin{align*}
D \Hom_{\mathscr{T}}(\mathscr{U}[i], \mathscr{U}) & = \Hom_{\mathscr{T}}(\mathscr{U}, \leftsub{\mathscr{T}}{\mathbb{S}} \mathscr{U}[i]) \\
& \subseteq \Hom_{\mathscr{T}}(\mathscr{U}, \mathscr{U}[i] * \cdots * \mathscr{U}[i + \ell]) \\
& = 0 \text{ for } 0 < i < n - \ell
\end{align*}
Then the claims follow from Proposition~\ref{prop.representable} by setting $m = n-1-\ell$ for (1), and $m = n-\ell$ for (2).
\end{proof}

We are now ready to prove the main result of this subsection.

\begin{proof}[Proof of Theorem~\ref{theorem.serre_CM}]
Fix $M \in \CM(\mathscr{U})$. Then $M$ is represented by some $X \in U*U[1]$.

\textsc{Step I:} We show $\leftsub{\mathscr{T}}{\mathbb{S}}_n^{-1} M \iso \tau_{\mod \mathscr{U}}^- \Hom_{\mathscr{T}}(-,X[n-1])$. Since $X \in \mathscr{U} * \mathscr{U}[1]$ there is a triangle
\[ U_1 \to[30] U_0 \to[30] X \to[30] U_1[1] \]
with $U_i \in \mathscr{U}$. This gives rise to the exact sequence
\[ \underbrace{\Hom_{\mathscr{T}}(-, U_0[n-1])}_{=0} \to[30] \Hom_{\mathscr{T}}(-, X[n-1]) \to[30] \Hom_{\mathscr{T}}(-, U_1[n]) \to[30] \Hom_{\mathscr{T}}(-, U_0[n]). \]
Note that the $\mathscr{U}$-modules $\Hom_{\mathscr{T}}(-, U_i[n]) = \Hom_{\mathscr{T}}(-, \leftsub{\mathscr{T}}{\mathbb{S}} (\leftsub{\mathscr{T}}{\mathbb{S}}_n^{-1} U_i))$ are injective, and
\[ \leftsub{D^{\rm b}(\mod \mathscr{U})}{\mathbb{S}}^{-1} (\Hom_{\mathscr{T}}(-, U_i[n])) = \Hom_{\mathscr{T}}(-, \leftsub{\mathscr{T}}{\mathbb{S}}^{-1}_n U_i) \]
are the corresponding projective $\mathscr{U}$-modules. Hence we obtain the exact sequence
\[\Hom_{\mathscr{T}}(-, \leftsub{\mathscr{T}}{\mathbb{S}}^{-1}_n U_1) \to[30] \Hom_{\mathscr{T}}(-, \leftsub{\mathscr{T}}{\mathbb{S}}^{-1}_n U_0) \epi[30] \tau^-_{\mod \mathscr{U}} \Hom_{\mathscr{T}}(-, X[n-1]).  \]
In other words, we have
\[ \leftsub{\mathscr{T}}{\mathbb{S}}^{-1}_n M \iso \tau^-_{\mod \mathscr{U}} \Hom_{\mathscr{T}}(-, X[n-1]). \]
\textsc{Step II:} We now show $M \iso \Omega^n \Hom_{\mathscr{T}}(-,X[n-1])$.

By Proposition~\ref{prop.ct-resolution}, we have triangles
\[ \begin{tikzpicture}[xscale=1.3,yscale=1.5]
 \node (X0) at (0,0) {$X^0$};
 \node (X1) at (2,0) {$X^1$};
 \node (X2) at (4,0) {$X^2$};
 \node (X8) at (8,0) {$X^{n-2}$};
 \node (X9) at (10,0) {$U^{n-1}$};
 \node (U0) at (1,1) {$U^0$};
 \node (U1) at (3,1) {$U^1$};
 \node (U2) at (5,1) {$U^2$};
 \node (U7) at (7,1) {$U^{n-3}$};
 \node (U8) at (9,1) {$U^{n-2}$};
 \node at (6,.5) {$\cdots$};
 \draw [->] (U0) -- (U1);
 \draw [->] (U1) -- (U2);
 \draw [->] (U7) -- (U8);
 \draw [->] (X0) -- (U0);
 \draw [->] (X1) -- (U1);
 \draw [->] (X2) -- (U2);
 \draw [->] (X8) -- (U8);
 \draw [->] (U0) -- (X1);
 \draw [->] (U1) -- (X2);
 \draw [->] (U7) -- (X8);
 \draw [->] (U8) -- (X9);
 \draw [->] (X1) -- node [pos=.15] {\tikz{\draw[-] (0,.15) -- (0,-.15);}} (X0);
 \draw [->] (X2) -- node [pos=.15] {\tikz{\draw[-] (0,.15) -- (0,-.15);}} (X1);
 \draw [->] (X9) -- node [pos=.15] {\tikz{\draw[-] (0,.15) -- (0,-.15);}} (X8);
 \node (X0') at (-.8,0) {$X$};
 \draw [double distance=1.5pt] (X0') -- (X0);
\end{tikzpicture} \]
with $U_i \in \mathscr{U}$. Note that, since $\Hom_{\mathscr{T}}(\mathscr{U}, \mathscr{U}[i]) = 0 \, \forall i \in \{1, \ldots, n-1\}$, the maps $X^i \to U^i$ above are left $\mathscr{U}$-approximations.

Clearly we have $X^i \in \mathscr{U} * \cdots * \mathscr{U}[i+1]$. Hence, by Corollary~\ref{cor.representable}, any map from $\Hom_{\mathscr{T}}(-, X^i)$ to projective $\mathscr{U}$-modules is representable. By Lemma~\ref{lemma.approx=approx}, the maps $X^i \to U^i$ above induce left approximations by projectives
\[ \Hom_{\mathscr{T}}(-, X^i) \to[30] \Hom_{\mathscr{T}}(-, U^i). \]
Since $\Hom_{\mathscr{T}}(-, X^i[1]) = 0$ for $i < n-2$ the maps
\[ \Hom_{\mathscr{T}}(-, U^i) \to[30] \Hom_{\mathscr{T}}(-, X^{i+1}) \]
are onto for $i < n-2$. Thus we have the following diagram of $U$-modules:
\[ \begin{tikzpicture}[xscale=1.3,yscale=1.5]
 \node (X0) at (0,0) {$_{\mathscr{T}}(-,X^0)$};
 \node (X1) at (2,0) {$_{\mathscr{T}}(-,X^1)$};
 \node (X2) at (4,0) {$_{\mathscr{T}}(-,X^2)$};
 \node (X8) at (8,0) {$_{\mathscr{T}}(-,X^{n-2})$};
 \node (X9) at (10,0) {$_{\mathscr{T}}(-,U^{n-1})$};
 \node (U0) at (1,1) {$_{\mathscr{T}}(-,U^0)$};
 \node (U1) at (3,1) {$_{\mathscr{T}}(-,U^1)$};
 \node (U2) at (5,1) {$_{\mathscr{T}}(-,U^2)$};
 \node (U7) at (7,1) {$_{\mathscr{T}}(-,U^{n-3})$};
 \node (U8) at (9,1) {$_{\mathscr{T}}(-,U^{n-2})$};
 \node at (6,.5) {$\cdots$};
 \draw [->] (U0) -- (U1);
 \draw [->] (U1) -- (U2);
 \draw [->] (U7) -- (U8);
 \draw [->] (X0) -- node [left] {ap} (U0);
 \draw [->] (X1) -- node [left] {ap} (U1);
 \draw [->] (X2) -- node [left] {ap} (U2);
 \draw [->] (X8) -- node [left] {ap} (U8);
 \draw [->>] (U0) -- (X1);
 \draw [->>] (U1) -- (X2);
 \draw [->>] (U7) -- (X8);
 \draw [->] (U8) -- (X9);
 \node (E) at (7.7,-.6) {$_{\mathscr{T}}(-,X^{n-2}[1])$};
 \draw [->>] (X9) -- (E);
\end{tikzpicture} \]
Therefore we can see inductively that for any $i \in \{0, \ldots, n-2\}$
\[ \Hom_{\mathscr{T}}(-,X^i) \iso \Omega_{\CM}^{-i} M \]
and that the maps
\[ \Hom_{\mathscr{T}}(-,X^i) \to[30] \Hom_{\mathscr{T}}(-, U^i) \]
are mono. In particular we have
\[ \Omega^n \Hom_{\mathscr{T}} (-,X^{n-2}[1]) \iso \Hom_{\mathscr{T}}(-,X_0) = M.\]
Now the assertion follows from
\[ \Hom_{\mathscr{T}}(-,X^{n-2}[1]) \iso \Hom_{\mathscr{T}}(-,X^{n-3}[2]) \iso \cdots \iso \Hom_{\mathscr{T}}(-,X^0[n-1]). \]
(It should also be noted that the above isomorphic $\mathscr{U}$-modules do not have to lie in $\CM(\mathscr{U})$.)

\textsc{Step III:} We now prove the claim of the theorem.

Let $N \in \CM(\mathscr{U})$. Then we have
\begin{align*}
& D\underline{\Hom}_{\mathscr{U}}( \leftsub{\mathscr{T}}{\mathbb{S}}_n^{-1}M, N) \\
& = \Ext^1_{\mathscr{U}}(N, \tau_{\mod \mathscr{U}} \leftsub{\mathscr{T}}{\mathbb{S}}_n^{-1} M) && \text{(by AR duality)} \\
& = \underline{\Hom}_{\mathscr{U}}(\Omega^{n+1} N, \Omega^n \tau_{\mod \mathscr{U}} \leftsub{\mathscr{T}}{\mathbb{S}}_n^{-1} M) && \text{(since $N \in \CM(\mathscr{U})$)} \\
& = \underline{\Hom}_{\mathscr{U}}( \Omega^{n+1} N, M) && \text{(by Steps I and II)}
\end{align*}
Hence $\Omega^{-(n+1)}_{\stabCM(\mathscr{U})} \circ \leftsub{\mathscr{T}}{\mathbb{S}}_n$ is the Serre functor $\leftsub{\stabCM(\mathscr{U})}{\mathbb{S}}$ on $\stabCM(\mathscr{U})$. In other words
\[ \leftsub{\stabCM}{\mathbb{S}}_{n+1} = \Omega^{n+1} \circ \leftsub{\stabCM(\mathscr{U})}{\mathbb{S}} = \leftsub{\mathscr{T}}{\mathbb{S}}_n. \qedhere\]
\end{proof}

\subsection{Identifying \texorpdfstring{$\stabCM(\mathscr{U}\psL)$}{\_CM\_ U} with the derived category of \texorpdfstring{$\Gamma$}{Gamma}}

In this subsection we restrict our attention back to the basic setup of this section: We have an algebra $\Lambda$ of global dimension at most $n$, satisfying the vosnex property (see \ref{notation.vosnex}). By Lemma~\ref{lem.selinj_dim_1} we know that $\mathscr{U}\psL$ is Iwanaga-Gorenstein of dimension at most $1$. Our aim in this subsection is to identify $\stabCM(\mathscr{U})$ as the derived category of some algebra $\Gamma$. More precisely we will show the following.

\begin{theorem} \label{thm.CM_is_derived}
In the setup above
\[ \mathscr{D}_{\Gamma} \approx \stabCM(\mathscr{U}\psL), \]
with $\Gamma = \underline{\End}_{\Lambda}(\widetilde{\Lambda})$.
\end{theorem}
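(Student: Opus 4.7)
The plan is to follow the strategy of the self-injective case in Section~\ref{sect.selfinj} (Theorem~\ref{theorem.stableisderived} together with Corollary~\ref{corollary.stabmodisderived}) and adapt it to the Iwanaga-Gorenstein setting. In that self-injective setup, the decomposition $\mathscr{U}\psL = \bigvee_{i \in \mathbb{Z}} \add \mathbb{S}^i \widetilde{\Lambda}_P$ (Lemma~\ref{lemma.S_decom_U}) identified $\mathscr{U}\psL$ with the repetitive category $\widehat{\Gamma}$, and then Happel's theorem gave $\stabmod \widehat{\Gamma} \approx \mathscr{D}_{\Gamma}$ using $\gld \Gamma < \infty$. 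Under only the vosnex hypothesis we do not expect such a clean decomposition, but Lemma~\ref{lem.selinj_dim_1} still says that $\mathscr{U}\psL$ is Iwanaga-Gorenstein of dimension at most $1$, so the natural replacement for $\stabmod$ is $\stabCM$, and the right tool will be Keller's Morita theorem for algebraic triangulated categories.

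First I would show $\gld \Gamma \leqslant n+1$, paralleling Proposition~\ref{prop.fingld}: any $\Gamma$-module lifts to a $\widetilde{\Lambda}$-module, the $\widetilde{\Lambda}$-resolution supplied by Lemma~\ref{lemma.exactsequence} is automatically an $\add \widetilde{\Lambda}$-resolution thanks to Proposition~\ref{prop.prepare_right_cto}(2), and the standard bound (as in \cite[Lemma~2.1]{EHIS}) passes to the stable quotient. Second, I would construct a tilting object $T \in \stabCM(\mathscr{U}\psL)$ whose stable endomorphism algebra is $\Gamma$. The candidate is the $\mathscr{U}\psL$-module obtained by restricting the Yoneda functor of $\widetilde{\Lambda}$ modulo morphisms factoring through $\add \Lambda$; using the vosnex property together with the triangles of Proposition~\ref{prop.ct-resolution} and the Yoneda-type identifications of Proposition~\ref{prop.representable}, one checks that $T$ lies in $\CM(\mathscr{U}\psL)$ and that $\End_{\stabCM(\mathscr{U}\psL)}(T) \iso \Gamma$.

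Third, I would verify that $T$ is a tilting object: the rigidity $\underline{\Hom}_{\stabCM(\mathscr{U}\psL)}(T, T[i]) = 0$ for $i \neq 0$ should follow by computing shifts in $\stabCM$ using the description of $\Omega_{\CM}^-$ provided by Lemma~\ref{lemma.approx=approx} and Corollary~\ref{cor.representable}, combined with the vosnex vanishing of Homs; generation of $\stabCM(\mathscr{U}\psL)$ by $T$ will use that every object of $\CM(\mathscr{U}\psL)$ admits a finite resolution by shifts of representables coming from summands of $\widetilde{\Lambda}$, via Proposition~\ref{prop.ct-resolution}. An application of Keller's Morita theorem then yields a triangle equivalence $\mathscr{D}_\Gamma \approx \stabCM(\mathscr{U}\psL)$ sending $\Gamma$ to $T$.

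The main obstacle will be the construction and analysis of $T$. In the self-injective case the explicit decomposition of $\mathscr{U}\psL$ and its identification with $\widehat{\Gamma}$ make the analogue of $T$ essentially tautological; in the vosnex case no such description of $\mathscr{U}\psL$ is available, and we must instead lean on the Iwanaga-Gorenstein structure of Lemma~\ref{lem.selinj_dim_1} to guarantee that $T$ actually sits inside $\CM(\mathscr{U}\psL)$. Concretely, the technical heart will be proving that $T$ generates $\stabCM(\mathscr{U}\psL)$, which amounts to producing sufficiently short CM-resolutions of arbitrary objects of $\CM(\mathscr{U}\psL)$ built from shifts of $T$.
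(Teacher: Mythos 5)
Your overall strategy matches the paper's: both prove the equivalence by producing a tilting object $T$ in $\stabCM(\mathscr{U}\psL)$ with endomorphism algebra $\Gamma$ (using $\gld \Gamma < \infty$ from the analogue of Proposition~\ref{prop.fingld}), and then invoking a Morita-type theorem for algebraic triangulated categories. Your candidate for $T$ --- the representable $\mathscr{U}\psL$-modules attached to $\widetilde{\Lambda}_P$ --- is indeed the one the paper uses; concretely it is $T = P_V^{\geqslant 0}$ for a suitable $V \in \mathscr{U}\psL^{<0}$ (Construction~\ref{const.tiltingobj}), and identifying its endomorphism algebra with $\Gamma$ takes real work (Theorem~\ref{thm.identify_endoring} and Proposition~\ref{prop.adjoints}), which you correctly flag as nontrivial.

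The one genuine gap is in your proposed generation argument. You suggest deducing generation from Proposition~\ref{prop.ct-resolution} via ``finite resolutions by shifts of representables coming from summands of $\widetilde{\Lambda}$.'' But Proposition~\ref{prop.ct-resolution} produces resolutions by the $P_U$ for arbitrary $U \in \mathscr{U}\psL$; these are exactly the projectives of $\mod \mathscr{U}\psL$ and so become zero in $\stabCM(\mathscr{U}\psL)$, hence contribute nothing to generation there. The paper instead argues indirectly: Proposition~\ref{prop.T_generates} shows, by analyzing a minimal projective resolution and using the support-shift Lemma~\ref{lem.susp_moves}, that no nonzero object of $\stabCM(\mathscr{U}\psL)$ is right-orthogonal to all shifts of $T$; then $\thick(T) = \stabCM(\mathscr{U}\psL)$ is deduced from Wakamatsu's lemma for triangulated categories (Lemma~\ref{lemma.wakamatsu_triang}), using that $\thick(T) = \add\{T[i]\} * \cdots * \add\{T[i]\}$ (finitely many factors, since $\gld\Gamma \leqslant n+1$) is contravariantly finite by Lemma~\ref{lemma.contr_fin_preserved}. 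You would need to replace your sketch of this step by an argument of that shape; the direct ``finite $T$-resolution'' route you propose is not available.
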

We will give two equivalences. The first uses a tilting object $T$ (see Construction~\ref{const.tiltingobj} and Theorem~\ref{thm.equiv_via_tilting}), while the latter uses restriction along some functor $\mathscr{U}\psL \to \proj \Gamma$ (see Construction~\ref{const.for_res} and Theorem~\ref{thm.equiv_via_res}).

Note that Theorem~\ref{thm.CM_is_derived} generalizes Section~\ref{subsect.stab_is_Db}, and specifically Corollary~\ref{corollary.stabmodisderived}(1).

\subsubsection{A tilting object}

We set $\mathscr{U}\psL^i = \add \leftsub{\mathscr{U}\psL}{\mathbb{S}}_n^i \Lambda$ for $i \in \mathbb{Z}$. Then $\mathscr{U} = \bigvee_{i \in \mathbb{Z}} \mathscr{U}\psL^i$. We write $\mathscr{U}\psL^{< \ell} = \bigvee_{i < \ell} \mathscr{U}\psL^i, \mathscr{U}\psL^{> \ell} = \bigvee_{i > \ell} \mathscr{U}\psL^i$ and similar variations. For $X \in \mod \mathscr{U}\psL$ we set
\[ \Supp X = \add \{ U \in \mathscr{U}\psL \text{ indecomposable} \mid X(U) \neq 0 \}. \]
We observe that $\Hom_{\mathscr{U}\psL}(X, Y) = 0$ whenever $X, Y \in \mod \mathscr{U}\psL$ with $\Supp X \cap \Supp Y = \emptyset$.

We have an equivalence $\mathscr{U} / \mathscr{U}^{\geqslant \ell} \approx \mathscr{U}^{<\ell}$, which induces natural functors $\mathscr{U} \to \mathscr{U}^{<\ell}$ and $\mod \mathscr{U}^{<\ell} \to \mod \mathscr{U}$.  Thus we may identify
\[ \mod \mathscr{U}^{<\ell} = \{X \in \mod \mathscr{U} \mid \Supp X \subseteq \mathscr{U}^{< \ell} \} \overset{\mathclap{\text{full}}}{\subseteq} \mod \mathscr{U}. \]
Similarly we identify
\[ \mod \mathscr{U}^{>\ell} = \{X \in \mod \mathscr{U} \mid \Supp X \subseteq \mathscr{U}^{>\ell}\} \overset{\mathclap{\text{full}}}{\subseteq} \mod \mathscr{U}. \]

\begin{remark} \label{remark.pic_U}
The following picture describes the distribution of the $\mathscr{U}^i$ in $\mathscr{D}$ in comparison to the standard t-structure.
\[ \begin{tikzpicture}[yscale=-1]
 \draw [decorate,decoration=snake,segment length=2cm] (0,0) -- node [pos=.7,fill=white] {$\mathscr{D}$} (14,0);
 \draw [decorate,decoration=snake,segment length=2cm] (0,3) -- (14,3);
 \draw [decorate,decoration=snake,segment length=.8cm,looseness=.5] (14,.5) [out=180,in=270] to node [pos=.3,fill=white] {$\mathscr{D}^{\leqslant 0}$} (6.5,1.5) [out=90,in=180] to (14,2.5);
 \draw [decorate,decoration=snake,segment length=1cm,looseness=.5] (0,.5) [out=0,in=270] to node [pos=.4,fill=white] {$\mathscr{D}^{\geqslant n}$} (5.5,1.5) [out=90,in=0] to (0,2.5);
 \foreach \i in {3,2,...,-3}
  {
   \draw (7 - 2 * \i, 1.5) circle (15pt);
   \node at (7 - 2* \i, 1.5) {$\mathscr{U}^{\i}$};
  }
 \node at (0,1.5) {$\cdots$};
 \node at (14,1.5) {$\cdots$};
\end{tikzpicture} \]
That is, we have
\[ \mathscr{U}^{\leqslant 0} \subseteq \mathscr{D}^{\leqslant 0} \qquad \text{and} \qquad \mathscr{U}^{> 0}  \subseteq \mathscr{D}^{\geqslant n}. \]
In particular note that $\Hom_{\mathscr{D}}(\mathscr{U}^{\leqslant 0}, \mathscr{U}^{> 0}) = 0$, and hence by shifting $\Hom_{\mathscr{D}}(\mathscr{U}^{\leqslant \ell}, \mathscr{U}^{> \ell}) = 0$ for any $\ell \in \mathbb{Z}$.
\end{remark}

\begin{observation} \label{obs.geq_<}
For $X \in \mod \mathscr{U}\psL$ there is a functorial short exact sequence
\[ X^{\geqslant 0} \mono[30] X \epi[30] X^{< 0} \]
with $X^{\geqslant 0} \in \mod \mathscr{U}\psL^{\geqslant 0}$ and $X^{< 0} \in \mod \mathscr{U}\psL^{< 0}$. (This functorial exact sequence should not be confused with the functorial triangle in \ref{rem.t-trunc-triang})
\end{observation}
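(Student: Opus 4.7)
My plan is to exploit the key vanishing highlighted in Remark~\ref{remark.pic_U}: $\Hom_{\mathscr{D}_{\Lambda}}(\mathscr{U}\psL^{<0}, \mathscr{U}\psL^{\geqslant 0}) = 0$. Given $X \in \mod \mathscr{U}\psL$, I define $X^{\geqslant 0}$ as the subfunctor of $X$ obtained by restriction to $\mathscr{U}\psL^{\geqslant 0}$ followed by extension by zero, i.e.\ $X^{\geqslant 0}(U) = X(U)$ for $U \in \mathscr{U}\psL^{\geqslant 0}$ and $X^{\geqslant 0}(U) = 0$ otherwise. To check this is genuinely a subfunctor of $X$ one must verify, for every morphism $f : U \to V$ in $\mathscr{U}\psL$, that $X(f)(X^{\geqslant 0}(V)) \subseteq X^{\geqslant 0}(U)$. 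The only nontrivial case is $U \in \mathscr{U}\psL^{<0}$ and $V \in \mathscr{U}\psL^{\geqslant 0}$, where we must show $X(f)(X(V)) \subseteq 0$; but in that case $f = 0$ by the key vanishing, so $X(f) = 0$. Setting $X^{<0} := X/X^{\geqslant 0}$ then gives the desired short exact sequence, functorial in $X$ by construction, with $X^{\geqslant 0}$ supported on $\mathscr{U}\psL^{\geqslant 0}$ and $X^{<0}$ supported on $\mathscr{U}\psL^{<0}$.

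The remaining step is to establish that both ends are actually finitely presented. I would pick a projective presentation $\Hom_{\mathscr{U}}(-,U_1) \xrightarrow{f} \Hom_{\mathscr{U}}(-,U_0) \to X \to 0$ in $\mod \mathscr{U}\psL$, decompose $U_i = U_i^+ \oplus U_i^-$ with $U_i^+ \in \mathscr{U}\psL^{\geqslant 0}$ and $U_i^- \in \mathscr{U}\psL^{<0}$, and observe that the key vanishing forces the $(U_1^-, U_0^+)$-component of $f$ to vanish. Restricting the whole presentation to $\mathscr{U}\psL^{<0}$, the summands $\Hom_{\mathscr{U}}(-,U_i^+)|_{\mathscr{U}\psL^{<0}}$ vanish identically, leaving
\[
\Hom_{\mathscr{U}\psL^{<0}}(-,U_1^-) \to \Hom_{\mathscr{U}\psL^{<0}}(-,U_0^-) \to X|_{\mathscr{U}\psL^{<0}} \to 0,
\]
which exhibits $X^{<0}$ as a finitely presented $\mathscr{U}\psL^{<0}$-module under the identification $\mod \mathscr{U}\psL^{<0} \hookrightarrow \mod \mathscr{U}\psL$ fixed just before the observation.

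The main obstacle is then the finite presentation of $X^{\geqslant 0}$, because ``$\geqslant 0$''-truncation does not interact as cleanly with projective resolutions (the off-diagonal part $U_i^+ \to U_i^-$ of $f$ need not vanish). The cleanest route I would take is to appeal to the fact that $\mod \mathscr{U}\psL$ is abelian: since $\mathscr{U}\psL$ is an $n$-cluster tilting subcategory of the triangulated category $\mathscr{D}_{\Lambda}$, it has weak kernels, and hence kernels in $\mod \mathscr{U}\psL$ of maps between finitely presented modules remain finitely presented. Applying this to the epimorphism $X \to X^{<0}$ constructed above yields $X^{\geqslant 0} = \Ker(X \to X^{<0}) \in \mod \mathscr{U}\psL$, and since its support lies in $\mathscr{U}\psL^{\geqslant 0}$ it belongs to $\mod \mathscr{U}\psL^{\geqslant 0}$, completing the proof.
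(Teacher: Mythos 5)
The paper states this only as an Observation, without proof, so there is no argument to compare against; your write-up supplies the natural argument the authors are implicitly invoking, and it is correct. The vanishing $\Hom_{\mathscr{D}_{\Lambda}}(\mathscr{U}\psL^{<0}, \mathscr{U}\psL^{\geqslant 0}) = 0$ (Remark~\ref{remark.pic_U} at $\ell = -1$) is indeed exactly what makes the ``supported in $\mathscr{U}\psL^{\geqslant 0}$'' part a subfunctor $X^{\geqslant 0} \subseteq X$, and your restriction of a projective presentation to $\mathscr{U}\psL^{<0}$ (using the same vanishing to kill the $(U_1^-, U_0^+)$-block of the presentation map) correctly gives finite presentation of $X^{<0}$, while finite presentation of $X^{\geqslant 0} = \ker(X \to X^{<0})$ follows because $\mod\mathscr{U}\psL$ is abelian ($\mathscr{U}\psL$ has weak kernels, being a functorially finite subcategory of the triangulated category $\mathscr{D}_{\Lambda}$). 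Your note that the naive truncation of a presentation fails for $X^{\geqslant 0}$ because the $U_1^+ \to U_0^-$ block need not vanish is precisely the right subtlety to flag, and going through kernels is the clean way around it.
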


The aim of this subsection is to establish the following proposition. Together with the next subsection it will provide a proof of Theorem~\ref{thm.CM_is_derived} above.

For compacter notation we write $P_U = \Hom_{\mathscr{D}_{\Lambda}}(-, U)$ for the projective $\mathscr{U}\psL$-module corresponding to $U$.

\begin{construction} \label{const.tiltingobj}
We set
\[ V := \bigoplus_{\substack{U \in \mathscr{U}\psL^{< 0} \\ \text{indec.} \\ \Supp P_U \not\subseteq \mathscr{U}\psL^{< 0}}} \in \mathscr{U}\psL, \qquad \text{and} \qquad T := P_V^{\geqslant 0}. \]
Note that the direct sum is finite since there are only finitely many indecomposable objects $U \in \mathscr{U}\psL^{<0}$ satisfying $\Supp P_U \not\subseteq \mathscr{U}\psL^{< 0}$ (this is equivalent to $\Hom_{\mathscr{D}_{\Lambda}}(\mathscr{U}\psL^{\geqslant 0}, U) \neq 0$).

Note that since $T = P_V^{\geqslant 0} = \Omega P_V^{< 0}$ (see the defining sequence in Observation~\ref{obs.geq_<}) we have $T \in \CM(\mathscr{U}\psL)$.
\end{construction}

\begin{theorem} \label{thm.equiv_via_tilting}
The object $T$ as in Construction~\ref{const.tiltingobj} above has endomorphism ring $\Gamma$, and the composition
\[ \mathscr{D}_{\Gamma} \tol[40]{\scriptstyle T \otimes_{\Gamma}^L -} \mathscr{D}_{\mathscr{U}\psL} \to[30] \stabCM(\mathscr{U}\psL) \]
is an equivalence.
\end{theorem}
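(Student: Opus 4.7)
The plan is to recognise $T$ as a classical tilting object in $\stabCM(\mathscr{U}\psL)$ with endomorphism ring $\Gamma$, so that — combined with $\gld \Gamma \leqslant n+1 < \infty$ (Proposition~\ref{prop.fingld}) — the standard tilting-theoretic argument produces a triangle equivalence $\mathscr{D}_{\Gamma} \approx \stabCM(\mathscr{U}\psL)$, realised precisely by $T \otimes_{\Gamma}^{L} -$ followed by the projection $\mathscr{D}_{\mathscr{U}\psL} \to \stabCM(\mathscr{U}\psL)$. That $T \in \CM(\mathscr{U}\psL)$ is immediate from the defining sequence $T \mono P_V \epi P_V^{<0}$: since $P_V$ is projective, $T = \Omega P_V^{<0}$ is a first syzygy, and $\mathscr{U}\psL$ is Iwanaga-Gorenstein of dimension at most~$1$ by Lemma~\ref{lem.selinj_dim_1}, so every first syzygy is Cohen-Macaulay.

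\textbf{Identifying $V$ and $\End(T)$.} A key preliminary is identifying $V$ under the push-down $\mathscr{U}\psL \to \mod \Lambda$, $X \mapsto \Ho^0(X)$. Because $\Hom_{\mathscr{D}_{\Lambda}}(\mathscr{U}\psL^{>0}, \mathscr{U}\psL^{<0}) = 0$ by Remark~\ref{remark.pic_U}, for indecomposable $U \in \mathscr{U}\psL^{<0}$ the condition $\Supp P_U \not\subseteq \mathscr{U}\psL^{<0}$ collapses to $\Ho^0(U) = \Hom_{\mathscr{D}_{\Lambda}}(\Lambda, U) \neq 0$. This puts the indecomposable summands of $V$ in bijection with the non-projective indecomposable summands of $\widetilde{\Lambda} = \bigoplus_{i \geqslant 0} \tau_n^{-i} \Lambda$. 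To compute $\End_{\mathscr{U}\psL}(T)$ I apply $\Hom_{\mathscr{U}\psL}(-, T)$ to the defining sequence: both $\Hom(P_V, T) = T(V) = 0$ (as $V \in \mathscr{U}\psL^{<0}$ while $T$ is supported on $\mathscr{U}\psL^{\geqslant 0}$) and $\Hom(P_V^{<0}, T) = 0$ (disjoint supports) vanish, so the long exact sequence yields $\End_{\mathscr{U}\psL}(T) \iso \Ext^1_{\mathscr{U}\psL}(P_V^{<0}, T)$. This last group I would identify with $\underline{\End}_{\Lambda}(\widetilde{\Lambda}) = \Gamma$ by lifting representatives of extensions along $V \to \Lambda$-style left approximations and matching the resulting morphisms modulo those factoring through $\add \Lambda$; Corollary~\ref{cor.representable} provides the representability needed to translate $\mathscr{U}\psL$-module morphisms back to morphisms in $\mathscr{D}_{\Lambda}$.

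\textbf{Vanishing and generation.} To show $\Hom_{\stabCM(\mathscr{U}\psL)}(T, T[i]) = 0$ for $i \neq 0$, I use Theorem~\ref{theorem.serre_CM} to translate the Serre functor on $\stabCM(\mathscr{U}\psL)$ into $\leftsub{\mathscr{D}_{\Lambda}}{\mathbb{S}}_n$, which reduces the vanishing (both for $i > 0$ and, via Serre duality, for $i < 0$) to $\Hom$-vanishing statements between shifts of $V$ in $\mathscr{D}_{\Lambda}$ that are controlled by the vosnex property and by the explicit location of $\leftsub{\mathscr{D}_{\Lambda}}{\mathbb{S}}_n^{\pm \ell} V$ with respect to the standard $t$-structure (the analogue of Lemma~\ref{lemma.nuorders} refined using Lemma~\ref{lemma.equiv_vosnex}). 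For classical generation of $\stabCM(\mathscr{U}\psL)$ by $T$, I would prove that each simple $\mathscr{U}\psL$-module lies in $\thick(T)$ inside $\stabCM(\mathscr{U}\psL)$, by repeatedly triangulating with images of $V$ translated by $\leftsub{\mathscr{D}_{\Lambda}}{\mathbb{S}}_n^{\pm 1}$ so as to sweep through the entire $\mathbb{Z}$-filtration $\mathscr{U}\psL = \bigvee_{i \in \mathbb{Z}} \mathscr{U}\psL^i$.

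\textbf{Main obstacle.} The hardest step will be the classical generation. In the self-injective setup of Section~\ref{sect.selfinj} the $\mathbb{S}$-orbit of a single object covered $\mathscr{U}\psL$ and made the comparison transparent; here, without self-injectivity, the $\mathbb{Z}$-filtration $\mathscr{U}\psL^i$ interacts non-trivially with the standard $t$-structure (as reflected in Construction~\ref{const.tiltingobj}'s distinction between supports), and one must keep careful track of how $T$ detects indecomposables of $\mathscr{U}\psL^i$ through iterated triangles and syzygies. A secondary delicate point is fixing the correct algebra map $\Gamma \to \End_{\mathscr{U}\psL}(T)$ underlying the $\Ext^1$-identification above, so that $T$ becomes a $\Gamma$-$\mathscr{U}\psL$-bimodule in a way that makes $T \otimes_{\Gamma}^{L} -$ coincide with the functor forming the composition in the statement.
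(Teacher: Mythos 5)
Your overall architecture matches the paper's: identify $\End_{\mathscr{U}\psL}(T)\cong\Gamma$, verify that $T$ is a tilting object in $\stabCM(\mathscr{U}\psL)$, use $\gld\Gamma\leqslant n+1$ (Proposition~\ref{prop.fingld}) to get $\mathscr{D}_\Gamma=\perf\Gamma$, and conclude by standard tilting theory that $T\otimes^L_\Gamma -$ followed by projection is an equivalence. Your argument that $T\in\CM(\mathscr{U}\psL)$ is the same as the paper's. The two points where you diverge in substance are worth spelling out.

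For the endomorphism ring, your route via the long exact sequence, giving $\End(T)\cong\Ext^1_{\mathscr{U}\psL}(P_V^{<0},T)$ and then an ad hoc ``lifting'' identification with $\underline{\End}_\Lambda(\widetilde\Lambda)$, is genuinely different from the paper's. The paper (Theorem~\ref{thm.identify_endoring} and Proposition~\ref{prop.adjoints}) sets up an exact restriction functor $\mathtt{res}\colon\mod\mathscr{U}\psL\to\mod\Lambda$ with a right adjoint $\mathtt{G}_*$, shows these restrict to mutually inverse equivalences on a suitable full subcategory $\mathscr{R}$, and deduces $\End_{\mathscr{U}\psL}(T)=\End_\Lambda(T(\Lambda))=\End_\Lambda(\widetilde\Lambda_P)$ almost immediately, with the equality $\underline{\End}=\End$ following from a further support/t-structure computation. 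Your $\Ext^1$ approach is plausible but is sketchy precisely at the hard step (matching extension classes to stable morphisms); the adjunction argument is cleaner and you may find it considerably easier.

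The generation step is where your proposal has a genuine gap. You propose to show that each simple $\mathscr{U}\psL$-module lies in $\thick(T)$ ``inside $\stabCM(\mathscr{U}\psL)$.'' But $\mathscr{U}\psL$ is not self-injective here, so simple modules are generally \emph{not} Cohen-Macaulay and do not live in $\stabCM(\mathscr{U}\psL)$ at all; they only have well-defined images under the Buchweitz identification $\stabCM(\mathscr{U}\psL)\simeq\mathscr{D}^b(\mod\mathscr{U}\psL)/\perf\mathscr{U}\psL$, which you never invoke. Moreover the $\mathbb{Z}$-filtration $\mathscr{U}\psL=\bigvee_i\mathscr{U}\psL^i$ is infinite, so ``sweeping through'' it by iterated triangles does not obviously terminate inside $\thick(T)$, which is closed only under finite operations. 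The paper sidesteps both problems: Proposition~\ref{prop.T_generates} proves only \emph{weak} generation ($\thick(T)^\perp=0$) by a minimal-projective-resolution argument, and the upgrade to $\thick(T)=\stabCM(\mathscr{U}\psL)$ is then obtained by observing that $\mathscr{D}_\Gamma=\mathscr{X}*\cdots*\mathscr{X}$ with $n+2$ factors (the ABIM bounded $*$-decomposition for $\gld\Gamma\leqslant n+1$), transporting this to $\thick(T)=\mathscr{Y}*\cdots*\mathscr{Y}$, proving contravariant finiteness of this finite $*$-product via Lemma~\ref{lemma.contr_fin_preserved}, and applying Wakamatsu's Lemma for triangulated categories (Lemma~\ref{lemma.wakamatsu_triang}) to get a t-structure whose aisle is $\thick(T)$. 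This chain of ideas is the real content of the proof that your sketch omits.
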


The first step towards the proof of this theorem is the following result.

\begin{proposition} \label{prop.is_tilting_object}
The object $T$ as in Construction~\ref{const.tiltingobj} is a tilting object in $\stabCM(\mathscr{U}\psL)$. (That is, we have $\underline{\Hom}_{\mathscr{U}}(T, \Omega_{\CM}^i T) = 0 \, \forall i \neq 0$, and $\forall X \in \stabCM(\mathscr{U}) \setminus \{0\}$ there is $i \in \mathbb{Z}$ such that $\underline{\Hom}_{\mathscr{U}}(T, \Omega_{\CM}^i X) \neq 0$.)
\end{proposition}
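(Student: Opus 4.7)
The plan is to verify the two defining properties of a tilting object: Hom-orthogonality $\underline{\Hom}_{\mathscr{U}\psL}(T, \Omega_{\CM}^i T) = 0$ for every $i \neq 0$, and classical generation of $\stabCM(\mathscr{U}\psL)$ by $T$. The Cohen-Macaulay property $T \in \CM(\mathscr{U}\psL)$ is already in hand from Construction~\ref{const.tiltingobj}: the short exact sequence $T \mono P_V \epi P_V^{<0}$ of Observation~\ref{obs.geq_<} exhibits $T$ as a first syzygy of $P_V^{<0}$, and Lemma~\ref{lem.selinj_dim_1} says $\mathscr{U}\psL$ is Iwanaga-Gorenstein of dimension at most $1$, so first syzygies are automatically Cohen-Macaulay.

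For the positive-shift vanishing I would use the defining exact sequence twice. Applying $\Hom_{\mathscr{U}\psL}(-, T)$ and using that $P_V$ is projective yields $\Ext^i_{\mathscr{U}\psL}(T, T) \iso \Ext^{i+1}_{\mathscr{U}\psL}(P_V^{<0}, T)$ for $i \geqslant 1$. Applying $\Hom_{\mathscr{U}\psL}(P_V^{<0}, -)$ to the same sequence then reduces this further to $\Ext^\ast_{\mathscr{U}\psL}(P_V^{<0}, P_V^{<0})$, using the Iwanaga-Gorenstein bound $\Ext^{\geqslant 2}(-, P_V) = 0$. A direct check gives $\Hom_{\mathscr{U}\psL}(T, P_V^{<0}) = 0 = \Hom_{\mathscr{U}\psL}(P_V^{<0}, T)$ by disjointness of the supports $\mathscr{U}\psL^{\geqslant 0}$ and $\mathscr{U}\psL^{<0}$, which starts the induction and forces $\Ext^1(P_V^{<0}, P_V^{<0}) = 0$. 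Here the definition of $V$ plays its critical role: an indecomposable $U \in \mathscr{U}\psL^{<0}$ is a summand of $V$ precisely when $\Supp P_U \not\subseteq \mathscr{U}\psL^{<0}$, so those $U \notin \add V$ have $P_U = P_U^{<0}$ genuinely projective in $\mod \mathscr{U}\psL$, allowing the computation to be iterated to all higher degrees.

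For the negative-shift vanishing I would appeal to the Serre duality on $\stabCM(\mathscr{U}\psL)$ of Theorem~\ref{theorem.serre_CM}: the Serre functor is $\leftsub{\mathscr{T}}{\mathbb{S}}_n$ composed with the shift $[1]$ of $\stabCM$, which converts each $\underline{\Hom}(T, T[-j])$ for $j \geqslant 1$ into a positive-degree Hom-group involving $\leftsub{\mathscr{T}}{\mathbb{S}}_n T$. Using Lemma~\ref{lemma.equiv_vosnex} (giving $\leftsub{\mathscr{T}}{\mathbb{S}} \mathscr{U}\psL \subseteq \mathscr{U}\psL * \mathscr{U}\psL[1]$) together with the explicit description $T = P_V^{\geqslant 0}$ and the one-directional Hom-vanishing of Remark~\ref{remark.pic_U}, the shifted module $\leftsub{\mathscr{T}}{\mathbb{S}}_n T$ is controlled enough to reduce the computation to the positive case already handled.

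The main obstacle I anticipate is the generation condition. My plan is to show that the thick closure of $T$ in $\stabCM(\mathscr{U}\psL)$ contains every simple $\mathscr{U}\psL$-module viewed as an object of the singularity category, and then to conclude by composition-factor filtration for Cohen-Macaulay modules. The simples supported at indecomposables $U \in \mathscr{U}\psL^{\geqslant 0}$ should be accessible from $T$ via iterated cones built from the defining exact sequence, while those with $U \in \mathscr{U}\psL^{<0}$ require passing through $P_V^{<0} \iso T[1]$ in $\stabCM$, exploiting the one-directional Hom-vanishing of Remark~\ref{remark.pic_U} between $\mathscr{U}\psL^{\geqslant 0}$ and $\mathscr{U}\psL^{<0}$. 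The $\tau_n$-finiteness of $\Lambda$ keeps all indexing finite and makes the combinatorial bookkeeping tractable.
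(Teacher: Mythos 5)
Your overall division into ``positive-shift vanishing, negative-shift vanishing, and generation'' matches the structure of the paper's proof (Lemmas~\ref{lemma.no_neg_selfext}, \ref{lemma.no_pos_selfext}, and Proposition~\ref{prop.T_generates}), and the opening paragraph about $T \in \CM(\mathscr{U}\psL)$ is fine. But there are genuine gaps in each of the three parts.

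For $\Ext^i_{\mathscr{U}\psL}(T,T) = 0$, $i>0$: your long-exact-sequence juggling is circular. Chasing your reductions through one finds $\Ext^i(T,T)$ is a quotient of $\Ext^i(P_V^{<0},P_V^{<0}) \iso \Ext^{i-1}(T,P_V^{<0}) \iso \Ext^i(T,T)$ for $i\geqslant 2$, which gives a surjection of a finite-dimensional space onto itself --- hence an isomorphism --- but not vanishing. The missing ingredient is a direct support computation: $\Omega$ preserves $\mod\mathscr{U}\psL^{\geqslant 0}$ (this is Observation~\ref{obs.geq_leq}), so $\Supp \Omega^i P_U^{<0}\subseteq\mathscr{U}\psL^{\geqslant 0}$ for all $i\geqslant 1$, and the disjointness of supports then kills $\Ext^i(P_U^{<0},P_{U'}^{<0})$ for \emph{all} $i>0$ at once. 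Note also that the statement you are aiming for holds for \emph{every} pair $U,U'\in\mathscr{U}\psL$, not just the indecomposable summands of $V$, so your claim that ``the definition of $V$ plays its critical role'' in the self-extension vanishing is misplaced; the choice of $V$ only enters later in ensuring $T$ is a finitely generated module and in the generation step.

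For $\underline{\Hom}_{\mathscr{U}\psL}(T,\Omega^i T)=0$, $i>0$: your use of Serre duality in $\stabCM(\mathscr{U}\psL)$ via Theorem~\ref{theorem.serre_CM} is exactly the right move, and Lemma~\ref{lemma.equiv_vosnex} is relevant background, but as stated it is too vague to close the argument. What is actually needed, and what the paper provides separately, is the quantitative statement that high syzygies shift supports strictly forward: $\Omega^i(\mod\mathscr{U}\psL^{\geqslant 0})\subseteq\mod\mathscr{U}\psL^{>0}$ for $i>n$ (Lemma~\ref{lem.shift_of_support}). This is a real lemma with its own proof (it uses a finite $\Lambda$-projective resolution of $X(\Lambda)$ to compare $X$ with a complex of projectives supported in $\add\Lambda$), and without it you cannot carry out the reduction ``to the positive case already handled'' --- the objects you reach after applying $\leftsub{\stabCM(\mathscr{U}\psL)}{\mathbb{S}}$ are not directly comparable to those in the first part.

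For generation: your route through the simple $\mathscr{U}\psL$-modules is a genuinely different strategy from the paper's, and I do not see how to complete it. There are infinitely many indecomposable objects of $\mathscr{U}\psL$, hence infinitely many simples, so the claim that $\thick(T)$ contains them all cannot be established by a finite bookkeeping of cones on the single exact sequence $T\rightarrowtail P_V\twoheadrightarrow P_V^{<0}$; that sequence only exhibits $P_V^{<0}\iso T[1]$ in $\stabCM(\mathscr{U}\psL)$ and does not reach arbitrary simples. The paper instead argues by contradiction in Proposition~\ref{prop.T_generates}: if some nonzero $X\in\stabCM(\mathscr{U}\psL)$ had $\underline{\Hom}_{\mathscr{U}\psL}(T,\Omega_{\CM}^iX)=0$ for all $i$, one may, after shifting $X$ into $\mod\mathscr{U}\psL^{<0}$ (Lemma~\ref{lem.susp_moves}), take a minimal projective resolution $\cdots\to P_{U_1}\to P_{U_0}\to X$, apply $(-)^{\geqslant 0}$, and show the resulting exact complex of $P_{U_i}^{\geqslant 0}$'s splits entirely; but for $i\gg 0$ one has $P_{U_i}^{\geqslant 0}=P_{U_i}$ by Lemma~\ref{lem.susp_moves} again, contradicting minimality. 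You would need something of comparable strength, and I recommend looking at this contradiction argument rather than pressing the filtration-by-simples route.
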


We prove this theorem in several shorter lemmas. First, in the Lemmas~\ref{lemma.no_neg_selfext} and \ref{lemma.no_pos_selfext}, we check that $T$ has neither negative nor positive self-extensions. Then, in Proposition~\ref{prop.T_generates}, we show that $T$ generates $\stabCM(\mathscr{U}\psL)$.

The following observation will be used throughout what follows.

\begin{observation} \label{obs.geq_leq}
Let $\ell \in \mathbb{Z}$. For any $X \in \mod \mathscr{U}^{\geqslant \ell}$ also $\Omega X \in \mod \mathscr{U}^{\geqslant \ell}$.
\end{observation}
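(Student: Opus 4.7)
The plan is to realise $\Omega X$ explicitly via a projective cover of $X$ in $\mod \mathscr{U}\psL$ and reduce the support statement to the $\Hom$-vanishing already recorded in Remark~\ref{remark.pic_U}. The key point is that in the Krull-Schmidt category $\mod \mathscr{U}\psL$ the projective cover of $X$ is determined by the top of $X$, and is therefore supported wherever $X$ is.

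Concretely, I would first pick a projective cover $\pi \colon P_{U_0} \epi X$ and set $\Omega X := \Ker \pi$. Since $\Supp X \subseteq \mathscr{U}\psL^{\geqslant \ell}$, the top of $X$ is supported on $\mathscr{U}\psL^{\geqslant \ell}$, and hence $U_0 \in \mathscr{U}\psL^{\geqslant \ell}$. Next I would evaluate the resulting short exact sequence $0 \to \Omega X \to P_{U_0} \to X \to 0$ at an arbitrary indecomposable $V \in \mathscr{U}\psL^{<\ell}$. Since $V \notin \Supp X$ we have $X(V) = 0$, so
\[ (\Omega X)(V) \;=\; P_{U_0}(V) \;=\; \Hom_{\mathscr{D}_{\Lambda}}(V, U_0). \]

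The task then reduces to proving this $\Hom$-space vanishes. Writing $V \in \mathscr{U}\psL^i$ for some $i \leqslant \ell - 1$, we have $V \in \mathscr{U}\psL^{\leqslant i}$ while $U_0 \in \mathscr{U}\psL^{\geqslant \ell} \subseteq \mathscr{U}\psL^{> i}$, so the shifted form of the vanishing $\Hom_{\mathscr{D}_{\Lambda}}(\mathscr{U}\psL^{\leqslant \ell'}, \mathscr{U}\psL^{> \ell'}) = 0$ from Remark~\ref{remark.pic_U} (applied with $\ell' = i$) yields $(\Omega X)(V) = 0$. As this holds for every indecomposable $V \in \mathscr{U}\psL^{<\ell}$, we conclude $\Omega X \in \mod \mathscr{U}\psL^{\geqslant \ell}$. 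No step presents a real obstacle; the only subtlety is ensuring that $U_0$ may be chosen in $\mathscr{U}\psL^{\geqslant \ell}$, which follows immediately from the Krull-Schmidt property and the characterisation of projective covers via the top of the module.
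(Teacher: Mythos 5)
Your argument is correct, and since the paper records this as an \emph{Observation} with no proof attached, you have simply supplied the argument the authors considered clear. The two ingredients you use — that a projective cover of $X$ must have top supported inside $\Supp X$, hence is $P_{U_0}$ with $U_0 \in \mathscr{U}\psL^{\geqslant \ell}$, and that $\Hom_{\mathscr{D}_{\Lambda}}(\mathscr{U}\psL^{\leqslant \ell-1}, \mathscr{U}\psL^{> \ell-1}) = 0$ from Remark~\ref{remark.pic_U} — are exactly what is needed, and the conclusion follows either by your exact-sequence evaluation or, slightly more directly, by observing that $\Omega X$ is a submodule of $P_{U_0}$, which already lies in $\mod \mathscr{U}\psL^{\geqslant \ell}$, and support is inherited by submodules.
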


\begin{lemma} \label{lemma.no_neg_selfext}
In the setup above we have
\[ \underline{\Hom}_{\mathscr{U}\psL}(P_U^{\geqslant 0}, \Omega^{-i}_{\CM} P_{U'}^{\geqslant 0}) = \Ext^i_{\mathscr{U}\psL}(P_U^{\geqslant 0}, P_{U'}^{\geqslant 0}) = 0 \quad \forall i > 0 \]
for any $U$ and $U'$ in $\mathscr{U}\psL$.
\end{lemma}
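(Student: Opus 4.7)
The plan is to reduce both equalities to a sharper vanishing statement about modules supported on the two halves of the grading. The first equality $\underline{\Hom}_{\mathscr{U}\psL}(P_U^{\geqslant 0}, \Omega_{\CM}^{-i} P_{U'}^{\geqslant 0}) = \Ext^i_{\mathscr{U}\psL}(P_U^{\geqslant 0}, P_{U'}^{\geqslant 0})$ is the standard Frobenius-category identity for Cohen-Macaulay modules, applicable here because both $P_U^{\geqslant 0}$ and $P_{U'}^{\geqslant 0}$ belong to $\CM(\mathscr{U}\psL)$: the defining short exact sequence $P_U^{\geqslant 0} \mono P_U \epi P_U^{<0}$ of Observation~\ref{obs.geq_<} exhibits $P_U^{\geqslant 0}$ as a first syzygy, and $\mathscr{U}\psL$ has Iwanaga-Gorenstein dimension at most $1$ by Lemma~\ref{lem.selinj_dim_1}.

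For the actual vanishing of $\Ext^i_{\mathscr{U}\psL}(P_U^{\geqslant 0}, P_{U'}^{\geqslant 0})$, I apply $\Hom_{\mathscr{U}\psL}(-, P_{U'}^{\geqslant 0})$ to the same short exact sequence. Since $P_U$ is projective the long exact sequence yields $\Ext^i_{\mathscr{U}\psL}(P_U^{\geqslant 0}, P_{U'}^{\geqslant 0}) \iso \Ext^{i+1}_{\mathscr{U}\psL}(P_U^{<0}, P_{U'}^{\geqslant 0})$ for every $i \geqslant 1$, so it suffices to prove the stronger statement
\[ \Ext^j_{\mathscr{U}\psL}(X, Y) = 0 \quad \forall j \geqslant 0, \quad \text{whenever } X \in \mod \mathscr{U}\psL^{<0} \text{ and } Y \in \mod \mathscr{U}\psL^{\geqslant 0}. \]

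To prove this I construct a projective resolution of $X$ whose terms all have support in $\mathscr{U}\psL^{<0}$. The key iterated observation is: if $X' \in \mod \mathscr{U}\psL$ has $\Supp X' \subseteq \mathscr{U}\psL^{<0}$ then $\top X'$ is supported in $\mathscr{U}\psL^{<0}$, so its projective cover has the form $\bigoplus_i P_{V_i}$ with each $V_i \in \mathscr{U}\psL^{<0}$; and for any such $V_i \in \mathscr{U}\psL^{j}$ with $j<0$, Remark~\ref{remark.pic_U} gives $\Supp P_{V_i} \subseteq \mathscr{U}\psL^{\leqslant j} \subseteq \mathscr{U}\psL^{<0}$, so the kernel of the cover again lies in $\mod \mathscr{U}\psL^{<0}$ and the process iterates. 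Applying $\Hom_{\mathscr{U}\psL}(-, Y)$ to this projective resolution produces the zero complex, since by Yoneda $\Hom_{\mathscr{U}\psL}(P_V, Y) = Y(V) = 0$ for every $V \in \mathscr{U}\psL^{<0}$ and every $Y \in \mod \mathscr{U}\psL^{\geqslant 0}$.

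The main technical obstacle is ensuring each step of the above construction yields a genuine projective cover inside $\mod \mathscr{U}\psL$ (i.e.\ that finite presentability is preserved). This follows from the Hom-finiteness of $\mathscr{U}\psL^{<0}$: the equivalence $\mathscr{U}\psL / \mathscr{U}\psL^{\geqslant 0} \approx \mathscr{U}\psL^{<0}$ induces an exact embedding $\mod \mathscr{U}\psL^{<0} \hookrightarrow \mod \mathscr{U}\psL$ that sends projective covers to projective covers, so finitely presented modules supported in $\mathscr{U}\psL^{<0}$ admit finitely presented syzygies still supported in $\mathscr{U}\psL^{<0}$. Combining everything gives both equalities in the lemma.
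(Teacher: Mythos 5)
Your first equality, and the dimension shift $\Ext^i_{\mathscr{U}\psL}(P_U^{\geqslant 0}, P_{U'}^{\geqslant 0}) \iso \Ext^{i+1}_{\mathscr{U}\psL}(P_U^{<0}, P_{U'}^{\geqslant 0})$ for $i \geqslant 1$, are both fine. The problem is the ``sharper vanishing statement'' you reduce to: it is false, and the reason your argument for it breaks down is a reversal of direction in the support bound.

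Concretely, you assert that for an indecomposable $V \in \mathscr{U}\psL^{j}$ with $j < 0$, Remark~\ref{remark.pic_U} gives $\Supp P_V \subseteq \mathscr{U}\psL^{\leqslant j}$. In fact the remark says $\Hom_{\mathscr{D}}(\mathscr{U}\psL^{\leqslant \ell}, \mathscr{U}\psL^{>\ell}) = 0$; since $P_V(W) = \Hom_{\mathscr{D}}(W, V)$, applying this with $\ell = j-1$ forces $P_V(W) = 0$ for $W \in \mathscr{U}\psL^{< j}$, so $\Supp P_V \subseteq \mathscr{U}\psL^{\geqslant j}$ --- the support of a representable projective propagates to \emph{larger} indices, not smaller. (This is consistent with Observation~\ref{obs.geq_leq}, which preserves $\mod \mathscr{U}\psL^{\geqslant \ell}$ under $\Omega$, and with Observation~\ref{obs.syzygy_supp_bound}, which bounds $\Supp P_U \subseteq \mathscr{U}\psL^{< r+m}$ for $U \in \mathscr{U}\psL^{<r}$, i.e.\ the support can creep upward by $m$ but not downward.) Consequently the syzygy of a module supported in $\mathscr{U}\psL^{<0}$ need not be supported in $\mathscr{U}\psL^{<0}$, and your proposed resolution leaves $\mod \mathscr{U}\psL^{<0}$ after one step. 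Indeed the statement $\Ext^j_{\mathscr{U}\psL}(X,Y) = 0$ for all $j \geqslant 0$ with $X \in \mod \mathscr{U}\psL^{<0}$ and $Y \in \mod \mathscr{U}\psL^{\geqslant 0}$ already fails at $j = 1$: for an indecomposable $U$ with $P_U^{\geqslant 0} \neq 0 \neq P_U^{<0}$, the defining short exact sequence $P_U^{\geqslant 0} \mono P_U \epi P_U^{<0}$ is nonsplit (as $P_U$ is indecomposable) and hence is a nonzero class in $\Ext^1_{\mathscr{U}\psL}(P_U^{<0}, P_U^{\geqslant 0})$.

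The paper instead dimension-shifts in the \emph{other} variable, applying $\Hom_{\mathscr{U}\psL}(P_U^{\geqslant 0}, -)$ to the sequence $P_{U'}^{\geqslant 0} \mono P_{U'} \epi P_{U'}^{<0}$ to reduce to terms $\Ext^{i-1}_{\mathscr{U}\psL}(P_U^{\geqslant 0}, P_{U'}^{<0})$, and then a second shift in the first variable reduces further to $\Ext^i_{\mathscr{U}\psL}(P_U^{<0}, P_{U'}^{<0})$. This vanishes because $\Omega^i P_U^{<0}$ (being $\Omega^{i-1}P_U^{\geqslant 0}$ modulo projectives) lies in $\mod \mathscr{U}\psL^{\geqslant 0}$ by Observation~\ref{obs.geq_leq}, while $P_{U'}^{<0} \in \mod \mathscr{U}\psL^{<0}$, and these supports are disjoint. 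In other words, the useful disjointness goes from syzygies (which drift into $\mathscr{U}\psL^{\geqslant 0}$) to targets in $\mathscr{U}\psL^{<0}$ --- the opposite orientation from what you used.
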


\begin{proof}
The first equality holds by general theory of stable categories of Frobenius categories.

For the second one, note that, since $\Supp \Omega^i P_U^{< 0} = \Supp \Omega^{i-1} P_U^{\geqslant 0} \subseteq \mathscr{U}\psL^{\geqslant 0}$ and $\Supp P_{U'}^{< 0} \subseteq \mathscr{U}\psL^{< 0}$ we have $\Hom_{\mathscr{U}\psL}(\Omega^i P_U^{< 0}, P_{U'}^{<0}) = 0$. Therefore also $\Ext^i_{\mathscr{U}\psL}(P_U^{< 0}, P_{U'}^{<0}) = 0$ for any $i > 0$. Now the short exact sequence $P_{U'}^{\geqslant 0} \mono P_{U'} \epi P_{U'}^{< 0}$ gives rise to a sequence
\[ \Ext^{i-1}_{\mathscr{U}\psL}(P_U^{\geqslant 0}, P_{U'}^{< 0}) \to[30] \Ext^i_{\mathscr{U}\psL}(P_U^{\geqslant 0}, P_{U'}^{\geqslant 0}) \to[30] \underbrace{\Ext^i_{\mathscr{U}\psL}(P_U^{\geqslant 0}, P_{U'})}_{ = 0} . \]
For $i > 1$ we have $\Ext^{i-1}_{\mathscr{U}\psL}(P_U^{\geqslant 0}, P_{U'}^{< 0}) = \Ext^i_{\mathscr{U}\psL}(P_U^{< 0}, P_{U'}^{< 0}) = 0$, and for $i = 1$ we have $\Hom_{\mathscr{U}\psL}(P_U^{\geqslant 0}, P_{U'}^{< 0}) = 0$ since the supports are disjoint. Therefore, for any $i > 0$ we have $\Ext^i_{\mathscr{U}\psL}(P_U^{\geqslant 0}, P_{U'}^{\geqslant 0}) = 0$.
\end{proof}

\begin{lemma} \label{lem.shift_of_support}
In the setup above we have $\Omega^i(\mod \mathscr{U}^{\geqslant 0}) \subseteq \mod \mathscr{U}\psL^{> 0}$ for any $i > n$.
\end{lemma}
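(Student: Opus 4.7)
The plan is to transfer the problem to $\mod \Lambda$ via the exact evaluation functor $\res \colon \mod \mathscr{U}\psL \to \mod \Lambda$ at $\mathscr{U}\psL^{0} = \add \Lambda$: since $\gld \Lambda \leqslant n$, minimal syzygies in $\mod \Lambda$ vanish after $n$ steps, and the crucial point is that $\res$ commutes with passing to minimal syzygies in this setup.

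First I would collect the Hom-vanishings supplied by Remark~\ref{remark.pic_U}. These yield (a) $\Hom_{\mathscr{T}}(\mathscr{U}\psL^{\leqslant i-1}, U) = 0$ for every $U \in \mathscr{U}\psL^{i}$, hence $\Supp P_U \subseteq \mathscr{U}\psL^{\geqslant i}$; and (b) $\mathscr{U}\psL^{>0} \subseteq \mathscr{D}^{\geqslant n}$ together with $\mathscr{U}\psL^{0} \subseteq \mathscr{D}^{\leqslant 0}$, so that $\Ho^0(U) = 0$ for every $U \in \mathscr{U}\psL^{>0}$ and $\Hom_{\mathscr{T}}(\mathscr{U}\psL^{>0}, \mathscr{U}\psL^{0}) = 0$ via the standard t-structure. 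From (a), every projective cover of an object in $\mod \mathscr{U}\psL^{\geqslant 0}$ (respectively $\mod \mathscr{U}\psL^{>0}$) already lies in this category, so $\Omega$ preserves both subcategories. From (b), I compute $\res P_U = \Hom_{\mathscr{T}}(\Lambda, U) = 0$ for $U \in \mathscr{U}\psL^{>0}$, while $\res P_U = U$ is a projective $\Lambda$-module for $U \in \mathscr{U}\psL^{0}$.

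The next step is to verify that $\res(\rad_{\mathscr{U}\psL} Y) = \rad_\Lambda(\res Y)$ for every $Y \in \mod \mathscr{U}\psL^{\geqslant 0}$. Evaluating at $V \in \mathscr{U}\psL^{0}$, the only contributions to $(\rad_{\mathscr{U}\psL} Y)(V)$ come from maps $V \to U$ with $U \in \mathscr{U}\psL^{0}$, because $\Hom_{\mathscr{T}}(V, U) = 0$ for $U \in \mathscr{U}\psL^{>0}$ (using $\Hom_{\mathscr{T}}(\mathscr{U}\psL^{\leqslant 0}, \mathscr{U}\psL^{>0}) = 0$ from Remark~\ref{remark.pic_U}) and $Y(U) = 0$ for $U \in \mathscr{U}\psL^{<0}$; and for $V, U \in \add \Lambda$ the radicals in $\mathscr{U}\psL$ and in $\Lambda$ agree. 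Hence $\res$ commutes with taking tops, and so a minimal projective cover $P \to X$ in $\mod \mathscr{U}\psL^{\geqslant 0}$ restricts to a minimal projective cover $\res P \to \res X$ in $\mod \Lambda$. By induction one obtains $\res \Omega^{i} X \iso \Omega^{i} \res X$ in $\mod \Lambda$ for every $i \geqslant 0$.

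Since $\gld \Lambda \leqslant n$, we have $\Omega^{n+1} \res X = 0$, hence $\res \Omega^{n+1} X = 0$; combined with $\Omega^{n+1} X \in \mod \mathscr{U}\psL^{\geqslant 0}$ from (a), this forces $\Omega^{n+1} X \in \mod \mathscr{U}\psL^{>0}$. For $i > n+1$ one iterates using that $\Omega$ preserves $\mod \mathscr{U}\psL^{>0}$. The main technical point is the compatibility of $\res$ with minimal projective covers; once the Hom-vanishings (a) and (b) are isolated, the remainder is straightforward bookkeeping.
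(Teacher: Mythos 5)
Your proof is correct, but it takes a genuinely different route from the paper's. The paper starts from a $\Lambda$-projective resolution $Q_\bullet$ of $X(\Lambda)$ of length at most $n$, lifts it to a complex of $\mathscr{U}\psL$-projectives $P_{Q_\bullet}\to X$, and checks that this complex has all homologies in $\mod\mathscr{U}\psL^{>0}$ (exact at $\Lambda$, zero at $\mathscr{U}\psL^{<0}$); it then closes with the terse remark that the claim follows from $\Omega(\mod\mathscr{U}\psL^{>0})\subseteq\mod\mathscr{U}\psL^{>0}$, leaving a certain amount of syzygy bookkeeping implicit. You instead isolate a cleaner intermediate structural statement: that the exact evaluation functor $\res\colon\mod\mathscr{U}\psL^{\geqslant 0}\to\mod\Lambda$ sends minimal projective covers to minimal projective covers, hence commutes with $\Omega$; once that is established, $\gld\Lambda\leqslant n$ immediately gives $\res\,\Omega^{n+1}X=0$, and combining with $\Omega^{n+1}X\in\mod\mathscr{U}\psL^{\geqslant 0}$ finishes the argument, with the $i>n+1$ cases following since $\Omega$ preserves $\mod\mathscr{U}\psL^{>0}$. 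Both arguments exploit exactly the same inputs --- $\gld\Lambda\leqslant n$, the vanishing $\Hom_{\mathscr{D}}(\mathscr{U}\psL^{\leqslant 0},\mathscr{U}\psL^{>0})=0$, and the location $\mathscr{U}\psL^{>0}\subseteq\mathscr{D}^{\geqslant n}$ --- but you push the $\mathscr{U}\psL$-resolution down to $\Lambda$, whereas the paper lifts the $\Lambda$-resolution up. Your compatibility lemma for $\res$ with radicals and tops is the extra work your approach requires, but it makes the final step essentially immediate, and it is a reusable observation in its own right.

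One small slip worth fixing: in your preliminary list of vanishings you write $\Hom_{\mathscr{T}}(\mathscr{U}\psL^{>0},\mathscr{U}\psL^{0})=0$, which is false in general (for instance $\Hom_{\mathscr{D}}(\mathbb{S}_n\Lambda,\Lambda)\cong D\tau_n^-\Lambda$ need not vanish). What you mean, and what you correctly use later, is the opposite-direction vanishing $\Hom_{\mathscr{T}}(\mathscr{U}\psL^{0},\mathscr{U}\psL^{>0})=0$, which is the statement $\Ho^0(U)=0$ for $U\in\mathscr{U}\psL^{>0}$ and a special case of $\Hom_{\mathscr{D}}(\mathscr{U}\psL^{\leqslant 0},\mathscr{U}\psL^{>0})=0$ from Remark~\ref{remark.pic_U}. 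Since you invoke only the correct version in the radical computation and in computing $\res P_U$, the argument itself is unaffected.
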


\begin{proof}
Let $X \in \mod \mathscr{U}^{\geqslant 0}$. We consider the short exact sequence $X^{> 0} \mono X \epi X^{\leqslant 0}$ (similar to Observation~\ref{obs.geq_leq}). 

Let $Q_n \mono Q_{n-1} \to \cdots \to Q_0 \epi X(\Lambda)$ be a projective resolution of $X(\Lambda)$ as a $\Lambda$-module. Since $X \in \mod \mathscr{U}^{\geqslant 0}$ we have $\Supp X^{\leqslant 0} \subseteq \mathscr{U}^0 = \add \Lambda$. Thus we have a complex
\[ 0 \to[30] P_{Q_n} \to[30] \cdots \to[30] P_{Q_0} \to[30] X^{\leqslant 0} \]
of $\mathscr{U}$-modules. Now the morphism $P_{Q_0} \to X^{\leqslant 0}$ factors through $X \epi X^{\leqslant 0}$. The composition $P_{Q_1} \to P_{Q_0} \to X$ vanishes, since it vanishes on $\add \Lambda \subseteq \mathscr{U}$. Hence we have a complex
\begin{equation} \label{eq.complex_supportshift} 0 \to[30] P_{Q_n} \to[30] \cdots \to[30] P_{Q_0} \to[30] X \to[30] 0. \end{equation}
Evaluating at $\Lambda$ the complex \eqref{eq.complex_supportshift} becomes an exact sequence, and evaluating at $\mathscr{U}^{<0}$ all terms vanish. Thus all homologies of \eqref{eq.complex_supportshift} belong to $\mod \mathscr{U}^{>0}$. Now the claim of the lemma follows from $\Omega(\mod \mathscr{U}^{>0}) \subseteq \mod \mathscr{U}^{>0}$.
\end{proof}

\begin{lemma} \label{lemma.no_pos_selfext}
In the setup above we have
\[ \underline{\Hom}_{\mathscr{U}\psL}(P_U^{\geqslant 0}, \Omega^i P_{U'}^{\geqslant 0}) = 0 \quad \forall i > 0 \]
for any $U$ and $U'$ in $\mathscr{U}\psL$.
\end{lemma}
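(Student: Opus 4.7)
The goal is to show $\underline{\Hom}_{\mathscr{U}\psL}(P_U^{\geqslant 0}, \Omega^i P_{U'}^{\geqslant 0}) = 0$ for every $i > 0$, which together with Lemma~\ref{lemma.no_neg_selfext} completes the rigidity check of $T$ needed for Proposition~\ref{prop.is_tilting_object}.

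My plan is to use Serre duality in $\stabCM(\mathscr{U}\psL)$ (Theorem~\ref{theorem.serre_CM}), whose Serre functor equals $\mathbb{S}_n \circ [n+1]$, to translate the stable Hom into a classical Ext:
\[
D\underline{\Hom}_{\mathscr{U}\psL}(P_U^{\geqslant 0}, \Omega^i P_{U'}^{\geqslant 0}) \iso \underline{\Hom}_{\mathscr{U}\psL}(P_{U'}^{\geqslant 0}, \mathbb{S}_n P_U^{\geqslant 0}\,[n+1+i]) \iso \Ext^{n+1+i}_{\mathscr{U}\psL}(P_{U'}^{\geqslant 0}, \mathbb{S}_n P_U^{\geqslant 0}),
\]
using in the final step the standard identification of positive-degree stable Hom with classical Ext between Cohen--Macaulay modules. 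Since $\mathbb{S}_n$ carries $\mathscr{U}\psL^{j}$ to $\mathscr{U}\psL^{j+1}$, a direct computation from $(\mathbb{S}_n P_U^{\geqslant 0})(V) = \Hom_{\mathscr{U}\psL}(\mathbb{S}_n^{-1} V, U)$ identifies $\mathbb{S}_n P_U^{\geqslant 0} \iso P_{\mathbb{S}_n U}^{\geqslant 1}$, the obvious analogue of the construction $(\cdot)^{\geqslant 0}$ but with cutoff at degree $1$.

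I would then apply $\Ext^{\bullet}_{\mathscr{U}\psL}(P_{U'}^{\geqslant 0}, -)$ to the short exact sequence
\[
P_{\mathbb{S}_n U}^{\geqslant 1} \mono P_{\mathbb{S}_n U}^{\geqslant 0} \epi P_{\mathbb{S}_n U}^{[0]},
\]
in which the cokernel $P_{\mathbb{S}_n U}^{[0]}$ is supported on $\mathscr{U}\psL^{0} = \add \Lambda$. Lemma~\ref{lemma.no_neg_selfext} forces $\Ext^{>0}_{\mathscr{U}\psL}(P_{U'}^{\geqslant 0}, P_{\mathbb{S}_n U}^{\geqslant 0}) = 0$, so the long exact sequence yields a dimension-shift isomorphism
\[
\Ext^{n+1+i}_{\mathscr{U}\psL}(P_{U'}^{\geqslant 0}, P_{\mathbb{S}_n U}^{\geqslant 1}) \iso \Ext^{n+i}_{\mathscr{U}\psL}(P_{U'}^{\geqslant 0}, P_{\mathbb{S}_n U}^{[0]})
\]
that reduces the problem to showing the right-hand side vanishes whenever $i \geq 1$.

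The main obstacle will be this final vanishing. Since $P_{\mathbb{S}_n U}^{[0]}$ is a $\Lambda$-module under the identification $\add \Lambda = \mathscr{U}\psL^{0}$, it has projective dimension at most $n$ over $\Lambda$ by $\gld \Lambda \leq n$. I would lift such a $\Lambda$-resolution to a length-$n$ complex of projectives in $\mod \mathscr{U}\psL$, along the lines of the construction in the proof of Lemma~\ref{lem.shift_of_support}, so that the only residual homology is supported on $\mathscr{U}\psL^{>0}$. Bounding the contribution of these residual terms to $\Ext^{n+i}(P_{U'}^{\geqslant 0}, -)$ via the vosnex property (together with a second application of Lemma~\ref{lemma.no_neg_selfext}) should yield the desired vanishing in degrees $\geq n+1$, completing the proof.
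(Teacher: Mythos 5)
Your first three steps match the paper: Serre duality combined with Theorem~\ref{theorem.serre_CM} reduces the claim to a vanishing of $\Ext^{n+1+i}_{\mathscr{U}\psL}(P_{U'}^{\geqslant 0}, P_{\mathbb{S}_n U}^{\geqslant 1})$, and your identification $\leftsub{\mathscr{U}}{\mathbb{S}}_n P_U^{\geqslant 0} \iso P_{\mathbb{S}_n U}^{\geqslant 1}$ is correct (modulo the minor caveat that your displayed formula $(\mathbb{S}_n P_U^{\geqslant 0})(V) = \Hom(\mathbb{S}_n^{-1}V, U)$ only holds for $V\in\mathscr{U}^{\geqslant 1}$, with zero elsewhere). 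Your dimension-shift is also sound, though your short exact sequence $P_{\mathbb{S}_n U}^{\geqslant 1} \mono P_{\mathbb{S}_n U}^{\geqslant 0} \epi P_{\mathbb{S}_n U}^{[0]}$ differs from the paper's $P_{\mathbb{S}_n U}^{\geqslant 1} \mono P_{\mathbb{S}_n U} \epi P_{\mathbb{S}_n U}^{< 1}$: the paper's middle term is an honest projective so its $\Ext^{>0}$ vanishes for free, whereas you need Lemma~\ref{lemma.no_neg_selfext} to kill the middle term. Either way, the reduction to $\Ext^{n+i}_{\mathscr{U}\psL}(P_{U'}^{\geqslant 0},\, Y) = 0$ with $Y$ supported in $\mathscr{U}^{\leqslant 0}$ is legitimate.

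The gap is in the final step, and it stems from resolving the wrong argument. You propose to lift a $\Lambda$-projective resolution of $P_{\mathbb{S}_n U}^{[0]}$ (the second argument) to a complex of $\mathscr{U}\psL$-projectives and then ``bound the contribution of the residual terms.'' But $\Ext^{n+i}_{\mathscr{U}\psL}(P_{U'}^{\geqslant 0},\, -)$ is computed from a projective resolution of the \emph{first} argument $P_{U'}^{\geqslant 0}$ (or an injective coresolution of the second); a non-exact complex of projectives mapping to $P_{\mathbb{S}_n U}^{[0]}$ is neither, and turning it into a usable resolution would require a hyperext-type spectral sequence whose residual-homology terms you would have to control — your proposal does not spell out how. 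The paper's finish is much more direct and is the piece your plan is missing: apply Lemma~\ref{lem.shift_of_support} to $P_{U'}^{\geqslant 0}$ itself, giving $\Omega^{n+i}P_{U'}^{\geqslant 0} \in \mod\mathscr{U}\psL^{>0}$ for $i>0$. Since the target ($P_{\mathbb{S}_n U}^{[0]}$ in your version, $P_{\mathbb{S}_n U}^{<1}$ in the paper's) is supported in $\mathscr{U}\psL^{\leqslant 0}$, disjoint from $\mathscr{U}\psL^{>0}$, the Hom-space $\Hom_{\mathscr{U}\psL}(\Omega^{n+i}P_{U'}^{\geqslant 0}, P_{\mathbb{S}_n U}^{[0]})$ is already zero, and the Ext is a subquotient of it. This closes the argument immediately with no spectral-sequence bookkeeping.
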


\begin{proof}
We have
\begin{align*}
\underline{\Hom}_{\mathscr{U}\psL}(P_U^{\geqslant 0}, \Omega^i P_{U'}^{\geqslant 0}) & = D \underline{\Hom}_{\mathscr{U}\psL}(\Omega^i P_{U'}^{\geqslant 0}, \leftsub{\stabCM(\mathscr{U}\psL)}{\mathbb{S}} P_U^{\geqslant 0}) \\
& = D \underline{\Hom}_{\mathscr{U}\psL}(\Omega^i P_{U'}^{\geqslant 0}, \Omega_{\CM}^{-(n+1)} \leftsub{\stabCM(\mathscr{U}\psL)}{\mathbb{S}}_{n+1} P_U^{\geqslant 0}) \\
& = D \underline{\Hom}_{\mathscr{U}\psL}(\Omega^{i+n+1} P_{U'}^{\geqslant 0}, \leftsub{\mathscr{U}\psL}{\mathbb{S}}_n P_U^{\geqslant 0}) && \text{(by Theorem~\ref{theorem.serre_CM})} \\
& = D \underline{\Hom}_{\mathscr{U}\psL}(\Omega^{i+n+1} P_{U'}^{\geqslant 0}, P_{(\leftsub{\mathscr{U}\psL}{\mathbb{S}}_n U)}^{\geqslant 1}) && \text{(by $\leftsub{\mathscr{U}}{\mathbb{S}}_n P_U^{\geqslant 0} = P_{\leftsub{\mathscr{U}}{\mathbb{S}}_n U}^{\geqslant 1}$)}\\
& = D \Ext^{i+n+1}_{\mathscr{U}\psL}(P_{U'}^{\geqslant 0}, P_{(\leftsub{\mathscr{U}\psL}{\mathbb{S}}_n U)}^{\geqslant 1}).
\end{align*}
The short exact sequence $P_{(\leftsub{\mathscr{U}\psL}{\mathbb{S}}_n U)}^{\geqslant 1} \mono P_{(\leftsub{\mathscr{U}\psL}{\mathbb{S}}_n U)} \epi P_{(\leftsub{\mathscr{U}\psL}{\mathbb{S}}_n U)}^{< 1}$ gives rise to an exact sequence
\[ \Ext^{i+n}_{\mathscr{U}\psL}(P_{U'}^{\geqslant 0}, P_{(\leftsub{\mathscr{U}\psL}{\mathbb{S}}_n U)}^{< 1}) \to[30] \Ext^{i+n+1}_{\mathscr{U}\psL}(P_{U'}^{\geqslant 0}, P_{(\leftsub{\mathscr{U}\psL}{\mathbb{S}}_n U)}^{\geqslant 1}) \to[30] \underbrace{\Ext^{i+n+1}_{\mathscr{U}\psL}(P_{U'}^{\geqslant 0}, P_{(\leftsub{\mathscr{U}\psL}{\mathbb{S}}_n U)})}_{= 0}. \]
Thus the proof is complete if we can show that $\Ext^{i+n}_{\mathscr{U}\psL}(P_{U'}^{\geqslant 0}, P_{(\leftsub{\mathscr{U}\psL}{\mathbb{S}}_n U)}^{< 1}) = 0$. Indeed, by Lemma~\ref{lem.shift_of_support} we know $\Omega^{n+i} P_{U'}^{\geqslant 0} \in \mod \mathscr{U}^{> 0}$ for any $i > 0$, and hence $\Hom_{\mathscr{U}\psL}(\Omega^{n+i} P_{U'}^{\geqslant 0}, P_{(\leftsub{\mathscr{U}\psL}{\mathbb{S}}_n U)}^{< 1}) = 0$ by looking at supports. Consequently also $\Ext^{i+n}_{\mathscr{U}\psL}(P_{U'}^{\geqslant 0}, P_{(\leftsub{\mathscr{U}\psL}{\mathbb{S}}_n U)}^{< 1}) = 0$.
\end{proof}

To complete the proof of Proposition~\ref{prop.is_tilting_object}, it remains to show that $T$ generates the category $\stabCM(\mathscr{U}\psL)$.

\begin{proposition} \label{prop.T_generates}
Let $X \in \CM(\mathscr{U}\psL)$ be indecomposable and non-projective. Then there is $U \in \mathscr{U}\psL$ and $n \in \mathbb{Z}$ such that $\underline{\Hom}_{\mathscr{U}\psL}(P_U^{\geqslant 0}, \Omega^n_{\CM} X) \neq 0$.
\end{proposition}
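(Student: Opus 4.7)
The plan is to exploit the auto-equivalence $\leftsub{\mathscr{T}}{\mathbb{S}}_n$ of $\stabCM(\mathscr{U}\psL)$ supplied by Theorem~\ref{theorem.serre_CM}, which shifts $\mathscr{U}\psL^i$ to $\mathscr{U}\psL^{i+1}$ and hence shifts the support of any $\mathscr{U}\psL$-module by $+1$, in order to move the support of $X$ into a convenient position. The key preliminary observation is that only $U \in V$ can yield a non-zero object $P_U^{\geqslant 0}$ in $\stabCM(\mathscr{U}\psL)$: for $U \in \mathscr{U}\psL^{\geqslant 0}$ one has $P_U = P_U^{\geqslant 0}$ projective (by Remark~\ref{remark.pic_U}, $\Supp P_U$ lies in $\mathscr{U}\psL^{\geqslant U\text{-level}}$), and for $U \in \mathscr{U}\psL^{<0}\setminus V$ one has $P_U^{\geqslant 0}=0$ by the definition of $V$. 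Since $X$ is finitely presented, $\Supp X$ meets only finitely many $\mathscr{U}\psL^i$, and the conclusion of the proposition is invariant under simultaneously replacing $X$ by $\leftsub{\mathscr{T}}{\mathbb{S}}_n^k X$ and $U$ by $\leftsub{\mathscr{T}}{\mathbb{S}}_n^k U$; after such a shift I may arrange that $\Supp X$ straddles the boundary between $\mathscr{U}\psL^{\geqslant 0}$ and $\mathscr{U}\psL^{<0}$, and in particular that $X^{\geqslant 0}$ is non-zero in $\mod \mathscr{U}\psL$.

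I will then argue by contrapositive. Assume $\underline{\Hom}(P_U^{\geqslant 0}, \Omega^n_{\CM} X) = 0$ for every $U \in \mathscr{U}\psL$ and every $n \in \mathbb{Z}$, and aim to deduce that $X=0$ in $\stabCM(\mathscr{U}\psL)$. A useful reformulation passes through the identification of $\stabCM(\mathscr{U}\psL)$ with the singularity category $\mathscr{D}^{\rm b}(\mod \mathscr{U}\psL)/\perf \mathscr{U}\psL$, which is available because $\mathscr{U}\psL$ is Iwanaga-Gorenstein of dimension $\leqslant 1$ (Lemma~\ref{lem.selinj_dim_1}). In this setting the short exact sequence $P_U^{\geqslant 0} \mono P_U \epi P_U^{<0}$ gives a triangle, and since $P_U$ is perfect one obtains $P_U^{<0} \iso P_U^{\geqslant 0}[1]$, hence
\[ \underline{\Hom}(P_U^{\geqslant 0}, \Omega^n X) \iso \underline{\Hom}(P_U^{<0}, \Omega^{n-1} X). \]
Thus the hypothesis is equivalent to the vanishing of all such $\Hom$'s with $P_U^{<0}$ in place of $P_U^{\geqslant 0}$, yielding a symmetric vanishing statement involving both truncations of every projective.

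Combining this symmetric vanishing (for all $U, n$) with the functorial short exact sequence $X^{\geqslant 0} \mono X \epi X^{<0}$ from Observation~\ref{obs.geq_<}, together with the control over supports of syzygies provided by Lemma~\ref{lem.shift_of_support}, I plan to trace each ``piece'' of $X$ in $\mathscr{U}\psL^{\geqslant 0}$ and in $\mathscr{U}\psL^{<0}$ back to a vanishing of some $\underline{\Hom}(P_U^{\geqslant 0}, \Omega^n X)$ or $\underline{\Hom}(P_U^{<0}, \Omega^n X)$, eventually forcing $X$ to be projective and contradicting the hypothesis that it is non-projective. The main obstacle will be making this chasing precise: I must control how the auto-equivalence $\leftsub{\mathscr{T}}{\mathbb{S}}_n$, the syzygy functor $\Omega$, and the truncations $(-)^{\geqslant 0}, (-)^{<0}$ interact, so as to translate the abstract $\Hom$-vanishings in $\stabCM$ into a concrete structural statement about the module $X$ itself.
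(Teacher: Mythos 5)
Your proposal is a sketch, not a proof: you yourself flag the key step as ``the main obstacle will be making this chasing precise,'' and that chasing is exactly where all the work lies. As it stands, the argument never actually derives a contradiction from the assumed vanishing of $\underline{\Hom}_{\mathscr{U}\psL}(P_U^{\geqslant 0}, \Omega^n_{\CM}X)$; it only describes ingredients (the truncations, the shift $\leftsub{\mathscr{T}}{\mathbb{S}}_n$, Lemma~\ref{lem.shift_of_support}, the sequence $X^{\geqslant 0}\rightarrowtail X\twoheadrightarrow X^{<0}$) that one might hope to combine. Two further concerns: (i) the claimed invariance of the conclusion under ``replacing $X$ by $\mathbb{S}_n^k X$ and $U$ by $\mathbb{S}_n^k U$'' overlooks that $\mathbb{S}_n^k P_U^{\geqslant 0}=P_{\mathbb{S}_n^k U}^{\geqslant k}$, so the truncation level moves and the set of test objects genuinely changes — this needs a separate argument, not a one-line appeal to functoriality; (ii) arranging that $\Supp X$ ``straddles the boundary'' (so $X^{\geqslant 0}\neq 0$) points in a less useful direction than the normalization actually needed.

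The paper's proof works in the opposite direction: using Lemma~\ref{lem.susp_moves}, replace $X$ by a suspension so that $X\in\mod\mathscr{U}\psL^{<0}$ (i.e.\ $X^{\geqslant 0}=0$), take a \emph{minimal} projective resolution $\cdots\to P_{U_1}\to P_{U_0}\to X\to 0$, and consider the truncated exact sequence $\cdots\to P_{U_1}^{\geqslant 0}\to P_{U_0}^{\geqslant 0}\to 0$. Using the assumed vanishing $\underline{\Hom}_{\mathscr{U}\psL}(P_U^{\geqslant 0},\Omega^i X)=0$, one shows by induction that each map $P_{U_i}^{\geqslant 0}\twoheadrightarrow\Im d_{i-1}$ is a \emph{split} epimorphism, so the truncated complex decomposes into trivial two-term pieces. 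But by Lemma~\ref{lem.susp_moves} (via Lemma~\ref{lem.shift_of_support}) one has $P_{U_i}=P_{U_i}^{\geqslant 0}$ for $i\gg 0$, so a tail of the original minimal resolution would then split — contradicting minimality, since all differentials there lie in the radical. Your preliminary remarks — that $P_U^{\geqslant 0}$ vanishes in $\stabCM$ unless $U\in V$, and that $P_U^{<0}\cong P_U^{\geqslant 0}[1]$ in $\stabCM(\mathscr{U}\psL)$ — are correct and not unreasonable things to notice, but they do not appear in or substitute for the inductive splitting argument that carries the actual proof.
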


For the proof we need the following preparations.

\begin{observation} \label{obs.syzygy_supp_bound}
Let $m > 0$ such that $\tau_n^{-m} \Lambda = 0$. (Note that such an $m$ always exists since $\Lambda$ is assumed to be $\tau_n$-finite.) Clearly we then have the following:
\begin{enumerate}
\item For $U \in \mathscr{U}^{< r}$ we have $\Supp P_U \subseteq \mathscr{U}^{< r+m}$.
\item For $X \in \mod \mathscr{U}^{< r}$ we have $\Supp \Omega X \subseteq \mathscr{U}^{< r+m}$.
\end{enumerate}
\end{observation}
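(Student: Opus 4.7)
My plan is to verify both items directly from the definitions, with the key input being the vanishing characterization from Notation~\ref{def.tau_n}.

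For part (1), I will first reduce to the case $U = \mathbb{S}_n^i \Lambda$ with $i < r$ (by additivity of $\Supp$ and of $P_{(-)}$). An indecomposable $V = \mathbb{S}_n^j \Lambda$ lies in $\Supp P_U$ precisely when $\Hom_{\mathscr{D}_\Lambda}(V, U) \neq 0$, and applying the auto-equivalence $\mathbb{S}_n^{-j}$ of $\mathscr{D}_\Lambda$ rewrites this as $\Hom_{\mathscr{D}_\Lambda}(\Lambda, \mathbb{S}_n^{i-j} \Lambda) \neq 0$. The hypotheses $j \geqslant r+m$ and $i \leqslant r-1$ force $k := j-i \geqslant m+1 > m$, and iterating $\tau_n^{-m}\Lambda = 0$ gives $\tau_n^{-k}\Lambda = (\tau_n^{-})^{k-m}(\tau_n^{-m}\Lambda) = 0$. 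Applying the equivalence $\tau_n^{-k}\Lambda = 0 \Leftrightarrow \Hom_{\mathscr{D}_\Lambda}(\Lambda, \mathbb{S}_n^{-k}\Lambda) = 0$ recalled in Notation~\ref{def.tau_n}, I conclude $V \notin \Supp P_U$, which establishes (1).

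For part (2), the plan is to reduce to (1) via a projective cover. Given $X \in \mod \mathscr{U}\psL$ with $\Supp X \subseteq \mathscr{U}\psL^{< r}$, choose a projective cover $\bigoplus_i P_{U_i} \to X$ in $\mod \mathscr{U}\psL$. For any indecomposable $V \in \mathscr{U}\psL^{\geqslant r}$ we have $\Hom_{\mathscr{U}\psL}(P_V, X) = X(V) = 0$ by Yoneda, so no summand $U_i$ can lie in $\mathscr{U}\psL^{\geqslant r}$; that is, each $U_i$ belongs to $\mathscr{U}\psL^{< r}$. By part~(1), $\Supp P_{U_i} \subseteq \mathscr{U}\psL^{< r+m}$ for every $i$, and since $\Omega X$ is a submodule of $\bigoplus_i P_{U_i}$ the containment $\Supp \Omega X \subseteq \mathscr{U}\psL^{< r+m}$ follows.

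The only mild subtlety is the passage from $\tau_n^{-m}\Lambda = 0$ to $\tau_n^{-k}\Lambda = 0$ for all $k \geqslant m$, which is immediate from the definition of $\tau_n^{-k}$ as the $k$-fold iterate of $\tau_n^{-}$. Apart from this, the argument is a routine unpacking of definitions, consistent with the author's labelling of the statement as an observation and the use of the word ``clearly''.
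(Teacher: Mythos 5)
The paper offers no proof here — the observation is labelled ``Clearly we then have the following'' — so there is nothing to compare against; your task was to fill in the omitted verification, and your argument does that correctly. For (1) you reduce, via the auto-equivalence $\mathbb{S}_n^{-j}$, to the vanishing of $\Hom_{\mathscr{D}_\Lambda}(\Lambda, \mathbb{S}_n^{-(j-i)}\Lambda)$ for $j-i \geqslant m$, which is exactly the reformulation of $\tau_n$-finiteness given in Notation~\ref{def.tau_n}, combined with the elementary remark that $\tau_n^-$ is an additive functor and hence kills $0$. For (2) you pass to a minimal projective cover, show that every indecomposable summand $U_i$ of its top lies in $\mathscr{U}\psL^{<r}$, and then apply (1) together with $\Omega X \subseteq \bigoplus_i P_{U_i}$. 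The only stylistic nitpick is in (2): the phrasing ``no summand can lie in $\mathscr{U}\psL^{\geqslant r}$ because $X(V) = 0$ there'' silently uses that an indecomposable cannot lie in two different $\mathscr{U}\psL^j$ (otherwise $\mathscr{U}\psL^{\geqslant r}$ and $\Supp X \subseteq \mathscr{U}\psL^{<r}$ could share indecomposables). This disjointness does hold — by Remark~\ref{remark.pic_U} one has $\mathscr{U}\psL^0 \subseteq \mod\Lambda$ while $\mathscr{U}\psL^{>0} \subseteq \mathscr{D}^{\geqslant n}$, which meet only in $0$, and one shifts — but it is one extra thing to check; it is slightly cleaner to note directly that minimality of the cover forces $P_{U_i}\to X$ to be nonzero, hence $U_i \in \Supp X \subseteq \mathscr{U}\psL^{<r}$, which avoids the complement entirely.
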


\begin{lemma} \label{lem.susp_moves}
Let $X \in \CM(\mathscr{U})$. Then $\Omega^i X \in \mod \mathscr{U}^{> 0}$ and $\Omega_{\CM}^{-i} X \in \mod \mathscr{U}^{<0}$ for $i \gg 0$.
\end{lemma}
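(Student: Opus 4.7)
The plan is to combine the support-tracking observations (Observations~\ref{obs.geq_leq} and \ref{obs.syzygy_supp_bound}) with the Serre-functor identification from Theorem~\ref{theorem.serre_CM} to force the support of iterated syzygies into $\mathscr{U}\psL^{>0}$, and analogously into $\mathscr{U}\psL^{<0}$ for cosyzygies. First, by additivity of $\Omega$ and $\Omega_{\CM}^-$ over direct sums and their triviality on projective summands, I reduce to the case that $X$ is indecomposable, non-projective, and finitely presented, so that $\Supp X \subseteq \mathscr{U}\psL^{[a,b]}$ for some integers $a \leq b$; I fix $m > 0$ such that $\tau_n^{-m}\Lambda = 0$.

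By Observation~\ref{obs.geq_leq}, the subcategory $\mod \mathscr{U}\psL^{\geq \ell}$ is stable under $\Omega$ for each $\ell$; hence once $\Omega^j X \in \mod \mathscr{U}\psL^{\geq 1}$ for some $j$, every subsequent syzygy lies in $\mod \mathscr{U}\psL^{>0}$. So the first claim reduces to showing that $\min \Supp \Omega^i X \to +\infty$ as $i \to \infty$. The main tool is Theorem~\ref{theorem.serre_CM}, which yields $\mathbb{S}_{n+1}^{\stabCM(\mathscr{U}\psL)} = \mathbb{S}_n^{\mathscr{D}_\Lambda}$ on $\stabCM(\mathscr{U}\psL)$; since $\mathscr{U}\psL = \add\{\mathbb{S}_n^i\Lambda \mid i \in \mathbb{Z}\}$ is closed under $\mathbb{S}_n$, this autoequivalence descends to an autoequivalence of $\mod \mathscr{U}\psL$ and of $\stabCM(\mathscr{U}\psL)$ that strictly shifts $\mathscr{U}\psL^i$ to $\mathscr{U}\psL^{i+1}$, and hence strictly shifts module supports by one.

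I would argue by contradiction. If $\min \Supp \Omega^i X$ were bounded above by some $c \leq 0$ for infinitely many $i$, then Observation~\ref{obs.syzygy_supp_bound}(2) confines the corresponding supports in a bounded window inside the finite set of indecomposables of $\mathscr{U}\psL^{[a,c]}$, so pigeonhole produces a fixed $V \in \mathscr{U}\psL^{\leq 0}$ with $(\Omega^{i_\ell} X)(V) \neq 0$ for infinitely many $\ell$. Iterating the stable-category identification $\Omega_{\CM}^{k(n+1)} X \cong (\mathbb{S}^{\stabCM(\mathscr{U}\psL)})^{-k} \mathbb{S}_n^k X$ coming from Theorem~\ref{theorem.serre_CM}, and comparing with the strict $+k$ shift of the support of $\mathbb{S}_n^k X$ together with Serre duality in $\stabCM(\mathscr{U}\psL)$, yields a vanishing of the corresponding $\underline{\Hom}$ spaces for $k$ large enough, contradicting the persistent non-vanishing at $V$. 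The statement for $\Omega_{\CM}^{-i}$ is proved symmetrically, using $\mathbb{S}_n^{-1}$, the dual of Observation~\ref{obs.geq_leq}, and the dual of Observation~\ref{obs.syzygy_supp_bound}(2) for left projective approximations.

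The main technical obstacle is making the stable-to-module comparison rigorous: the projective summands introduced when passing to $\stabCM(\mathscr{U}\psL)$ can carry additional support, and one must check that they cannot spoil the claimed shift of the lower bound. This requires tracking the projective summands using the uniform width bound for projective $\mathscr{U}\psL$-modules in Observation~\ref{obs.syzygy_supp_bound}(1), which ensures their contribution to the support lies in a window of width less than $m$ and is therefore dominated by the deterministic $\mathbb{S}_n^k$-shift for $k$ sufficiently large.
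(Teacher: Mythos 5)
Your proposal has a genuine gap. The crux of the lemma is that iterated syzygies push the \emph{lower} end of the support of $X$ strictly upward, but the tools you invoke only control the wrong end. Observation~\ref{obs.geq_leq} says $\Omega$ preserves $\mod \mathscr{U}\psL^{\geqslant \ell}$, i.e.\ it prevents the lower bound from dropping, while Observation~\ref{obs.syzygy_supp_bound}(2) bounds how far the \emph{upper} end can drift; neither, nor any combination, forces the lower bound to increase. The paper's proof of the first assertion rests entirely on the intermediate Lemma~\ref{lem.shift_of_support}, which shows $\Omega^i(\mod \mathscr{U}\psL^{\geqslant 0}) \subseteq \mod \mathscr{U}\psL^{>0}$ for $i>n$ by constructing a length-$(n+1)$ complex from a $\Lambda$-projective resolution of $X(\Lambda)$, thus using $\gld\Lambda\leqslant n$ in an essential, explicit way. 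Your proposal never reconstructs anything playing this role.

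Your attempt to substitute Theorem~\ref{theorem.serre_CM} for that lemma is circular. The identity $\Omega_{\CM}^{k(n+1)} \cong (\leftsub{\stabCM(\mathscr{U}\psL)}{\mathbb{S}})^{-k}\,\mathbb{S}_n^{k}$ on $\stabCM(\mathscr{U}\psL)$ is nothing but a rearrangement of Theorem~\ref{theorem.serre_CM}: that theorem is precisely what identifies $\leftsub{\stabCM(\mathscr{U}\psL)}{\mathbb{S}}$ with $\Omega^{-(n+1)}_{\CM}\circ\leftsub{\mathscr{T}}{\mathbb{S}}_n$, so substituting back yields the tautology $\Omega^{k(n+1)}\cong\Omega^{k(n+1)}$. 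Without an independent handle on the effect of $\leftsub{\stabCM(\mathscr{U}\psL)}{\mathbb{S}}$ on supports of $\mathscr{U}\psL$-modules, there is no content to the claim that ``the $\mathbb{S}_n^k$-shift dominates,'' and Observation~\ref{obs.syzygy_supp_bound}(1), which bounds supports of projectives $P_U$ \emph{above}, does not supply what you would need to control the projective summands introduced by passing to the stable category. Finally, the second assertion is not proved ``symmetrically'' in the paper: the argument for $\Omega^{-i}_{\CM}$ deduces a $\underline{\Hom}$-vanishing from the \emph{same} Lemma~\ref{lem.shift_of_support} together with adjointness of $\Omega$ and $\Omega^-_{\CM}$, and then runs a nontrivial splitting argument on the resulting factorization through a projective. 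There is no ``dual of Observation~\ref{obs.syzygy_supp_bound}(2) for left projective approximations'' available, since $\Omega^-_{\CM}$ is formed with respect to projectives and a genuine dual statement would not follow formally.
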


\begin{proof}
The first claim follows by repeated application of Lemma~\ref{lem.shift_of_support}.

For the proof of the latter assertion let $m$ be as in Observation~\ref{obs.syzygy_supp_bound}. Let $r > 0$ such that $X \in \mod \mathscr{U}^{< r}$. Then
\[ \underline{\Hom}_{\mathscr{U}}(\mod \mathscr{U}^{\geqslant -2m} \cap \CM(\mathscr{U}), \Omega_{\CM}^{-i} X) = \underline{\Hom}_{\mathscr{U}}(\Omega^i(\mod \mathscr{U}^{\geqslant -2m} \cap \CM(\mathscr{U})), X). \]
Note that, by Lemma~\ref{lem.shift_of_support}, for $i \gg 0$ we have
\[ \Omega^i(\mod \mathscr{U}^{\geqslant -2m} \cap \CM(\mathscr{U})) \subseteq \mod \mathscr{U}^{\geqslant r} \cap \CM(\mathscr{U}). \]
Thus, since $X \in \mod \mathscr{U}^{< r}$, the above $\Hom$-space vanishes. This means that the inclusion $(\Omega_{\CM}^{-i} X)^{\geqslant -2m} \sub \Omega_{\CM}^{-i} X$ factors through a projective $\mathscr{U}$-module. Let $P_U \epi \Omega_{\CM}^{-i} X$ be a projective cover. The left commutative triangle below gives rise to the right one.
\[ \begin{tikzpicture}[xscale=3,yscale=1.5]
 \node (A) at (0,0) {$(\Omega_{\CM}^{-i} X)^{\geqslant -2m}$};
 \node (B) at (1,0) {$\Omega_{\CM}^{-i} X$};
 \node (C) at (1,-1) {$P_U$};
 \node (D) at (2,0) {$(\Omega_{\CM}^{-i} X)^{< -2m}$};
 \draw [right hook->] (A) -- (B);
 \draw [->>] (B) -- (D);
 \draw [->>] (C) -- (B);
 \draw [dashed,->] (A) -- (C);
 \node (A) at (3,0) {$(\Omega_{\CM}^{-i} X)^{\geqslant -2m}$};
 \node (B) at (4,0) {$(\Omega_{\CM}^{-i} X)^{\geqslant -2m}$};
 \node (C) at (4,-1) {$P_U^{\geqslant -2m}$};
 \draw [->] (A) -- node [above] {id} (B);
 \draw [->>] (C) -- (B);
 \draw [dashed,->] (A) -- (C);
\end{tikzpicture} \]
Thus $(\Omega_{\CM}^{-i} X)^{\geqslant -2m}$ is a direct summand of $P_U^{\geqslant -2m}$, and we can write $(\Omega_{\CM}^{-i} X)^{\geqslant -2m} = P_{U_1} \oplus P_{U_2}^{\geqslant -2m}$ with $U_1 \in \mathscr{U}^{\geqslant -2m}$ and $U_2 \in \mathscr{U}^{< -2m}$. Since $\Omega_{\CM}^{-i} X$ does not contain any projective direct summands we have $\Ext_{\mathscr{U}}^1((\Omega_{\CM}^{-i} X)^{< -2m}, P_{U_1'}) \neq 0$ for any non-zero direct summand $U_1'$ of $U_1$. Hence $\Hom_{\mathscr{U}}(\Omega((\Omega_{\CM}^{-i} X)^{< -2m}), P_{U_1'}) \neq 0$ for any such $U_1'$. Since $\Supp \Omega ((\Omega_{\CM}^{-i} X)^{< -2m}) \subseteq \mathscr{U}^{< -m}$ (see Observation~\ref{obs.syzygy_supp_bound}(2)) we have $U_1 \in \mathscr{U}^{< -m}$. So we have $U_1 \oplus U_2 \in \mathscr{U}^{< -m}$, and hence $\Supp P_{U_1} \oplus P_{U_2} \subseteq \mathscr{U}^{< 0}$ by Observation~\ref{obs.syzygy_supp_bound}(1). Thus
\[ \Supp \Omega_{\CM}^{-i} X \subseteq \Supp (P_{U_1} \oplus P_{U_2}) \cup \mathscr{U}^{< -2m} \subseteq \mathscr{U}^{< 0}. \qedhere \]
\end{proof}

\begin{proof}[Proof of Proposition~\ref{prop.T_generates}]
Assume conversely, that there is an indecomposable non-projective $X \in \CM(\mathscr{U}\psL)$ such that $\underline{\Hom}_{\mathscr{U}\psL}(P_U^{\geqslant 0}, \Omega^n_{\CM} X) = 0$ for any $U \in \mathscr{U}\psL$ and $n \in \mathbb{Z}$. Possibly replacing $X$ by a suspension we may assume $X \in \mod \mathscr{U}\psL^{< 0}$ (by Lemma~\ref{lem.susp_moves}).

We choose a minimal projective resolution
\[ \cdots \to[30] P_{U_2} \to[30] P_{U_1} \to[30] P_{U_0} \to[30] X \to[30] 0. \]
We will show that the induced exact sequence
\begin{equation} \label{eq.lower_part_sequence}
\cdots \to[30] P_{U_2}^{\geqslant 0} \tol[30]{\scriptstyle d_1} P_{U_1}^{\geqslant 0} \tol[30]{\scriptstyle d_0} P_{U_0}^{\geqslant 0} \tol[30]{\scriptstyle d_{-1}} 0 \to[30] 0
\end{equation}
is a direct sum of complexes of the form $\cdots \to 0 \to H \to H \to 0 \to \cdots$ with $H \in \mod \mathscr{U}\psL$. For $i$ sufficiently large we have $P_{U_i} = P_{U_i}^{\geqslant 0}$ by Lemma~\ref{lem.susp_moves}. This is a contradiction, since in the upper complex all maps are in the radical.

Since Sequence~\eqref{eq.lower_part_sequence} is exact, we only have to show that the map from $P_{U_i}^{\geqslant 0}$ to the image $\Im d_{i-1} \subseteq P_{U_{i-1}}^{\geqslant 0}$ is a split epimorphism for any $i \geqslant 0$. We show this by induction on $i$. The case $i=0$ is clear. Now consider the following diagram.
\[ \begin{tikzpicture}[xscale=2,yscale=1.5]
 \node (A) at (0,0) {$P_{U_{i+1}}$};
 \node (B) at (1,.5) {$\Omega^i X$};
 \node (C) at (2,0) {$P_{U_i}$};
 \node (D) at (0,-1) {$P_{U_{i+1}}^{\geqslant 0}$};
 \node (E) at (1,-.5) {$\Im d_i$};
 \node (F) at (2,-1) {$P_{U_i}^{\geqslant 0}$};
 \draw [->>] (A) -- (B);
 \draw [right hook->] (B) -- (C);
 \draw [->>] (D) -- (E);
 \draw [right hook->] (E) -- (F);
 \draw [right hook->] (D) -- (A);
 \draw [right hook->] (E) -- (B);
 \draw [right hook->] (F) -- (C);
 \draw [dashed,->] (E) -- (A);
 \draw [dashed,->,bend right=30] (E) to (D);
 \node at (1,0) [fill=white,inner sep=2pt] {};
 \draw [->] (A) -- (C);
 \draw [->] (D) -- (F);
\end{tikzpicture} \]
By inductive assumption the map $P_{U_i}^{\geqslant 0} \epi \Im d_{i-1}$ splits, and hence $\Im d_i$ is a direct summand of $P_{U_i}^{\geqslant 0}$. Hence, by the assumption $\underline{\Hom}_{\mathscr{U}\psL}(P_U^{\geqslant 0}, \Omega^i X) = 0 \, \forall U, i$ we have $\underline{\Hom}_{\mathscr{U}\psL}(\Im d_i, \Omega^i X) = 0$. Therefore the middle vertical inclusion above factors through a projective module. Hence the dashed diagonal map exists, making the triangle above it commutative. The lower dashed map in the diagram, making the triangle including both dashed maps commutative, exists, since clearly $\Im d_i$ (being a submodule of $P_{U_i}^{\geqslant 0}$) can only map to the part of $P_{U_{i+1}}$ which is supported in $\mathscr{U}\psL^{\geqslant 0}$. Hence the map $P_{U_{i+1}}^{\geqslant 0} \to \Im d_i$ is a split epimorphism.
\end{proof}

\subsubsection{The endomorphism algebra of the tilting object}

\begin{theorem} \label{thm.identify_endoring}
Let $T = P_V^{\geqslant 0}$ as in Construction~\ref{const.tiltingobj}. Then
\[ \underline{\End}_{\mathscr{U}\psL}(T) = \End_{\mathscr{U}\psL}(T) = \End_{\Lambda}(\widetilde{\Lambda}_P) = \underline{\End}_{\Lambda}(\widetilde{\Lambda}) \overset{\mathclap{\text{def}}}{=} \Gamma, \]
where $\widetilde{\Lambda}_P$ denotes the maximal summand of the $\Lambda$-module $\widetilde{\Lambda}$ without non-zero projective direct summands.
\end{theorem}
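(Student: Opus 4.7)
The plan is to establish the three equalities in the claimed chain one by one. For the outermost, $\underline{\End}_{\Lambda}(\widetilde{\Lambda}) = \End_{\Lambda}(\widetilde{\Lambda}_P)$, the decomposition $\widetilde{\Lambda} = \Lambda \oplus \widetilde{\Lambda}_P$ reduces the statement to $\Hom_{\Lambda}(\widetilde{\Lambda}_P, \Lambda) = 0$. For each indecomposable summand $M = \tau_n^{-i}\Lambda_e$ of $\widetilde{\Lambda}_P$ with $i \geqslant 1$, apply $\Hom_{\mathscr{D}_{\Lambda}}(-, \Lambda)$ to the t-truncation triangle $\trunc^{<0}(\mathbb{S}_n^{-i}\Lambda_e) \to \mathbb{S}_n^{-i}\Lambda_e \to M \to \trunc^{<0}(\mathbb{S}_n^{-i}\Lambda_e)[1]$. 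The left flanking term vanishes since $\trunc^{<0}(\cdot) \in \mathscr{D}_{\Lambda}^{<0}$ while $\Lambda \in \mathscr{D}_{\Lambda}^{\geqslant 0}$. The right flanking term equals $\Hom_{\mathscr{D}_{\Lambda}}(\Lambda_e, \mathbb{S}_n^i \Lambda)$, which vanishes because iterating Lemma~\ref{lemma.nuorders} gives $\mathbb{S}_n^i \Lambda \in \mathscr{D}_{\Lambda}^{\geqslant n}$ for $i \geqslant 1$.

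The central equality $\End_{\mathscr{U}\psL}(T) = \End_{\Lambda}(\widetilde{\Lambda}_P)$ will require an explicit description of $T$. Since $T = P_V^{\geqslant 0}$ vanishes on $\mathscr{U}\psL^{<0}$, its endomorphisms agree with those of $T|_{\mathscr{U}\psL^{\geqslant 0}}$. By Remark~\ref{remark.pic_U}, $\mathscr{U}\psL^{\geqslant 1} \subseteq \mathscr{D}_{\Lambda}^{\geqslant n}$ and therefore $\Hom_{\mathscr{D}_{\Lambda}}(\mathscr{U}\psL^{<0}, \mathscr{U}\psL^{\geqslant 1}) = 0$, so on $\mathscr{U}\psL^{\geqslant 1}$ the defining vanishing condition of $T$ is vacuous and $T$ coincides with $P_V$. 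On $\add \Lambda = \mathscr{U}\psL^0$, $T(\Lambda_{e'})$ consists of those $f \in e' \Ho^0(V)$ killed by precomposition with every morphism from an object of $\mathscr{U}\psL^{<0}$ into $\Lambda_{e'}$. The key identification to verify is $T|_{\add \Lambda} \cong \Hom_{\Lambda}(-, \widetilde{\Lambda}_P)$: the summands of $V$ are chosen so that $\Ho^0(V)$ picks up representatives of each indecomposable non-projective summand of $\widetilde{\Lambda}$, and the precomposition constraint cuts out the projective components and matches multiplicities, using uniqueness of the right $n$-cluster tilting object from Theorem~\ref{theorem.rightcto}. Combining this with the $P_V$-description on $\mathscr{U}\psL^{\geqslant 1}$ and naturality, any endomorphism of $T$ will be determined by (and recovered from) its restriction to $\add \Lambda$.

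For the last equality $\End_{\mathscr{U}\psL}(T) = \underline{\End}_{\mathscr{U}\psL}(T)$, a factorization $\phi \colon T \to P_W \to T$ through a projective $P_W$ with $W \in \mathscr{U}\psL$ restricts, via the identification above, to a factorization $\widetilde{\Lambda}_P \to \Ho^0(W) \to \widetilde{\Lambda}_P$ in $\mod \Lambda$. Since $\Ho^0(W) \in \add \widetilde{\Lambda}$, the first map kills the projective direct summands of $\Ho^0(W)$ by $\Hom_{\Lambda}(\widetilde{\Lambda}_P, \Lambda) = 0$, and the remaining part of the composition will be annihilated by the naturality constraints together with the vanishing $\Ext^1_{\mathscr{U}\psL}(P_V^{\geqslant 0}, P_V^{\geqslant 0}) = 0$ from Lemma~\ref{lemma.no_neg_selfext}. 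The main obstacle will be the identification $T|_{\add \Lambda} \cong \Hom_{\Lambda}(-, \widetilde{\Lambda}_P)$: tracking the multiplicities of the summands $\mathbb{S}_n^{-k}P$ of $V$ under $\Ho^0$ and showing that the precomposition kernel matches $\widetilde{\Lambda}_P$ exactly will rely on the vosnex property (via Remark~\ref{remark.pic_U}) together with the right $n$-cluster tilting structure of $\widetilde{\Lambda}$ (Theorem~\ref{theorem.rightcto}).
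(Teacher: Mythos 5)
The rightmost equality $\End_\Lambda(\widetilde{\Lambda}_P) = \underline{\End}_\Lambda(\widetilde{\Lambda})$ is handled essentially as in the paper: both reduce to $\Hom_\Lambda(\tau_n^{-i}\Lambda,\Lambda)=0$ for $i\geqslant 1$ and both establish it by applying $\Hom_{\mathscr{D}_\Lambda}(-,\Lambda)$ to the t-truncation triangle of $\mathbb{S}_n^{-i}\Lambda$. That part is fine.

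The middle equality is where your proposal has a genuine gap. You correctly observe $T(\Lambda)\cong\widetilde{\Lambda}_P$ (and that describing $T$ on $\mathscr{U}\psL^{\geqslant 1}$ is unconstrained, since $\Hom_{\mathscr{D}_\Lambda}(\mathscr{U}\psL^{<0},\mathscr{U}\psL^{>0})=0$), but then the load-bearing step is the assertion that ``any endomorphism of $T$ will be determined by (and recovered from) its restriction to $\add\Lambda$.'' That sentence is precisely the statement that the restriction functor $\mathtt{res}\colon \mod\mathscr{U}\psL\to\mod\Lambda$, $X\mapsto X(\Lambda)$, is fully faithful on the subcategory containing $T$. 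In the paper this is Proposition~\ref{prop.adjoints}: one constructs the right adjoint $\mathtt{G}_*=\Hom_{\mathscr{D}_\Lambda}(\mathtt{G}-,\cdot)$ of $\mathtt{res}$, proves that $\mathtt{G}_*$ is right exact (this is where $\mathscr{U}\psL^{\geqslant 0}\subseteq\add\Lambda\vee\mathscr{D}^{\geqslant n}$, coming from the vosnex setup, enters), deduces an equivalence between $\mod\Lambda$ and the subcategory $\mathscr{R}$ of $\mod\mathscr{U}\psL$ of modules with projective presentations over $\add\Lambda$, and finally shows that each $P_{\mathbb{S}_n^i\Lambda}^{\geqslant 0}$, hence $T$, lies in $\mathscr{R}$. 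None of this is established in your proposal; ``naturality'' does not substitute for it, because a $\mathscr{U}\psL$-module morphism is emphatically not determined by its values on $\add\Lambda$ in general.

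The leftmost equality is also off. Once one has the middle equality (fully faithfulness of $\mathtt{res}$ on $\mathscr{R}$), the correct mechanism is: a factorization $T\to P_W\to T$ is zero because the map $T\to P_W$ already vanishes for every indecomposable $W\in\mathscr{U}\psL$. For $W\in\mathscr{U}\psL^{<0}$ one has $\Hom_{\mathscr{U}\psL}(P_W,T)=T(W)=0$ by definition of $T=P_V^{\geqslant 0}$, so the second arrow vanishes instead; for $W\in\mathscr{U}\psL^{0}$ one uses $\Hom_\Lambda(\tau_n^i\Lambda,\Lambda)=0$ (the rightmost equality); and for $W\in\mathscr{U}\psL^{>0}$ one uses $P_W(\Lambda)=\Hom_{\mathscr{D}_\Lambda}(\Lambda,W)=0$. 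These are the three lines of the paper's proof. Your appeal to $\Ext^1_{\mathscr{U}\psL}(P_V^{\geqslant 0},P_V^{\geqslant 0})=0$ from Lemma~\ref{lemma.no_neg_selfext} is a red herring: that lemma controls self-extensions of $T$ and has nothing to say about whether a given endomorphism factors through a projective; rigidity does not imply that the stable and ordinary endomorphism algebras coincide.
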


The proof will rely on the following proposition.

\begin{proposition} \label{prop.adjoints}
\begin{enumerate}
\item The exact restriction functor
\[ \mathtt{res} \colon \mod \mathscr{U}\psL \to[30] \mod \Lambda \colon X \mapsto[30] X(\Lambda) \]
has a right adjoint, which is given by
\[ \mathtt{G}_* \colon \mod \Lambda \to[30] \mod \mathscr{U}\psL \colon X \mapsto \Hom_{\mathscr{D}_{\Lambda}}(\mathtt{G}-, X), \]
where $\mathtt{G}$ is the projection functor $\mathscr{U} \to \mathscr{U}^{\geqslant 0}$. (Note that $\add \Lambda \subseteq \mathscr{U}$, so the restriction functor above makes sense.)
\item The functors in (1) induce mutually inverse equivalences between $\mod \Lambda$ and
\[ \mathscr{R} := \{ X \in \mod \mathscr{U}\psL \mid \exists \text{ an exact sequence } P_{U_1} \to[30] P_{U_0} \epi[30] X \text{ with } U_i \in \add \Lambda \}. \]
\item For $i \leqslant 0$ we have $P_{\mathbb{S}_n^i \Lambda}^{\geqslant 0} \in \mathscr{R}$, and under the equivalence above $P_{\mathbb{S}_n^i \Lambda}^{\geqslant 0}$ corresponds to $\tau_n^i \Lambda$.
\end{enumerate}
\end{proposition}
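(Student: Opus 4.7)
The key observation is that the ``projection'' $\mathtt{G}\colon \mathscr{U}\psL \to \mathscr{U}\psL^{\geqslant 0}$ is a well-defined functor. Indeed, Remark~\ref{remark.pic_U} (shifted by one) gives $\Hom_{\mathscr{D}_\Lambda}(\mathscr{U}\psL^{<0}, \mathscr{U}\psL^{\geqslant 0}) = 0$, so every morphism $f\colon U \to V$ in $\mathscr{U}\psL$ is block-lower-triangular with respect to the canonical summand decomposition $U = U^{\geqslant 0} \oplus U^{<0}$, and $\mathtt{G}f$ is the upper-left block. For $Y \in \mod \Lambda$ this yields the explicit formula $\mathtt{G}_*Y(U) = \Hom_\Lambda(U^0, Y)$, where $U^0 \in \add \Lambda$ is the $\mathscr{U}\psL^0$-summand of $U$: the $\mathscr{U}\psL^{>0}$-part of $U^{\geqslant 0}$ lies in $\mathscr{D}^{\geqslant n}$ and so contributes nothing to $\Hom_{\mathscr{D}_\Lambda}(-, Y)$. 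For (1), the counit $\mathtt{res}\circ \mathtt{G}_*(Y) \to Y$ is the tautology $\mathtt{G}_*Y(\Lambda) = \Hom_\Lambda(\Lambda, Y) = Y$, and the unit $X \to \mathtt{G}_*\mathtt{res}(X)$ at $U$ sends $\xi \in X(U)$ to $X(j_U)(\xi) \in X(U^0) = \Hom_\Lambda(U^0, X(\Lambda))$, where $j_U\colon U^0 \hookrightarrow U$ is the inclusion of the $\add\Lambda$-summand; naturality is checked using the block form of $\mathscr{U}\psL$-morphisms.

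For (2), I first note that $\mathtt{G}_*(P) = P_P$ for $P \in \add \Lambda$: both sides evaluate at $U$ to $\Hom_\Lambda(U^0, P)$, since $\Hom_{\mathscr{D}_\Lambda}(U^{>0}, P) = 0$ by bidegree and $\Hom_{\mathscr{D}_\Lambda}(U^{<0}, P) = 0$ by the shift of Remark~\ref{remark.pic_U}. Applying $\mathtt{G}_*$ to a projective presentation $P_1 \to P_0 \to Y \to 0$ in $\mod \Lambda$ (with $P_i \in \add \Lambda$) produces the sequence $P_{P_1} \to P_{P_0} \to \mathtt{G}_*Y \to 0$, which is exact because $\Hom_\Lambda(U^0, -)$ is exact for $U^0 \in \add \Lambda$ projective. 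This shows $\mathtt{G}_*Y \in \mathscr{R}$. Conversely, for $X \in \mathscr{R}$ with presentation $P_{U_1} \to P_{U_0} \to X \to 0$ and $U_i \in \add \Lambda$, restricting gives a projective presentation $U_1 \to U_0 \to \mathtt{res}(X) \to 0$ of $\mathtt{res}(X)$ over $\Lambda$, and re-applying $\mathtt{G}_*$ recovers $X$; combined with $\mathtt{res}\mathtt{G}_*= \id$, this yields the claimed equivalence $\mathscr{R} \leftrightarrow \mod \Lambda$. For (3), the crucial ingredient is the identity $H^0(\mathbb{S}_n^i \Lambda) = \tau_n^i \Lambda$ for $i \leqslant 0$, which I prove by induction on $-i$: applying $\mathbb{S}_n^{-1}$ to the truncation triangle $\tau^{<0}(\mathbb{S}_n^{i+1}\Lambda) \to \mathbb{S}_n^{i+1}\Lambda \to H^0(\mathbb{S}_n^{i+1}\Lambda) \to$, Lemma~\ref{lemma.nuorders} (shifted) shows $\mathbb{S}_n^{-1}\tau^{<0}(\mathbb{S}_n^{i+1}\Lambda) \in \mathscr{D}^{\leqslant -1}$, so its $H^0$ vanishes and the long exact sequence yields $H^0(\mathbb{S}_n^i\Lambda) = H^0(\mathbb{S}_n^{-1}H^0(\mathbb{S}_n^{i+1}\Lambda)) = \tau_n^-(\tau_n^{i+1}\Lambda) = \tau_n^i\Lambda$. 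Both $P_{\mathbb{S}_n^i\Lambda}^{\geqslant 0}(V)$ and $\mathtt{G}_*(\tau_n^i\Lambda)(V)$ then evaluate to $\Hom_\Lambda(V^0, \tau_n^i\Lambda)$ on $V \in \mathscr{U}\psL^{\geqslant 0}$ and to zero elsewhere, so $P_{\mathbb{S}_n^i\Lambda}^{\geqslant 0} \cong \mathtt{G}_*(\tau_n^i\Lambda) \in \mathscr{R}$.

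The main obstacle is verifying naturality of the unit map in (1). A morphism $f\colon U \to V$ in $\mathscr{U}\psL$ may have a nonzero component $U^0 \to V^{<0}$ (since $\Hom_{\mathscr{D}_\Lambda}(\mathscr{U}\psL^0, \mathscr{U}\psL^{<0})$ does not generally vanish), contributing an entry to $X(f)$ landing in $X(U^0)$ that has no analogue in $\mathtt{G}_*\mathtt{res}(X)(f)$. This discrepancy disappears on the subcategory $\mathscr{R}\subseteq \mod \mathscr{U}\psL$, where every object is supported on $\mathscr{U}\psL^{\geqslant 0}$; the adjunction statement in (1) should therefore be understood as producing the equivalence (2) rather than as a strict adjunction on all of $\mod \mathscr{U}\psL$.
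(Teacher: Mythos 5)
The formula $\mathtt{G}_* Y(U) = \Hom_\Lambda(U^0,Y)$ on which your entire argument rests is incorrect, and this single error both manufactures the naturality ``obstacle'' you report and undermines your treatment of (2) and (3). You justify the formula by asserting that the $\mathscr{U}^{>0}$-summand $U^{>0}$ of $U$ lies in $\mathscr{D}^{\geqslant n}$ and therefore ``contributes nothing to $\Hom_{\mathscr{D}_\Lambda}(-,Y)$'' for $Y\in\mod\Lambda$. But the $t$-structure only gives $\Hom_{\mathscr{D}}(\mathscr{D}^{\leqslant 0},\mathscr{D}^{\geqslant 1})=0$; the reverse direction $\Hom_{\mathscr{D}}(\mathscr{D}^{\geqslant n},\mathscr{D}^{\leqslant 0})$ is nonzero in general. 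Already for $U^{>0}=\mathbb{S}_n\Lambda=D\Lambda[-n]$ one has $\Hom_{\mathscr{D}}(U^{>0},\Lambda)=\Ext_\Lambda^n(D\Lambda,\Lambda)$, the defining bimodule of $\widetilde{\Lambda}$, which is nonzero precisely when $\tau_n^-\Lambda\neq 0$. The correct description is $\mathtt{G}_* Y(U)=\Hom_{\mathscr{D}_\Lambda}(U^{\geqslant 0},Y)=\Hom_\Lambda(U^0,Y)\oplus\Hom_{\mathscr{D}}(U^{>0},Y)$; the extra summand that you dropped is exactly what absorbs the ``going-down'' component of a morphism and makes the adjunction natural on all of $\mod\mathscr{U}$, so there is no need to retreat to the subcategory $\mathscr{R}$ as you propose. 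What \emph{is} true, using $\gld\Lambda\leqslant n$, is $\Hom_{\mathscr{D}}(\mathscr{D}^{\geqslant n},\mathscr{D}^{\leqslant -1})=0$ --- one shift better than the degree you need, and this is the ingredient the paper actually uses.

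Concretely, the paper's argument for (1) is: right exactness of $\mathtt{G}_*$ follows from $\Hom_{\mathscr{D}}(\mathtt{G} U, M[1])=0$ for $M\in\mod\Lambda$ (since $\mathtt{G} U\in\add\Lambda\vee\mathscr{D}^{\geqslant n}$ and $M[1]\in\mathscr{D}^{\leqslant -1}$); for $X=\Lambda$ one has $\mathtt{G}_*\Lambda=P_\Lambda$ (here $\Hom(U^{>0},\Lambda)$ appears on both sides and need not vanish --- your proof of $\mathtt{G}_*(P)=P_P$ in (2) reaches the right conclusion but for the wrong reason), so Yoneda gives $\Hom_\mathscr{U}(\mathtt{G}_*\Lambda,Y)\iso Y(\Lambda)$; and a projective presentation of $X$ extends this to a natural isomorphism $\Hom_\mathscr{U}(\mathtt{G}_* X,Y)\iso\Hom_\Lambda(X,Y(\Lambda))$ on all of $\mod\Lambda\times\mod\mathscr{U}$. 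Note this isomorphism exhibits $\mathtt{G}_*$ as a \emph{left} adjoint of $\mathtt{res}$ (consistent with $\mathtt{G}_*$ being right exact); the word ``right'' in the statement appears to be a slip, and your unit/counit are set up for the wrong adjunction direction. Your induction for $H^0(\mathbb{S}_n^i\Lambda)=\tau_n^i\Lambda$ via Lemma~\ref{lemma.nuorders} is fine and spells out a detail the paper leaves implicit; but your final identification of $P_{\mathbb{S}_n^i\Lambda}^{\geqslant 0}$ with $\mathtt{G}_*(\tau_n^i\Lambda)$ again drops the $\Hom(V^{>0},-)$ term and compares values rather than functors. Instead apply $\Hom_{\mathscr{D}_\Lambda}(\mathtt{G}-,-)$ to the truncation triangle of $\mathbb{S}_n^i\Lambda$ and kill the two outer terms using $\Hom_{\mathscr{D}}(\mathtt{G}-,\mathscr{D}^{\leqslant -1})=0$, which gives a genuine isomorphism of $\mathscr{U}$-modules.
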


\begin{proof}
(1) We first check that $\mathtt{G}_*$ is right exact: Let $M_1 \mono M_2 \epi M_3$ be a short exact sequence in $\mod \Lambda$. Then we have an exact sequence
\[ \Hom_{\mathscr{D}_{\Lambda}}(\mathtt{G} - , M_1) \to[30] \Hom_{\mathscr{D}_{\Lambda}}(\mathtt{G} - , M_2) \to[30] \Hom_{\mathscr{D}_{\Lambda}}(\mathtt{G} - , M_3) \to[30] \Hom_{\mathscr{D}_{\Lambda}}(\mathtt{G} - , M_1[1]) \]
in $\mod \mathscr{U}\psL$. By Remark~\ref{remark.pic_U} we have $\mathscr{U}\psL^{\geqslant 0} \subseteq \add \Lambda \vee \mathscr{D}_{\Lambda}^{\geqslant n}$. Hence, since $\gld \Lambda \leqslant n$, we have $\Hom_{\mathscr{D}_{\Lambda}}(\mathtt{G} - , M_1[1]) = 0$. The right exactness of the functor $\mathtt{G}_*$ follows.

Now we prove claim (1). We have to show that for $X \in \mod \Lambda$ and $Y \in \mod \mathscr{U}\psL$ we have an isomorphism
\[ \Hom_{\mathscr{U}\psL}( \Hom_{\mathscr{D}_{\Lambda}}(\mathtt{G} - , X), Y) \iso \Hom_{\Lambda}(X, Y(\Lambda)) \]
which is natural in $X$ and $Y$. By the right exactness of $\mathtt{G}_*$ is suffices to consider the case $X = \Lambda$. We have
\begin{align*}
\Hom_{\mathscr{U}\psL}( \Hom_{\mathscr{D}_{\Lambda}}(\mathtt{G} - , \Lambda), Y) & = \Hom_{\mathscr{U}\psL}( \underbrace{\Hom_{\mathscr{D}_{\Lambda}}(\mathscr{U}\psL, \Lambda)}_{= P_{\Lambda}}, Y) && \text{(since $\Lambda \in \mathscr{U}^{\leqslant 0}$)} \\
& = Y(\Lambda) && \text{(by the Yoneda Lemma)} \\
& = \Hom_{\Lambda}(\Lambda, Y(\Lambda))
\end{align*}
This completes the proof of (1).

(2) Clearly the functors induce mutually inverse equivalences
\[ \add \Lambda \arrow[30]{<->} \add P_{\Lambda}. \]
Now claim (2) follows form the fact that both functors are right exact.

(3) We start by observing that $P_{\mathbb{S}_n^i \Lambda}^{\geqslant 0} = \Hom_{\mathscr{D}_{\Lambda}}(\mathtt{G} - , \mathbb{S}_n^i \Lambda)$. By Remark~\ref{remark.pic_U} we have $\mathbb{S}_n^i \Lambda \in \mathscr{U}\psL^i \subseteq \mathscr{D}_{\Lambda}^{\leqslant 0}$. Thus we have a triangle
\[ \trunc^{< 0} (\mathbb{S}_n^i \Lambda) \to[30] \mathbb{S}_n^i \Lambda \to[30] \Ho^0(\mathbb{S}_n^i \Lambda) \to[30] (\trunc^{< 0}(\mathbb{S}_n^i \Lambda))[1] \]
in $\mathscr{D}_{\Lambda}$ induced by the standard t-structure on $\mathscr{D}$. Since $\mathscr{U}\psL^{\geqslant 0} \subseteq \add \Lambda \vee \mathscr{D}_{\Lambda}^{\geqslant n}$, and $\trunc^{< 0} (\mathbb{S}_n^i \Lambda), (\trunc^{< 0} (\mathbb{S}_n^i \Lambda))[1] \in \mathscr{D}_{\Lambda}^{< 0}$, we have
\[ \Hom_{\mathscr{D}_{\Lambda}}(\mathtt{G} - , \trunc^{< 0} (\mathbb{S}_n^i \Lambda)) = 0 = \Hom_{\mathscr{D}_{\Lambda}}(\mathtt{G} - , (\trunc^{< 0} (\mathbb{S}_n^i \Lambda))[1]), \]
and thus
\[ P_{\mathbb{S}_n^i \Lambda}^{\geqslant 0} = \Hom_{\mathscr{D}_{\Lambda}}(\mathtt{G} - , \mathbb{S}_n^i \Lambda) \iso \Hom_{\mathscr{D}_{\Lambda}}(\mathtt{G} - , \underbrace{\Ho^0(\mathbb{S}_n^i \Lambda)}_{\in \mod \Lambda}). \]
We see that $P_{\mathbb{S}_n^i \Lambda}^{\geqslant 0}$ is in the image of the functor $\mathtt{G}_*$, and thus in $\mathscr{R}$. For the final claim note that
\[ \Ho^0(\mathbb{S}_n^i \Lambda) = \tau_n^i \Lambda \]
by \ref{def.tau_n}.
\end{proof}

Now we are ready to prove the main result of this subsection.

\begin{proof}[Proof of Theorem~\ref{thm.identify_endoring}]
By Proposition~\ref{prop.adjoints} we have $\End_{\mathscr{U}}(T) = \End_{\Lambda}(T(\Lambda))$, and by Proposition~\ref{prop.adjoints}(3) we have $T(\Lambda) = \widetilde{\Lambda}$. Thus the middle equality follows.

We next show the rightmost equality. Clearly it suffices to show that $\Hom_{\Lambda}( \tau_n^i \Lambda, \Lambda) = 0$ for $i < 0$. As in the proof of Proposition~\ref{prop.adjoints}(3) we consider the triangle
\[ \trunc^{< 0} (\mathbb{S}_n^i \Lambda) \to[30] \mathbb{S}_n^i \Lambda \to[30] \underbrace{\Ho^0(\mathbb{S}_n^i \Lambda)}_{= \tau_n^i \Lambda} \to[30] (\trunc^{< 0}(\mathbb{S}_n^i \Lambda))[1]. \]
Since $\Hom_{\mathscr{D}_{\Lambda}}( \mathbb{S}_n^i \Lambda, \Lambda) = 0$ for $i < 0$, and $\Hom_{\mathscr{D}_{\Lambda}}((\trunc^{< 0}(\mathbb{S}_n^i \Lambda))[1], \Lambda) \subseteq \Hom_{\mathscr{D}_{\Lambda}}(\mathscr{D}_{\Lambda}^{\leqslant -2}, \Lambda) = 0$, the claim follows.

It remains to show the first equality of the theorem. More precisely, we have to show that no endomorphism of $T$ factors through a projective $\mathscr{U}\psL$-module. For $i < 0$ we have
\begin{align*}
\Hom_{\mathscr{U}\psL}(P_{\mathbb{S}_n^i \Lambda}^{\geqslant 0}, P_U) & = \Hom_{\Lambda}(\tau_n^i \Lambda, \underbrace{P_U(\Lambda)}_{\mathclap{\substack{= \Hom_{\mathscr{D}_{\Lambda}}(\Lambda, U) \\ = 0 \text{ for } U \in \mathscr{U}\psL^{> 0}}}}) = 0 && \text{for } U \in \mathscr{U}\psL^{> 0} \\
\Hom_{\mathscr{U}\psL}(P_{\mathbb{S}_n^i \Lambda}^{\geqslant 0}, P_U) & = \Hom_{\Lambda}(\tau_n^i \Lambda, U) = 0 && \text{for } U \in \mathscr{U}\psL^0 = \add \Lambda
\intertext{by the proof of the rightmost equality of the theorem above, and}
\Hom_{\mathscr{U}\psL}(P_U, P_{\mathbb{S}_n^i \Lambda}^{\geqslant 0}) & = P_{\mathbb{S}_n^i \Lambda}^{\geqslant 0}(U) = 0 && \text{for } U \in \mathscr{U}\psL^{< 0}.
\end{align*}
Summing up these three vanishing properties we obtain the claim.
\end{proof}

\subsubsection{Proof of Theorem~\ref{thm.equiv_via_tilting}} \label{subsub.derived_via_tilt}

For the proof of Theorem~\ref{thm.equiv_via_tilting} we will need the following two observations.

The first is Wakamatsu's Lemma for Krull-Schmidt
triangulated categories.

\begin{lemma}[{\cite[Proposition 2.3(1)]{IyYo}}] \label{lemma.wakamatsu_triang}
Let $\mathscr{T}$ be a Krull-Schmidt triangulated category and let $\mathscr{W}$ be a thick subcategory of $\mathscr{T}$. If $\mathscr{W}$ is contravariantly finite in $\mathscr{T}$, then we have a t-structure $(\mathscr{W}, \mathscr{W}^{\perp})$ in $\mathscr{T}$ for $\mathscr{W}^{\perp} := \{ T \in \mathscr{T} \mid \Hom_{\mathscr{T}}(\mathscr{W}, T) = 0 \}$.
\end{lemma}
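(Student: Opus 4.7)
The plan is to verify the four defining properties of a t-structure for the pair $(\mathscr{W}, \mathscr{W}^{\perp})$: the shift-inclusions $\mathscr{W}[1] \subseteq \mathscr{W}$ and $\mathscr{W}^{\perp}[-1] \subseteq \mathscr{W}^{\perp}$, the semi-orthogonality $\Hom_{\mathscr{T}}(\mathscr{W}, \mathscr{W}^{\perp}) = 0$, and the existence, for every $X \in \mathscr{T}$, of an exact triangle $W \to X \to V \to W[1]$ with $W \in \mathscr{W}$ and $V \in \mathscr{W}^{\perp}$. The first three are essentially formal: thickness of $\mathscr{W}$ gives $\mathscr{W}[1] \subseteq \mathscr{W}$, the semi-orthogonality is the definition of $\mathscr{W}^{\perp}$, and the inclusion $\mathscr{W}^{\perp}[-1] \subseteq \mathscr{W}^{\perp}$ then follows from the identification
\[ \Hom_{\mathscr{T}}(\mathscr{W}, \mathscr{W}^{\perp}[-1]) = \Hom_{\mathscr{T}}(\mathscr{W}[1], \mathscr{W}^{\perp}) \subseteq \Hom_{\mathscr{T}}(\mathscr{W}, \mathscr{W}^{\perp}) = 0. \]

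For the existence of the triangle, I would pick a right $\mathscr{W}$-approximation $f \colon W \to X$ (available by contravariant finiteness) and complete it to a triangle $W \xrightarrow{f} X \xrightarrow{g} V \xrightarrow{h} W[1]$. The task reduces to showing $V \in \mathscr{W}^{\perp}$, i.e.\ that for every $W' \in \mathscr{W}$ one has $\Hom_{\mathscr{T}}(W', V) = 0$. Applying $\Hom_{\mathscr{T}}(W', -)$ to the triangle yields the long exact sequence
\[ \Hom_{\mathscr{T}}(W', W) \xrightarrow{f_{*}} \Hom_{\mathscr{T}}(W', X) \xrightarrow{g_{*}} \Hom_{\mathscr{T}}(W', V) \xrightarrow{h_{*}} \Hom_{\mathscr{T}}(W', W[1]) \xrightarrow{f[1]_{*}} \Hom_{\mathscr{T}}(W', X[1]). \]
The approximation property forces $f_{*}$ to be surjective, hence $g_{*} = 0$ by exactness, so $\Hom_{\mathscr{T}}(W', V)$ embeds via $h_{*}$ into $\ker(f[1]_{*})$. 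Using the shift isomorphism together with $W'[-1] \in \mathscr{W}$ (again by thickness), the vanishing $\ker(f[1]_{*}) = 0$ is equivalent to the injectivity of $f_{*} \colon \Hom_{\mathscr{T}}(\mathscr{W}, W) \to \Hom_{\mathscr{T}}(\mathscr{W}, X)$.

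The crux of the argument is this last injectivity, which does not follow from the approximation property alone. Here I would use the Krull-Schmidt hypothesis to take $f$ to be a \emph{minimal} right $\mathscr{W}$-approximation, so that every $\sigma \colon W \to W$ satisfying $f\sigma = f$ is an automorphism. Given a putative non-trivial $\beta \colon W'' \to W$ with $W'' \in \mathscr{W}$ and $f\beta = 0$, the vanishing $f\beta = 0$ factors $\beta$ through the rotated connecting map $V[-1] \to W$; combining this with the automorphism $\bigl(\begin{smallmatrix} \mathrm{id}_{W} & \beta \\ 0 & \mathrm{id}_{W''} \end{smallmatrix}\bigr)$ of $W \oplus W''$ and using thickness to keep the intermediate octahedral cones inside $\mathscr{W}$, one should extract from $\beta$ a non-trivial idempotent endomorphism $\sigma$ of $W$ with $f\sigma = f$, contradicting minimality unless $\beta = 0$. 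The main obstacle is carrying out this bookkeeping cleanly—especially the step where the factorization through $V[-1]$ is converted into an actual decomposition of $W$—which is precisely the content of Iyama-Yoshino's Proposition~2.3(1), and is the reason the paper appeals to the external reference rather than reproving the result in situ.
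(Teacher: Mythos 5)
Your overall plan is correct and is essentially the standard Wakamatsu-type argument: the formal t-structure checks are fine, and the substantive content is correctly isolated — namely that for a \emph{minimal} right $\mathscr{W}$-approximation $f \colon W \to X$ (which exists by Krull--Schmidt plus contravariant finiteness), the induced map $f_* \colon \Hom_{\mathscr{T}}(\mathscr{W}, W) \to \Hom_{\mathscr{T}}(\mathscr{W}, X)$ must be injective. Your long-exact-sequence reduction of ``cone $V$ lies in $\mathscr{W}^\perp$'' to this injectivity, using thickness to replace $W'$ by $W'[-1]$, is exactly right, and you are also right that without minimality the statement fails (add a superfluous summand $W_0$ to $W$ and the cone picks up $W_0[1] \in \mathscr{W}$). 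Note that the paper itself does not prove the lemma — it cites Iyama--Yoshino — so the comparison here is against the standard argument rather than a proof in the text.

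The gap is in the closing step. The sketch with the automorphism $\bigl(\begin{smallmatrix}\mathrm{id}_W & \beta \\ 0 & \mathrm{id}_{W''}\end{smallmatrix}\bigr)$ of $W \oplus W''$ does not lead anywhere: composing with $(f,0)$ just recovers $(f,0)$, and $(f,0)$ is not a minimal approximation, so no contradiction with minimality of $f$ arises; nor is it clear how a ``non-trivial idempotent of $W$'' is supposed to appear from an automorphism of $W\oplus W''$. The clean way to close the argument is: complete $\beta \colon W'' \to W$ to a triangle $W'' \xrightarrow{\beta} W \xrightarrow{p} C \to W''[1]$; by thickness $C \in \mathscr{W}$; since $f\beta = 0$, $f$ factors as $f = f'p$ with $f'\colon C \to X$; since every map $\mathscr{W} \to X$ factors through $f$ and hence through $f'$, the map $f'$ is again a right $\mathscr{W}$-approximation, so $f' = f\gamma$ for some $\gamma\colon C \to W$; then $f = f\gamma p$, and right minimality of $f$ forces $\gamma p$ to be an automorphism of $W$, so $p$ is a split monomorphism; finally, in the rotated triangle $W \xrightarrow{p} C \to W''[1] \xrightarrow{-\beta[1]} W[1]$, the split monomorphism $p$ forces the connecting map $-\beta[1]$ to vanish, hence $\beta = 0$. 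This is the step your proposal gestures at but does not actually carry out.
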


\begin{lemma} \label{lemma.contr_fin_preserved}
Let $\mathscr{T}$ be a triangulated category and $\mathscr{X}$ and $\mathscr{Y}$
 be full subcategories of $\mathscr{T}$. If $\mathscr{X}$ and $\mathscr{Y}$ are contravariantly finite in $\mathscr{T}$, then so is $\mathscr{X} * \mathscr{Y}$.
\end{lemma}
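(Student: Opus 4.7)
The strategy is to produce an explicit right $(\mathscr{X} * \mathscr{Y})$-approximation of an arbitrary $T \in \mathscr{T}$ by successively peeling off an $\mathscr{X}$-part and then a $\mathscr{Y}$-part. First I choose a right $\mathscr{X}$-approximation $\alpha \colon X_0 \to T$ and complete it to a triangle $X_0 \xrightarrow{\alpha} T \to C \xrightarrow{\delta} X_0[1]$. Next I take a right $\mathscr{Y}$-approximation $\beta \colon Y_0 \to C$, and I complete the composition $\delta\beta \colon Y_0 \to X_0[1]$ to a triangle $X_0 \xrightarrow{\iota} Z_0 \xrightarrow{p} Y_0 \xrightarrow{\delta\beta} X_0[1]$. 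By construction $Z_0 \in \mathscr{X} * \mathscr{Y}$, and TR3 applied to the morphism of triangles given by $\mathrm{id}_{X_0}$ and $\beta$ produces a map $f_0 \colon Z_0 \to T$ satisfying the two compatibilities $f_0 \iota = \alpha$ and $(T \to C) \circ f_0 = \beta \circ p$. These will be the only facts about $f_0$ used below.

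It then remains to verify that $f_0$ is a right approximation. Given $g \colon Z' \to T$ with $Z'$ sitting in a triangle $X' \xrightarrow{a} Z' \xrightarrow{b} Y' \to X'[1]$ in $\mathscr{X} * \mathscr{Y}$, I plan the following diagram chase. By the approximation property of $\alpha$, the composite $ga$ factors through $\alpha$, so $(T \to C) \circ g \circ a = 0$ and hence $(T \to C) \circ g$ factors as $\bar g \circ b$ for some $\bar g \colon Y' \to C$. Using the approximation property of $\beta$, this $\bar g$ factors as $\beta \psi'$ for some $\psi' \colon Y' \to Y_0$. The resulting map $\psi' b \colon Z' \to Y_0$ is killed by $\delta\beta$ (because $(T \to C) \circ g$ is killed by $\delta$), so by the triangle defining $Z_0$ it lifts to a map $\psi \colon Z' \to Z_0$ with $p \psi = \psi' b$.

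Finally, consider the discrepancy $g - f_0 \psi \colon Z' \to T$. The compatibility $(T \to C) \circ f_0 = \beta p$ shows that $g - f_0 \psi$ is killed by $T \to C$, and hence factors as $\alpha \eta$ for some $\eta \colon Z' \to X_0$. Applying the other compatibility $f_0 \iota = \alpha$ then yields $g = f_0 (\psi + \iota \eta)$, completing the factorization. The main obstacle I anticipate is not any single step, but keeping straight the interplay between the two TR3-compatibilities $f_0 \iota = \alpha$ and $(T \to C) f_0 = \beta p$: the first is needed only at the very end to absorb the leftover $\alpha \eta$ into an adjustment of $\psi$, while the second is what makes the discrepancy computable in the first place. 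Once this bookkeeping is set up correctly, everything reduces to repeated use of the two approximation properties and the long exact sequences of the triangles involved.
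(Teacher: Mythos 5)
Your construction coincides with the paper's: the object $Z_0$ you build is exactly the $H$ that the paper obtains from the octahedral axiom (the cone of $(\delta\beta)[-1]\colon Y_0[-1]\to X_0$), so the two proofs produce the same right approximation. The only difference is in the verification: the paper cites a ghost-lemma argument (the composition $T\to T'\to T''$ is a contravariant $\mathscr{X}*\mathscr{Y}$-ghost, and $H\to T\to T''$ is a triangle), whereas you unroll that lemma into the explicit diagram chase using the two compatibilities $f_0\iota=\alpha$ and $\gamma f_0=\beta p$. Both are correct; yours is more self-contained, the paper's is shorter once the ghost lemma is taken as known.
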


\begin{proof}
Let $T \in \mathscr{T}$, and let $X \to T$ be right $\mathscr{X}$-approximation. Denote by $T'$ the cone of this approximation. Let $Y \to T'$ be a right $\mathscr{Y}$-approximation, with cone $T''$. We obtain the following octahedron:
\[ \begin{tikzpicture}[xscale=2,yscale=-1.2]
 \node (A) at (1,0) {$T''[-1]$};
 \node (B) at (2,0) {$T''[-1]$};
 \node (C) at (0,1) {$X$};
 \node (D) at (1,1) {$H$};
 \node (E) at (2,1) {$Y$};
 \node (F) at (3,1) {$X[1]$};
 \node (G) at (0,2) {$X$};
 \node (H) at (1,2) {$T$};
 \node (I) at (2,2) {$T'$};
 \node (J) at (3,2) {$X[1]$};
 \node (K) at (1,3) {$T''$};
 \node (L) at (2,3) {$T''$};
 \draw [double distance=1.5pt] (A) -- (B);
 \draw [->] (C) -- (D);
 \draw [->] (D) -- (E);
 \draw [->] (E) -- (F);
 \draw [->] (G) -- (H);
 \draw [->] (H) -- (I);
 \draw [->] (I) -- (J);
 \draw [double distance=1.5pt] (K) -- (L);
 \draw [double distance=1.5pt] (C) -- (G);
 \draw [->] (A) -- (D);
 \draw [->] (D) -- (H);
 \draw [->] (H) -- (K);
 \draw [->] (B) -- (E);
 \draw [->] (E) -- (I);
 \draw [->] (I) -- (L);
 \draw [double distance=1.5pt] (F) -- (J);
\end{tikzpicture} \]
Since the map $T \to T'$ is a contravariant $\mathscr{X}$-ghost, and the map $T' \to T''$ is a contravariant $\mathscr{Y}$-ghost it follows from the ghost lemma that the composition $T \to T''$ is a contravariant $\mathscr{X} * \mathscr{Y}$-ghost. Since $H \in \mathscr{X} * \mathscr{Y}$ it follows that $H \to T$ is a right $\mathscr{X} * \mathscr{Y}$-approximation.
\end{proof}

We are now ready to complete the proof of Theorem~\ref{thm.equiv_via_tilting}.

\begin{proof}[Proof of Theorem~\ref{thm.equiv_via_tilting}]
The first claim is part of the statement of Theorem~\ref{thm.identify_endoring}.

For the second statement we denote by $\mathtt{F}$ the composition
\[ \mathtt{F} \colon \mathscr{D}_{\Gamma} \tol[40]{\scriptstyle T \otimes_{\Gamma}^L -} \mathscr{D}_{\mathscr{U}\psL} \tol[40]{\scriptstyle \text{proj}} \stabCM(\mathscr{U}\psL). \]
Then $\mathtt{F}$ induces an equivalence $\perf \Gamma \to \mathscr{W} := \thick(T)$ by Proposition~\ref{prop.is_tilting_object}. Since by Proposition~\ref{prop.fingld} we have $\gld \Gamma \leqslant n+1$ it follows that $\mathscr{D}_{\Gamma} = \perf \Gamma$. It remains to prove $\stabCM(\mathscr{U}\psL) = \mathscr{W}$.

Let $\mathscr{X} := \add \{ \Gamma[i] | i \in \mathbb{Z} \} \subseteq \mathscr{D}_{\Gamma}$, and $\mathscr{Y} := \add \{ T[i] | i \in \mathbb{Z} \}\subseteq \stabCM(\mathscr{U}\psL)$. Since $\gld \Gamma \leqslant n+1$ we have $\mathscr{D}_{\Gamma} = \mathscr{X} * \cdots * \mathscr{X}$ ($n+2$ factors) by \cite[Theorem~5.5]{ABIM}. Hence, by the equivalence $\mathscr{D}_{\Gamma} \to \mathscr{W}$ we also have $\mathscr{W} = \mathscr{Y} * \cdots * \mathscr{Y}$ ($n+2$ factors).

Since $\stabCM(\mathscr{U}\psL)$ is $\Hom$-finite, and since for any $M \in \stabCM(\mathscr{U}\psL)$ the spaces $\underline{\Hom}_{\mathscr{U}\psL}(T, M[i])$ are non-zero for only finitely many $i$, the subcategory $\mathscr{Y} \subseteq \stabCM(\mathscr{U}\psL)$ is contravariantly finite in $\stabCM(\mathscr{U}\psL)$. By Lemma~\ref{lemma.contr_fin_preserved} we have that $\mathscr{W}$ is also contravariantly finite in $\stabCM(\mathscr{U}\psL)$. By Lemma~\ref{lemma.wakamatsu_triang} we have a t-structure $(\mathscr{W}, \mathscr{W}^{\perp})$ in $\stabCM(\mathscr{U}\psL)$, with $\mathscr{W}^{\perp} = \{M \in \stabCM(\mathscr{U}\psL) \mid \underline{\Hom}_{\mathscr{U}\psL}(\mathscr{W}, M) = 0\}$. By Proposition~\ref{prop.is_tilting_object} we have $\mathscr{W}^{\perp} = 0$. Thus we have $\stabCM(\mathscr{U}\psL) = \mathscr{W}$.
\end{proof}

\subsubsection{The equivalence via restriction}

\begin{construction} \label{const.for_res}
We denote by $\mathtt{r}$ the functor
\begin{align*}
\mathtt{r} \colon \mathscr{U}\psL & \to[30] \add_{\mathscr{U}\psL} T \approx \proj \Gamma \\
\underset{\mathclap{\text{indec.}}}{U} & \mapsto[30] \left\{ \begin{array}{ll} P_U^{\geqslant 0} & \text{ if } U \in \mathscr{U}^{< 0} \\ 0 & \text{ if } U \in \mathscr{U}^{\geqslant 0} \end{array} \right.
\end{align*}
We denote by $\mathtt{R} \colon \mod \Gamma \to \mod \mathscr{U}\psL$ the restriction functor along $\mathtt{r}$. Clearly $\mathtt{R}$ is exact, so it induces a functor $\mathscr{D}_{\Gamma} \to \mathscr{D}_{\mathscr{U}\psL}$, which will also be called $\mathtt{R}$.
\end{construction}

The aim of this subsection is to prove the following.

\begin{theorem} \label{thm.equiv_via_res}
The composition of $\mathtt{R} \colon \mathscr{D}_{\Gamma} \to \mathscr{D}_{\mathscr{U}\psL}$ with projection $\mathscr{D}_{\mathscr{U}\psL} \to \stabCM(\mathscr{U}\psL)$ is a triangle equivalence.
\end{theorem}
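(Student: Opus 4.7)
The strategy adapts Corollary~\ref{corollary.stabmodisderived}(2) to the vosnex setting. Since $\mathtt{R}$ is exact on modules, the composition $\Phi := \pi \circ \mathtt{R} \colon \mathscr{D}_\Gamma \to \stabCM(\mathscr{U}\psL)$ (with $\pi$ the projection) is automatically a triangle functor, so the task reduces to showing that it sends the tilting generator $\Gamma \in \mathscr{D}_\Gamma$ to a tilting object of $\stabCM(\mathscr{U}\psL)$ with endomorphism ring isomorphic to $\Gamma$ (compatibly with the natural map).

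Unwinding definitions, $\mathtt{R}(\Gamma)$ is a $\mathscr{U}\psL$-module supported on the indecomposable summands $V_k$ of the object $V$ from Construction~\ref{const.tiltingobj}; on each such $V_k$ it satisfies
\[ \mathtt{R}(\Gamma)(V_k) = \Hom_{\mathscr{U}\psL}(P_{V_k}^{\geqslant 0}, T) = e_k \Gamma, \]
since $\mathtt{r}(V_k) = P_{V_k}^{\geqslant 0} = T_k$ is an indecomposable summand of $T$. To establish the tilting conditions $\underline{\End}_{\mathscr{U}\psL}(\mathtt{R}(\Gamma)) \iso \Gamma$ and $\underline{\Hom}_{\mathscr{U}\psL}(\mathtt{R}(\Gamma), \mathtt{R}(\Gamma)[i]) = 0$ for $i \neq 0$, I would adapt the support-based arguments of Lemmas~\ref{lemma.no_neg_selfext}--\ref{lemma.no_pos_selfext}: those proofs only use that $T$ is of the form $P_W^{\geqslant 0}$ for $W \in \mathscr{U}\psL^{<0}$, and analogous vanishing statements apply to $\mathtt{R}(\Gamma)$ via the interaction of $\mathbb{S}$ and the truncation $(-)^{\geqslant 0}$ with the support stratification $\mathscr{U}\psL = \bigvee_i \mathscr{U}\psL^i$.

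Once $\mathtt{R}(\Gamma)$ is known to be a tilting object with endomorphism ring $\Gamma$, the proof concludes as in the last paragraph of the proof of Theorem~\ref{thm.equiv_via_tilting}: since $\gld \Gamma \leqslant n+1$ (Proposition~\ref{prop.fingld}), $\Phi$ is fully faithful on $\mathscr{D}_\Gamma = \thick(\Gamma)$, and density follows from $\thick \mathtt{R}(\Gamma) = \stabCM(\mathscr{U}\psL)$ via a Wakamatsu-type argument (Lemmas~\ref{lemma.wakamatsu_triang}, \ref{lemma.contr_fin_preserved}) together with the analogue of Proposition~\ref{prop.T_generates} for $\mathtt{R}(\Gamma)$. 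The main obstacle is precisely the verification of the tilting properties of $\mathtt{R}(\Gamma)$: although $\mathtt{R}(\Gamma)$ and the tilting object $T$ of Construction~\ref{const.tiltingobj} are supported on disjoint strata of $\mathscr{U}\psL$ (namely $\mathscr{U}\psL^{<0}$ versus $\mathscr{U}\psL^{\geqslant 0}$), one expects $\mathtt{R}(\Gamma) \iso T[m]$ for some shift $m$ in $\stabCM(\mathscr{U}\psL)$, which would immediately finish the argument; proving this identification explicitly via the short exact sequence $T \mono P_V \epi P_V^{<0}$ and the SES $P_U^{\geqslant 0} \mono P_U \epi P_U^{<0}$ is the combinatorially delicate step.
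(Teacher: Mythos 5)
You correctly identify the shape of the argument and, crucially, the fact that should drive it: one expects $\mathtt{R}(\Gamma)$ to be a shift of the tilting object $T$ in $\stabCM(\mathscr{U}\psL)$ (indeed $\mathtt{R}(\Gamma) \iso T[1]$), which would reduce Theorem~\ref{thm.equiv_via_res} to Theorem~\ref{thm.equiv_via_tilting}. That is precisely what the paper proves, and in fact it proves the stronger statement (Proposition~\ref{prop.equivalences_same}) that the two compositions $T[1]\otimes_\Gamma^L - $ and $\mathtt{R}$ agree as functors, not just that both are equivalences. However, you flag that identification as ``the combinatorially delicate step'' and do not supply it; this is the genuine gap.

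The intermediate objects you gesture at --- the short exact sequences $T \mono P_V \epi P_V^{<0}$ and $P_U^{\geqslant 0}\mono P_U \epi P_U^{<0}$ --- do not directly connect $T$ to $\mathtt{R}(\Gamma)$: note that $T$ lives in $\mod\mathscr{U}\psL^{\geqslant 0}$ while $\mathtt{R}(\Gamma)$ lives in $\mod\mathscr{U}\psL^{<0}$, and neither $P_V$ nor $P_V^{<0}$ carries the right $\Gamma^{\op}$-action. What is actually needed is the bimodule $\sigma^*T = \Hom_{\mathscr{U}\psL}(P_-^{\geqslant 0},T)$, which simultaneously has $(\sigma^*T)^{\geqslant 0}\iso T$ and $(\sigma^*T)^{<0}\iso\mathtt{R}(\Gamma)$ (Lemmas~\ref{lemma.same_equiv_t} and~\ref{lemma.same_equiv_res}), giving the short exact sequence $T \mono \sigma^*T \epi \mathtt{R}(\Gamma)$ of $\mathscr{U}\psL\otimes_k\Gamma^{\op}$-modules. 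The substance is then the proof that $\sigma^*T\in\perf\mathscr{U}\psL$, which uses Serre duality together with the Iwanaga--Gorenstein property (Lemma~\ref{lem.selinj_dim_1}) to rewrite $\sigma^*T$ as $D R\Hom_{\mathscr{U}\psL}(\leftsub{\mathscr{U}\psL}{\mathbb{S}}^- P_V, P_-^{\geqslant 0})$ and conclude by landing in $\thick(\inj\mathscr{U}\psL)\subseteq\perf\mathscr{U}\psL$. None of this appears in your sketch.

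Finally, your fallback route --- re-running Lemmas~\ref{lemma.no_neg_selfext}--\ref{lemma.no_pos_selfext} for $\mathtt{R}(\Gamma)$ directly --- would require new arguments, not the claimed ``analogous'' ones: those lemmas exploit the specific form $T=P_V^{\geqslant 0}$ (a truncation of a projective toward $\mathscr{U}\psL^{\geqslant 0}$, which is in particular a syzygy and hence lies in $\CM(\mathscr{U}\psL)$), whereas $\mathtt{R}(\Gamma)$ is supported on $\mathscr{U}\psL^{<0}$ and is not of that shape, nor a priori Cohen--Macaulay. One can of course deduce the desired vanishing a posteriori from $\mathtt{R}(\Gamma)\iso T[1]$ in the stable category --- but that is exactly the isomorphism you would need to prove first.
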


We will actually show the following more precise statement, relating this equivalence to the one induced by $T$.

\begin{proposition} \label{prop.equivalences_same}
In the setup above both compositions in the following diagram are naturally isomorphic, where both right functors are natural projection.
\[ \begin{tikzpicture}[xscale=3,yscale=.5]
 \node (A) at (0,0) {$\mathscr{D}_{\Gamma}$};
 \node (B1) at (1,1) {$\mathscr{D}_{\mathscr{U}\psL}$};
 \node (B2) at (1,-1) {$\mathscr{D}_{\mathscr{U}\psL}$};
 \node (C) at (2,0) {$\stabCM(\mathscr{U}\psL)$};
 \draw [->] (A) -- node [above left] {$\scriptstyle T[1] \otimes_{\Gamma}^L -$} (B1);
 \draw [->] (A) -- node [below left] {$\mathtt{R}$} (B2);
 \draw [->>] (B1) -- (C);
 \draw [->>] (B2) -- (C);
\end{tikzpicture} \]
\end{proposition}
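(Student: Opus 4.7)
The plan is to invoke the universal property of the derived category. Both compositions in the diagram are triangle functors $\mathscr{D}_\Gamma \to \stabCM(\mathscr{U}\psL)$, and since $\mathscr{D}_\Gamma = \perf \Gamma$ (by $\gld \Gamma \leqslant n+1$, Proposition~\ref{prop.fingld}) is generated as a thick triangulated subcategory by the free module $\Gamma$, it will suffice to exhibit a $\Gamma$-equivariant isomorphism $T[1] \iso \mathtt{R}(\Gamma)$ in $\stabCM(\mathscr{U}\psL)$ (these being the respective images of $\Gamma$ under the two compositions). Because the upper composition is already known to be an equivalence by Theorem~\ref{thm.equiv_via_tilting}, any such $\Gamma$-equivariant isomorphism extends uniquely to a natural isomorphism of the two triangle functors.

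First I would identify both sides concretely. The short exact sequence $0 \to T = P_V^{\geqslant 0} \to P_V \to P_V^{<0} \to 0$ (Observation~\ref{obs.geq_<}) becomes a triangle in $\stabCM(\mathscr{U}\psL)$ in which $P_V$ vanishes (being projective), so $T[1] \iso P_V^{<0}$ there. Unwinding the definition of $\mathtt{r}$ shows that $\mathtt{R}(\Gamma)$ is the $\mathscr{U}\psL$-module $U \mapsto \Hom_{\mathscr{U}}(P_U^{\geqslant 0}, T)$ for indecomposable $U \in \mathscr{U}\psL^{<0}$, and zero on $\mathscr{U}\psL^{\geqslant 0}$. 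I would then define a natural epimorphism $\pi \colon P_V^{<0} \twoheadrightarrow \mathtt{R}(\Gamma)$ as follows: an element $\alpha \in P_V^{<0}(U) = \Hom_{\mathscr{U}}(U,V)$ induces via Yoneda a map $P_U \to P_V$, whose restriction $P_U^{\geqslant 0} \to P_V$ has image supported on $\mathscr{U}\psL^{\geqslant 0}$ and hence factors uniquely through $P_V^{\geqslant 0} = T$. Applying $\Hom_{\mathscr{U}}(-, T)$ to $0 \to P_U^{\geqslant 0} \to P_U \to P_U^{<0} \to 0$, and using $\Hom_{\mathscr{U}}(P_U, T) = T(U) = 0$ together with $\Ext^1_{\mathscr{U}}(P_U, T) = 0$ for $U \in \mathscr{U}\psL^{<0}$, identifies $\pi$ as surjective with kernel $K$ satisfying $K(U) = \Hom_{\mathscr{U}}(P_U^{<0}, P_V)$. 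Both $\pi$ and this identification are manifestly natural in $U$ and $\Gamma$-equivariant via post-composition by $\End_{\mathscr{U}}(T) = \Gamma$.

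The main obstacle is showing $K \in \perf \mathscr{U}\psL$, so that the short exact sequence $0 \to K \to P_V^{<0} \to \mathtt{R}(\Gamma) \to 0$ becomes an isomorphism $P_V^{<0} \iso \mathtt{R}(\Gamma)$ in $\stabCM(\mathscr{U}\psL)$. My plan is to unpack $K(U) = \{\alpha \colon U \to V \mid \alpha \circ g = 0 \text{ for all } g \colon W \to U \text{ with } W \in \mathscr{U}\psL^{\geqslant 0}\}$ and, via a right $\mathscr{U}\psL^{\geqslant 0}$-approximation triangle $W_U \to U \to C_U \to W_U[1]$ in $\mathscr{D}_\Lambda$, identify $K(U)$ with $\Hom_{\mathscr{D}_\Lambda}(C_U, V)$ (up to a contribution from $\Hom(W_U[1], V)$ that should vanish for a minimal approximation). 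The vosnex property, through Lemma~\ref{lemma.equiv_vosnex} (which gives $\mathbb{S}^{-1} \mathscr{U}\psL \subseteq \mathscr{U}\psL[-1] * \mathscr{U}\psL$), will then pin down the location of $C_U$ well enough to realize $K$ as a bounded complex of representable $\mathscr{U}\psL$-modules, hence as a perfect complex. Combining this with the previous steps produces the $\Gamma$-equivariant isomorphism $T[1] \iso \mathtt{R}(\Gamma)$, which by the first paragraph extends to a natural isomorphism of the two triangle functors.
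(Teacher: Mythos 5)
Your opening reduction matches the paper in spirit: both proofs reduce to exhibiting a bimodule-level relation between $T[1]$ and $\mathtt{R}(\Gamma)$ modulo $\perf\mathscr{U}\psL$, and the identification $T[1]\iso P_V^{<0}$ via the short exact sequence $T\mono P_V\epi P_V^{<0}$ is fine. The gap is in the surjectivity of your map $\pi$. For $U\in\mathscr{U}\psL^{<0}$ your $\pi(U)$ is precisely restriction along $P_U^{\geqslant 0}\mono P_U$, composed with the identification $\Hom_{\mathscr{U}\psL}(P_U^{\geqslant 0},P_V)=\Hom_{\mathscr{U}\psL}(P_U^{\geqslant 0},T)$ (by support). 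Applying $\Hom_{\mathscr{U}\psL}(-,P_V)$ to $P_U^{\geqslant 0}\mono P_U\epi P_U^{<0}$ yields
\[ 0 \to[30] \Hom_{\mathscr{U}\psL}(P_U^{<0},P_V) \to[30] \Hom_{\mathscr{U}\psL}(P_U,P_V) \tol[30]{\pi(U)} \Hom_{\mathscr{U}\psL}(P_U^{\geqslant 0},P_V) \to[30] \Ext^1_{\mathscr{U}\psL}(P_U^{<0},P_V) \to[30] 0, \]
so $\ker\pi(U)=\Hom_{\mathscr{U}\psL}(P_U^{<0},P_V)$ as you say, but $\cok\pi(U)=\Ext^1_{\mathscr{U}\psL}(P_U^{<0},P_V)$. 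The two vanishings you cite ($\Hom_{\mathscr{U}\psL}(P_U,T)=0$ and $\Ext^1_{\mathscr{U}\psL}(P_U,T)=0$) come from applying $\Hom_{\mathscr{U}\psL}(-,T)$ to the same sequence and only give the identification $\mathtt{R}(\Gamma)(U)\iso\Ext^1_{\mathscr{U}\psL}(P_U^{<0},T)$; they say nothing about $\Ext^1_{\mathscr{U}\psL}(P_U^{<0},P_V)$. Since $\mathscr{U}\psL$ is merely Iwanaga-Gorenstein of dimension $\leqslant 1$ in this section, $P_V$ is generally \emph{not} injective, so this $\Ext^1$ has no reason to vanish, and $\pi$ need not be surjective. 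Your subsequent plan to show $K\in\perf\mathscr{U}\psL$ via $\mathscr{U}\psL^{\geqslant 0}$-approximations is also only sketched.

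The paper's proof avoids this entirely: instead of comparing $P_V^{<0}$ and $\mathtt{R}(\Gamma)$ by an ad hoc map, it introduces the intermediate bimodule $\sigma^*T=\Hom_{\mathscr{U}\psL}(P_-^{\geqslant 0},T)$ and observes by direct support computations (Lemmas~\ref{lemma.same_equiv_t} and \ref{lemma.same_equiv_res}) that $(\sigma^*T)^{\geqslant 0}\iso T$ and $(\sigma^*T)^{<0}\iso\mathtt{R}(\Gamma)$ as $\mathscr{U}\psL\otimes_k\Gamma^{\op}$-modules. The truncation exact sequence of Observation~\ref{obs.geq_<} then \emph{automatically} gives the required triangle $T\to\sigma^*T\to\mathtt{R}(\Gamma)\to T[1]$, with no surjectivity to verify, and the only remaining point is $\sigma^*T\in\perf\mathscr{U}\psL$, which follows from Serre duality and Lemma~\ref{lem.selinj_dim_1}. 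You should either adopt this intermediate object or genuinely prove $\Ext^1_{\mathscr{U}\psL}(P_U^{<0},P_V)=0$; at present the claimed surjectivity is unsupported.
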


For the proof we will need the following construction.

\begin{construction}
We denote by $\sigma = P_{-}^{\geqslant 0}$ the functor
\[ \mathscr{U}\psL \to[30] \mod \mathscr{U}\psL \colon U \mapsto[30] P_U^{\geqslant 0}. \]
For $X \in \mod \mathscr{U}$ we denote by $\sigma^* M = \Hom_{\mathscr{U}\psL}(P_-^{\geqslant 0}, M)$ the module given by
\[ \mathscr{U}\psL^{\op} \to[30] \mod k \colon U \mapsto[30] \Hom_{\mathscr{U}\psL}(P_U^{\geqslant 0}, M). \]
We observe that $\sigma^* M$ is a $\mathscr{U}\psL \otimes_k \End_{\mathscr{U}\psL}(M)^{\op}$-module.
\end{construction}

We have the following observations relating $\sigma^*$ to the claim of Proposition~\ref{prop.equivalences_same}.

\begin{lemma} \label{lemma.same_equiv_t}
We have $(\sigma^* T)^{\geqslant 0} \iso T$ as $\mathscr{U}\psL \otimes_k \Gamma^{\op}$-modules.
\end{lemma}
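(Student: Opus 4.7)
The plan is to unravel the definitions and observe that the isomorphism is essentially a direct application of Yoneda's lemma, once we check that $P_U$ is already supported on $\mathscr{U}\psL^{\geqslant 0}$ for $U \in \mathscr{U}\psL^{\geqslant 0}$. Indeed, for such $U$ and any $W \in \mathscr{U}\psL^{< 0}$ we have $P_U(W) = \Hom_{\mathscr{D}}(W, U)$, and by the key vanishing $\Hom_{\mathscr{D}}(\mathscr{U}\psL^{\leqslant \ell}, \mathscr{U}\psL^{> \ell}) = 0$ from Remark~\ref{remark.pic_U} (applied with $\ell = -1$) this is zero. Hence $P_U^{\geqslant 0} = P_U$ whenever $U \in \mathscr{U}\psL^{\geqslant 0}$.

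Therefore, for $U \in \mathscr{U}\psL^{\geqslant 0}$, Yoneda's lemma gives a natural isomorphism
\[ (\sigma^* T)(U) = \Hom_{\mathscr{U}\psL}(P_U^{\geqslant 0}, T) = \Hom_{\mathscr{U}\psL}(P_U, T) \iso T(U). \]
Since $T = P_V^{\geqslant 0}$ is supported on $\mathscr{U}\psL^{\geqslant 0}$ by construction, both $(\sigma^* T)^{\geqslant 0}$ and $T$ vanish on $\mathscr{U}\psL^{< 0}$ and agree on $\mathscr{U}\psL^{\geqslant 0}$. This already yields $(\sigma^* T)^{\geqslant 0} \iso T$ as $\mathscr{U}\psL$-modules.

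It remains to check compatibility with the right $\Gamma$-action, and this is the only point requiring care. The $\Gamma = \End_{\mathscr{U}\psL}(T)$-action on $\sigma^* T$ is by post-composition on the second slot of $\Hom_{\mathscr{U}\psL}(P_U^{\geqslant 0}, T)$, while the $\Gamma$-action on $T$ is the evaluation of an endomorphism at the component $U$. Under the Yoneda identification $\phi \mapsto \phi_U(\id_U)$, for $\gamma \in \End_{\mathscr{U}\psL}(T)$ we have
\[ (\gamma \circ \phi)_U(\id_U) = \gamma_U(\phi_U(\id_U)), \]
so the two actions agree. The main obstacle is nothing more than keeping the three layers of structure (the module variance, the $\mathscr{U}\psL^{\geqslant 0}$-truncation, and the right $\Gamma$-action) straight, and verifying the vanishing step so that $P_U^{\geqslant 0}$ may be replaced by $P_U$ in the relevant range.
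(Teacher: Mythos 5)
Your proof is correct and follows essentially the same route as the paper: in both cases the key step is the observation that $P_U^{\geqslant 0} = P_U$ when $U \in \mathscr{U}\psL^{\geqslant 0}$, after which the isomorphism is Yoneda. You supply two details the paper leaves implicit (the vanishing $\Hom_{\mathscr{D}}(\mathscr{U}\psL^{< 0}, \mathscr{U}\psL^{\geqslant 0}) = 0$ justifying $P_U^{\geqslant 0} = P_U$, and the $\Gamma$-equivariance of the Yoneda isomorphism), but the argument is the same.
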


\begin{proof}
We have
\begin{align*}
(\sigma^* T)^{\geqslant 0}(\underset{\mathclap{\text{indec.}}}{U}) & = \left\{ \begin{array}{ll} (\sigma^* T)(U) & \text{if } U \in \mathscr{U}\psL^{\geqslant 0} \\ 0 & \text{if } U \in \mathscr{U}\psL^{< 0} \end{array} \right. && (\text{by definition of } ?^{\geqslant 0}) \\
& = \left\{ \begin{array}{ll} \Hom_{\mathscr{U}}(\underbrace{P_U^{\geqslant 0}}_{= P_U}, T) & \text{if } U \in \mathscr{U}\psL^{\geqslant 0} \\ 0 & \text{if } U \in \mathscr{U}\psL^{< 0} \end{array} \right. && (\text{by definition of } \sigma^*) \\
& = T(U) && (\text{since } T \in \mod \mathscr{U}^{\geqslant 0}) \qedhere
\end{align*}
\end{proof}

\begin{lemma} \label{lemma.same_equiv_res}
We have $(\sigma^* T)^{< 0} \iso \mathtt{R}(\Gamma)$ as $\mathscr{U}\psL \otimes_k \Gamma^{\op}$-modules.
\end{lemma}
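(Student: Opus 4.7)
The plan is to compare both $\mathscr{U}\psL \otimes_k \Gamma^{\op}$-modules pointwise on indecomposables of $\mathscr{U}\psL$ and then verify that the resulting pointwise isomorphism is natural in both variables; no serious calculation should be required, only a careful unwinding of definitions.

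First I would evaluate the right hand side $\mathtt{R}(\Gamma)$ on an indecomposable $U \in \mathscr{U}\psL$. By Construction~\ref{const.for_res}, $\mathtt{R}(\Gamma)(U) = \Gamma(\mathtt{r}(U))$, and since $\Gamma = \End_{\mathscr{U}\psL}(T)$ may be regarded via Yoneda as the representable functor $\Hom_{\mathscr{U}\psL}(-,T)$ on $\add_{\mathscr{U}\psL}T$, this gives
\[ \mathtt{R}(\Gamma)(U) \;=\; \begin{cases} \Hom_{\mathscr{U}\psL}(P_U^{\geqslant 0},\, T) & \text{if } U \in \mathscr{U}\psL^{< 0}, \\ 0 & \text{if } U \in \mathscr{U}\psL^{\geqslant 0}. \end{cases} \]
On the other hand, by definition of $\sigma^*$ and of $(-)^{< 0}$ (Observation~\ref{obs.geq_<}), the left hand side is
\[ (\sigma^* T)^{< 0}(U) \;=\; \begin{cases} \Hom_{\mathscr{U}\psL}(P_U^{\geqslant 0},\, T) & \text{if } U \in \mathscr{U}\psL^{< 0}, \\ 0 & \text{if } U \in \mathscr{U}\psL^{\geqslant 0}. \end{cases} \]
So the two modules agree pointwise via the identity.

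Next I would check naturality in $U$. Given a morphism $\phi \colon U \to U'$ in $\mathscr{U}\psL$, in both modules the structure map is obtained by precomposition with the induced morphism $P_U^{\geqslant 0} \to P_{U'}^{\geqslant 0}$: on $\mathtt{R}(\Gamma)$ this is precomposition with $\mathtt{r}(\phi)$, while on $(\sigma^* T)^{< 0}$ it is precomposition with $\sigma(\phi) = P_\phi^{\geqslant 0}$ (followed by cutting off the $\geqslant 0$ part, which is automatic here). The cases where $U$ or $U'$ sits in $\mathscr{U}\psL^{\geqslant 0}$ trivially give zero maps on both sides. Naturality in the $\Gamma^{\op}$-variable is equally direct: the right $\Gamma$-action on both sides is given by postcomposition with endomorphisms of $T$, using that $T$ itself is the $\mathscr{U}\psL \otimes_k \Gamma^{\op}$-bimodule carrying the defining right $\Gamma$-action.

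There is no real obstacle here; the only mild subtlety is to make sure that the identification $\add_{\mathscr{U}\psL}T = \add\{P_U^{\geqslant 0} \mid U \in \mathscr{U}\psL^{< 0}\}$ used to interpret $\mathtt{r}$ as taking values in $\proj \Gamma$ is compatible with the Yoneda description of $\Gamma$ exploited above — this is guaranteed by Construction~\ref{const.tiltingobj}, since the only indecomposables $U \in \mathscr{U}\psL^{<0}$ not appearing as summands of $V$ are exactly those with $P_U^{\geqslant 0} = 0$, so they contribute nothing to either side.
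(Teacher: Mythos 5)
Your proof is correct and takes essentially the same route as the paper: both compute $(\sigma^* T)^{<0}$ and $\mathtt{R}(\Gamma)$ pointwise on indecomposables of $\mathscr{U}\psL$, finding $\Hom_{\mathscr{U}\psL}(P_U^{\geqslant 0}, T)$ when $U \in \mathscr{U}\psL^{<0}$ and $0$ otherwise. The paper stops at the pointwise identification, treating the bimodule naturality as implicit, whereas you spell out the naturality checks and the compatibility with Construction~\ref{const.tiltingobj}; these additions are correct but do not change the argument.
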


\begin{proof}
Similar to the proof of the previous lemma we have
\begin{align*}
(\sigma^* T)^{< 0}(\underset{\mathclap{\text{indec.}}}{U}) & = \left\{ \begin{array}{ll} (\sigma^* T)(U) & \text{if } U \in \mathscr{U}\psL^{< 0} \\ 0 & \text{if } U \in \mathscr{U}\psL^{\geqslant 0} \end{array} \right. && (\text{by definition of } ?^{< 0}) \\
& = \left\{ \begin{array}{ll} \Hom_{\mathscr{U}\psL}(P_U^{\geqslant 0}, T) & \text{if } U \in \mathscr{U}\psL^{< 0} \\ 0 & \text{if } U \in \mathscr{U}\psL^{\geqslant 0} \end{array} \right. && (\text{by definition of } \sigma^*)
\end{align*}
Now note that
\[ \mathtt{R}(\Gamma)(\underset{\mathclap{\text{indec.}}}{U}) = \Hom_{\mathscr{U}\psL}(\mathtt{r}(U), T) = \left\{ \begin{array}{ll} \Hom_{\mathscr{U}\psL}(P_U^{\geqslant 0}, T) & \text{if } U \in \mathscr{U}\psL^{< 0} \\ 0 & \text{if } U \in \mathscr{U}\psL^{\geqslant 0} \end{array} \right. \qedhere \]
\end{proof}

\begin{proof}[Proof of Proposition~\ref{prop.equivalences_same}]
It suffices to show that there is a triangle $T \to X \to \mathtt{R}(\Gamma) \to T[1]$ in $\mathscr{D}_{\mathscr{U}\psL \otimes_k \Gamma^{\op}}$ such that $X$ is mapped to $\perf \mathscr{U}\psL$ by the forgetful functor $\mathscr{D}_{\mathscr{U}\psL \otimes_k \Gamma^{\op}} \to \mathscr{D}_{\mathscr{U}\psL}$.

Clearly the short exact sequence
\[ \underbrace{(\sigma^* T)^{\geqslant 0}}_{= T} \mono[30] \sigma^* T \epi[30] \underbrace{(\sigma^* T)^{< 0}}_{= \mathtt{R}(\Gamma)} \]
of $\mathscr{U}\psL \otimes_k \Gamma^{\op}$-modules gives rise to a triangle in $\mathscr{D}_{\mathscr{U}\psL \otimes_k \Gamma^{\op}}$ (here the equalities in the subscript hold by Lemmas~\ref{lemma.same_equiv_t} and \ref{lemma.same_equiv_res}, respectively). It remains to see that $\sigma^* T \in \perf \mathscr{U}\psL$. Indeed we have
\begin{align*}
\sigma^* T & = \Hom_{\mathscr{U}\psL}(P_-^{\geqslant 0}, T) \\
& = \Hom_{\mathscr{U}\psL}(P_-^{\geqslant 0}, P_V) && (\text{since } T = P_V^{\geqslant 0}) \\
& = R \Hom_{\mathscr{U}\psL}(P_-^{\geqslant 0}, P_V) && (\text{since } P_U^{\geqslant 0} \in \CM(\mathscr{U}\psL) \, \forall U) \\
& = D R \Hom_{\mathscr{U}\psL}(\leftsub{\mathscr{U}\psL}{\mathbb{S}}^- P_V, P_-^{\geqslant 0})
\intertext{Now note that since, by Lemma~\ref{lem.selinj_dim_1}, we have $P_V \in \thick(\inj \mathscr{U}\psL)$, and hence $\leftsub{\mathscr{U}\psL}{\mathbb{S}}^- P_V \in \perf \mathscr{U}\psL$, and}
\sigma^* T & \in \thick \{ D R \Hom_{\mathscr{U}\psL}(P_U, P_-^{\geqslant 0}) \mid U \in \mathscr{U}\psL \}.
\end{align*}
Note also that for $U \in \mathscr{U}\psL^{< 0}$ we have $R \Hom_{\mathscr{U}\psL}(P_U, P_-^{\geqslant 0}) = 0$. For $U \in \mathscr{U}\psL^{\geqslant 0}$ we have
\begin{align*}
D R \Hom_{\mathscr{U}\psL}(P_U, P_-^{\geqslant 0}) & = D \Hom_{\mathscr{U}\psL}(P_U, P_-^{\geqslant 0}) \\
& = D \Hom_{\mathscr{U}\psL}(P_U, P_-) \\
& = D \Hom_{\mathscr{D}_{\Lambda}}(U, -) \\
& \in \inj \mathscr{U}\psL \\
& \subseteq \perf \mathscr{U}\psL && (\text{by Lemma~\ref{lem.selinj_dim_1}}) \qedhere
\end{align*}
\end{proof}

\subsection{The \texorpdfstring{$(n+1)$}{(n+1)}-Amiot cluster category and \texorpdfstring{$\stabCM(\widetilde{\Lambda})$}{\_CM\_ Lambda\~{}} }

The construction in this subsection mostly works as in the self-injective case in Section~\ref{subsec.amiot_cluster_selfinj}.

Here we show that the stable category of Cohen-Macaulay $\widetilde{\Lambda}$ modules may be identified with the $(n+1)$-Amiot cluster category of $\Gamma$ as in the following diagram.
\[ \begin{tikzpicture}[xscale=3,yscale=-1.5]
 \node (A) at (0,0) {$\mathscr{D}_{\Gamma}$};
 \node (B) at (1,0) {$\stabCM(\mathscr{U}\psL)$};
 \node (C) at (0,1) {$\mathscr{C}_{\Gamma}^{n+1}$};
 \node (D) at (1,1) {$\stabCM(\widetilde{\Lambda})$};
 \draw (A) -- node [above] {$\approx$} (B);
 \draw (C) -- node [above] {$\approx$} (D);
 \draw [->] (A) -- node [left] {$\pi$} (C);
 \draw [->] (B) -- node [right] {push-down} (D);
\end{tikzpicture} \]

As in Section~\ref{subsec.amiot_cluster_selfinj}, $\widetilde{\Lambda}_P$ is an ideal of $\widetilde{\Lambda}$, hence in particular a $\widetilde{\Lambda} \otimes_k \widetilde{\Lambda}^{\op}$-module. Since $\Lambda$ is a subalgebra of $\widetilde{\Lambda}$, the right action of $\widetilde{\Lambda}$ on $\widetilde{\Lambda}_P$ gives a $k$-algebra homomorphism
\[ \widetilde{\Lambda} \to[30] \End_{\Lambda}( \widetilde{\Lambda}_P ) = \Gamma, \]
and hence a functor
\[ \proj \widetilde{\Lambda} \to[30] \proj \Gamma.\]
We denote by $\mathtt{A} \colon \mathscr{D}_{\Gamma} \to \mathscr{D}_{\widetilde{\Lambda}}$ the induced restriction functor.

\begin{lemma}
We have the following commutative diagram:
\[ \begin{tikzpicture}[xscale=3,yscale=1.5]
\node (A) at (0,1) {$\mathscr{D}_{\Gamma}$};
\node (B) at (1,1) {$\mathscr{D}_{\mathscr{U}\psL}$};
\node (C) at (1,0) {$\mathscr{D}_{\widetilde{\Lambda}}$};
\draw [->] (A) -- node [above] {$\mathtt{R}$} (B);
\draw [->] (B) -- node [right] {push-down} (C);
\draw [->] (A) -- node [below left] {$\mathtt{A}$} (C);
\end{tikzpicture} \]
\end{lemma}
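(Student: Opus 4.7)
The plan is to mimic the proof of Lemma~\ref{lemma.comm_res_pd} from the self-injective case, adapted to the more intricate definition of $\mathtt{R}$ from Construction~\ref{const.for_res}. All three functors are triangulated functors between derived categories of small $k$-categories, and extend to cocontinuous triangulated functors between unbounded derived categories of $\Mod$. By Proposition~\ref{prop.fingld}, $\gld \Gamma \leqslant n+1$, so $\mathscr{D}_\Gamma = \perf \Gamma$ is generated as a triangulated category by the compact object $\Gamma$. Hence it suffices to produce a natural isomorphism $\mathtt{A}(\Gamma) \simeq (\text{push-down} \circ \mathtt{R})(\Gamma)$ of $\widetilde{\Lambda} \otimes_k \Gamma^{\op}$-modules in $\mathscr{D}(\widetilde{\Lambda} \otimes_k \Gamma^{\op})$.

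By construction $\mathtt{A}(\Gamma) \simeq \Hom_{\Lambda}(\widetilde{\Lambda}_P, \widetilde{\Lambda}_P)$, with the left $\widetilde{\Lambda}$-action coming from right multiplication on $\widetilde{\Lambda}_P$. To identify $(\text{push-down} \circ \mathtt{R})(\Gamma)$, first note that $\mathtt{R}(\Gamma)$ is the $\mathscr{U}\psL$-module $U \mapsto \Hom_{\mathscr{U}\psL}(\mathtt{r}(U), T)$, and the push-down $\mathscr{D}_{\mathscr{U}\psL} \to \mathscr{D}_{\widetilde{\Lambda}}$ is the left Kan extension along the covering functor $\mathrm{cov} \colon \mathscr{U}\psL \to \proj \widetilde{\Lambda}$, $U \mapsto \pi U$ (sending projectives $P_U$ to $P_{\pi U}$).

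To avoid constructing an explicit projective resolution of $\mathtt{R}(\Gamma)$, I propose to use the triangle $T \to \sigma^* T \to \mathtt{R}(\Gamma) \to T[1]$ from the proof of Proposition~\ref{prop.equivalences_same}, with $\sigma^* T \in \perf \mathscr{U}\psL$. Applying push-down term-by-term yields a triangle in $\mathscr{D}(\widetilde{\Lambda} \otimes_k \Gamma^{\op})$ whose middle term lies in $\perf \widetilde{\Lambda}$. Comparison with a shift of the triangle in Proposition~\ref{prop.amiot.factorization.conditions}, together with the identification $T(\Lambda) = \widetilde{\Lambda}_P$ from Proposition~\ref{prop.adjoints}(3), should realize the outcome as $\mathtt{A}(\Gamma)$.

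The main obstacle is the careful bookkeeping of all bimodule structures, in particular verifying that the left $\widetilde{\Lambda}$-action induced by the covering $\mathrm{cov}$ agrees with the left $\widetilde{\Lambda}$-action on $\mathtt{A}(\Gamma)$ coming from the algebra map $\widetilde{\Lambda} \to \Gamma$. Tautologically, both actions originate from the right multiplication of $\widetilde{\Lambda}$ on itself, but confirming this requires unwinding the various identifications $\add \pi \Lambda \approx \proj \widetilde{\Lambda}$, $T \approx \Gamma$, and $\add_{\mathscr{U}\psL} T \approx \proj \Gamma$ provided by Theorem~\ref{thm.identify_endoring}.
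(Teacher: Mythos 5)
Your reduction to producing a bimodule isomorphism $\mathtt{A}(\Gamma) \iso (\text{push-down} \circ \mathtt{R})(\Gamma)$ in $\mathscr{D}_{\widetilde{\Lambda} \otimes_k \Gamma^{\op}}$ is sound, and the description $\mathtt{R}(\Gamma)(U) = \Hom_{\mathscr{U}\psL}(\mathtt{r}(U),T)$ is correct. The gap is in the step where you try to realize this isomorphism: pushing down the short exact sequence $T \mono \sigma^*T \epi \mathtt{R}(\Gamma)$ and "comparing with a shift of" the triangle $X \to \mathtt{A}(D\Gamma[-n-1]) \to \mathtt{A}(\Gamma) \to X[1]$ cannot work, because the terms of the two triangles are not the same objects. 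The push-down of $T$ is a $\widetilde{\Lambda}$-module concentrated in cohomological degree $0$ (its value at $\widetilde{\Lambda}$ is $\bigoplus_{i \geqslant 0} \Hom_{\mathscr{D}_\Lambda}(\mathbb{S}_n^i\Lambda, V)$), whereas $\mathtt{A}(D\Gamma[-n-1]) \iso D\Hom_\Lambda(\widetilde{\Lambda}_P,\widetilde{\Lambda})[-n-1]$ sits in degree $n+1$, and no integer shift reconciles the middle terms $\text{push}(\sigma^*T)$ and $X = DR\Hom_\Lambda(\widetilde{\Lambda}_P,\widetilde{\Lambda})$ either (the latter has cohomology in two degrees). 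With the first two terms disagreeing, a triangle comparison would have to presuppose the conclusion on the third term. The detour is also unnecessary: since push-down and $\mathtt{R}$ are both exact, $(\text{push-down}\circ\mathtt{R})(\Gamma)$ is a single module in degree $0$ and can be computed outright.

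The paper's actual argument (transported from Lemma~\ref{lemma.comm_res_pd}) is more elementary and avoids all of this. All three functors are restrictions along functors between (additive closures of) categories of projectives: push-down is restriction along $\proj\widetilde{\Lambda} \to \Add\mathscr{U}\psL$, $\widetilde{\Lambda}\mapsto \bigoplus_{i\in\mathbb{Z}}\mathbb{S}_n^i\Lambda$; $\mathtt{R}$ is restriction along $\mathtt{r}$; $\mathtt{A}$ is restriction along the functor \eqref{eq.funct_widetilde-gamma}. Commutativity therefore reduces to identifying the composite $\proj\widetilde{\Lambda} \to \Add\mathscr{U}\psL \xrightarrow{\mathtt{r}} \Proj\Gamma$, $\widetilde{\Lambda}\mapsto \bigoplus_{i<0}P^{\geqslant 0}_{\mathbb{S}_n^i\Lambda}$, with the functor induced by $\widetilde{\Lambda}\to\Gamma$; this follows from Proposition~\ref{prop.adjoints}(2)--(3) (which sends $P^{\geqslant 0}_{\mathbb{S}_n^i\Lambda}$ to $\tau_n^i\Lambda$ and identifies Hom-spaces) combined with Theorem~\ref{thm.identify_endoring}. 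If you prefer to keep your reduction, the fix is to drop the triangle comparison and instead compute directly: exactness of push-down gives $(\text{push-down}\circ\mathtt{R})(\Gamma)(\widetilde{\Lambda}) = \bigoplus_{i<0}\Hom_{\mathscr{U}\psL}(P^{\geqslant 0}_{\mathbb{S}_n^i\Lambda}, T)$, and Proposition~\ref{prop.adjoints} converts this to $\Hom_\Lambda(\widetilde{\Lambda}_P,\widetilde{\Lambda}_P) = \mathtt{A}(\Gamma)$, naturally in the $\widetilde{\Lambda}$-$\Gamma$-bimodule structures.
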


\begin{proof}
The proof of Lemma~\ref{lemma.comm_res_pd} carries over.
\end{proof}

We now have the following commutative diagram.
\begin{equation} \label{diagram.functors_non-selfinj}
\begin{tikzpicture}[baseline=-.75cm,xscale=4,yscale=2]
 \node (A) at (0,0) {$\mathscr{D}_{\Gamma}$};
 \node (B) at (1,0) {$\mathscr{D}_{\mathscr{U}\psL}$};
 \node (C) at (2,0) {$\stabCM(\mathscr{U}\psL)$};
 \node (D) at (1,-1) {$\mathscr{D}_{\widetilde{\Lambda}}$};
 \node (E) at (2,-1) {$\stabCM(\widetilde{\Lambda})$};
 \node (F) at (0,-1) {$\mathscr{C}_{\Gamma}^{n+1}$};
 \draw [->] (A) -- node [above,pos=.7] {$\mathtt{R}$} (B);
 \draw [->,out=45,in=135] (A) to node [pos=.7,right=15pt] {$\approx$} (C);
 \draw [->>] (B) -- (C);
 \draw [->>] (D) -- (E);
 \draw [->] (B) -- node [fill=white,inner sep=1pt] {push-down} (D);
 \draw [->] (C) -- node [fill=white,inner sep=1pt] {push-down} (E);
 \draw [->] (A) -- node [below left] {$\mathtt{A}$} (D);
 \draw [->] (A) -- node [left] {$\pi$} (F);
\end{tikzpicture}
\end{equation}

We will find a triangle functor $\mathtt{H} \colon \mathscr{C}_{\Gamma}^{n+1} \to \stabCM \widetilde{\Lambda}$ making diagram \eqref{diagram.functors_non-selfinj} commutative. We need the following observation, which is shown by the proof of Proposition~\ref{prop.amiot.factorization.conditions} without any changes.

\begin{proposition} \label{prop.amiot_factor_cond_non-selfinj}
In the setup above there is a triangle
\[ X \to[30] \mathtt{A}(D \Gamma[-n-1]) \to[30] \mathtt{A}(\Gamma) \to[30] X[1] \]
in $\mathscr{D}_{\widetilde{\Lambda} \otimes_k \Gamma^{\op}}$, such that $X$ is mapped to $\perf \widetilde{\Lambda}$ by the forgetful functor $\mathscr{D}_{\widetilde{\Lambda} \otimes_k \Gamma^{\op}} \to \mathscr{D}_{\widetilde{\Lambda}}$.
\end{proposition}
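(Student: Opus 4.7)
The plan is to repeat verbatim the proof of Proposition~\ref{prop.amiot.factorization.conditions}, after verifying that every input used there is still available under the vosnex hypothesis. Concretely, there are four ingredients to recover:
\begin{enumerate}
\item the identification $\Gamma = \End_{\Lambda}(\widetilde{\Lambda}_P)$;
\item the identification $\widetilde{\Lambda}_P \cong \tau_n^-\widetilde{\Lambda}$, which is what lets us rewrite $\mathtt{A}(\Gamma) = \Hom_\Lambda(\widetilde{\Lambda}_P,\widetilde{\Lambda}_P)$ as $D\Ext^n_\Lambda(\widetilde{\Lambda}_P,\widetilde{\Lambda})$;
\item the vanishing $\Ext^i_\Lambda(\widetilde{\Lambda}_P,\widetilde{\Lambda})=0$ for $0<i<n$, needed so that $R\Hom_\Lambda(\widetilde{\Lambda}_P,\widetilde{\Lambda})$ is concentrated in degrees $0$ and $-n$;
\item $\widetilde{\Lambda}_P \in \perf \Lambda$, needed so that the middle term of the resulting triangle restricts into $\perf \widetilde{\Lambda}$.
\end{enumerate}

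\textbf{Step 1.} Ingredient (1) is exactly the content of Theorem~\ref{thm.identify_endoring} (the middle equality there). Ingredient (2) follows from $\widetilde{\Lambda}=\bigoplus_{i\ge 0}\tau_n^{-i}\Lambda$ once we observe that $\Lambda$ is the only projective summand of this decomposition: for $i\ge 1$ the module $\tau_n^{-i}\Lambda=\Ho^0(\mathbb{S}_n^{-i}\Lambda)$ sits in $\mathscr{U}\psL^{<0}$, and the proof of Theorem~\ref{thm.identify_endoring} shows $\Hom_\Lambda(\tau_n^{-i}\Lambda,\Lambda)=0$, so no copy of $\Lambda$ can split off. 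Hence $\tau_n^-\widetilde{\Lambda}\cong \widetilde{\Lambda}_P$ as $\Lambda$-modules (and in fact as $\widetilde{\Lambda}$-$\Gamma$-bimodules, where the right $\Gamma$-action is the natural one from $\End_\Lambda(\widetilde{\Lambda}_P)$).

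\textbf{Step 2.} For ingredient (3), Proposition~\ref{prop.prepare_right_cto}(2) yields $\Ext^i_\Lambda(\widetilde{\Lambda},\widetilde{\Lambda})=0$ for $0<i<n$, and $\widetilde{\Lambda}_P$ is a summand of $\widetilde{\Lambda}$, so the same vanishing holds with $\widetilde{\Lambda}_P$ on the left. Together with $\gld\Lambda\le n$, this forces $R\Hom_\Lambda(\widetilde{\Lambda}_P,\widetilde{\Lambda})$ to have nontrivial cohomology only in degrees $0$ and $-n$. The standard $t$-structure then provides the canonical triangle
\[
D\Ext^n_\Lambda(\widetilde{\Lambda}_P,\widetilde{\Lambda})[n]
\to
DR\Hom_\Lambda(\widetilde{\Lambda}_P,\widetilde{\Lambda})
\to
D\Hom_\Lambda(\widetilde{\Lambda}_P,\widetilde{\Lambda})
\to
D\Ext^n_\Lambda(\widetilde{\Lambda}_P,\widetilde{\Lambda})[n+1]
\]
in $\mathscr{D}_{\widetilde{\Lambda}\otimes_k\Gamma^{\op}}$, which is the sought-after triangle after substituting $\mathtt{A}(\Gamma)=D\Ext^n_\Lambda(\widetilde{\Lambda}_P,\widetilde{\Lambda})$ and $\mathtt{A}(D\Gamma)=D\Hom_\Lambda(\widetilde{\Lambda}_P,\widetilde{\Lambda})$ and setting $X:=DR\Hom_\Lambda(\widetilde{\Lambda}_P,\widetilde{\Lambda})[-n-1]$.

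\textbf{Step 3.} For ingredient (4), $\widetilde{\Lambda}_P\in\perf\Lambda$ since $\gld\Lambda\le n<\infty$, so $\widetilde{\Lambda}_P\in\thick_{\mathscr{D}_\Lambda}(\Lambda)$. Applying the exact functor $DR\Hom_\Lambda(-,\widetilde{\Lambda})\colon \mathscr{D}_\Lambda^{\op}\to\mathscr{D}_{\widetilde{\Lambda}}$, we get $DR\Hom_\Lambda(\widetilde{\Lambda}_P,\widetilde{\Lambda})\in\thick_{\mathscr{D}_{\widetilde{\Lambda}}}\!\bigl(DR\Hom_\Lambda(\Lambda,\widetilde{\Lambda})\bigr)=\thick_{\mathscr{D}_{\widetilde{\Lambda}}}(D\widetilde{\Lambda})=\perf\widetilde{\Lambda}$, which is precisely the required perfectness of $X$ after the shift.

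I do not expect any serious obstacle; the only point where one has to think is the verification in Step 1 that $\widetilde{\Lambda}_P\cong\tau_n^-\widetilde{\Lambda}$ as bimodules, because in the non-self-injective setting one loses the clean Auslander--Reiten-type bijection between projectives and injectives that was available in Section~\ref{sect.selfinj}. Once one observes that the relevant $\Hom$-vanishing used inside the proof of Theorem~\ref{thm.identify_endoring} rules out projective summands of $\tau_n^{-i}\Lambda$ for $i\ge 1$, the whole argument proceeds identically.
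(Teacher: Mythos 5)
Your proposal is correct and takes the same approach as the paper, which merely asserts that the proof of Proposition~\ref{prop.amiot.factorization.conditions} carries over without changes; you make explicit the verifications (the identification $\Gamma=\End_\Lambda(\widetilde{\Lambda}_P)$, the isomorphism $\widetilde{\Lambda}_P\cong\tau_n^-\widetilde{\Lambda}$, the $\Ext$-vanishing from Proposition~\ref{prop.prepare_right_cto}(2), and perfectness of $\widetilde{\Lambda}_P$) that this assertion relies upon. One small nit: the equality $\Gamma=\End_\Lambda(\widetilde{\Lambda}_P)$ is the \emph{rightmost} equality of Theorem~\ref{thm.identify_endoring}, not the middle one, and it is $\mathbb{S}_n^{-i}\Lambda$ rather than its zeroth cohomology $\tau_n^{-i}\Lambda$ that lies in $\mathscr{U}^{<0}$ — but neither issue affects the argument, since the relevant input is the $\Hom$-vanishing $\Hom_\Lambda(\tau_n^{-i}\Lambda,\Lambda)=0$ for $i\ge 1$, which you correctly extract from the proof of Theorem~\ref{thm.identify_endoring}.
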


Using the universality of the $(n+1)$-Amiot cluster category, we have the following consequences. See Appendix~\ref{appendix} for details on the proof.

\begin{proposition}
There is a triangle functor $\mathtt{H} \colon \mathscr{C}_{\Gamma}^{n+1} \to \stabCM(\widetilde{\Lambda})$ making Diagram~\eqref{diagram.functors_non-selfinj} commutative.
\end{proposition}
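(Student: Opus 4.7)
The plan is to mimic the proof of Proposition~\ref{prop.H_exists} in the self-injective case, replacing $\stabmod \widetilde{\Lambda}$ by $\stabCM(\widetilde{\Lambda})$ throughout. The basic strategy is to use the universal property of the $(n+1)$-Amiot cluster category $\mathscr{C}_{\Gamma}^{n+1}$ (see Theorem~\ref{thm.universal_stable} in Appendix~\ref{appendix}): a triangle functor $\mathtt{F}\colon \mathscr{D}_{\Gamma} \to \mathscr{T}$ into a suitable algebraic triangulated category descends to $\pi\colon \mathscr{C}_{\Gamma}^{n+1} \to \mathscr{T}$ provided the appropriate Calabi-Yau identification of objects can be exhibited at the DG-level by a triangle whose third term becomes zero in $\mathscr{T}$.

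First I would take $\mathtt{F}$ to be the composition
\[ \mathscr{D}_{\Gamma} \tol[40]{\mathtt{A}} \mathscr{D}_{\widetilde{\Lambda}} \to[30] \stabCM(\widetilde{\Lambda}), \]
where the second arrow is the natural projection. Since by Lemma~\ref{lem.selinj_dim_1} the algebra $\widetilde{\Lambda}$ is Iwanaga-Gorenstein of dimension at most $1$, Buchweitz's theorem identifies $\stabCM(\widetilde{\Lambda})$ with the Verdier quotient $\mathscr{D}^{\rm b}(\modfg \widetilde{\Lambda}) / \perf \widetilde{\Lambda}$, so the projection above is a well-defined triangle functor. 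This is the analogue of the construction in the self-injective case and makes the lower triangle of Diagram~\eqref{diagram.functors_non-selfinj} commute by construction (since $\mathtt{A}$ already factors through the push-down from $\mathscr{D}_{\mathscr{U}\psL}$).

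Next I would invoke Proposition~\ref{prop.amiot_factor_cond_non-selfinj}: there is a triangle
\[ X \to[30] \mathtt{A}(D\Gamma[-n-1]) \to[30] \mathtt{A}(\Gamma) \to[30] X[1] \]
in $\mathscr{D}_{\widetilde{\Lambda} \otimes_k \Gamma^{\op}}$ whose underlying object $X$ in $\mathscr{D}_{\widetilde{\Lambda}}$ lies in $\perf \widetilde{\Lambda}$. Under the projection to $\stabCM(\widetilde{\Lambda})$ this $X$ becomes zero, so the triangle produces a functorial isomorphism $\mathtt{F}(D\Gamma[-n-1]) \iso \mathtt{F}(\Gamma)$ in $\stabCM(\widetilde{\Lambda})$. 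This is precisely the input required by the universal property (Theorem~\ref{thm.universal_stable}) to produce a triangle functor $\mathtt{H}\colon \mathscr{C}_{\Gamma}^{n+1} \to \stabCM(\widetilde{\Lambda})$ with $\mathtt{H} \circ \pi = \mathtt{F}$, which is exactly the commutativity of Diagram~\eqref{diagram.functors_non-selfinj}.

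The main obstacle is the careful verification that Theorem~\ref{thm.universal_stable} genuinely applies in this Iwanaga-Gorenstein setting; all the real work is packaged into the universal property and into Proposition~\ref{prop.amiot_factor_cond_non-selfinj}, so the body of the argument is formally identical to Proposition~\ref{prop.H_exists}. The bimodule nature of the triangle (i.e.\ it lives in $\mathscr{D}_{\widetilde{\Lambda} \otimes_k \Gamma^{\op}}$ rather than merely in $\mathscr{D}_{\widetilde{\Lambda}}$) is crucial, as this is what allows the DG-enhancement argument in the appendix to go through. Since no feature of self-injectivity was used in the proof of Proposition~\ref{prop.H_exists} beyond the existence of the triangle with perfect third term and the identification of the target as a quotient of a derived category by $\perf \widetilde{\Lambda}$ (here furnished by Buchweitz), the argument transports to the present context, and the detailed DG-categorical justification is deferred to Appendix~\ref{appendix}.
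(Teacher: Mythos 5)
Your proposal is correct and takes essentially the same approach as the paper: apply Theorem~\ref{thm.universal_stable} (the universal property, formulated in the appendix precisely for the Iwanaga-Gorenstein target $\stabCM(\Pi)$) to the data furnished by Proposition~\ref{prop.amiot_factor_cond_non-selfinj}, using Lemma~\ref{lem.selinj_dim_1} to ensure $\widetilde{\Lambda}$ is Iwanaga-Gorenstein so that the projection $\mathscr{D}_{\widetilde{\Lambda}}\to\stabCM(\widetilde{\Lambda})$ makes sense. Your Buchweitz-quotient description of this projection is exactly what the appendix implements at the DG level via acyclic complexes of projectives.
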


Now we have the following result.

\begin{theorem} \label{theorem.H_equiv_CM}
The functor $\mathtt{H} \colon \mathscr{C}_{\Gamma}^{n+1} \to \stabCM(\widetilde{\Lambda})$ is an equivalence.
\end{theorem}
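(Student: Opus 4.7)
The plan is to mimic the proof of Theorem~\ref{theorem.H_equiv_selfinj}, replacing $\stabmod$ by $\stabCM$ and using the results developed in Section~\ref{sect.non-selfinj} in place of their self-injective counterparts. The two things to verify are (i) that $\mathtt{H}$ is fully faithful on the image of $\mathscr{D}_{\Gamma}$, and (ii) that the image of $\mathtt{H}$ generates $\stabCM(\widetilde{\Lambda})$ as a triangulated category. Since the $(n+1)$-Amiot cluster category $\mathscr{C}_{\Gamma}^{n+1}$ is generated as a triangulated category by the image of $\mathscr{D}_{\Gamma}$ (via $\pi$), (i) combined with (ii) will then upgrade fully faithfulness to the whole $\mathscr{C}_{\Gamma}^{n+1}$ and give essential surjectivity.

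For (i), I identify $\mathscr{D}_{\Gamma}$ with $\stabCM(\mathscr{U}\psL)$ via Theorem~\ref{thm.CM_is_derived} (concretely, via $\mathtt{R}$ composed with projection, as in Theorem~\ref{thm.equiv_via_res}); the commutativity of Diagram~\eqref{diagram.functors_non-selfinj} then means $\mathtt{H}$ restricted to the image of $\mathscr{D}_{\Gamma}$ is nothing but the push-down $\stabCM(\mathscr{U}\psL) \to \stabCM(\widetilde{\Lambda})$. The same computation as in the self-injective case goes through: for $X,Y$ coming from $\stabCM(\mathscr{U}\psL)$,
\begin{align*}
\Hom_{\mathscr{C}_{\Gamma}^{n+1}}(X,Y)
&= \coprod_{i\in\mathbb{Z}} \underline{\Hom}_{\stabCM(\mathscr{U}\psL)}(X, \leftsub{\stabCM(\mathscr{U}\psL)}{\mathbb{S}}_{n+1}^i Y) \\
&= \coprod_{i\in\mathbb{Z}} \underline{\Hom}_{\stabCM(\mathscr{U}\psL)}(X, \leftsub{\mathscr{D}_{\Lambda}}{\mathbb{S}}_n^i Y) \\
&= \underline{\Hom}_{\stabCM(\widetilde{\Lambda})}(X,Y),
\end{align*}
where the first equality uses the definition of $\mathscr{C}_{\Gamma}^{n+1}$ as the orbit category, the second is exactly Theorem~\ref{theorem.serre_CM}, and the third is the statement that the push-down $\mathscr{U}\psL \to \add \pi\Lambda$ realizes $\widetilde{\Lambda}$ as the orbit category $\mathscr{U}\psL / (\leftsub{\mathscr{D}_{\Lambda}}{\mathbb{S}}_n)$, a property that passes from module categories to their Cohen-Macaulay subcategories and then to the stable quotient.

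For (ii), I need to show that every indecomposable object of $\stabCM(\widetilde{\Lambda})$ lies in the triangulated subcategory generated by the image of the push-down $\stabCM(\mathscr{U}\psL) \to \stabCM(\widetilde{\Lambda})$. Since $\mathscr{U}\psL$ is a Galois $\mathbb{Z}$-covering of $\add \pi\Lambda$ via $\leftsub{\mathscr{D}_{\Lambda}}{\mathbb{S}}_n$, the push-down functor $\mod \mathscr{U}\psL \to \mod \widetilde{\Lambda}$ sends projectives to projectives and is essentially surjective on finitely presented modules (every finitely presented $\widetilde{\Lambda}$-module lifts, since an indecomposable projective resolution can be lifted one step at a time along the covering). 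It therefore restricts to a dense functor $\CM(\mathscr{U}\psL) \to \CM(\widetilde{\Lambda})$: given $M \in \CM(\widetilde{\Lambda})$, lift a projective presentation $P_1 \to P_0 \epi M$ to $\mathscr{U}\psL$, obtaining a module $\widetilde{M}$ which is a cokernel of a map between projective $\mathscr{U}\psL$-modules and whose push-down is $M$; a self-extension argument using that both $\widetilde{\Lambda}$ and $\mathscr{U}\psL$ are Iwanaga-Gorenstein of dimension at most $1$ (Lemma~\ref{lem.selinj_dim_1}) shows $\widetilde{M} \in \CM(\mathscr{U}\psL)$.

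The main obstacle is this density/lifting step in (ii): the self-injective proof merely observes that simples of $\widetilde{\Lambda}$ lie in $\stabmod \widetilde{\Lambda}$, but simples need not be Cohen-Macaulay in our setting, so one truly needs to lift arbitrary Cohen-Macaulay $\widetilde{\Lambda}$-modules through the covering. Once this is settled, fully faithfulness on a generating subcategory combined with the fact that both functors (source and target) are triangulated, together with the $5$-lemma applied to triangles, promotes (i) to fully faithfulness on all of $\mathscr{C}_{\Gamma}^{n+1}$, and density follows from (ii). Hence $\mathtt{H}$ is a triangle equivalence.
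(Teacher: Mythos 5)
Your fully faithfulness argument matches the paper exactly, and the computation correctly replaces Theorem~\ref{theorem.serrestable} by Theorem~\ref{theorem.serre_CM}; the paper's proof of this theorem is literally the one-line remark that Theorem~\ref{theorem.H_equiv_selfinj} carries over word for word with that substitution.

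Where you go astray is in the diagnosis of the density step. You claim the ``simples'' argument breaks because simple $\widetilde{\Lambda}$-modules need not be Cohen-Macaulay, and that one must therefore lift arbitrary CM modules through the covering. But the argument never requires simples to be CM. In Diagram~\eqref{diagram.functors_non-selfinj} the vertical arrow $\mathscr{D}_{\widetilde{\Lambda}} \epi \stabCM(\widetilde{\Lambda})$ is the (dense) Buchweitz localization $\mathscr{D}^b(\mod\widetilde{\Lambda}) \to \mathscr{D}^b(\mod\widetilde{\Lambda})/\perf\widetilde{\Lambda} \iso \stabCM(\widetilde{\Lambda})$. Since the simples generate $\mathscr{D}^b(\mod\widetilde{\Lambda})$ as a triangulated category, their images under this localization generate $\stabCM(\widetilde{\Lambda})$. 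Each simple $\widetilde{\Lambda}$-module lifts to a simple $\mathscr{U}\psL$-module (this is just one vertex of the $\mathbb{Z}$-covering), and the commuting square on the right of the diagram then shows its image in $\stabCM(\widetilde{\Lambda})$ lies in the image of the push-down $\stabCM(\mathscr{U}\psL) \to \stabCM(\widetilde{\Lambda})$, hence in the image of $\mathtt{H}$. Since $\mathtt{H}$ is fully faithful, its essential image is a thick subcategory containing this generating set, which gives density.

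Your alternative density argument — lifting an arbitrary $M \in \CM(\widetilde{\Lambda})$ through the Galois $\mathbb{Z}$-cover and checking that the lift remains Cohen-Macaulay via the covering formula for $\Ext$ — is valid, but it reproves density of the push-down from scratch using heavier covering-theoretic input than the paper needs. The paper's route buys simplicity (you only lift simples, which is immediate) at the cost of invoking the Buchweitz quotient, which is already present as a labeled arrow in Diagram~\eqref{diagram.functors_non-selfinj}; your route is more self-contained on that point but substantially longer.
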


\begin{proof}
The proof of Theorem~\ref{theorem.H_equiv_selfinj} carries over word for word (with the reference to Theorem~\ref{theorem.serrestable} replaced by a reference to Theorem~\ref{theorem.serre_CM}).
\end{proof}

\begin{corollary} \label{cor.CMunderline_cto}
Let $\Lambda$ be an algebra of global dimension at most $n$, such that the $n$-Amiot cluster category $\mathscr{C}_{\Lambda}^n$ is $\Hom$-finite. Assume further that the category $\mathscr{U}\psL$ (see Theorem~\ref{theorem.ct_in_Db}) has the vosnex property. Then the category $\stabCM(\widetilde{\Lambda})$ is $(n+1)$-Calabi-Yau triangulated with an $(n+1)$-cluster tilting object.
\end{corollary}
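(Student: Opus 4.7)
The plan is to deduce both assertions of the corollary directly from Theorem~\ref{theorem.H_equiv_CM}, which provides a triangle equivalence $\mathtt{H} \colon \mathscr{C}_{\Gamma}^{n+1} \to \stabCM(\widetilde{\Lambda})$, where $\Gamma = \underline{\End}_{\Lambda}(\widetilde{\Lambda})$. The strategy parallels the proof of Corollary~\ref{cor.stab_is_Amiot} in the self-injective case: once the identification with an Amiot cluster category is in hand, the $(n+1)$-Calabi-Yau property and the existence of an $(n+1)$-cluster tilting object are supplied by the general theorems of Amiot.

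Before invoking those theorems, I would verify that the algebra $\Gamma$ satisfies their hypotheses. By Proposition~\ref{prop.fingld} we have $\gld \Gamma \leqslant n+1$. Since $\widetilde{\Lambda}$ is a finite dimensional algebra and is Iwanaga-Gorenstein of dimension at most $1$ by Lemma~\ref{lem.selinj_dim_1}, the category $\stabCM(\widetilde{\Lambda})$ is $\Hom$-finite; transporting this through $\mathtt{H}$, we conclude that $\mathscr{C}_{\Gamma}^{n+1}$ is $\Hom$-finite, which by the Amiot lemma (recalled just before Theorem~\ref{theorem.clairemain1}) forces $\Gamma$ to be $\tau_{n+1}$-finite. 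Thus the hypotheses of Theorems~\ref{theorem.clairemain1} and \ref{theorem.clairemain2} are satisfied for $\Gamma$.

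With this in place, Theorem~\ref{theorem.clairemain1} asserts that $\mathscr{C}_{\Gamma}^{n+1}$ is $(n+1)$-Calabi-Yau, and transport of structure across $\mathtt{H}$ yields the $(n+1)$-Calabi-Yau property for $\stabCM(\widetilde{\Lambda})$. Moreover, Theorem~\ref{theorem.clairemain2} provides an $(n+1)$-cluster tilting object $\pi \Gamma \in \mathscr{C}_{\Gamma}^{n+1}$, and its image under $\mathtt{H}$ is then an $(n+1)$-cluster tilting object of $\stabCM(\widetilde{\Lambda})$. Chasing the commutative diagram~\eqref{diagram.functors_non-selfinj} identifies this image as $\mathtt{A}(\Gamma) = \Hom_{\Lambda}(\widetilde{\Lambda}_P,\widetilde{\Lambda}_P)$.

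The main work has already been done in establishing Theorem~\ref{theorem.H_equiv_CM} and Proposition~\ref{prop.fingld}; what remains here is essentially a formal transport-of-structure argument. The only subtle point is the verification of the $\tau_{n+1}$-finiteness of $\Gamma$, which is not available a priori from the vosnex hypothesis on $\Lambda$ but becomes automatic once one has the equivalence with a $\Hom$-finite category in hand.
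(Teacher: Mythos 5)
Your proof is correct and follows the same route as the paper, which deduces the corollary from Theorem~\ref{theorem.H_equiv_CM} together with Theorem~\ref{theorem.clairemain2}. You add two useful clarifications that the paper leaves implicit: you invoke Theorem~\ref{theorem.clairemain1} explicitly for the Calabi--Yau property (the paper's one-line proof does not cite a source for that half of the statement), and you check that $\Gamma$ is $\tau_{n+1}$-finite before applying the Amiot results. On the latter point your bootstrap is legitimate because the equivalence $\mathtt{H}$ from Theorem~\ref{theorem.H_equiv_CM} is established (via Theorem~\ref{thm.CM_is_derived} and the universal property in Theorem~\ref{thm.universal_stable}) without presupposing $\tau_{n+1}$-finiteness of $\Gamma$, so transporting $\Hom$-finiteness from $\stabCM(\widetilde{\Lambda})$ back through $\mathtt{H}$ is not circular.
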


\begin{proof}
This follows immediately from Theorems~\ref{theorem.H_equiv_CM} and \ref{theorem.clairemain2}.
\end{proof}

\begin{example}
Let $\Lambda$ be the Auslander algebra of non-linearly oriented $A_3$ as in the left picture below, and $n=2$.
\[ \begin{tikzpicture}
 \node at (-.5,2) {$\Lambda \colon$};
 \node (1) at (0,0) {$1$};
 \node (2) at (0,2) {$2$};
 \node (3) at (1,1) {$3$};
 \node (4) at (2,0) {$4$};
 \node (5) at (2,2) {$5$};
 \node (6) at (3,1) {$6$};
 \draw [->] (1) -- (3);
 \draw [->] (2) -- (3);
 \draw [->] (3) -- (4);
 \draw [->] (3) -- (5);
 \draw [->] (4) -- (6);
 \draw [->] (5) -- (6);
 \draw [dashed] (1) -- (4);
 \draw [dashed] (2) -- (5);
 \draw [dashed] (3) -- (6);
\end{tikzpicture}
\qquad \qquad
\begin{tikzpicture}
 \node at (-.5,2) {$\widetilde{\Lambda} \colon$};
 \node (1) at (0,0) {$1$};
 \node (2) at (0,2) {$2$};
 \node (3) at (1,1) {$3$};
 \node (4) at (2,0) {$4$};
 \node (5) at (2,2) {$5$};
 \node (6) at (3,1) {$6$};
 \draw [->] (1) -- node [inner sep=1pt,fill=white] {$\scriptstyle \alpha$} (3);
 \draw [->] (2) -- node [inner sep=1pt,fill=white] {$\scriptstyle \beta$} (3);
 \draw [->] (3) -- node [inner sep=1pt,fill=white] {$\scriptstyle \gamma$} (4);
 \draw [->] (3) -- node [inner sep=1pt,fill=white] {$\scriptstyle \delta$} (5);
 \draw [->] (4) -- node [inner sep=1pt,fill=white] {$\scriptstyle \varepsilon$} (6);
 \draw [->] (5) -- node [inner sep=1pt,fill=white] {$\scriptstyle \zeta$} (6);
 \draw [<-] (1) -- node [inner sep=1pt,fill=white] {$\scriptstyle \eta$} (4);
 \draw [<-] (2) -- node [inner sep=1pt,fill=white] {$\scriptstyle \vartheta$} (5);
 \draw [<-] (3) -- node [inner sep=1pt,fill=white] {$\scriptstyle \iota$} (6);
\end{tikzpicture} \]
Then $\widetilde{\Lambda}$ is given by the right quiver above, subject to the relations
\[ (\gamma \eta, \delta \vartheta, \eta \alpha - \varepsilon \iota, \vartheta \beta - \zeta \iota, \iota \gamma, \iota \delta, \alpha \gamma, \beta \delta, \gamma \varepsilon - \delta \zeta ). \]
Calculating the $\tau_2^-$ powers of the projective $\Lambda$ modules we obtain
\begin{align*}
\Lambda \colon \quad & 1 && 2 && \begin{matrix} 3 \\ 1 \quad 2 \end{matrix} && \begin{matrix} 4 \\ 3 \\ 2 \end{matrix} && \begin{matrix} 5 \\ 3 \\ 1 \end{matrix} && \begin{matrix} 6 \\ 4 \quad 5 \\ 3 \end{matrix} \\
\tau_2^- \Lambda \colon \quad & 4 && 5 && \begin{matrix} 6 \\ 4 \quad 5 \end{matrix} && - && - && -
\end{align*}
Thus $\widetilde{\Lambda}_P = \bigoplus_{i > 0} \tau_2^{-i} \Lambda = 4 \oplus 5 \oplus \begin{smallmatrix} 6 \\ 4 \quad 5 \end{smallmatrix}$, and $\End_{\Lambda}(\widetilde{\Lambda}_P) = k[\circ \to \circ \arrow{<-} \circ]$. Thus, by Theorem~\ref{theorem.H_equiv_CM} the category $\stabCM(\widetilde{\Lambda})$ is the classical $3$-cluster category of $A_3$.
\end{example}

\renewcommand{\thesection}{A}

\section{Appendix: The universal property of the \texorpdfstring{$n$}{n}-Amiot cluster category} \label{appendix}

In this appendix we work out in detail how Keller's general result \cite[Theorem 4]{K_orbit} applies to our setup. Everything stated here is due to Keller, and can also be found in \cite{K_orbit} (also see \cite{DGcat, K_cyclic, K_onDG} for background). To avoid set theoretical issues we assume all categories to be small throughout this appendix.

\subsection{Pretriangulated DG categories}

See \cite{K_cyclic}, in particular Section~2 (note that in that paper pretriangulated DG categories are called exact DG categories).

\begin{definition}
A \emph{DG category} is a $\mathbb{Z}$-graded category (i.e.\ morphism spaces are $\mathbb{Z}$-graded, and composition of morphisms respects this grading) with a differential $d$ of degree $1$ satisfying the Leibniz rule.

For a DG category $\mathscr{X}$ we denote  by $\Ho^0(\mathscr{X})$ the category with the same objects as $\mathscr{X}$, and with $\Hom_{\Ho^0(\mathscr{X})}(X_1, X_2) = \Ho^0(\Hom_{\mathscr{X}}(X_1, X_2))$.
\end{definition}

\begin{example} \label{ex.complexes1}
Let $\mathscr{A}$ be an additive category. Then $C_{\rm dg}(\mathscr{A})$, the set of complexes over $\mathscr{A}$, becomes a DG category by setting
\begin{align*}
& \HOM_{\mathscr{A}}^n (A, B) = \prod_{i \in \mathbb{Z}} \Hom_{\mathscr{A}}(A^i, B^{i+n}) \text{, and} \\
& d((f_i)_{i \in \mathbb{Z}}) = ( f_i d_B - (-1)^n d_A f_{i+1} ) \qquad \text{for } (f_i)_{i \in \mathbb{Z}} \in \HOM_{\mathscr{A}}^n (A, B).
\end{align*}
To calculate morphisms in $\Ho^0(C_{\rm dg}(\mathscr{A}))$ note that for $A, B \in C_{\rm dg}(\mathscr{A})$ we have
\begin{align*}
{\rm Z}^0(\HOM_{\mathscr{A}}(A, B)) & = \{ (f_i)_{i \in \mathbb{Z}} \in \prod_{i \in \mathbb{Z}} \Hom_{\mathscr{A}}(A^i, B^i) \mid d((f_i)_{i \in \mathbb{Z}}) = 0 \} \\
& = \{ (f_i)_{i \in \mathbb{Z}} \mid d_A f_{i+1} = f_i d_B \} \\
& = \text{usual maps of complexes} \\
{\rm B}^0(\HOM_{\mathscr{A}}(A, B)) & = \{ d((h_i)_{i \in \mathbb{Z}}) \mid h_i \in \Hom_{\mathscr{A}}(A^i, B^{i-1}) \} \\
& = (h_i d_B + d_A h_{i+1})_{i \in \mathbb{Z}} \mid h_i \in \Hom_{\mathscr{A}}(A^i, B^{i-1}) \} \\
& = \text{homotopies of complexes}
\intertext{and hence}
\Ho^0(\HOM_{\mathscr{A}}(A, B)) & = \text{maps of complexes up to homotopy.}
\end{align*}
\end{example}

\begin{definition}[see \cite{K_onDG}, Sections~2.3 and 3]
Let $\mathscr{X}$ be a DG category. A \emph{DG $\mathscr{X}$-module} is a DG functor $\mathscr{X}^{\op} \to C_{\rm dg}(\Mod k)$. By abuse of notation we also denote the DG category of DG $\mathscr{X}$-modules by $C_{\rm dg}(\mathscr{X})$.

We denote the (non DG) category of DG $\mathscr{X}$-modules by $C(\mathscr{X}) = Z^0(C_{\rm dg}(\mathscr{X}))$, and we denote by $D(\mathscr{X})$ the derived category of DG $\mathscr{X}$-modules, that is the category obtained from $C(\mathscr{X})$ by inverting quasi-isomorphisms.

A DG $\mathscr{X}$-module is \emph{representable}, if it is isomorphic to a DG $\mathscr{X}$-module of the form $\HOM_{\mathscr{X}}(-, X)$ for some object $X \in \mathscr{X}$.
\end{definition}

\begin{definition}
A DG category $\mathscr{X}$ is called \emph{pretriangulated} (or \emph{exact}), if the class of representable DG $\mathscr{X}$-modules is closed under translations and mapping cones.
\end{definition}

Note that by the Yoneda Lemma we have the subcategory of representable DG $\mathscr{X}$-modules is equivalent to $\mathscr{X}$, for any DG category $\mathscr{X}$.

\begin{construction}[{Keller \cite[2.2(d)]{K_cyclic}}]
Let $\mathscr{X}$ be a DG category. We denote by $\pretr \mathscr{X}$ the \emph{pretriangulated hull of $\mathscr{X}$}, that is the smallest subcategory of $C_{\rm dg}(\mathscr{X})$ which contains the representable DG $\mathscr{X}$-modules, and is closed under translations and mapping cones.
\end{construction}

The universal property of the $n$-Amiot cluster categories builds on the following result of Keller.

\begin{proposition}[{Keller \cite[2.2(d)]{K_cyclic}}]
Let $\mathscr{X}$ be a DG category. Then
\begin{enumerate}
\item $\pretr \mathscr{X}$ is a pretriangulated DG category, and
\item the natural functor $\mathscr{X} \to \pretr \mathscr{X}$ is universal among DG functors from $\mathscr{X}$ to pretriangulated DG categories.
\end{enumerate}
\end{proposition}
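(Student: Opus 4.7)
For part (1), I would check the two closure conditions defining a pretriangulated DG category directly from the construction of $\pretr \mathscr{X}$. By definition, $\pretr \mathscr{X}$ is the smallest full subcategory of $C_{\rm dg}(\mathscr{X})$ containing the representable DG $\mathscr{X}$-modules and closed under translations and mapping cones inherited from $C_{\rm dg}(\mathscr{X})$. So for every $Y \in \pretr \mathscr{X}$ the shift $Y[1]$ still lies in $\pretr \mathscr{X}$, and for every closed degree-zero morphism $f \colon Y_1 \to Y_2$ in $\pretr \mathscr{X}$ the cone $\Cone(f)$ lies in $\pretr \mathscr{X}$. It then suffices to identify the translation of the representable $(\pretr \mathscr{X})$-module $\HOM_{\pretr \mathscr{X}}(-,Y)$ with $\HOM_{\pretr \mathscr{X}}(-,Y[1])$, and the mapping cone of $f_{*} \colon \HOM_{\pretr \mathscr{X}}(-, Y_1) \to \HOM_{\pretr \mathscr{X}}(-, Y_2)$ with $\HOM_{\pretr \mathscr{X}}(-, \Cone(f))$. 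Both isomorphisms are immediate from the definition of translation and cone in $C_{\rm dg}(\mathscr{X})$ together with the Yoneda-type embedding $\pretr \mathscr{X} \hookrightarrow C_{\rm dg}(\pretr \mathscr{X})$.

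For part (2), let $F \colon \mathscr{X} \to \mathscr{Y}$ be a DG functor into a pretriangulated DG category $\mathscr{Y}$. I will construct an extension $\widetilde{F} \colon \pretr \mathscr{X} \to \mathscr{Y}$ by induction on the construction of objects of $\pretr \mathscr{X}$. On the representable generators (i.e.\ on $\mathscr{X}$) set $\widetilde{F} = F$. Whenever the next step in the construction is a translation $Y \mapsto Y[1]$, use the pretriangulated hypothesis on $\mathscr{Y}$ to pick an object representing $\HOM_{\mathscr{Y}}(-,\widetilde{F}(Y))[1]$ and declare it to be $\widetilde{F}(Y[1])$. Whenever the next step is a mapping cone on a closed degree-zero morphism $f$ already in the image of $\widetilde{F}$, pick an object representing $\Cone(\widetilde{F}(f))$ and declare it to be $\widetilde{F}(\Cone(f))$. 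On morphism complexes, the key observation is that $\HOM_{\pretr \mathscr{X}}(Y, Y')$ between any two objects of $\pretr \mathscr{X}$ is computable from the morphism complexes of $\mathscr{X}$ together with the combinatorial data describing $Y$ and $Y'$ as iterated shifts and cones; this is conveniently formalised by the Bondal--Kapranov twisted-complex description of $\pretr \mathscr{X}$. Transporting this computation across $F$ then defines $\widetilde{F}$ on morphisms as a map of DG $k$-modules, and functoriality together with compatibility with $d$ follows from the corresponding identities for $F$.

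For uniqueness, any DG functor extending $F$ must commute (up to natural DG-isomorphism) with shifts and cones, so the same induction produces a natural isomorphism between any two extensions. This gives the universal property in the only sense in which it can hold, namely up to natural isomorphism of DG functors.

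\textbf{Main obstacle.} The technical heart of the argument is that shifts and cones in $\mathscr{Y}$ are determined only up to non-canonical isomorphism, so $\widetilde{F}$ is manufactured by a sequence of non-canonical choices. Threading the resulting coherence isomorphisms through the induction in such a way that the induced action on the morphism complexes of $\pretr \mathscr{X}$ becomes unambiguous is what requires care; invoking the twisted-complex description gives a single uniform combinatorial presentation of all of $\pretr \mathscr{X}$ at once, and makes both the existence and the uniqueness parts essentially formal.
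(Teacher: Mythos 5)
The paper offers no proof of this proposition at all --- it is imported verbatim from Keller \cite[2.2(d)]{K_cyclic} --- so there is nothing internal to compare against; what you have written is a reconstruction of the standard Bondal--Kapranov/Keller argument, and it is essentially correct. For (1), the point you identify is exactly the right one: since $\pretr \mathscr{X}$ sits inside $C_{\rm dg}(\mathscr{X})$ and contains the literal shift and cone objects, one only needs that these objects represent the shifted and coned Hom-functors over $\pretr\mathscr{X}$, which is immediate from the objectwise definition of shift and cone in a DG module category. For (2), your diagnosis of the difficulty (non-canonical choices of representing objects in $\mathscr{Y}$) and your remedy (the one-sided twisted-complex presentation of $\pretr\mathscr{X}$, which describes every object and every morphism complex uniformly in terms of the data of $\mathscr{X}$) is precisely Keller's device. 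One organizational simplification worth knowing: instead of an object-by-object induction, apply $\pretr$ functorially to $F\colon \mathscr{X}\to\mathscr{Y}$ to get $\pretr F\colon \pretr\mathscr{X}\to\pretr\mathscr{Y}$ (on twisted complexes this is just entrywise application of $F$, with no choices), and then observe that when $\mathscr{Y}$ is pretriangulated the canonical DG functor $\mathscr{Y}\to\pretr\mathscr{Y}$ is an equivalence; all of the non-canonical choices you worry about are then packaged into a single choice of quasi-inverse, and both existence and uniqueness up to natural isomorphism of the extension $\widetilde F$ follow at once. Your formulation of the universal property (uniqueness up to natural isomorphism of DG functors) is the correct reading of the statement.
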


The reason for calling these DG algebras pretriangulated, and the connection to our setup, is the following. 

\begin{proposition}[{Keller \cite{K_cyclic}}]
Let $\mathscr{X}$ be a pretriangulated DG category. Then $\Ho^0(\mathscr{X})$ is an algebraic triangulated category. Moreover any algebraic triangulated category comes up in this construction.
\end{proposition}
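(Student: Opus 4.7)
The plan is to treat the two assertions separately, using the homotopy category of complexes $\Ho^0(C_{\rm dg}(\mathscr{A}))$ of Example~\ref{ex.complexes1} as a template. For the first statement, I begin by transporting the triangulated structure on $C_{\rm dg}(\mathscr{X})$ back to $\mathscr{X}$ via Yoneda. Since $\mathscr{X}$ is pretriangulated, for every $X\in\mathscr{X}$ the shifted DG $\mathscr{X}$-module $\HOM_{\mathscr{X}}(-,X)[1]$ is representable, so there exists (uniquely up to isomorphism) an object $X[1]\in\mathscr{X}$ realizing the shift; likewise, for every $f\in {\rm Z}^0(\HOM_{\mathscr{X}}(X,Y))$, the mapping cone of $\HOM_{\mathscr{X}}(-,X)\to\HOM_{\mathscr{X}}(-,Y)$ is representable, giving $\Cone(f)\in\mathscr{X}$ together with a sequence $X\to Y\to\Cone(f)\to X[1]$. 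I declare a triangle in $\Ho^0(\mathscr{X})$ distinguished if it is isomorphic to the image of such a cone sequence.

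Next I verify the axioms (TR1)--(TR4). (TR1) and (TR2) follow from the explicit formula for $\Cone(f)$ and the natural homotopy equivalence $\Cone(f)[-1]\simeq (X\to\Cone(f)\to Y)$ available in $C_{\rm dg}(\mathscr{X})$ and hence in $\mathscr{X}$. (TR3) reduces to the statement that a degree-$(-1)$ homotopy between two closed squares produces a morphism of cones, which is the same direct verification as for complexes. (TR4) is then the usual octahedral construction built by iterated mapping cones. To see that $\Ho^0(\mathscr{X})$ is algebraic, I note that the Yoneda embedding $\mathscr{X}\hookrightarrow C(\mathscr{X})$ lands in a category of DG $\mathscr{X}$-modules equipped with the Frobenius exact structure whose conflations are the termwise split short exact sequences; its projective-injective objects are the contractible DG modules (mapping cones on identities), and the stable category of the additive subcategory of representable modules is, by construction, $\Ho^0(\mathscr{X})$.

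For the converse, let $\mathscr{T}=\underline{\mathscr{E}}$ for some Frobenius exact category $\mathscr{E}$ with subcategory $\mathscr{P}$ of projective-injective objects. I construct the pretriangulated DG category $\mathscr{X}$ as the full DG subcategory of $C_{\rm dg}(\mathscr{P})$ consisting of \emph{totally acyclic} complexes of projective-injective objects (complete resolutions). The category $\mathscr{X}$ is closed in $C_{\rm dg}(\mathscr{P})$ under translation and mapping cones of closed degree-$0$ morphisms, hence is pretriangulated. Each $E\in\mathscr{E}$ admits a complete resolution $C_E\in\mathscr{X}$, and the assignment $E\mapsto C_E$ extends to a functor $\mathscr{E}\to\Ho^0(\mathscr{X})$ which annihilates morphisms factoring through $\mathscr{P}$, so descends to $\Phi\colon\underline{\mathscr{E}}\to\Ho^0(\mathscr{X})$. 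The key step, which I expect to be the main technical point, is showing $\Phi$ is an equivalence: fully faithfulness amounts to the comparison theorem identifying morphisms of complete resolutions modulo homotopy with morphisms in $\underline{\mathscr{E}}$, and essential surjectivity follows because every totally acyclic complex $C\in\mathscr{X}$ is quasi-isomorphic (in the relevant sense) to the complete resolution of its $0$-th cocycle $Z^0(C)\in\mathscr{E}$. This identifies $\Ho^0(\mathscr{X})\approx\underline{\mathscr{E}}=\mathscr{T}$, completing the proof.
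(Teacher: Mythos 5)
The paper states this result without proof, attributing it to Keller \cite{K_cyclic}, so there is no paper-proof to compare against; your reconstruction follows the standard route and is correct. In particular, the forward direction (Yoneda, closure of representables under shifts and cones, the termwise-split Frobenius exact structure with contractibles as the projective-injectives) is exactly Keller's argument, and your converse construction, taking the DG category of totally acyclic complexes over the projective-injectives of a Frobenius model $\mathscr{E}$ and using complete resolutions, is precisely the construction the paper itself invokes in the Appendix when it sets $\mathscr{A}_{\Delta} = C_{\rm dg}^{\infty,\emptyset}(\proj\Delta)$ and identifies $\Ho^0(\mathscr{A}_{\Delta})$ with $\stabmod\Delta$ (resp.\ $\stabCM(\Delta)$).
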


\begin{example}[{\cite[2.2(a)]{K_cyclic}}] \label{ex.subcat_complex}
Let $\mathscr{A}$ be an additive category, $\mathscr{X}$ a full DG subcategory of $C_{\rm dg}(\mathscr{A})$ (see Example~\ref{ex.complexes1} above) which is closed under translations and mapping cones. Then $\mathscr{X}$ is a pretriangulated DG category, and (by the calculation in Example~\ref{ex.complexes1}) the triangulated category $\Ho^0(\mathscr{X})$ is the corresponding subcategory of the homotopy category of complexes in $\mathscr{A}$.
\end{example}

\begin{notation}
We will mostly need the following instances of Example~\ref{ex.subcat_complex} above, for which we therefore introduce special names.
\begin{enumerate}
\item Let $\Delta$ be a finite dimensional algebra. We denote by
\[ \mathscr{P}_{\Delta} := C_{\rm dg}^{-, \rm b}(\proj \Delta) \]
the DG category of right bounded complexes of finitely generated projective $\Delta$-modules with bounded homology.

Then $\Ho^0(\mathscr{P}_{\Delta}) = \mathscr{D}_{\Delta}$, the bounded derived category of $\Delta$-modules.
\item Let $\Delta$ be a self-injective finite dimensional algebra (resp.\ an Iwanaga-Gorenstein finite dimensional algebra). We denote by
\[ \mathscr{A}_{\Delta} := C_{\rm dg}^{\infty, \emptyset}(\proj \Delta) \]
the DG category of acyclic complexes of finitely generated projective $\Delta$-modules.

Then $\Ho^0(\mathscr{A}_{\Delta}) = \stabmod \Delta$ the stable module category of $\Delta$ (resp.\ $\Ho^0(\mathscr{A}_{\Delta}) = \stabCM(\Delta)$ the stable category of Cohen-Macaulay $\Delta$-modules).
\end{enumerate}
\end{notation}

\subsection{Functors}

We now look at how certain functors between triangulated categories are reflected in the setup of pretriangulated DG categories. It turns out (see \cite[Section~9]{K_orbit}, in particular Section~9.6) that we should consider the category $\rep$ (see Definition~\ref{def.rep} below).

For the rest of this appendix all undecorated tensor products are understood to be tensor products over the base field $k$. Note that for two DG categories $\mathscr{X}$ and $\mathscr{Y}$, the tensor product $\mathscr{X} \otimes \mathscr{Y}$ is a DG category in a natural way (see for instance \cite[2.3]{K_onDG}).

\begin{definition}[{\cite[Section~9.2]{K_orbit}}] \label{def.rep}
Let $\mathscr{X}$ and $\mathscr{Y}$ be DG categories. We denote by $\rep(\mathscr{X}, \mathscr{Y})$. the full subcategory of $D(\mathscr{X}^{\op} \otimes \mathscr{Y})$ formed by the objects $\mathtt{R}$, such for all $X \in \mathscr{X}$ the object $\mathtt{R}(X \otimes -)$ is isomorphic to a representable DG $\mathscr{Y}$-module in $D(\mathscr{Y})$.
\end{definition}

The motivating example for this construction is the following:

\begin{example}[{\cite[Section~9.2]{K_orbit}}]
Let $\mathtt{F} \colon \mathscr{X} \to \mathscr{Y}$ be a functor of DG categories. Then $\mathtt{F}$ induces an object $\mathtt{R}_{\mathtt{F}}$ given by $\mathtt{R}_{\mathtt{F}}(X \otimes Y) = \HOM_{\mathscr{Y}}(Y, \mathtt{F} X)$ in $\mathscr{D}_{\mathscr{X}^{\op} \otimes \mathscr{Y}}$. Since $\mathtt{R}_{\mathtt{F}}(X \otimes -)$ is represented by $\mathtt{F} X$ it lies in $\rep(\mathscr{X}, \mathscr{Y})$.
\end{example}

The composition of functors in $\rep$ is given by the derived tensor product.

\begin{example}
Let $\mathtt{F} \colon \mathscr{X} \to \mathscr{Y}$ and $\mathtt{R} \in \rep(\mathscr{Y}, \mathscr{Z})$ for three DG categories $\mathscr{X}, \mathscr{Y}, \mathscr{Z}$. Then the composition $\mathtt{R}_{\mathtt{F}} \mathtt{R} \in \rep(\mathscr{X}, \mathscr{Z})$ is given by
\[ (\mathtt{R}_{\mathtt{F}} \mathtt{R}) (X \otimes Z) = \mathtt{R}(\mathtt{F} X \otimes Z). \]
In particular, for $\mathtt{R} = \mathtt{R}_{\mathtt{G}}$, we have
\[ \mathtt{R}_{\mathtt{F}} \mathtt{R}_{\mathtt{G}} = \mathtt{R}_{\mathtt{F} \mathtt{G}}. \]
\end{example}

\subsection{Orbit categories}

We give the following definition for the sake of completeness. However we do not explain it here. The reader is advised to look up Section~5.1 in \cite{K_orbit}.

\begin{definition}[{\cite[Section~5.1]{K_orbit}}] \label{def.orbitDG}
Let $\mathscr{X}$ be a DG category, $\mathtt{S} \colon \mathscr{X} \to \mathscr{X}$ a DG functor inducing an equivalence on $\Ho^0(\mathscr{X})$. Then the \emph{DG orbit category} $\mathscr{X} / \mathtt{S}$ has the same objects as $\mathscr{X}$, and
\begin{align*} & \Hom_{\mathscr{X} / \mathtt{S}}(X_1, X_2) \\
= \, & {\rm colim} \Big[ \bigoplus_{j \geqslant 0} \leftsub{\mathscr{X}}(\mathtt{S}^j X_1, X_2) \tol[30]{\scriptstyle \mathtt{S}} \bigoplus_{j \geqslant 0} \leftsub{\mathscr{X}}(\mathtt{S}^j X_1, \mathtt{S}X_2)  \tol[30]{\scriptstyle \mathtt{S}} \bigoplus_{j \geqslant 0} \leftsub{\mathscr{X}}(\mathtt{S}^j X_1, \mathtt{S}^2 X_2)  \tol[30]{\scriptstyle \mathtt{S}} \cdots \Big] .
\end{align*}
\end{definition}

\begin{definition}[{\cite[Section~5.1]{K_orbit}}]
Let $\mathscr{T} = \Ho^0(\mathscr{X})$ be an algebraic triangulated category, $\mathtt{S} \colon \mathscr{X} \to \mathscr{X}$ a DG functor inducing an equivalence on $\mathscr{T}$. Then the \emph{triangulated orbit category} of $\mathscr{T}$ modulo $\mathtt{S}$ is defined to be
\[ \Ho^0(\pretr (\mathscr{X} / \mathtt{S})). \]
\end{definition}

The instance of this definition we need here is the following. Here and in the sequel, for a modules $X$ over a finite dimensional algebra $\Delta$ we denote by $\mathtt{p}_{\Delta}(X)$ a projective resolution of $X$.

\begin{proposition}[{Keller, a special case of \cite[Theorem~7.1]{K_orbit}}] \label{prop.dg_for_amiot}
Let $\Delta$ be a finite dimensional algebra of finite global dimension, and $n \in \mathbb{N}$. Then the functor
\[ \mathtt{S} := \mathtt{p}_{\Delta \otimes \Delta^{\op}}(D \Delta)[-n] \otimes_{\Delta} - \colon \mathscr{P}_{\Delta} \to[30] \mathscr{P}_{\Delta} \]
induces the autoequivalence $\mathbb{S}_n$ of $\mathscr{D}_{\Delta}$. The $n$-Amiot cluster category (defined in Construction~\ref{const.n-Amiot}) is equivalent to the triangulated orbit category
\[ \mathscr{C}_{\Delta}^n \approx \Ho^0(\pretr (\mathscr{P}_{\Delta} / \mathtt{S} )). \]
\end{proposition}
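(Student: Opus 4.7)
The proposition is essentially a citation of Keller's [K\_orbit, Theorem~7.1], so my plan reduces to identifying our setting with his and tracking through his construction. The first part of the statement is straightforward homological algebra: since $\gld \Delta < \infty$, the bimodule $D\Delta$ is perfect over $\Delta \otimes \Delta^{\op}$, hence admits a bounded projective bimodule resolution $\mathtt{p}_{\Delta \otimes \Delta^{\op}}(D\Delta)$. Tensoring with this resolution over $\Delta$ preserves $\mathscr{P}_\Delta$ (as it preserves perfect complexes), and on $\Ho^0 = \mathscr{D}_\Delta$ computes the left derived functor $-\otimes^L_\Delta D\Delta$, which is the Serre functor $\leftsub{\mathscr{D}_\Delta}{\mathbb{S}}$. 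The $[-n]$ shift in the definition of $\mathtt{S}$ then produces $\mathbb{S}_n$.

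For the second part, I would invoke Keller's theorem directly. His setup considers a DG category $\mathscr{B}$ equipped with a DG endofunctor $F$ implemented by tensoring with a fixed bimodule, and describes the triangulated orbit category $\Ho^0(\pretr(\mathscr{B}/F))$ as a quotient of a thick subcategory inside the derived category of a specific DG algebra. Taking $\mathscr{B} = \mathscr{P}_\Delta$ and $F = \mathtt{S}$, the relevant hypotheses are satisfied: $\mathtt{S}$ is a standard equivalence given by a perfect bimodule, and $\Delta \in \mathscr{P}_\Delta$ is a compact generator since $\gld \Delta < \infty$.

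The substance of Keller's construction is the identification of the controlling DG algebra. It is built as a homotopy colimit of iterates of $\mathtt{S}^{\pm 1}$ applied to $\Delta$; because $\mathtt{S}$ is the Serre functor shifted by $[-n]$, the nontrivial cohomologies of this colimit are concentrated in degrees $0$ and $-(n+1)$, producing precisely the trivial extension DG algebra $\Gamma = \Delta \oplus D\Delta[-n-1]$ of Construction~\ref{const.n-Amiot}. Keller's theorem then identifies $\Ho^0(\pretr(\mathscr{P}_\Delta / \mathtt{S}))$ with $\thick_{\mathscr{D}_\Gamma}(\Delta) / \perf \Gamma$, which is by definition $\mathscr{C}^n_\Delta$.

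The main obstacle is purely formal bookkeeping: since the exposition in \cite{K_orbit} is not stated in exactly this form, one must check that all hypotheses are genuinely satisfied (finite global dimension of $\Delta$, perfectness of the bimodule defining $\mathtt{S}$, and the generation and boundedness properties of $\mathscr{P}_\Delta$) and that Keller's computation of the colimit DG algebra produces $\Delta \oplus D\Delta[-n-1]$ for our particular $\mathtt{S}$. Once this translation is carried out, no genuinely new argument is needed for arbitrary $n$, since Keller's constructions are formal and do not use any special feature of $n=2$.
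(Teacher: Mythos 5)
The paper gives no proof of this proposition; it is stated purely as a citation of Keller's result (with the understanding, as noted at the start of Subsection~2.1, that the $n=2$ arguments of Keller and Amiot carry over verbatim to arbitrary $n$). Your overall plan---identify the setup and invoke Keller---is therefore at the same level as the paper. Your first paragraph, about the Serre functor, is essentially correct, with the small caveat that $D\Delta$ being perfect over $\Delta \otimes \Delta^{\op}$ requires separability of $\Delta/\Rad\Delta$ over $k$ (which the paper does impose whenever Amiot cluster categories are used), not just $\gld\Delta < \infty$.

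The middle paragraph, however, gives an incorrect account of where $\Gamma$ comes from. You claim the controlling DG algebra is a homotopy colimit of iterates of $\mathtt{S}^{\pm 1}$ applied to $\Delta$, with cohomology concentrated in degrees $0$ and $-(n+1)$, yielding $\Gamma = \Delta \oplus D\Delta[-n-1]$. That cannot be right: the Hom-complex $\Hom_{\mathscr{P}_\Delta/\mathtt{S}}(\Delta,\Delta)$ of Definition~\ref{def.orbitDG} has cohomology $\bigoplus_{p\in\mathbb{Z}} \Hom_{\mathscr{D}_\Delta}(\Delta, \mathbb{S}_n^p\Delta[*])$; its degree-$0$ part is the $(n+1)$-preprojective algebra $\widetilde{\Delta}$ (this is precisely Theorem~\ref{theorem.clairemain2}, $\End_{\mathscr{C}^n_\Delta}(\pi\Delta)\cong\widetilde{\Delta}$), and in general its cohomology is spread over all degrees---indeed it is unbounded whenever $\Delta$ is not $\tau_n$-finite, and no $\tau_n$-finiteness is assumed in this proposition. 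In particular, the two-term DG algebra $\Gamma$ is \emph{not} obtained as the cohomology of any such colimit. Keller's $\Gamma$ is chosen directly as the DG trivial extension, and the substance of his Theorem~7.1 is precisely the non-obvious identification of $\Ho^0(\pretr(\mathscr{P}_\Delta/\mathtt{S}))$ with $\thick_{\mathscr{D}_\Gamma}(\Delta)/\perf\Gamma$, proved via the projection $\Gamma \twoheadrightarrow \Delta$, the induced restriction $\mathscr{D}_\Delta \to \mathscr{D}_\Gamma$, and the universal property of the DG orbit category (the machinery recorded as Theorem~\ref{theo.keller_universal} of this appendix). Your conclusion is correct, but the heuristic you offer for it would mislead anyone attempting to turn it into an actual proof.
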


Before we can formulate the universal property of orbit categories in the full generality of \cite{K_orbit} we need one more piece of notation.

\begin{definition}[{\cite[Section~9.3]{K_orbit}}]
Let $\mathscr{X}$ be a DG category and $\mathtt{S} \colon \mathscr{X} \to \mathscr{X}$ a cofibrant DG module in $\rep(\mathscr{X}, \mathscr{X})$, and let $\mathscr{Y}$ be a DG category. Then we denote by $\eff(\mathscr{X}, \mathtt{S}, \mathscr{Y})$ (from effa\c{c}able -- removing the effect of $\mathtt{S}$) the category with objects
\[ (\mathtt{R}, \phi) \text{ with } \mathtt{R} \in \rep(\mathscr{X}, \mathscr{Y}) \text{ and } \phi \colon \mathtt{R} \to \mathtt{R}_{\mathtt{S}} \mathtt{R} \text{ an quasi-isomorphism of DG} \mathscr{X}^{\op} \otimes \mathscr{Y} \text{-modules} \]
and where the morphisms from $(\mathtt{R}, \phi)$ to $(\mathtt{R}', \phi')$ are obtained from morphisms $f \colon \mathtt{R} \to \mathtt{R}'$ of DG $\mathscr{X}^{\op} \otimes \mathscr{Y}$-modules, such that $f \phi' = \phi (\mathtt{S} f)$, by making quasi-isomorphisms invertible.

Similarly we can define the category $\eff^{\op}(\mathscr{X}, \mathtt{S}, \mathscr{Y})$ with objects $(\mathtt{R}, \phi)$, where $\phi$ is a quasi-isomorphism $\mathtt{R}_{\mathtt{S}} \mathtt{R} \to \mathtt{R}$.
\end{definition}

We are now ready to state Keller's universal property.

\begin{theorem}[{Keller \cite[Theorem~4 in Section~9.6]{K_orbit}}] \label{theo.keller_universal}
Let $\mathscr{X}$ be a pretriangulated DG category, and $\mathtt{S} \colon \mathscr{X} \to \mathscr{X}$ as in Definition~\ref{def.orbitDG}. Then for any pretriangulated DG category $\mathscr{Y}$ there are equivalences
\[ \eff(\mathscr{X}, \mathtt{S}, \mathscr{Y}) \arrowl[30]{<-}{\approx} \rep(\pretr(\mathscr{X}/\mathtt{S}), \mathscr{Y}) \tol[30]{\approx} \eff^{\op}(\mathscr{X}, \mathtt{S}, \mathscr{Y}). \]
\end{theorem}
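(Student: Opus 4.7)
To prove this universal property, my plan is to construct explicit quasi-inverse functors in both directions, focusing on the equivalence $\rep(\pretr(\mathscr{X}/\mathtt{S}), \mathscr{Y}) \simeq \eff(\mathscr{X}, \mathtt{S}, \mathscr{Y})$; the $\eff^{\op}$ side will follow by symmetry, since a quasi-isomorphism $\mathtt{R} \to \mathtt{R}_\mathtt{S}\mathtt{R}$ carries the same derived-category information as a quasi-isomorphism going the other way.

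The right-to-left functor is restriction along the canonical composite $\iota \colon \mathscr{X} \to \mathscr{X}/\mathtt{S} \to \pretr(\mathscr{X}/\mathtt{S})$: given $\mathtt{M} \in \rep(\pretr(\mathscr{X}/\mathtt{S}), \mathscr{Y})$, set $\mathtt{R} := \mathtt{R}_\iota \otimes^L \mathtt{M}$. The very construction of the DG orbit category (Definition~\ref{def.orbitDG}) equips $\iota$ with a canonical natural quasi-isomorphism $\iota \cong \iota \circ \mathtt{S}$, arising from the $j=1$ summand in the defining colimit of the Hom-spaces. Transported along $\mathtt{M}$ this supplies exactly the coherence datum $\phi \colon \mathtt{R} \to \mathtt{R}_\mathtt{S}\mathtt{R}$, and its being a quasi-isomorphism is automatic.

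For the inverse I would build the representation directly from $(\mathtt{R}, \phi)$. Iterating $\phi$ produces compatible maps $\phi^{(j)} \colon \mathtt{R} \to \mathtt{R}_{\mathtt{S}^j}\mathtt{R}$, which make each summand $\HOM_{\mathscr{X}}(\mathtt{S}^j X_1, X_2)$ act on $\mathtt{R}$; the transition maps in the sequential colimit defining $\Hom_{\mathscr{X}/\mathtt{S}}(X_1, X_2)$ are precisely those induced by applying $\mathtt{S}$, so the partial actions assemble into a representation of $\mathscr{X}/\mathtt{S}$ in $\mathscr{Y}$. Since $\mathscr{Y}$ is already pretriangulated, the universal property of $\pretr(-)$ then extends this uniquely (up to quasi-isomorphism) to an object of $\rep(\pretr(\mathscr{X}/\mathtt{S}), \mathscr{Y})$.

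It remains to check that the two constructions are mutually quasi-inverse, which reduces to a naturality check on representable bimodules combined with the universal properties of $\mathscr{X}/\mathtt{S}$ and $\pretr(-)$. The main obstacle will be the technical bookkeeping around derived tensor products: one must pass to cofibrant resolutions so that $\otimes^L$ is correctly computed, and verify that the filtered colimit defining morphisms in $\mathscr{X}/\mathtt{S}$ is compatible with these derived operations and with the passage to the pretriangulated hull. A clean way to handle this is to fix concrete cofibrant DG models of $\mathscr{X}/\mathtt{S}$ and $\pretr(\mathscr{X}/\mathtt{S})$, transport everything into a category of DG bimodules where the operations become strict, and reduce the verification to routine functoriality.
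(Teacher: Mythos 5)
The paper does not actually prove this statement: it is attributed to Keller, and the only argument the paper supplies is the subsequent Remark, which derives the second equivalence $\rep(\pretr(\mathscr{X}/\mathtt{S}),\mathscr{Y}) \simeq \eff^{\op}(\mathscr{X},\mathtt{S},\mathscr{Y})$ from Keller's first one by passing to opposite DG categories, using the identities $\mathscr{X}^{\op}/\mathtt{S} = (\mathscr{X}/\mathtt{S})^{\op}$ and $\pretr(\mathscr{X}^{\op}) = (\pretr\mathscr{X})^{\op}$. So comparing your proposal against the paper's ``proof'' is mostly comparing against a citation plus a one-line duality remark.

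Your outline of the main equivalence is a reasonable sketch of Keller's argument: restriction along $\iota \colon \mathscr{X} \to \pretr(\mathscr{X}/\mathtt{S})$ together with the canonical natural transformation $\iota \Rightarrow \iota \circ \mathtt{S}$ built into the orbit category does supply the coherence datum $\phi$, and in the other direction the colimit formula for $\Hom_{\mathscr{X}/\mathtt{S}}$ together with the universal property of $\pretr$ is the right machinery. You correctly flag that the real work is in handling derived tensor products and cofibrant replacements so that the constructions descend to $\rep$ and $\eff$; that is indeed where the content of Keller's proof lives, and your proposal leaves all of it to the reader. As a sketch this is fine, but it is not a proof.

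The genuine gap is in how you dispose of the $\eff^{\op}$ side. Saying that a quasi-isomorphism $\mathtt{R} \to \mathtt{R}_{\mathtt{S}}\mathtt{R}$ ``carries the same derived-category information as a quasi-isomorphism going the other way'' does not establish an equivalence between $\eff$ and $\eff^{\op}$, nor between $\rep(\pretr(\mathscr{X}/\mathtt{S}),\mathscr{Y})$ and $\eff^{\op}(\mathscr{X},\mathtt{S},\mathscr{Y})$. The objects of $\eff$ and $\eff^{\op}$ are pairs where the direction of $\phi$ is part of the structure, the morphisms are required to intertwine $\phi$ in a direction-dependent way, and a quasi-isomorphism is not literally invertible at the DG level; you cannot simply flip the arrow. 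The correct argument, which the paper's Remark gives, is to apply Keller's stated equivalence to $\mathscr{X}^{\op}$ and $\mathscr{Y}^{\op}$: passing to opposites reverses all morphisms coherently, identifies $\eff^{\op}(\mathscr{X},\mathtt{S},\mathscr{Y})$ with $\eff(\mathscr{X}^{\op},\mathtt{S},\mathscr{Y}^{\op})$, and uses $\pretr(\mathscr{X}^{\op}/\mathtt{S}) = (\pretr(\mathscr{X}/\mathtt{S}))^{\op}$ to match the $\rep$ side. You should replace your symmetry remark with this opposite-category argument; as written it asserts an equivalence it does not construct.
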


\begin{remark}
Keller only states the first equivalence in of Theorem~\ref{theo.keller_universal}. The other equivalence follows from the first one for $\mathscr{X}^{\op}$ and $\mathscr{Y}^{\op}$. (Note that it follows from the definition that $\mathscr{X}^{\op} / \mathtt{S} = (\mathscr{X} / \mathtt{S})^{\op}$, and from the universal property of the pretriangulated hull that $\pretr (\mathscr{X}^{\op}) = (\pretr \mathscr{X})^{\op}$.)
\end{remark}

Applying this theorem to the setup of Proposition~\ref{prop.dg_for_amiot} we obtain the following.

\begin{corollary} \label{cor.keller_universal_cc}
Let $\Delta$ be a finite dimensional algebra of finite global dimension, and
\[ \mathtt{S} := \mathtt{p}_{\Delta \otimes \Delta^{\op}}(D \Delta)[-n] \otimes_{\Delta} - \colon \mathscr{P}_{\Delta} \to \mathscr{P}_{\Delta} \]
as in Proposition~\ref{prop.dg_for_amiot}. Then  for any pretriangulated DG category $\mathscr{Y}$ there are equivalences
\[ \eff(\mathscr{P}_{\Delta}, \mathtt{S}, \mathscr{Y}) \arrowl[30]{<-}{\approx} \rep(\pretr(\mathscr{P}_{\Delta} / \mathtt{S}), \mathscr{Y}) \tol[30]{\approx} \eff^{\op}(\mathscr{P}_{\Delta}, \mathtt{S}, \mathscr{Y}). \]
\end{corollary}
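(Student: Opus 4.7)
The statement is an immediate application of Keller's universal property, Theorem~\ref{theo.keller_universal}, to the specific DG setup introduced in Proposition~\ref{prop.dg_for_amiot}; so my plan is simply to verify that the hypotheses of that theorem are met and then invoke it. Concretely, to apply Theorem~\ref{theo.keller_universal} with $\mathscr{X} = \mathscr{P}_{\Delta}$ I need to check:
\begin{enumerate}
\item[(a)] $\mathscr{P}_{\Delta}$ is a pretriangulated DG category;
\item[(b)] $\mathtt{S}$ is a DG endofunctor of $\mathscr{P}_{\Delta}$ (in fact a cofibrant object of $\rep(\mathscr{P}_{\Delta},\mathscr{P}_{\Delta})$);
\item[(c)] $\mathtt{S}$ induces an auto-equivalence of $\Ho^0(\mathscr{P}_{\Delta})$.
\end{enumerate}

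For (a), the DG category $\mathscr{P}_{\Delta} = C^{-,\rm b}_{\rm dg}(\proj \Delta)$ is a full DG subcategory of $C_{\rm dg}(\proj \Delta)$ which is manifestly closed under translations and mapping cones (both operations preserve being bounded-above with bounded homology), so Example~\ref{ex.subcat_complex} tells us it is pretriangulated. For (b), tensoring with the projective bimodule resolution $\mathtt{p}_{\Delta \otimes \Delta^{\op}}(D\Delta)$ sends $\proj \Delta$ into right-bounded complexes of projectives; together with the shift $[-n]$ and the hypothesis $\gld \Delta < \infty$ (which forces $D\Delta$ to be perfect as a bimodule, so its resolution has bounded homology), this ensures that $\mathtt{S}$ lands in $\mathscr{P}_{\Delta}$, and cofibrancy in $\rep$ comes from working with a projective bimodule resolution. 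Point (c) is exactly the content of Proposition~\ref{prop.dg_for_amiot}, which identifies the induced functor on $\Ho^0(\mathscr{P}_{\Delta}) = \mathscr{D}_{\Delta}$ with the auto-equivalence $\mathbb{S}_n$.

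Having verified (a)–(c), I will directly apply Theorem~\ref{theo.keller_universal} to obtain the two equivalences
\[ \eff(\mathscr{P}_{\Delta}, \mathtt{S}, \mathscr{Y}) \arrowl[30]{<-}{\approx} \rep(\pretr(\mathscr{P}_{\Delta}/\mathtt{S}), \mathscr{Y}) \tol[30]{\approx} \eff^{\op}(\mathscr{P}_{\Delta}, \mathtt{S}, \mathscr{Y}) \]
for every pretriangulated DG category $\mathscr{Y}$.

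There is essentially no obstacle here; the only mildly subtle point is keeping track of the cofibrancy/functoriality of $\mathtt{S}$ and confirming that $\mathtt{S}$ really preserves the boundedness constraints defining $\mathscr{P}_{\Delta}$, both of which are handled by choosing a bimodule projective resolution of $D\Delta$ and using $\gld \Delta < \infty$. Everything else is already packaged in Proposition~\ref{prop.dg_for_amiot} and Theorem~\ref{theo.keller_universal}, so the proof consists merely of a short citation.
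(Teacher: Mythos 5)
Your proposal is correct and follows exactly the same route the paper takes: the corollary is obtained by directly specializing Theorem~\ref{theo.keller_universal} to $\mathscr{X} = \mathscr{P}_{\Delta}$ with the $\mathtt{S}$ of Proposition~\ref{prop.dg_for_amiot}, and the paper offers no further argument beyond this citation. Your verifications (a)--(c) of the hypotheses are the right (implicit) checks and are all accurate.
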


\subsection{Application to our setup}

We now show the following instance of Keller's general result. This is what we apply in this paper.

\begin{theorem} \label{thm.universal_stable}
Let $\Delta$ be a finite dimensional algebra of finite global dimension, and let $\Pi$ be an Iwanaga-Gorenstein finite dimensional algebra. Let $M \in \mathscr{D}_{\Pi \otimes \Delta^{\op}}$, and
\[ \mathtt{A} = M \otimes_{\Delta}^L - \colon \mathscr{D}_{\Delta} \to \mathscr{D}_{\Pi} \]
the triangle functor given by the derived tensor product. Assume there is a triangle
\[ X \to[30] \mathtt{A}(D \Delta [-n]) \to[30] \mathtt{A}(\Delta) \to[30] X[1] \]
in $\mathscr{D}_{\Pi \otimes \Delta^{\op}}$, such that $X$ is mapped to $\perf \Pi$ by the forgetful functor $\mathscr{D}_{\Pi \otimes \Delta^{\op}} \to \mathscr{D}_{\Pi}$.

Then there is a functor $\mathtt{H} \colon \mathscr{C}_{\Delta}^n \to \stabCM(\Pi)$ making the following diagram commutative.
\[ \begin{tikzpicture}[xscale=3,yscale=1.5]
 \node (A) at (0,1) {$\mathscr{D}_{\Delta}$};
 \node (B) at (1,1) {$\mathscr{D}_{\Pi}$};
 \node (C) at (0,0) {$\mathscr{C}_{\Delta}^n$};
 \node (D) at (1,0) {$\stabCM(\Pi)$};
 \draw [->] (A) -- node [above] {$\mathtt{A}$} (B);
 \draw [->] (C) -- node [above] {$\mathtt{H}$} (D);
 \draw [->] (A) -- (C);
 \draw [->] (B) -- (D);
\end{tikzpicture} \]
\end{theorem}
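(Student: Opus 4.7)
The plan is to obtain $\mathtt{H}$ by invoking Keller's universal property of DG orbit categories in the form of Corollary~\ref{cor.keller_universal_cc}, applied with $\mathscr{Y} = \mathscr{A}_\Pi$ (which makes sense since $\Pi$ is Iwanaga-Gorenstein, so $\Ho^0(\mathscr{A}_\Pi) = \stabCM(\Pi)$). It suffices to construct a pair $(\mathtt{R}, \phi) \in \eff^{\op}(\mathscr{P}_\Delta, \mathtt{S}, \mathscr{A}_\Pi)$: through the right-hand equivalence of that corollary this will correspond to an object of $\rep(\pretr(\mathscr{P}_\Delta / \mathtt{S}), \mathscr{A}_\Pi)$, essentially a DG functor $\pretr(\mathscr{P}_\Delta / \mathtt{S}) \to \mathscr{A}_\Pi$; passing to $\Ho^0$ and combining with Proposition~\ref{prop.dg_for_amiot} then delivers the desired triangle functor $\mathtt{H} \colon \mathscr{C}_\Delta^n \to \stabCM(\Pi)$.

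To build $\mathtt{R}$, I use that $\Pi$ being Iwanaga-Gorenstein admits a Tate (complete) projective resolution, which upgrades to a DG functor $\mathtt{stab} \colon \mathscr{P}_\Pi \to \mathscr{A}_\Pi$ descending on $\Ho^0$ to the Buchweitz projection $\mathscr{D}_\Pi \to \stabCM(\Pi)$ with kernel $\perf \Pi$. A cofibrant bimodule resolution of $M$ represents an element $\mathtt{R}_{\mathtt{A}} \in \rep(\mathscr{P}_\Delta, \mathscr{P}_\Pi)$ whose underlying functor on $\Ho^0$ is $\mathtt{A}$; I set $\mathtt{R} := \mathtt{R}_{\mathtt{A}} \mathtt{R}_{\mathtt{stab}} \in \rep(\mathscr{P}_\Delta, \mathscr{A}_\Pi)$, so that on $\Ho^0$ it sends $P$ to $\mathtt{A}(P)$ viewed in $\stabCM(\Pi)$. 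By definition $\mathtt{R}_{\mathtt{S}} \mathtt{R}$ is precomposition with $\mathtt{S}$ (which represents $\mathbb{S}_n$), so on $\Ho^0$ it sends $\Delta$ to the class of $\mathtt{A}(D\Delta[-n])$, while $\mathtt{R}(\Delta)$ is the class of $\mathtt{A}(\Delta)$.

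The structure morphism $\phi \colon \mathtt{R}_{\mathtt{S}} \mathtt{R} \to \mathtt{R}$ is extracted from the given triangle
\[ X \to \mathtt{A}(D \Delta[-n]) \to \mathtt{A}(\Delta) \to X[1] \]
in $\mathscr{D}_{\Pi \otimes \Delta^{\op}}$: the middle morphism is natural with respect to the bimodule structure, and its outer cone $X$ becomes zero after applying $\mathtt{stab}$ since its restriction to $\Pi$ is perfect. This yields a morphism in $D(\mathscr{P}_\Delta^{\op} \otimes \mathscr{A}_\Pi)$ which is a quasi-isomorphism when evaluated at $\Delta$, hence everywhere on $\mathscr{P}_\Delta$ (which is generated from $\Delta$ by shifts and cones). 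Thus $(\mathtt{R}, \phi) \in \eff^{\op}$, as required, and the commutativity of the stated diagram is automatic, because the underlying functor of $\mathtt{R}$ is by construction $\mathtt{A}$ followed by the Buchweitz projection. I expect the main technical obstacle to be the bimodule-level construction of $\phi$: one must lift the given triangle through cofibrant replacements so that $\phi$ is a genuine morphism in $D(\mathscr{P}_\Delta^{\op} \otimes \mathscr{A}_\Pi)$ rather than merely a compatible family of object-wise isomorphisms. This is standard but delicate model-categorical bookkeeping on DG bimodules, and is the reason the authors elect to suppress the details in the main text.
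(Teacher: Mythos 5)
Your proposal follows essentially the same route as the paper's proof: apply Corollary~\ref{cor.keller_universal_cc} with $\mathscr{Y} = \mathscr{A}_{\Pi}$, build the stabilization $\mathscr{P}_{\Pi} \leadsto \mathscr{A}_{\Pi}$ via complete resolutions (the paper's $\mathtt{G}$ in Proposition~\ref{prop.rep_for_proj}, which is an object of $\rep(\mathscr{P}_{\Pi}, \mathscr{A}_{\Pi})$ rather than a strict DG functor, precisely to avoid the strictification issue you flag), compose with $\mathtt{R}_{\mathtt{A}}$, take $\phi$ from the middle arrow of the given bimodule triangle, and observe that the cone $\mathtt{R}_X\mathtt{G}$ is acyclic because $X \otimes_{\Delta} P$ is perfect over $\Pi$. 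The paper checks acyclicity of $\mathtt{R}_X \mathtt{G}$ directly on all objects rather than by the evaluate-at-$\Delta$-and-extend argument you sketch, but both are fine; your ``model-categorical bookkeeping'' worry is exactly what working inside $\rep$ and $\eff^{\op}$ resolves.
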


\begin{remark}
The same result also holds if we have a triangle ``in the other direction'', that is a triangle
\[ X \to[30] \mathtt{A}(\Delta) \to[30] \mathtt{A}(D \Delta [-n]) \to[30] X[1] \]
(and all other assumptions as in Theorem~\ref{thm.universal_stable} above).
\end{remark}

For the proof we need the following observation.

\begin{proposition}[Keller] \label{prop.rep_for_proj}
Let $\Pi$ be an Iwanaga-Gorenstein finite dimensional algebra. Setting
\[ \mathtt{G}(X \otimes Y) = \HOM_{\Pi}(Y, X) \text{ for } X \in \mathscr{P}_{\Pi} \text{ and } Y \in \mathscr{A}_{\Pi} \]
we have $\mathtt{G} \in \rep(\mathscr{P}_{\Pi}, \mathscr{A}_{\Pi})$, and $\mathtt{G}$ induces the natural functor $\mathscr{D}_{\Pi} \to \stabCM(\Pi)$.
\end{proposition}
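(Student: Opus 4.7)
The plan is to construct, for each $X\in\mathscr{P}_\Pi$, a \emph{complete projective resolution} $Y_X\in\mathscr{A}_\Pi$ together with a comparison morphism in $\mathscr{P}_\Pi$ that induces a quasi-isomorphism of DG $\mathscr{A}_\Pi$-modules
\[
\mathtt{G}(X\otimes -)\ =\ \HOM_\Pi(-,X)\ \simeq\ \HOM_\Pi(-,Y_X),
\]
with the right-hand side being the DG $\mathscr{A}_\Pi$-module represented by $Y_X$. This single statement will simultaneously prove $\mathtt{G}\in\rep(\mathscr{P}_\Pi,\mathscr{A}_\Pi)$ and identify the induced map on $\Ho^0$ with the assignment $X\mapsto Y_X$, which is Buchweitz's classical equivalence between $\mathscr{D}_\Pi/\perf\Pi$ and $\stabCM(\Pi)$ followed by the projection $\mathscr{D}_\Pi\twoheadrightarrow\mathscr{D}_\Pi/\perf\Pi$.

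For the construction of $Y_X$, let $d$ be the Iwanaga-Gorenstein dimension of $\Pi$ and choose an integer $m\gg 0$ so that $H^i(X)=0$ for $i\geqslant -m$ and so that the cocycle $C:=Z^{-m}(X)\subseteq X^{-m}$ is Cohen-Macaulay; such $m$ exists since sufficiently deep syzygies of any finitely generated module are CM over an Iwanaga-Gorenstein algebra. The category $\CM(\Pi)$ is Frobenius with projective-injectives equal to $\proj\Pi$, so $C$ admits an acyclic coresolution $0\to C\to P^0\to P^1\to\cdots$ by finitely generated projectives. Splice it with the left tail of $X$ to define
\[
Y_X\ :=\ \bigl(\cdots\to X^{-m-2}\to X^{-m-1}\to P^0\to P^1\to\cdots\bigr),
\]
where $P^0$ sits in cohomological degree $-m$ and the connecting map is the composition of the natural surjection $X^{-m-1}\twoheadrightarrow C$ (possible because $H^{-m}(X)=0$) with the inclusion $C\hookrightarrow P^0$. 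An immediate cocycle check shows $Y_X$ is acyclic, hence lies in $\mathscr{A}_\Pi$; moreover $X$ and $Y_X$ share the brutal truncation $\sigma^{\leqslant -m-1}X$ as a common subcomplex.

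For the quasi-isomorphism, apply $\HOM_\Pi(Z,-)$ for an arbitrary $Z\in\mathscr{A}_\Pi$ to the two inclusions $\sigma^{\leqslant -m-1}X\hookrightarrow X$ and $\sigma^{\leqslant -m-1}X\hookrightarrow Y_X$: both cones are bounded complexes of finitely generated projectives, and for any $P\in\proj\Pi$ the complex $\HOM_\Pi(Z,P)$ is acyclic just because $Z$ is. (The symmetric statement, that $\HOM_\Pi(P,Z)$ is acyclic, is needed implicitly for Frobenius-category manipulations and uses precisely the Iwanaga-Gorenstein hypothesis, which ensures every acyclic complex of finitely generated projectives is totally acyclic.) By dévissage we conclude
\[
\HOM_\Pi(-,X)\ \simeq\ \HOM_\Pi(-,\sigma^{\leqslant -m-1}X)\ \simeq\ \HOM_\Pi(-,Y_X)
\]
as DG $\mathscr{A}_\Pi$-modules, which is the representability assertion.

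Finally, on $\Ho^0$, the assignment $X\mapsto Y_X$ sends any object of $\perf\Pi$ to a contractible complex (its complete resolution is null-homotopic), hence factors as $\mathscr{D}_\Pi\twoheadrightarrow\mathscr{D}_\Pi/\perf\Pi\xrightarrow{\;\sim\;}\stabCM(\Pi)$, the Buchweitz equivalence, and this composite is the ``natural functor'' of the statement. The principal technical concern is that the construction of $Y_X$ is not obviously DG-functorial in $X$; this is bypassed by observing that membership in $\rep(\mathscr{P}_\Pi,\mathscr{A}_\Pi)$ is tested object-wise in $D(\mathscr{A}_\Pi)$, so exhibiting any $Y_X$ for each single $X$ suffices, and uniqueness of $Y_X$ up to isomorphism in $\stabCM(\Pi)$ then automatically forces compatibility with the induced functor on $\Ho^0$.
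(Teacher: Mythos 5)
Your argument is essentially the paper's own: the paper takes $\mathtt{a}X \in \mathscr{A}_{\Pi}$ with $\trunc^{<i}\mathtt{a}X = \trunc^{<i}X$ for $i \ll 0$ (your $Y_X$, built by coresolving a deep Cohen--Macaulay syzygy, is exactly such an object) and shows the cone of $\mathtt{a}X \to X$ is a left-bounded complex of finitely generated projectives annihilated by $\HOM_{\Pi}(Y,-)$ for $Y$ acyclic. One substantive correction: you have the trivial and the nontrivial vanishing statements swapped. For $Z \in \mathscr{A}_{\Pi}$ and $P$ projective, it is $\HOM_{\Pi}(P,Z)$ that is acyclic ``just because $Z$ is'' (apply the exact functor $\Hom_{\Pi}(P,-)$), whereas $\HOM_{\Pi}(Z,P)$ --- the complex your d\'evissage actually uses --- is acyclic precisely because every acyclic complex of finitely generated projectives over an Iwanaga--Gorenstein algebra is totally acyclic; that is where the Gorenstein hypothesis enters, not in the ``symmetric statement''. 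Two smaller slips: you want $H^i(X)=0$ for $i \leqslant -m$ (not $i \geqslant -m$), and the cone on the $Y_X$ side is only bounded below rather than bounded, so the d\'evissage there needs the same care the paper takes with its left-bounded cone $C$.
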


\begin{proof}
Take $\mathtt{a} X \in \mathscr{A}_{\Pi}$ such that $\trunc^{< i} \mathtt{a} X = \trunc^{< i} X$ for $i \ll 0$. Then we have a chain homomorphism $\mathtt{a} X \to X$, and a triangle
\[ \mathtt{a} X \to[30] X \to[30] C \to[30] \mathtt{a} X[1] \]
in the homotopy category of $\proj \Pi$, with $C$ a left bounded complex of finitely generated projective $\Pi$-modules.

Since $Y$ is acyclic, so is $\HOM_{\Pi}(Y, C)$. Thus the morphism
\[ \HOM_{\Pi}(Y, \mathtt{a} X) \to[30] \HOM_{\Pi}(Y, X) = \mathtt{G}(X \otimes Y) \]
is a quasi-isomorphism. Thus $\mathtt{G}(X \otimes -)$ is representable (since it is represented by $\mathtt{a}X$), and we have $\mathtt{G} \in \rep(\mathscr{P}_{\Pi}, \mathscr{A}_{\Pi})$.
\end{proof}

\begin{proof}[Proof of Theorem~\ref{thm.universal_stable}]
Our first task is to translate the setup of the theorem to functors between the DG categories $\mathscr{P}_{\Delta}$, $\mathscr{P}_{\Pi}$, and $\mathscr{A}_{\Pi}$. We have the following functors and objects in the respective $\rep$-categories:
\begin{enumerate}
\item  The functor $\mathtt{p}_{\Pi \otimes \Delta^{\op}} (M) \otimes_{\Delta} - \colon \mathscr{P}_{\Delta} \to \mathscr{P}_{\Pi}$ induces $\mathtt{A} \colon \mathscr{D}_{\Delta} \to \mathscr{D}_{\Pi}$. We denote by $\mathtt{R}_{\mathtt{A}}$ the corresponding object in $\rep(\mathscr{P}_{\Delta}, \mathscr{P}_{\Pi})$.
\item The functor $\mathtt{S} := \mathtt{p}_{\Delta \otimes \Delta^{\op}} (D\Delta)[-n] \otimes_{\Delta} - \colon \mathscr{P}_{\Delta} \to \mathscr{P}_{\Delta}$ induces $D\Delta[-n] \otimes^L_{\Delta} - \colon \mathscr{D}_{\Delta} \to \mathscr{D}_{\Delta}$. We have the corresponding object $\mathtt{R}_{\mathtt{S}}$ in $\rep(\mathscr{P}_{\Delta}, \mathscr{P}_{\Delta})$.
\item We denote by $\mathtt{R}_X$ the object in $\rep(\mathscr{P}_{\Delta}, \mathscr{P}_{\Pi})$ corresponding to the functors $\mathtt{p}_{\Pi \otimes \Delta^{\op}}(X) \otimes_{\Delta} - \colon \mathscr{P}_{\Delta} \to \mathscr{P}_{\Pi}$.
\end{enumerate}

Now note that the triangle of the theorem induces a triangle
\[ \mathtt{R}_X \to[30] \mathtt{R}_{\mathtt{S}}\mathtt{R}_{\mathtt{A}} \tol[30]{\phi} \mathtt{R}_{\mathtt{A}} \to[30] \mathtt{R}_X[1] \]
in $\rep(\mathscr{P}_{\Delta}, \mathscr{P}_{\Pi})$.

We denote by $\mathtt{G} \in \rep(\mathscr{P}_{\Pi}, \mathscr{A}_{\Pi})$ the object constructed in Proposition~\ref{prop.rep_for_proj} above. Then the object $\mathtt{R}_{\mathtt{A}} \mathtt{G} \in \rep(\mathscr{P}_{\Delta}, \mathscr{A}_{\Pi})$ is a lift of the composition $\mathscr{D}_{\Delta} \tol{\scriptstyle \mathtt{A}} \mathscr{D}_{\Pi} \to \stabCM(\Pi)$. Multiplying the above triangle with $\mathtt{G}$ we obtain a triangle
\[ \mathtt{R}_X \mathtt{G} \to[30] \mathtt{R}_{\mathtt{S}}\mathtt{R}_{\mathtt{A}} \mathtt{G} \tol[30]{\phi \mathtt{G}} \mathtt{R}_{\mathtt{A}} \mathtt{G} \to[30] \mathtt{R}_X \mathtt{G} [1] \]
in $\rep(\mathscr{P}_{\Delta}, \mathscr{A}_{\Pi})$. We claim that $\phi \mathtt{G}$ is a quasi-isomorphism. To see this we only have to show that $\mathtt{R}_X \mathtt{G}$ is acyclic. Indeed, for any $P \in \mathscr{P}_{\Delta}$ and $A \in \mathscr{A}_{\Pi}$ we have
\begin{align*}
(\mathtt{R}_X \mathtt{G})(P \otimes A) & = \mathtt{G}(X \otimes_{\Delta} P \otimes A) \\
& = \HOM_{\Pi}(A, X \otimes_{\Delta} P),
\end{align*}
and this is acyclic, since $A$ is acyclic, and $P \in \perf \Delta$ and $X \in \perf \Pi$ imply $X \otimes_{\Delta} P \in \perf \Pi$.

Consequently $(\mathtt{R}_{\mathtt{A}} \mathtt{G}, \phi \mathtt{G})$ lies in $\eff^{\op}(\mathscr{P}_{\Delta}, \mathtt{S}, \mathscr{A}_{\Pi})$. By Corollary~\ref{cor.keller_universal_cc} $\mathtt{R}_{\mathtt{A}} \mathtt{G}$ is given by an object in $\rep(\pretr(\mathscr{P}_{\Delta} / \mathtt{S}), \mathscr{A}_{\Pi})$, which induces the desired triangle functor $\mathtt{H} \colon \mathscr{C}_{\Delta}^n \to \stabCM(\Pi)$.
\end{proof}

\newcommand{\etalchar}[1]{$^{#1}$}


\begin{thebibliography}{BMR{\etalchar{+}}}

\bibitem[ABIM]{ABIM}
Luchezar~L. Avramov, Ragnar-Olaf Buchweitz, Srikanth Iyengar, and Claudia
  Miller.
\newblock {Homology of perfect complexes}.
\newblock preprint, arXiv:math.AC/0609008.

\bibitem[ABS]{ABS_trivext}
I.~Assem, T.~Br{\"u}stle, and R.~Schiffler.
\newblock Cluster-tilted algebras as trivial extensions.
\newblock {\em Bull. Lond. Math. Soc.}, 40(1):151--162, 2008.

\bibitem[AHHK]{handbook_tilting}
Lidia Angeleri~H{\"u}gel, Dieter Happel, and Henning Krause, editors.
\newblock {\em Handbook of tilting theory}, volume 332 of {\em London
  Mathematical Society Lecture Note Series}.
\newblock Cambridge University Press, Cambridge, 2007.

\bibitem[Ami1]{C_PhD}
Claire Amiot.
\newblock {\em Sur les petites cat\'{e}gories triangul\'{e}es}.
\newblock PhD thesis, Universit\'{e} Paris~7, 2008.

\bibitem[Ami2]{CC}
Claire Amiot.
\newblock Cluster categories for algebras of global dimension 2 and quivers
  with potential.
\newblock {\em Ann. Inst. Fourier (Grenoble)}, 59(6):2525--2590, 2009.

\bibitem[AR1]{AR_dualR_I}
Maurice Auslander and Idun Reiten.
\newblock Stable equivalence of dualizing {$R$}-varieties.
\newblock {\em Advances in Math.}, 12:306--366, 1974.

\bibitem[AR2]{MR1388043}
Maurice Auslander and Idun Reiten.
\newblock {$D$}{T}r-periodic modules and functors.
\newblock In {\em Representation theory of algebras ({C}ocoyoc, 1994)},
  volume~18 of {\em CMS Conf. Proc.}, pages 39--50. Amer. Math. Soc.,
  Providence, RI, 1996.

\bibitem[ARS]{ARS}
Maurice Auslander, Idun Reiten, and Sverre~O. Smal{\o}.
\newblock {\em Representation theory of {A}rtin algebras}, volume~36 of {\em
  Cambridge Studies in Advanced Mathematics}.
\newblock Cambridge University Press, Cambridge, 1997.
\newblock Corrected reprint of the 1995 original.

\bibitem[ASS]{ASS}
Ibrahim Assem, Daniel Simson, and Andrzej Skowro{\'n}ski.
\newblock {\em Elements of the representation theory of associative algebras.
  {V}ol. 1}, volume~65 of {\em London Mathematical Society Student Texts}.
\newblock Cambridge University Press, Cambridge, 2006.
\newblock Techniques of representation theory.

\bibitem[Aus1]{Aus_CohF}
Maurice Auslander.
\newblock Coherent functors.
\newblock In {\em Proc. {C}onf. {C}ategorical {A}lgebra ({L}a {J}olla,
  {C}alif., 1965)}, pages 189--231. Springer, New York, 1966.

\bibitem[Aus2]{Au}
Maurice Auslander.
\newblock Representation theory of {A}rtin algebras. {I}, {II}.
\newblock {\em Comm. Algebra}, 1:177--268; ibid. 1 (1974), 269--310, 1974.

\bibitem[Bel]{Bel_stC}
Apostolos Beligiannis.
\newblock Relative homology and higher cluster-tilting theory.
\newblock in preparation.

\bibitem[BGRS]{BGRS}
R.~Bautista, P.~Gabriel, A.~V. Ro{\u\i}ter, and L.~Salmer{\'o}n.
\newblock Representation-finite algebras and multiplicative bases.
\newblock {\em Invent. Math.}, 81(2):217--285, 1985.

\bibitem[BK]{BonKap}
A.~I. Bondal and M.~M. Kapranov.
\newblock Representable functors, {S}erre functors, and reconstructions.
\newblock {\em Izv. Akad. Nauk SSSR Ser. Mat.}, 53(6):1183--1205, 1337, 1989.

\bibitem[BM]{BaurMarsh08}
Karin Baur and Robert~J. Marsh.
\newblock A geometric description of {$m$}-cluster categories.
\newblock {\em Trans. Amer. Math. Soc.}, 360(11):5789--5803, 2008.

\bibitem[BMR{\etalchar{+}}]{BMRRT}
Aslak~Bakke Buan, Robert Marsh, Markus Reineke, Idun Reiten, and Gordana
  Todorov.
\newblock Tilting theory and cluster combinatorics.
\newblock {\em Adv. Math.}, 204(2):572--618, 2006.

\bibitem[CB1]{MR1730657}
William Crawley-Boevey.
\newblock Preprojective algebras, differential operators and a {C}onze
  embedding for deformations of {K}leinian singularities.
\newblock {\em Comment. Math. Helv.}, 74(4):548--574, 1999.

\bibitem[CB2]{MR1781930}
William Crawley-Boevey.
\newblock On the exceptional fibres of {K}leinian singularities.
\newblock {\em Amer. J. Math.}, 122(5):1027--1037, 2000.

\bibitem[CR]{CR}
Charles~W. Curtis and Irving Reiner.
\newblock {\em Representation theory of finite groups and associative
  algebras}.
\newblock Pure and Applied Mathematics, Vol. XI. Interscience Publishers, a
  division of John Wiley \& Sons, New York-London, 1962.

\bibitem[EH]{ErdmannHolm09}
Karin Erdmann and Thorsten Holm.
\newblock Maximal {$n$}-orthogonal modules for selfinjective algebras.
\newblock {\em Proc. Amer. Math. Soc.}, 136(9):3069--3078, 2008.

\bibitem[EHIS]{EHIS}
Karin Erdmann, Thorsten Holm, Osamu Iyama, and Jan Schr{\"o}er.
\newblock Radical embeddings and representation dimension.
\newblock {\em Adv. Math.}, 185(1):159--177, 2004.

\bibitem[FZ]{FZ}
Sergey Fomin and Andrei Zelevinsky.
\newblock Cluster algebras. {I}. {F}oundations.
\newblock {\em J. Amer. Math. Soc.}, 15(2):497--529 (electronic), 2002.

\bibitem[GLS1]{GLS1}
Christof Gei{\ss}, Bernard Leclerc, and Jan Schr{\"o}er.
\newblock Rigid modules over preprojective algebras.
\newblock {\em Invent. Math.}, 165(3):589--632, 2006.

\bibitem[GLS2]{GLS2}
Christof Geiss, Bernard Leclerc, and Jan Schr{\"o}er.
\newblock Auslander algebras and initial seeds for cluster algebras.
\newblock {\em J. Lond. Math. Soc. (2)}, 75(3):718--740, 2007.

\bibitem[Guo]{Guo}
Lingyan Guo.
\newblock Cluster tilting objects in generalized higher cluster categories.
\newblock preprint, arXiv:1005.3564.

\bibitem[Hap]{Ha}
Dieter Happel.
\newblock {\em Triangulated categories in the representation theory of
  finite-dimensional algebras}, volume 119 of {\em London Mathematical Society
  Lecture Note Series}.
\newblock Cambridge University Press, Cambridge, 1988.

\bibitem[HI1]{HerI}
Martin Herschend and Osamu Iyama.
\newblock {$n$-representation-finite algebras and fractionally Calabi-Yau
  algebras}.
\newblock to appear in Bull. Lond. Math. Soc.

\bibitem[HI2]{HerschendIyama}
Martin Herschend and Osamu Iyama.
\newblock Selfinjective quivers with potential and 2-representation finite
  algebras.
\newblock preprint, arXiv:1006.1917, to appear in Compos. Math.

\bibitem[HZ1]{HuangZhangA}
Zhaoyong Huang and Xiaojin Zhang.
\newblock Higher {A}uslander algebras admitting trivial maximal orthogonal
  subcategories.
\newblock to appear in J. Algebra.

\bibitem[HZ2]{HuangZhangB}
Zhaoyong Huang and Xiaojin Zhang.
\newblock Trivial maximal 1-orthogonal subcategories for {A}uslander's
  1-{G}orenstein algebras.
\newblock preprint, arXiv:0903.0762.

\bibitem[HZ3]{HuangZhang09}
Zhaoyong Huang and Xiaojin Zhang.
\newblock The existence of maximal {$n$}-orthogonal subcategories.
\newblock {\em J. Algebra}, 321(10):2829--2842, 2009.

\bibitem[IO]{IO}
Osamu Iyama and Steffen Oppermann.
\newblock $n$-representation-finite algebras and $n$-{APR} tilting.
\newblock to appear in Trans. Amer. Math. Soc.

\bibitem[IY]{IyYo}
Osamu Iyama and Yuji Yoshino.
\newblock Mutation in triangulated categories and rigid {C}ohen-{M}acaulay
  modules.
\newblock {\em Invent. Math.}, 172(1):117--168, 2008.

\bibitem[Iya1]{Iy_Auslander_corr}
Osamu Iyama.
\newblock Auslander correspondence.
\newblock {\em Adv. Math.}, 210(1):51--82, 2007.

\bibitem[Iya2]{I2}
Osamu Iyama.
\newblock Higher-dimensional {A}uslander-{R}eiten theory on maximal orthogonal
  subcategories.
\newblock {\em Adv. Math.}, 210(1):22--50, 2007.

\bibitem[Iya3]{Iy_n-Auslander}
Osamu Iyama.
\newblock Cluster tilting for higher {A}uslander algebras.
\newblock {\em Adv. Math.}, 226(1):1--61, 2011.

\bibitem[Kel1]{Kel_DefCY}
Bernhard Keller.
\newblock Deformed {C}alabi-{Y}au completions.
\newblock to appear in J. Reine Angew. Math.

\bibitem[Kel2]{DGcat}
Bernhard Keller.
\newblock Deriving {DG} categories.
\newblock {\em Ann. Sci. \'Ecole Norm. Sup. (4)}, 27(1):63--102, 1994.

\bibitem[Kel3]{K_cyclic}
Bernhard Keller.
\newblock On the cyclic homology of exact categories.
\newblock {\em J. Pure Appl. Algebra}, 136(1):1--56, 1999.

\bibitem[Kel4]{K_orbit}
Bernhard Keller.
\newblock On triangulated orbit categories.
\newblock {\em Doc. Math.}, 10:551--581 (electronic), 2005.

\bibitem[Kel5]{K_onDG}
Bernhard Keller.
\newblock On differential graded categories.
\newblock In {\em International {C}ongress of {M}athematicians. {V}ol. {II}},
  pages 151--190. Eur. Math. Soc., Z\"urich, 2006.

\bibitem[Kon]{Kontsevich}
Maxim Kontsevich.
\newblock {Triangulated categories and geometry}.
\newblock Course at the \'{E}cole Normale Sup\'{e}rieure.

\bibitem[KR]{KelRei}
Bernhard Keller and Idun Reiten.
\newblock Cluster-tilted algebras are {G}orenstein and stably {C}alabi-{Y}au.
\newblock {\em Adv. Math.}, 211(1):123--151, 2007.

\bibitem[Lad1]{Lad_der_eq_n-APR}
Sefi Ladkani.
\newblock Derived equivalences and mutation sequences for {A}uslander algebras.
\newblock in preparation.

\bibitem[Lad2]{LadkaniA}
Sefi Ladkani.
\newblock Perverse equivalences, {BB}-tilting, mutations and applications.
\newblock preprint, arXiv:1001.4765.

\bibitem[Rin1]{R}
Claus~Michael Ringel.
\newblock {\em Tame algebras and integral quadratic forms}, volume 1099 of {\em
  Lecture Notes in Mathematics}.
\newblock Springer-Verlag, Berlin, 1984.

\bibitem[Rin2]{Ringel_selfinj_cta}
Claus~Michael Ringel.
\newblock The self-injective cluster-tilted algebras.
\newblock {\em Arch. Math. (Basel)}, 91(3):218--225, 2008.

\bibitem[Tho]{Thomas06}
Hugh Thomas.
\newblock Defining an {$m$}-cluster category.
\newblock {\em J. Algebra}, 318(1):37--46, 2007.

\bibitem[vdW]{Waer}
B.~L. van~der Waerden.
\newblock {\em Algebra. {V}ol. {II}}.
\newblock Springer-Verlag, New York, 1991.
\newblock Based in part on lectures by E. Artin and E. Noether, Translated from
  the fifth German edition by John R. Schulenberger.

\bibitem[Zhu]{Zhu08}
Bin Zhu.
\newblock Generalized cluster complexes via quiver representations.
\newblock {\em J. Algebraic Combin.}, 27(1):35--54, 2008.

\end{thebibliography}
\end{document}